
\documentclass{amsart}
\usepackage{amsmath}
\usepackage{amssymb}
\usepackage[all]{xy}
\usepackage{comment}




\theoremstyle{plain}
\newtheorem{thm}{Theorem}[section]
\newtheorem{thm*}{Theorem}[section]
\newtheorem{cor}[thm]{Corollary}
\newtheorem{prop}[thm]{Proposition}
\newtheorem{lemma}[thm]{Lemma}

\theoremstyle{definition}
\newtheorem{defn}[thm]{Definition}
\newtheorem{remark}[thm]{Remark}


\newtheorem*{remark*}{Remark}

\newtheorem{ex}[thm]{Example}

\newtheorem{question*}{Question}

\numberwithin{equation}{thm}


\def\Rad{\operatorname{Rad}\nolimits}
\def\Spec{\operatorname{Spec}\nolimits}
\def\Ker{\operatorname{Ker}\nolimits}
\def\Res{\operatorname{Res}\nolimits}

\def\Soc{\operatorname{Soc}\nolimits}
\def\Coker{\operatorname{Coker}\nolimits}
\def\rk{\operatorname{Rk}\nolimits}
\def\Im{\operatorname{Im}\nolimits}
\def\Proj{\operatorname{Proj}\nolimits}

\def\Grass{\operatorname{Grass}\nolimits}
\def\Coh{\operatorname{Coh}\nolimits}

\def\Det{\operatorname{Det}\nolimits}

\def\rk{\operatorname{rk}\nolimits}

\def\id{\operatorname{id}\nolimits}
\def\Spec{\operatorname{Spec}\nolimits}
\def\sl2{\operatorname{SL_{2(2)}}\nolimits}
\def\Ga2{\operatorname{\mathbb G_{\rm a(2)}}\nolimits}

\def\GL{\operatorname{GL}\nolimits}

\def\HHH{\operatorname{H}\nolimits}
\def\Ext{\operatorname{Ext}\nolimits}

\def\Hom{\operatorname{Hom}\nolimits}

\def\proj{\operatorname{proj}\nolimits}

\def\diag{\operatorname{diag}\nolimits}
\def\mod{\operatorname{mod}\nolimits}
\def\det{\operatorname{det}\nolimits}
\def\Dim{\operatorname{dim}\nolimits}
\def\dim{\operatorname{dim}\nolimits}

\def\P{\operatorname{P}\nolimits}

\def\Stab{\operatorname{Stab}\nolimits}
\newcommand{\cKer}{\mathcal K\text{\it er}}
\newcommand{\cIm}{\mathcal I\text{\it m}}
\newcommand{\cCoker}{\mathcal C\text{\it oker}}

\newcommand{\bA}{\mathbb A}
\newcommand{\cA}{\mathcal A}

\newcommand{\cE}{\mathcal E}
\newcommand{\E}{\mathcal E}

\newcommand{\bF}{\mathbb F}
\newcommand{\cF}{\mathcal F}

\newcommand{\cI}{\mathcal I}
\newcommand{\CI}{\mathcal I}

\newcommand{\cL}{\mathcal L}
\newcommand{\bM}{\mathbb M}

\newcommand{\cO}{\mathcal O}

\newcommand{\bP}{\mathbb P}

\newcommand{\cR}{\mathcal R}

\newcommand{\cU}{\mathcal U}
\newcommand{\CU}{\mathcal U}

\newcommand{\CV}{\mathbb V}
\newcommand{\CW}{\mathbb W}

\newcommand{\bZ}{\mathbb Z}

\newcommand{\fg}{\mathfrak g}

\newcommand{\fp}{\mathfrak p}
\newcommand{\p}{\mathfrak p}
\newcommand{\fq}{\mathfrak q}
\newcommand{\fu}{\mathfrak u}

\newcommand{\ol}{\overline}
\newcommand{\ul}{\underline}
\newcommand{\wt}{\widetilde}
\newcommand{\bu}{\bullet}
\newcommand{\red}{\rm red}

\newcommand{\Z}{\mathbb Z}

\newcommand{\cOG}{\mathcal O_{Gr}}


\setcounter{section}{0}

\date\today

\begin{document}

 \title[Representations of elementary abelian $p$-groups]{Representations of elementary abelian $p$-groups and bundles on Grassmannians}
 
 \author[Jon F. Carlson, Eric M. Friedlander and Julia Pevtsova]
{Jon F. Carlson$^*$, Eric M. Friedlander$^{**}$ and 
Julia Pevtsova$^{***}$}

\address{Department of Mathematics, University of Georgia,
Athens, GA}
\email{jfc@math.uga.edu}

\address {Department of Mathematics, University of Southern California,
Los Angeles, CA}
\email{eric@math.northwestern.edu}

\address {Department of Mathematics, University of Washington, 
Seattle, WA}
\email{julia@math.washington.edu}

\thanks{$^*$ partially supported by the NSF grant DMS-1001102}
\thanks{$^{**}$ partially supported by the NSF grant DMS-0909314 and DMS-0966589}
\thanks{$^{***}$ partially supported by the NSF grant DMS-0800930 and DMS-0953011}



\begin{abstract}   
We initiate  the study of representations of elementary abelian $p$-groups 
via restrictions to truncated polynomial subalgebras of the group algebra generated 
by $r$ nilpotent elements, $k[t_1, \ldots, t_r]/(t^p_1, \ldots, t_r^p)$.  We introduce new geometric 
invariants based on the behavior of modules upon restrictions to such subalgebras.
We also introduce modules of constant radical and socle type generalizing modules of 
constant Jordan type and provide several general constructions of modules with these properties. 
We show that modules of constant radical and socle type lead to families of algebraic vector 
bundles on Grassmannians and illustrate our theory with numerous examples. 
\end{abstract}

\maketitle

\tableofcontents 
Quillen's fundamental ideas on applying geometry to 
the study of group cohomology in positive characteristic 
\cite{Q} opened the door to many exciting developments in both 
cohomology and modular representation theory.    
Cyclic shifted subgroups, the prototypes of the rank $r$ shifted 
subgroups studied in this paper, were introduced by Dade in \cite{Dade} 
and quickly became the subject of an intense study.  
In \cite{AS}, Avrunin and Scott proved the conjecture of the 
first author tying the cohomological support variety originating from 
Quillen's approach with the variety of shifted subgroups (rank variety) 
introduced in \cite{C}.   

These ideas were successfully applied to restricted Lie algebras  (\cite{FPa86}) 
and, more generally, infinitesimal group schemes (\cite{SFB1}, \cite{SFB2})  yielding  many 
surprising  geometric results which also underline the very different nature   
of infinitesimal group schemes and  finite groups.    Nonetheless, in \cite{FP1}, \cite{FP2}, the second and third authors 
found a unifying tool, called $\pi$-points, that allowed the generalization of shifted cyclic subgroups 
and the Avrunin-Scott's theorem to any finite group scheme.  

In a surprising twist, the $\pi$-point approach has led to new discoveries even for elementary abelian  
p-groups, the context in which cyclic shifted subgroups  were originally introduced.  Among these, 
the most relevant to the present paper  are modules of constant Jordan type (\cite{CFP}) and the 
connection between such modules and algebraic vector bundles on projective varieties (\cite{FP3},  
see also \cite{BP} and \cite{Ben2} for a treatment specific to elementary abelian $p$-groups).

Equipped with the understanding  of the versatility as well as the limits of cyclic shifted subgroups, 
we set out on the quest of studying modular representations  via 
their restrictions to rank $r$  shifted subgroups. Following the original course of the 
development of the theory, we devote this paper entirely  to modular representations of an 
elementary abelian  $p$-group $E$  over an algebraically closed field $k$ of positive characteristic $p$. 
A {\it rank $r$ shifted subgroup} of the group algebra $kE$ is a 
subalgebra $C \subset kE$ isomorphic to a group algebra of an elementary 
abelian $p$-group of rank $r$, for $1 \leq r<n$, 
with the property that $kE$ is free as a $C$--module. 
For an $E$-module $M$, we consider restrictions of $M$ to such subalgebras $C$ of $kE$.    
The concept of a ``shifted subgroup" exists in the literature 
(see, e.g., \cite{Ben}) but no systematic study  of such restrictions has been undertaken  for $r>1$.

Throughout the paper, we choose an $n$-dimensional linear subspace $\CV \subset \Rad(kE)$ which gives a splitting of the projection 
$\Rad(kE) \to \Rad(kE)/\Rad^2(kE)$. Once such a $\CV$ is fixed,  we consider only the rank $r$ shifted subgroups which are determined 
by a linear subspace of $\CV$. Such shifted subgroups are naturally parametrized by the Grassmann variety $\Grass(r,\CV)$ of 
$r$-planes in $\CV$.  In Section \ref{se:radsoc} we prove a partial generalization of the main result in \cite{FPS} showing  that some of the invariants we introduce do not depend on the choice of $\CV$. 

The paper  naturally splits into two parts.  In the first part which occupies  Sections 1 through 5,  
we introduce new geometric and numerical invariants for  modules arising from their restrictions to rank $r$ 
shifted subgroups and then construct many examples to reveal some of the interesting behavior of these invariants.   We show
how to associate  subvarieties of $\Grass(r,\CV)$ to a finite dimensional $kE$-module $M$;
for $r=1$, these subvarieties are refinements of the rank variety of $M$.  In the second half of the paper we construct and  study 
algebraic vector bundles on $\Grass(r,\CV)$ associated to certain $kE$--modules, extending the construction for $r=1$
first introduced in \cite{FP3}. 

Whereas the isomorphism type of a $k[t]/t^p$--module
$M$ is specified by a $p$--tuple of integers (the Jordan type of $M$), there is no such
classification for $r > 1$.
Indeed, except in the very special case in which $p=2 =r$, the category of finite
dimensional $C \simeq k[t_1, \ldots, t_r]/(t^p_1, \ldots, t_r^p)$--modules is wild.    For $r \geq 1$, we
consider dimensions of $C$--socles and $C$--radicals of a given $kE$--module $M$ as $C$
ranges over rank $r$ shifted subgroups of $kE$.  For $r=1$,
this numerical data  is equivalent to the Jordan type of $M$.  Although these ranks do not
determine the isomorphism types of the restrictions of a  given $kE$--module $M$ for $r > 1$, 
they do provide intriguing new invariants for $M$.

 Extending our earlier investigations
of $kE$--modules of constant Jordan type, we formulate in \eqref{const-rad} and then study the
condition on a $kE$--module $M$ that it have constant $r$--radical type or constant 
$r$--socle type. 
We introduce invariants for $kE$--modules which do not have constant $r$--radical type
(or constant $r$--socle type).   Our simplest invariant, a straight--forward generalization of the
rank variety of a $kE$--module $M$, is the $r$--rank variety $\Grass(r,\CV)_M \subset \Grass(r,\CV)$.
More elaborate geometric invariants, also closed subvarieties of $\Grass(r,\CV)$, extend the
generalized support varieties of \cite{FP4}.

The generalization to $r > 1$ raises many interesting questions for which we have only partial
answers.     For example,  even though the rank $r$ shifted subgroups 
are parametrized by  $\Grass(r,\CV)$, 
 for $r > 1$ the Zariski
topology on this Grassmannian is not easily obtained from the representation theory of $kE$.  
This stands in stark contrast with the situation for $r=1$ where the realization 
theorem asserts that any closed subvariety of the support variety of a finite group $G$ is realized 
as a support (equivalently, rank) variety of some finite dimensional representation of $G$ as proved in  
\cite{C2}.
For $r=1$, the Avrunin--Scott's theorem says that the rank variety of a $kE$-module $M$ has an 
interpretation in terms of the action of $\HHH^*(kE,k)$ on $\Ext^*_{kE}(M, M)$; we know of no 
such cohomological interpretation for $r > 1$. Theorem \ref{max-rad} is a  partial generalization to $r \geq 1$ 
of the fundamental theorem of \cite{FPS} concerning maximal Jordan type, yet we do not have the full 
generalization to all radical ranks.

We verify that the classes of $kE$--modules of constant $r$--radical type or constant $r$--socle 
type share some of the good properties of the class of modules of constant Jordan type.   Informed
by a variety of examples, we develop some sense of the complicated nature and
independence of the condition of being of constant socle  versus radical type. 
Many of our examples have very rich symmetries and, hence,  have constant $r$--radical type 
and $r$--socle type for all $r$, $\ 1 \leq r < n$.  On the other hand, in Section \ref{se:quantum} 
we introduce modules arising from quantum complete intersections which have much 
less symmetry and, therefore, much more intricate properties. In particular, we
exhibit $kE$--modules which have constant 2--radical type but not constant 2--socle type. Using Carlson 
 modules $L_\zeta$ in Section \ref{se:Lzeta}, we produce examples of modules which have constant $r$--radical type 
for a given $r$, $1 < r < n$, but not constant $s$--radical
type for any $s, \ 1 \leq s < r$. We also construct modules which have constant $s$--radical type 
all $s, \ 1 \leq s < r < n$, but not constant $r$--radical type.   Thanks to the duality of
radicals of $M$ and socles of $M^\#$, examples of constant radical types lead to examples of 
constant socle types.

The second part of the paper is dedicated to the construction 
of algebraic vector bundles on $\Grass(r,\CV)$ 
associated to $kE$-modules of constant 
$r$-radical type or constant $r$-socle type (and, more generally, to $kE$-modules
of constant $r$--$\Rad^j$ rank or constant $r$--$\Soc^j$ rank for $j, \ 1 \leq j \leq r(p-1)$, 
as defined in \eqref{const-rad}).    
All are associated to images or kernels of the restrictions of the $kE$-module $M$ to 
rank $r$ shifted  subgroups $C \subset kE$ indexed by points
of $\Grass(r, \CV)$.  We construct these bundles using various complementary techniques:
\begin{enumerate}
\item  patching  images or kernels of local operators on standard affine open subsets of  $\Grass(r, \CV)$ (Section \ref{se:local});
\item  applying equivariant descent to images or kernels of global operators on Stiefel varieties over 
$\Grass(r, \CV)$ (Section \ref{se:equiv}); 
\item  investigating explicit actions on graded modules over the homogeneous coordinate ring
of $\Grass(r, \CV)$ generated by Pl\"ucker coordinates (Section \ref{global}).
\end{enumerate}   
We mention a few specific results of this paper.  In Section \ref{se:grass}, we investigate
the generalization $\Grass(r,\CV)_M \subset \Grass(r,\CV)$ of the classical rank variety of
a $kE$-module $M$; the choice of $\CV \subset \Rad(kE)$ is less restrictive than the classical
choice of a basis of $\Rad(kE)$ modulo $\Rad^2(kE)$.    As shown in Corollary \ref{nonmax-closed},
$\Grass(r,\CV)_M \subset \Grass(r,\CV)$ and its refinements are closed subvarieties of 
$\Grass(r,\CV)$; moreover, in Corollary \ref{idp}, we show that $\Grass(r,\CV)_M$ is essentially
dependent only upon $M$ and not upon a choice of $\CV \subset \Rad(kE)$.  
In Section \ref{se:const}, we 
consider various classes of modules which have constant $r$--radical type and constant
$r$--socle type for all $r$. 
The examples of Sections \ref{se:quantum} and \ref{se:Lzeta} reveal some of the subtle possibilities
for restrictions of  $kE$-modules to rank $r$ shifted  subgroups $C$
of $kE$.  The quantum complete intersections of Section \ref{se:quantum} are perhaps new,
and certainly not fully understood.  The Carlson modules $L_\zeta$ of Section \ref{se:Lzeta}
show a surprising variability of behavior.

Section \ref{se:construction} contains two constructions of bundles arising from modules of constant 
socle or radical type. In Proposition~\ref{localdef}, we show that kernels and images of some local operators 
defined via explicit equations on principal affine opens of $\Grass(r, \CV)$ patch together to give 
globally defined coherent sheaves associated to a given $kE$-module $M$, $\cKer^\ell(M)$ and $\cIm^\ell(M)$.  
Theorem~\ref{th:bundle} proves that starting with a $kE$--module of constant socle or radical type we get a 
locally free sheaf (equivalently,  an algebraic vector bundle) on $\Grass(r, \CV)$.  Finally, in 
Theorem~\ref{pr:coincide},  we prove that the local construction of bundles coincides with the construction
by equivariant descent as described in \S \ref{se:descent}. 

In Section \ref{se:equiv}, we concentrate on algebraic vector bundles 
on $\Grass(r, \CV)$ associated to various $\GL_n$-equivariant $kE$-modules introduced in Definition~\ref{eq}.  
For such  $kE$-modules, Theorem~\ref{thm:constr} provides a useful method of determining their associated
vector bundles on $\Grass(r, \CV)$ using a standard construction from the representation theory of reductive 
algebraic groups.  We find that many familiar vector bundles on $\Grass(r, \CV)$ arise in this manner and 
fill the second half of Section \ref{se:equiv}  with examples.
To demonstrate the explicit nature of our techniques, we show 
 in Section \ref{global} how
 to calculate (typically, with the aid of a computer) ``generators" of kernel bundles
 arising from homogeneous elements of graded modules over the homogeneous coordinate
 algebra of $\Grass(r, \CV)$.  
 
 The appendix, written by the first author, shows how one can  
 calculate explicitly generalized rank varieties for small examples using MAGMA.  
 Any reader interested in obtaining the programs used for calculations should contact 
 the first author.

Throughout this paper, $k$ will denote an algebraically closed field of characteristic $p>0$.

The authors gratefully acknowledge  the hospitality of  MSRI where this project got started.  
They would also like to thank Steve Mitchell and S\'andor Kov\'acs for very helpful 
conversations.


\section{The $r$-rank variety $\Grass(r,\CV)_M$}
\label{se:grass}

Throughout this section, $E$ is an elementary abelian $p$ group of rank $n \geq 1$
and $r$ is a fixed integer satisfying $1 \leq r \leq n$.  Recall that the group algebra
$kE$ is isomorphic to the truncated polynomial algebra $k[x_1,\ldots,x_n]/(x_1^p,\ldots,x_n^p)$.
We choose a subspace $\CV \subset \Rad(kE)$ of the radical of $kE$ with
 the property that $\CV$ is a choice of splitting of the projection 
$\Rad(kE) \to \Rad(kE)/\Rad^2(kE)$;
in other words, the composition $\rho_\CV: \CV \to \Rad(kE) \to \Rad(kE)/\Rad^2(kE)$
is an isomorphism.  Observe that if $\CW \subset \Rad(kE)$ is another choice of 
splitting, then there is a unique map $\psi: \CV \to \CW$ such that 
$\rho_\CW \circ \psi = \rho_\CV$; that is, the following diagram commutes:
\[\xymatrix{\CV\ar[dr]_-{\rho_\CV}\ar[rr]^-\psi && \CW\ar[dl]^-{\rho_\CW}\\
&\frac{\Rad(kE)}{\Rad^2(kE)}}\]

Our choice of $\CV \subset \Rad(kE)$ provides an identification
\begin{equation}
\label{identify}
S^*(\CV)/\langle v^p, v \in \CV \rangle \ \cong kE
\end{equation}
which we employ throughout this paper.

For $r=1$, rank varieties were originally defined in terms of a choice of $\CV \subset \Rad(kE)$
together with a choice of ordered basis for $\CV$; these $r=1$ rank varieties have an 
interpretation in terms of cohomology, and thus are independent of such choices.  More refined 
support varieties for $r=1$ are also independent of such choices, thanks to results of \cite{FPS}.
For $r > 1$, we do not have a cohomological interpretation of $r$-rank varieties, so 
that we take some care in establishing invariance properties.  In particular, we consistently
avoid specifying an ordered basis of $\CV$.

We consider $r$-planes $U \subset \CV$ (i.e., subspaces of the $k$-vector 
space $\CV$ of dimension $r$). We recall the projective algebraic variety $\Grass(r,\CV)$ whose (closed) points are $r$-planes 
of  $\CV$.    We construct this Grassmannian by fixing some $r$-plane $U_0 \subset \CV$
and considering the set of $k$-linear maps of maximal rank
\begin{equation}
\label{hom}
\Hom_k(U_0,\CV)^o \ \subset \ \Hom_k(U_0,\CV);
\end{equation}
then 
$$\Grass(r,\CV) \ \equiv \  \GL(\CV)/\Stab(U_0) \ \cong   \  \Hom_k(U_0,\CV)^o/\GL(U_0) .$$
In particular, we observe for later use that there is a natural transitive (left) action
of $\GL(\CV)$ on $\Grass(r,\CV)$.
We view $\Hom_k(U_0,\CV)^o \to \Grass(r,\CV)$ as the principal $\GL(U_0)$--bundle whose 
fiber above an $r$-plane $U \in \Grass(r,\CV)$ consists of vector space bases of $U$.
Should we provide $\CV$ with an ordered basis and choose $U_0$ to be the span of
the first $r$ basis elements, then we can identify $\Hom_k(U_0,\CV)$
with the affine space $\bA^{nr}$ and view $\Grass(r,\CV)$ as 
\begin{equation}
\label{pinned}
 \Grass_{n,r}  \ \equiv \ \GL_n/\P_{r,n-r} \cong \ M_{n,r}^o/\GL_r,
 \end{equation}
 where $M_{n,r}$ is the affine space of $n\times r$-matrices, where $M_{n,r}^o \subset M_{n,r}$
 consists of those matrices of rank $r$, and
where $\P_{r, n-r} \simeq \ \Stab(U_0)$ is the standard parabolic subgroup stabilizing the 
vector $[\underbrace{1, \ldots, 1}_{r}, 0,\ldots, 0]$  in the standard representation 
of $\GL_n$.

We  employ the Pl\"ucker embedding
$\fp: \Grass(r,\CV)\ \hookrightarrow  \ \bP(\Lambda^r(\CV))$
of $\Grass(r,\CV)$, providing $\Grass(r,\CV)$ with the structure of a closed subvariety
of projective space.   Once we choose an ordered basis for $\CV$, this embedding can be
described explicitly as follows.   The inclusion (\ref{hom}) becomes $M_{n,r}^o \ \subset M_{n,r}$.
For any subset 
 $\Sigma \subset \{ 1,\ldots, n\}$ of cardinality $r$,  the {\it $\Sigma$--submatrix} of 
 an $n\times r$ matrix $A \in M_{n,r}$ is the $r\times r$ 
 matrix obtained by removing all rows indexed by
numbers not in $\Sigma$.
The Pl\"ucker coordinates $\{\mathfrak p_\Sigma(U) \}$ of the $r$-plane $U \in \GL_n/\P_{r,n-r}$ 
are the entries of the ordered $n$-tuple (well defined up to scalar multiple) 
obtained by taking any matrix $A \in \GL_n$ representing $U$ and setting $p_\Sigma(U)$ 
equal to the determinant of the $\Sigma$--submatrix 
of $A$.   In these terms,
the Pl\"ucker embedding  becomes
\begin{equation}
\label{puck}
\fp: \Grass_{n,r} \ \hookrightarrow   \bP^{{n \choose r}-1}, \quad  U \ \mapsto 
[\fp_\Sigma(A_U)].
\end{equation}
The homogeneous coordinate ring of the Grassmanian can be written as
the quotient of the polynomial ring on ${n \choose r}$ variables $\{ \fp_\Sigma \}$ by
the homogeneous ideal generated by standard Pl\"ucker relations.

We investigate $kE$-modules by considering their restrictions along flat maps 
$$k[t_1,\ldots,t_r]/(t_i^p) \ \to \ kE,$$
where we use $k[t_1,\ldots,t_r]/(t_i^p) $ to denote $k[t_1,\ldots,t_r]/(t_1^p,\ldots,t_r^p)$.
To give such a map is to choose an ordered $r$-tuple of elements of $\Rad(kE)$ which
are linearly independent modulo $\Rad^2(kE)$.   We formulate our consideration 
so that our maps are parametrized by $U \in \Grass(r,\CV)$.

For any $r$-plane $U \in \Grass(r,\CV)$, we define the finite dimensional commutative $k$
algebra 
$$C(U) \ \equiv \ S^*(U)/\langle u^p, u \in U \rangle \ \simeq \ k[t_1,\ldots,t_r]/(t_i^p)$$
to be the quotient of the symmetric algeba $S^*(U)$ by the ideal generated by $p$-th powers
of elements of $U \subset S^*(U)$.   We naturally associate to each $U \in \Grass(r,\CV)$ 
the map of $k$-algebras
\begin{equation}
\label{alphaU}
\alpha_U: C(U) \ \to \ kE
\end{equation}
induced by $S^*(U) \to S^*(\CV)$, the projection $S^*(\CV) \to S^*(\CV)/\langle v^p, v\in \CV\rangle$,
and the identification of (\ref{identify}).

The following characterization of flatness for certain maps of $k$-algebras applies in particular
to show that $\alpha_U$ is flat.    The essence of the 
proof of this fact (for $r=1$) is present  in \cite{C}.  Recall that a finitely generated module
over a commutative, local ring (such as $C$) is flat if and only if it is free.   If 
$\alpha: C \to A$ is a homomorphism of $k$-algebras and $M$ is a $C$-module, then
we denote by $\alpha^*(M)$ the restriction of $M$ along $\alpha$.

\begin{prop} \label{flat}
Consider a $k$-algebra homomorphism
$$\alpha: C \ \equiv \ k[t_1,\ldots,t_r]/(t_1^p,\ldots,t_r^p) \ \longrightarrow \ 
k[x_1,\ldots,x_n]/(x_1^p,\ldots,x_n^p) \equiv A. $$ 
The map $\alpha$ is flat  as a map of $C$-modules if and only if
the images of 
$\alpha(t_1), \dots, \alpha(t_r)$ in $\Rad(A)/\Rad^2(A)$ are linearly independent. 
\end{prop}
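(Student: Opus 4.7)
My plan is to prove both directions by exploiting the observation that any basis of $\Rad(A)/\Rad^2(A)$ lifts to a presentation of $A$ as a truncated polynomial algebra. For the ``if'' direction, I would begin by extending $\alpha(t_1), \ldots, \alpha(t_r)$ to a full basis of $\Rad(A)/\Rad^2(A)$ and lifting to elements $\alpha(t_1), \ldots, \alpha(t_r), a_{r+1}, \ldots, a_n \in \Rad(A)$. Because Frobenius is a ring homomorphism on the commutative ring $A$ of characteristic $p$ and $x_i^p = 0$, every element $a \in \Rad(A)$ satisfies $a^p = 0$. This gives a well-defined $k$-algebra map
\[
\widetilde\alpha : k[t_1, \ldots, t_r, y_{r+1}, \ldots, y_n]/(t_i^p, y_j^p) \ \to \ A
\]
extending $\alpha$ by sending $y_j \mapsto a_j$. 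By Nakayama's lemma, valid here since $\Rad(A)$ is nilpotent, $\widetilde\alpha$ is surjective, and as both sides have $k$-dimension $p^n$ it is an isomorphism. Since the source is visibly free of rank $p^{n-r}$ over its subalgebra $k[t_1, \ldots, t_r]/(t_i^p)$, this would show that $A$ is free of rank $p^{n-r}$ over $C$ via $\alpha$, hence flat.

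For the converse, I would argue contrapositively: assume that $\alpha(t_1), \ldots, \alpha(t_r)$ span only an $s$-dimensional subspace of $\Rad(A)/\Rad^2(A)$ with $s < r$, and set $I = (\alpha(t_1), \ldots, \alpha(t_r)) \subset A$. Nakayama then produces $b_1, \ldots, b_s \in \Rad(A)$, linearly independent modulo $\Rad^2(A)$, with $I = (b_1, \ldots, b_s)$. Applying the construction of the previous paragraph to a basis of $\Rad(A)/\Rad^2(A)$ beginning with $b_1, \ldots, b_s$ produces a $k$-algebra automorphism of $A$ carrying $x_i$ to $b_i$ for $1 \leq i \leq s$; pulling back through this automorphism identifies $A/I$ with $k[x_{s+1}, \ldots, x_n]/(x_j^p)$, a $k$-vector space of dimension $p^{n-s}$.

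To conclude, I would observe that flatness of $\alpha$ forces $A$ to be free over $C$ (flat equals free for finitely generated modules over a local ring), necessarily of rank $p^n/p^r = p^{n-r}$, so $\dim_k A \otimes_C k = p^{n-r}$. But $A \otimes_C k \cong A/I$ has $k$-dimension $p^{n-s} > p^{n-r}$, a contradiction. The main technical step in the whole argument is the construction of $k$-algebra automorphisms of $A$ from arbitrary bases of $\Rad(A)/\Rad^2(A)$, which is exactly what the ``if'' direction provides; everything else is a dimension count.
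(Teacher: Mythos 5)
Your ``if'' direction is correct and is actually more self-contained than the paper's argument: the observation that every element of $\Rad(A)$ has vanishing $p$-th power (Frobenius is additive in characteristic $p$) lets you extend $\alpha$ to a presentation of $A$ as a truncated polynomial algebra on $n$ generators beginning with $\alpha(t_1),\ldots,\alpha(t_r)$, from which freeness is visible. The paper instead quotes the $r=1$ criterion from the literature and then reduces $r>1$ to $r=1$ via Dade's Lemma, so your route genuinely avoids those external inputs for this half.

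The ``only if'' direction, however, has a real gap. The claim that $I=(\alpha(t_1),\ldots,\alpha(t_r))$ is generated by $s$ elements that are linearly independent modulo $\Rad^2(A)$ is a misapplication of Nakayama: a minimal generating set of $I$ corresponds to a basis of $I/\Rad(A)I$, not of the image of $I$ in $\Rad(A)/\Rad^2(A)$, and these differ whenever $I\cap\Rad^2(A)\supsetneq\Rad(A)I$. Concretely, take $n=r=2$, $\alpha(t_1)=x_1$, $\alpha(t_2)=x_2^2$ (a legitimate $k$-algebra map since $x_2^{2p}=0$); here $s=1$, but $I=(x_1,x_2^2)$ needs two generators, $A/I\cong k[x_2]/(x_2^2)$ has dimension $2$ rather than your predicted $p^{n-s}=p$, and in particular the identification of $A/I$ with $k[x_{s+1},\ldots,x_n]/(x_j^p)$ fails. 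The underlying difficulty is that $\dim_k A/I\geq p^{n-r}$ holds automatically (Nakayama plus the surjection $C^{\oplus d}\twoheadrightarrow A$), with equality if and only if $A$ is free; so establishing the \emph{strict} inequality is equivalent to non-freeness and cannot be extracted from a count of linear parts alone. What is really needed is the statement that an element of $\Rad^2(A)$ (equivalently, after a linear change of the $t_i$, some $\alpha(t_r)\in\Rad^2(A)$) cannot act freely on $A$ --- precisely the content of the $r=1$ result and Dade's Lemma that the paper invokes --- and your proposal does not supply a substitute for it.
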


\begin{proof}   We first consider the case $r=1$ so that $C = k[t_1]/t_1^p$.  Write
$$
\alpha^\prime: C \ \to \ A, \quad \quad \alpha^\prime(t_1) \stackrel{\rm def}{=} a_1x_1 + \ldots + a_nx_n \equiv \alpha(t_1) 
 \, \mod \,   \Rad^2(A).
 $$ 
By \cite[9.5.10]{CTVZ} or 
\cite[2.2]{FP1}, $A$ is a free $C$-module with respect to $\alpha: C \to A$ if and only
if $\alpha(t_1)$ acts freely on $A$ if and only if
$A$ is a free $C$-module with respect to $\alpha^\prime: C \to A$.
Hence, we may replace $\alpha$ by $\alpha^\prime$.   Applying 
a linear automorphism to $A$ which maps $\alpha(t_1)$ to $x_1$, 
we may assume that $\alpha(t_1) = x_1$.   For $A$, given the structure of a $C$-module through
such a map $\alpha$, it is clear that $A$ is free as a $C$-module.

We now assume $r > 1$ and equip $A$ with the structure of a $C$-module through the given
$k$-algebra homomorphism $\alpha: C \to A$.
By Dade's Lemma (\cite{Dade}, \cite[5.3]{FP2}), $A$ is free as a $C$-module if and only if
$\beta^*(A)$ is free as a $k[t]/t^p$-module for every non-zero $k$-algebra homomorphism
$$\beta: k[t]/t^p\  \to \ C, \quad \beta(t) = b_1t_1 + \cdots + b_rt_r \not= 0.$$
Applying the case $r=1$, $A$ is free as a $C$-module if and only if $\alpha\circ \beta(t) \ \not\equiv  0
\ \text{mod} \Rad^2(A)$ for all (non-zero) $\beta$ which is the case if and only if the images of 
$\alpha(t_1), \dots, \alpha(t_r)$ in $\Rad(A)/\Rad^2(A)$ are linearly independent.
\end{proof}

We now introduce the $r$-rank variety of a finite dimensional $kE$-module $M$.

\begin{defn}
For any finite dimensional $kE$-module $M$, we denote by 
$$\Grass(r,\CV)_M \ \subset \ \Grass(r,\CV)$$
the set of those $r$-planes $U \in \Grass(r,\CV)$
with the property that $\alpha_U^*(M)$ is a free $C(U)$-module
(where $\alpha_U$ is given in (\ref{alphaU})).  
We say that $\Grass(r,\CV)_M$ is the $r$-rank variety of $M$.
\end{defn}

\begin{remark}
As shown in  Corollary \ref{idp}, $\Grass(r,\CV)_M$ is independent of
the choice of $\CV$ in the sense that if $\CW \subset \Rad(A)$ is another 
choice of splitting for the projection $\Rad(A) \to \Rad(A)/\Rad^2(A)$, 
then the unique isomorphism $\psi: \CV \to \CW$ commuting with the
projections to $\Rad(A)/\Rad^2(A)$ induces an isomorphism
$\Psi: \Grass(r,\CV)_M  \stackrel{\sim}{\to} \Grass_r(\CW)_M$.
\end{remark}

The following interpretation of  $\Grass(r,\CV)_M$ in terms of classical
(i.e., $r=1$) rank varieties follows immediately from Dade's Lemma asserting that
a $C$-module $N$
 is free if and only $\beta^*(N)$ is a free $k[t]/t^p$-algebra for every 
 $\beta: k[t]/t^p\  \to \ C$, with $\beta(t) = b_1t_1 + \cdots +b_rt_r \not= 0.$

\begin{prop}
\label{r1}
For any finite dimensional $kE$-module $M$ and any $r$-plane $U \in \Grass(r,\CV)$,
$$\Grass(r,\CV)_M \ = \ \{ U \in \Grass(r,\CV); \Grass(1,U)_{\alpha_U^*(M)} \ \not= \emptyset\}.$$
\end{prop}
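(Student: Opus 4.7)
The plan is to recognize this as a direct reformulation of Dade's Lemma, modulo a basic compatibility between the maps $\alpha_U$ and $\alpha_L$ for $L \subset U \subset \CV$.

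First I would set up the relevant functoriality. For any line $L \subset U$, the inclusion $L \hookrightarrow U$ induces an inclusion $\iota_L^U : C(L) \hookrightarrow C(U)$ via the functor $S^*(-)$, and directly from the construction (\ref{alphaU}) the triangle
\[
\xymatrix@R=3ex@C=3ex{C(L)\ar[rr]^-{\iota_L^U}\ar[dr]_-{\alpha_L} && C(U)\ar[dl]^-{\alpha_U}\\ & kE &}
\]
commutes. This immediately yields $\alpha_L^*(M) = \iota_L^{U\,*}(\alpha_U^*(M))$, so that $\Grass(1,U)_{\alpha_U^*(M)}$ may be identified with the set of $1$-planes $L \subset U$ for which $\alpha_L^*(M)$ has the same freeness behavior required by the rank-variety convention.

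Next I would apply Dade's Lemma to the $C(U)$-module $N = \alpha_U^*(M)$. Under the natural bijection between $1$-planes $L \subset U$ and nonzero $k$-algebra homomorphisms $\beta : k[t]/t^p \to C(U)$ taken up to scalar (with $\beta(t)$ spanning $L$), Proposition \ref{flat} ensures that every such $\beta$ is automatically flat, since its image in $\Rad(C(U))/\Rad^2(C(U))$ is the nonzero element of $U$ spanning $L$. Dade's Lemma as recalled in the paragraph preceding the proposition then reads: $N$ is free over $C(U)$ if and only if $\beta^*(N)$ is free over $k[t]/t^p$ for every such $\beta$. Passing to the contrapositive turns the universal quantifier on the right into an existential, giving
\[
\alpha_U^*(M) \text{ not free over } C(U) \iff \exists\, L \subset U \text{ with } \alpha_L^*(M) \text{ not free over } C(L).
\]
Combined with the first step and the definition of $\Grass(r,\CV)_M$ (as the locus where the restriction fails to be free, per the classical convention for rank varieties), this yields exactly the claimed equality.

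The only real obstacle is notational bookkeeping — tracking the identifications $\iota_L^U$, the compatibility $\alpha_U \circ \iota_L^U = \alpha_L$, and the bijection between $1$-planes of $U$ and shifted-subgroup maps out of $k[t]/t^p$. There is no substantive difficulty beyond a single appeal to Dade's Lemma.
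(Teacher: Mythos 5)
Your proposal is correct and is essentially the argument the paper intends: the paper offers no written proof, stating only that the proposition "follows immediately from Dade's Lemma," and your write-up simply makes explicit the compatibility $\alpha_U \circ \iota_L^U = \alpha_L$ and the contrapositive of Dade's Lemma applied to $N = \alpha_U^*(M)$. (Note that the paper's Definition of $\Grass(r,\CV)_M$ contains a typo — it should read "not a free $C(U)$-module," as Proposition 1.8(1) and Example 1.9 confirm — and you have correctly used the intended convention.)
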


\vskip .2in

We employ the notation of (\ref{hom}) and (\ref{pinned}).
The projective variety $\Grass_{n,r}$ has an open covering by affine pieces
$\CU_\Sigma \simeq \mathbb A^{(n-r)r}$, the $\GL_r$-orbits 
of  matrices $A = (a_{ij})$ such that $\fp_\Sigma(A)
\not= 0$,
$$\cU_\Sigma \ \equiv \ \mathfrak{p}^{-1}(\bP^{{n \choose r}-1}
\backslash Z(\mathfrak p_\Sigma))
 \ \subset \  \Grass_{n,r}.$$
 We consider the section of $M_{n,r}^o \to \Grass_{n,r}$ 
over $\CU_\Sigma$ defined by sending a $\GL_r$-orbit
to its unique representative such that the $\Sigma$--submatrix 
is the identify matrix.

Suppose that $\Sigma = \{i_1, \dots, i_r\}$ with $i_1 < \dots < i_r$.
Our choice of section identifies $k[\CU_\Sigma]$ with the quotient
\begin{equation}
\label{eq:YSigma}
k[M_{n,r}] = k[Y_{i,j}]_{1\leq i \leq n,  1 \leq j \leq r } \  \longrightarrow
 \
 k[Y_{i,j}^\Sigma]_{i \notin \Sigma, 1\leq j \leq r} = k[\CU_\Sigma]
 \end{equation}
sending $Y_{i,j}$  to 1, if $i = i_j  \in \Sigma$; to 0 if $i=i_{j^\prime} \in \Sigma$ and $j \not= j^\prime$; 
and to $Y_{i,j}^\Sigma$ otherwise.  
For notational convenience,  we set $Y_{i,j}^\Sigma$  equal to 1, if
$i\in \Sigma$ and $i = i_j$, and we set $Y_{i,j}^\Sigma = 0$ if $i=i_{j^\prime} \in \Sigma$ 
and $j \not= j^\prime$.

\begin{defn}
\label{alpha-S}
For any $\Sigma = \{i_1, \dots, i_r\}$ with $i_1 < \dots < i_r$, we define the map of $k[\cU_\Sigma]$-algebras
\[
\alpha_\Sigma: C \otimes k[\cU_\Sigma] = k[t_1,\ldots,t_r]/(t_i^p) \otimes k[\cU_\Sigma] \ \to 
k[x_1,\ldots,x_n]/(x_i^p) \otimes k[\cU_\Sigma] = kE \otimes k[\cU_\Sigma] 
\]
via 
\[
t_j \mapsto \sum_{i=1}^n x_i \otimes Y_{i,j}^\Sigma.
\]\end{defn}

\vskip .1in

Pick a basis for $\CV$ and choose $U_0$ to be the span of the first $r$ basis elements.  
For any $U \in \cU_\Sigma \subset \Grass(r,\CV)$, these choices enable us to identify 
$\alpha_U: C(U)\to kE$ with  the result of specializing  $\alpha_\Sigma$ by setting
the variables  $Y_{i,j}^\Sigma$ to values $a_{i,j} \in k$,
where $A_U = (a_{i,j}) \in M_{n,r}$ is the unique representation of $U$ whose
$\Sigma$--submatrix  is the identity.

\begin{prop}
\label{closed}
For any finite dimensional $A$-module $M$, $\Grass(r,\CV)_M \ \subset \ \Grass(r,\CV)$
is a closed subvariety.
\sloppy{

}
\end{prop}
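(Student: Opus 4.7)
The plan is to verify closedness locally on each chart $\cU_\Sigma$ of the standard affine cover of $\Grass(r,\CV)$ introduced above. Once we know that $\Grass(r,\CV)_M \cap \cU_\Sigma$ is Zariski closed in $\cU_\Sigma$ for each $r$-subset $\Sigma \subset \{1,\ldots,n\}$, global closedness follows by gluing.

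Fix such a $\Sigma$. By Definition~\ref{alpha-S}, the universal $k[\cU_\Sigma]$-algebra homomorphism $\alpha_\Sigma \colon C \otimes k[\cU_\Sigma] \to kE \otimes k[\cU_\Sigma]$ makes $M \otimes k[\cU_\Sigma]$ into a $C \otimes k[\cU_\Sigma]$-module whose fibre at a closed point $U \in \cU_\Sigma$ is exactly $\alpha_U^*(M)$. I then consider the $k[\cU_\Sigma]$-linear endomorphism $\theta_\Sigma$ of $M \otimes k[\cU_\Sigma]$ given by multiplication by $\alpha_\Sigma\bigl((t_1 t_2 \cdots t_r)^{p-1}\bigr)$, the image under $\alpha_\Sigma$ of the generator of the one-dimensional socle of $C$. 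Fixing a $k$-basis of $M$ represents $\theta_\Sigma$ by a square matrix $T_\Sigma$ with entries in $k[\cU_\Sigma]$, and the specialisation $T_\Sigma(U)$ at a closed point is the matrix of multiplication by $\alpha_U\bigl((t_1 \cdots t_r)^{p-1}\bigr)$ on $M$.

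The key module-theoretic input is the following standard fact about the local Frobenius algebra $C = k[t_1,\ldots,t_r]/(t_i^p)$: for any finite-dimensional $C$-module $N$, the rank of multiplication by $s := (t_1 \cdots t_r)^{p-1}$ on $N$ equals the number of free $C$-summands in any Krull--Schmidt decomposition of $N$. Indeed, if $s(n) \neq 0$ then the annihilator $\Ann(n) \subset C$ fails to contain the unique minimal ideal $\Soc(C) = k \cdot (t_1 \cdots t_r)^{p-1}$, which forces $\Ann(n) = 0$ and hence $Cn \cong C$; this copy of $C$ splits off because $C$ is self-injective. Consequently $\rk(s) \leq \dim_k N / p^r$ with equality if and only if $N$ is free, and, applied fibrewise, $\alpha_U^*(M)$ is free over $C(U)$ if and only if $\rk T_\Sigma(U) = \dim M/p^r$ (which of course requires $p^r \mid \dim M$).

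It follows that the locus in $\cU_\Sigma$ where $\alpha_U^*(M)$ fails to be a free $C(U)$-module is cut out by the vanishing of the $(\dim M/p^r) \times (\dim M/p^r)$ minors of $T_\Sigma$ when $p^r \mid \dim M$, and is all of $\cU_\Sigma$ otherwise; in either case it is Zariski closed in $\cU_\Sigma$. By Proposition~\ref{r1}, i.e.\ the contrapositive of Dade's lemma, this locus is precisely $\Grass(r,\CV)_M \cap \cU_\Sigma$, and gluing over $\Sigma$ yields the result. The main non-routine ingredient is the characterisation of freeness via the rank of the top socle operator, which rests on $C$ being a local self-injective ring with one-dimensional socle; everything else is the standard determinantal description of rank loci together with routine assembly of the universal operator.
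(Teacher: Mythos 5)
Your proof is correct, and it reaches the conclusion by a genuinely different key lemma than the paper. The paper also works chart by chart on the $\cU_\Sigma$ and reduces to a determinantal rank condition, but its operator is the ``radical map'' $\sum_{i=1}^r \alpha_\Sigma(t_i)\colon (M\otimes k[\cU_\Sigma])^{\oplus r}\to M\otimes k[\cU_\Sigma]$, whose rank at $U$ is $\Dim\Rad_U(M)$, combined with the criterion that $\alpha_U^*(M)$ is free if and only if $\Dim\Rad_U(M)$ attains the maximum possible value $\frac{p^r-1}{p^r}\Dim M$; the non-free locus is then where this rank drops, hence closed by lower semi-continuity. You instead use the single square matrix of multiplication by the socle generator $s=(t_1\cdots t_r)^{p-1}$, together with the fact that $\rk(s|_N)$ counts the free summands of a $C$-module $N$ (which uses that $C$ is local self-injective with one-dimensional socle, so $sn\neq 0$ forces $Cn\cong C$ to split off). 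Note that your operator is exactly the top radical operator, since $s$ is the unique monomial of degree $r(p-1)$ with all exponents below $p$, so $sN=\Rad^{r(p-1)}(N)$; in effect you are running the $j=r(p-1)$ case of the paper's later semicontinuity argument (Theorem~\ref{semicont}), while the paper's proof of this proposition is the $j=1$ case. Your version buys a single square matrix and a freeness criterion that simultaneously counts the projective part (a fact the paper itself invokes later, e.g.\ in Proposition~\ref{nothigher}); the paper's choice of the $j=1$ operator is the one that generalizes uniformly to the semicontinuity of all the functions $U\mapsto\Dim\Rad^j_U(M)$ and hence to the refined support varieties. One small caution: the paper's Definition of $\Grass(r,\CV)_M$ literally reads as the locus where $\alpha_U^*(M)$ \emph{is} free, but every subsequent statement (Proposition~\ref{r1}, Proposition~\ref{prop}(1), Example~\ref{nonmax-ex}) and the paper's own proof treat it as the non-free locus; you correctly proved closedness of the non-free locus, which is what is intended.
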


\begin{proof}
It suffices to pick an ordered basis for $\CV$ and thus work with $\Grass_{n,r}$. 
 It further
suffices to show that for any  $\Sigma \subset \{ 1,\ldots, n\}$ of cardinality $r$, 
$$\cU_\Sigma \cap (\Grass_{n,r})_M \ \subset \ \cU_\Sigma$$
is closed.    
Having made a choice of ordered basis for $\CV$ and a choice of $\Sigma$ with 
$U \in \cU_\Sigma$, we may identify $C(U)$ with $C = k[t_1,\ldots,t_r]/(t_i^p)$
and thus identify $\alpha_U$ as a map of the form $\alpha_U: C \to kE$.
The condition that the finite dimensional $C$-module $ \alpha_U^*(M)$  is not free 
is equivalent to the condition that 
\begin{equation}
\label{cl}
\Dim(\Rad(\alpha_U^*(M)) \ < \ \frac{p^r-1}{p^r} \cdot \Dim(M).
\end{equation}

We consider the $k[\cU_\Sigma]$-linear map of free
$k[\cU_\Sigma]$-modules
\begin{equation}
\label{vector}
\sum_{i=1}^r  \alpha_\Sigma(t_i):    (M \otimes k[\cU_\Sigma])^{\oplus r} \ \to \ M \otimes k[\cU_\Sigma].
\end{equation}
 Denote by
$\Phi(M) \in M_{m,rm}(k[\cU_\Sigma])$ the associated matrix, where $m = \Dim M$.    The rank of
the specialization of $\Phi(M)$ at some point of $U \in \cU_\Sigma$ equals the dimension
of $\Rad(C) \cdot \alpha_U^*M$, 
\begin{equation}
\label{spec-rank}
\rk(\Phi(M) \otimes_{k[\cU_\Sigma]} k)  \ = \  \Dim(\Rad(  \alpha_U^*(M)), 
\end{equation} 
where $k[\cU_\Sigma] \to k$ is evaluation at $U$ represented by $A_U\in M_{n,r}$
with  $\Sigma$--submatrix equal to the identity.    

The fact that (\ref{cl}) is a closed condition follows immediately from  the lower 
semi--continuity of $\rk(\Phi(M))$ as a function on  $\cU_\Sigma$.
\end{proof}

\begin{ex} \label{nonmax-ex}
Suppose that $n = 4$, and choose $[x_1,x_2,x_3,x_4]$ spanning $\CV \subset \Rad(kE)$ 
determining an (ordered) basis for $\Rad(kE)/\Rad^2(kE)$.
Take $r=2$.  Set $M = kE/(x_1, x_2)$. 
Then $(\Grass_{4,2})_M$ consists of 
all 2-planes which intersect  non-trivially the plane $\langle x_1,x_2 \rangle$ 
spanned by $x_1$ and $x_2$.   Namely, $\alpha_U^*M$ is a free 
$C = k[t_1,t_2]/(t_1^p,t_2^p)$-module if and only if the 2-plane $U\subset \CV$
does not intersect $\langle x_1,x_2 \rangle$.    Take $u_1 = \sum_{j=1}^4 u_{1,j}x_j, 
u_2 = \sum_{j=1}^4 u_{2,j}x_j$ spanning $U$. Then $U$ does not intersect 
$\langle x_1,x_2 \rangle$ if and  only if 
the vectors $\{x_1, x_2, u_1, u_2\}$ span $\CV$. This is equivalent to non-singularity of the matrix 
\[\begin{pmatrix} 1&0&0&0\\
0&1&0&0\\
u_{11}&u_{12}&u_{13}&u_{14}\\
u_{21}&u_{22}&u_{23}&u_{24}
\end{pmatrix}. 
\]
Hence, 
in Pl\"ucker coordinates, $(\Grass_{4,2})_M$ is the zero locus of  $\fp_{\{3,4\}} = u_{13}u_{24}-u_{23}u_{14} = 0$.
\end{ex}

\vspace{0.1in}

For $r=1$, $\Grass(1,\CV)_M \ \subset \  \Grass(1,\CV) \  \simeq \ \bP^{n-1}$ 
can be naturally identified with the projectivized support variety of the $kE$-module $M$
(see \cite{CTVZ}).  The following proposition extends to all $r \geq 1$ various familiar properties
of support varieties.  As usual,  $\Omega^s(M)$ is the name of  the
$s^{th}$ syzygy or $s^{th}$ Heller shift of the $kE$-module $M$. 
(We also use this notation for the Heller shift of any $C$-module, where $C$
is a commutative $k$-algebra of the form $k[t_1,\ldots,t_r]/(t_i^p)$.)
Recall that  $\Omega(M)$ is the kernel of a projective 
cover $Q \to M$ of $M$, and $\Omega^{-1}(M)$ is the cokernel of an 
injective hull $ M \to I$. Then inductively, 

\begin{equation}
\label{omegaZ}
\Omega^s(M) \ = \ \Omega(\Omega^{s-1}(M)), \quad \Omega^{-s}(M) \ = \
\Omega^	{-1}(\Omega^{-s+1}(M)), \quad s>1.
\end{equation}

\begin{prop}
\label{prop}
Let $M$ and $N$ be finite dimensional $kE$-modules, and fix an integer $r \geq 1$.
\begin{enumerate}
\item $M$ is projective as a $kE$-module if and only if $\Grass(r,\CV)_M = \emptyset$.
\item $\Grass(r,\CV)_{M\oplus N} \ = \ \Grass(r,\CV)_M  \cup \Grass(r,\CV)_N$.
\item $\Grass(r,\CV)_{\Omega^i (M)} = \Grass(r,\CV)_M$ for any $i\in \mathbb Z$.   
\item If $0 \to M_1 \to M_2 \to M_3 \to 0$ is an exact sequence of $kE$-modules, then 
$$\Grass(r,\CV)_{M_2}  \ \subset \  \Grass(r,\CV)_{M_1} \cap \Grass(r,\CV)_{M_3}.$$
\item $\Grass(r,\CV)_{M\otimes N}  \ \subset \ \Grass(r,\CV)_M \cap \Grass(r,\CV)_N$.
\end{enumerate}
\end{prop}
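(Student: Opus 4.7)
My plan is to handle each of the five parts by reducing to a standard property of finite-dimensional modules over the local Frobenius $k$-algebra $C(U) \cong k[t_1,\ldots,t_r]/(t_i^p)$, invoking flatness of $\alpha_U: C(U) \to kE$ (Proposition \ref{flat}), exactness of restriction, and Dade's Lemma. The background facts I will use freely are: $kE$ is a free $C(U)$-module for each $U \in \Grass(r,\CV)$; finite-dimensional projective and free $C(U)$-modules coincide; by Dade's Lemma (used already in Proposition \ref{r1}), a $C(U)$-module is free iff it is free on restriction to every nonzero subalgebra of the form $k[t]/t^p \hookrightarrow C(U)$; and since $C(U)$ is a cocommutative Hopf algebra, $C(U) \otimes_k N$ with diagonal action is isomorphic to $C(U) \otimes_k N$ with trivial action on $N$, and hence is a free $C(U)$-module.

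For (1), if $M$ is $kE$-projective (equivalently, free), then $\alpha_U^*(M)$ is a free $C(U)$-module for every $U$, so $\Grass(r,\CV)_M = \emptyset$. Conversely, if $\Grass(r,\CV)_M = \emptyset$, then every line $L \subset \CV$ lies in some $r$-plane $U$, and freeness of $\alpha_U^*(M)$ forces freeness of its further restriction $\alpha_L^*(M)$ along the flat inclusion $C(L) \hookrightarrow C(U)$; Dade's Lemma applied to $kE$ then yields that $M$ is free. For (2), the identification $\alpha_U^*(M \oplus N) = \alpha_U^*(M) \oplus \alpha_U^*(N)$ combined with the fact that over a local Frobenius ring a finite-dimensional direct sum is free iff each summand is free gives the equality. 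For (5), the identification $\alpha_U^*(M \otimes_k N) \cong \alpha_U^*(M) \otimes_k \alpha_U^*(N)$ together with the Hopf-algebra observation above shows that if either tensor factor is free then so is the tensor product, which yields the stated inclusion.

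Part (3) requires slightly more care. I would take a projective resolution $P_\bullet \to M$ over $kE$; since $kE$ is free over $C(U)$, each $\alpha_U^*(P_n)$ is a free $C(U)$-module, so $\alpha_U^*(P_\bullet) \to \alpha_U^*(M)$ is a free resolution. Comparing with a minimal resolution of $\alpha_U^*(M)$ over $C(U)$ shows that $\alpha_U^*(\Omega^i_{kE} M)$ and $\Omega^i_{C(U)}(\alpha_U^*(M))$ differ only by free summands. Since $C(U)$ is self-injective, its Heller shift preserves non-freeness (an SES $0 \to \Omega N \to P \to N \to 0$ with $P$ free splits iff $N$ is free), so $\alpha_U^*(\Omega^i M)$ is free iff $\alpha_U^*(M)$ is free; the negative-$i$ case is symmetric, using injective coresolutions. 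Finally, for (4), applying the exact functor $\alpha_U^*$ produces an SES of $C(U)$-modules $0 \to \alpha_U^*(M_1) \to \alpha_U^*(M_2) \to \alpha_U^*(M_3) \to 0$. Because $\Ext^1_{C(U)}(F_3,F_1) = 0$ whenever $F_1, F_3$ are free, such an extension splits, so if both outer restrictions are free then so is the middle. Taking contrapositives yields $\Grass(r,\CV)_{M_2} \subset \Grass(r,\CV)_{M_1} \cup \Grass(r,\CV)_{M_3}$; the $\cap$ appearing in the displayed statement is presumably a typo for $\cup$, since the $\cap$-inclusion already fails for split short exact sequences.

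None of the parts poses a serious obstacle. The only mildly delicate point is (3), where one must articulate carefully that $\alpha_U^*\Omega^i M$ and $\Omega^i \alpha_U^* M$ need only agree up to free summands, and that this weaker comparison is enough because freeness is a property stable under passage to the stable module category $\stmod(C(U))$.
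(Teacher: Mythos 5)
Your arguments for (1)--(4) coincide with the paper's own: (1) is Proposition \ref{r1} combined with Dade's Lemma, (2) is immediate from the local-ring fact you cite, (3) is exactly the observation that $\alpha_U^*(\Omega^i M)$ is stably isomorphic to $\Omega^i_{C(U)}(\alpha_U^*M)$ together with the $r=1$ case, and (4) is precisely the splitting argument the paper gives. You are also right about the typo in (4): the paper's proof establishes the contrapositive of ``both outer restrictions free $\Rightarrow$ middle restriction free,'' which yields $\Grass(r,\CV)_{M_2}\subset\Grass(r,\CV)_{M_1}\cup\Grass(r,\CV)_{M_3}$, and the intersection version already fails for a split sequence with disjoint nonempty rank varieties. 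The one genuine divergence is (5). You assert $\alpha_U^*(M\otimes N)\cong\alpha_U^*(M)\otimes\alpha_U^*(N)$ and then apply the tensor identity; this is legitimate under the paper's standing convention that $kE$ carries the primitively generated Hopf structure, because $\alpha_U$ is induced by the linear inclusion $U\subset\CV$ and hence is a map of Hopf algebras --- but you should say this explicitly, since the identification is exactly what the paper warns need not hold (``the restriction functor along $\alpha_U$ does not commute with tensor products'' for a general choice of coproduct). The paper instead uses Dade's Lemma to produce a line $W'\subset U$ witnessing non-freeness and then quotes Carlson's $r=1$ theorem $\Grass(1,U)_{\alpha_U^*(M\otimes N)}=\Grass(1,U)_{\alpha_U^*(M)}\cap\Grass(1,U)_{\alpha_U^*(N)}$, which is valid ``without regard to Hopf algebra structures.'' Your route is shorter but is tied to the primitive coproduct; the paper's buys a conclusion independent of the Hopf structure (in particular covering the group-like one) at the cost of an extra reduction step.
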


\begin{proof}
The assertion (1) follows from Proposition \ref{r1} together with Dade's Lemma.  
Assertion (2) is immediate.  The assertion (3) follows from Proposition ~\ref{r1}, the observation 
that the restriction of $\Omega^i(M)$ along some $\alpha_U: C(U) \to kE$ is stably isomorphic
to the $i$-th Heller shift of the restriction of $M$ along $\alpha_U$, and the corresponding
result for $r=1$. 

To prove (4), we first observe that if the restrictions along
$\alpha_U$ of both $M_1$ and $M_3$ are free, then the pull-back along
$\alpha_U$ of $0 \to M_1 \to M_2 \to M_3 \to 0$  splits and thus $M_2$ is also free.

Complicating the proof of (5) is the fact that, in general, the restriction functor 
along $\alpha_U$ does not commute with tensor products, for the tensor product operation
depends upon on the choice of Hopf algebra structure.
We use the fact proved in \cite{C} (see also \cite{FP2}), that
\begin{equation}
\label{gr}
\Grass(1,\CV)_{M\otimes N}  \ = \ \Grass(1,\CV)_M \cap \Grass(1,\CV)_N,
\end{equation}
without regard to Hopf algebra structures.
If $U$ is in $\Grass(r,\CV)_{M \otimes N}$, then
$\alpha_U^*(M \otimes N)$ is not a free module. So there exists $\beta:k[t]/(t^p) \to C(U)$
such that $\beta^*(\alpha_U^*(M \otimes N))$ is not a free $k[t]/t^p$-module. 
Consequently, the line $W \in \Grass(1,\CV)$  generated by $\alpha(\beta(t))$ is in 
$\Grass(1,\CV)_{M\otimes N}$.   Thus, the line $W^\prime \subset U$ generated by $\beta(t)$
is in $\Grass(1,U)_{\alpha_U^*(M\otimes N)}$.   By (\ref{gr}), (with $\CV$ replaced by $U$),
$W^\prime$
is in both  $\Grass(1,U)_{\alpha_U^*(M)}$ and $\Grass(1,U)_{\alpha_U^*(N)}$. 
Therefore, neither $\alpha_U^*(M)$ nor $\alpha_U^*(N)$ is free, so that 
$U \in \Grass(r,\CV)_M \cap \Grass(r,\CV)_N$.
\end{proof}

\begin{ex}
The reverse inclusion of Proposition \ref{prop}(4) does not hold if $r \geq 2$. 
Retain the notation of Example \ref{nonmax-ex}.
 Let $U = \langle x_1,x_2 \rangle \subset \CV$, and let  $M = kE/(x_1)$ and $N = kE/(x_2)$. 
  Then $M \otimes N$ is a free $kE$-module
so that $\Grass(2,\CV)_{M\otimes N} = \emptyset$; however, neither $\alpha_U^*(M)$
or $\alpha_U^*(N)$ is free as a $C(U)$-module, so that 
$U \in \Grass(2,\CV)_M \cap \Grass(2,\CV)_N$.
\end{ex}

To end this section, we observe that it is not possible, in general, to realize all
of the closed sets of  $\Grass(2,\CV)$ as $2$-support varieties of $kE$-modules.  This contrasts
with the case $r=1$:  every closed subvariety of the usual support
variety $\Grass(1,\CV)$ is the  support
variety of a tensor product of Carlson modules $L_\zeta$ for suitably chosen cohomology
classes $\zeta \in \HHH^*(kE,k)$ (\cite{C2}).

\begin{ex} \label{not-somany-closed}
Take $n= 3$, so that $\Grass(2,\CV) \simeq \bP^2$. 
Recall that the complexity of a $kE$-module $M$ is the dimension of 
the affine support variety of $M$ (whose projectivization is $\Grass(1,\CV)_M$).
\begin{itemize}
\item  If $M$ has complexity 0, then $M$ is projective and $\Grass(2,\CV)_M = \emptyset$.
\item If $M$ has complexity 1, then the affine support variety of $M$ is a finite union of lines.  
Under the identification  $\Grass(2,\CV) \simeq \bP^2$,
the subvariety of planes $U \in \Grass(2,\CV)$ containing a given line is a line in $\bP^2$.
By Proposition \ref{r1}, $\Grass(2,\CV)_M$ consists of those $U \in \CV$ such that $U$
contains one of the lines whose union is the affine support variety of $M$.  Hence, the subsets in
 $\Grass(2,\CV) \simeq \bP^2$ of the form $\Grass(2,\CV)_M $ for $M$ of complexity 1
 are finite unions of lines.
\item If $M$ has complexity 2 or 3, then there are no 2-planes in $\CV$ 
which fail to intersect $\Grass(1,\CV)$. Consequently, $\Grass(2,\CV)_M = \Grass(2,\CV)$. 
\end{itemize}
Hence, the closed subsets of $\Grass(2,\CV)$ of the form $\Grass(2,\CV)_M$ do not generate
the Zariski topology of $\Grass(2,\CV)$.
\end{ex}


\section{Radicals and Socles} 
\label{se:radsoc}

We retain the notation of Section\,\ref{se:grass}: $E$ is an elementary abelian $p$-group
of rank $n$ and $\CV \subset \Rad(kE)$ is a splitting of the projection 
$\Rad(kE) \to \Rad(kE)/\Rad^2(kE)$.
As in Definition \ref{rad-U}, for a given $kE$-module $M$ we consider radicals and socles 
with respect to rank $r$ elementary subgroups parametrized by $U \in \Grass(r,\CV)$.
The dimensions of these radicals and socles are numerical invariants which in some sense
are the extension to
$r > 1$ of the Jordan type of a $kE$-module at a cyclic shifted subgroup (or the Jordan type
of a $\fu(\fg)$-module at a 1-parameter subgroup of a $p$-restricted Lie algebra $\fg$).

\begin{defn}
\label{rad-U}
Let $M$ be a $kE$-module,  $U \in \Grass(r,\CV)$ be an $r$-plane of $\CV$, and 
take $\alpha_U$ as in (\ref{alphaU}).   We define 
\[
\Rad_U(M) \equiv \Rad(\alpha_U^*(M)) = \sum_{u\in U}u \cdot M, \]
\[
\Soc_U(M)  \equiv \Soc(\alpha_U^*(M)) = \{m \in M \, | \, u \cdot m = 0 \  \forall u \in U\},
\]
the radical and socle of $M$ as a $C(U)$-module.
For $j > 1$, we inductively define the $kE$-submodules of $M$
$$\Rad_U^j(N) \ = \ \Rad_U(\Rad^{j-1}_U(M))$$ and
$$ \Soc^j_U(M) = \{m \in M \, | \overline m \in \Soc_U(M/\Soc_U^{j-1}(M)\}.$$
\end{defn}

\vskip .1in

Thus, if $\{u_1,\ldots,u_r\}$ spans $U$ and if $S_j(u_1, \ldots, u_r)  \subset \Rad(kE)$ denotes 
the subspace generated by all monomials on $\{u_1, \ldots, u_r\}$ of degree $j$, then 
\begin{equation}
\label{eq:rad}
\Rad_U^j(M) \ = \ \ \sum_{s\in S_j(u_1, \ldots, u_r)} s \cdot M
\end{equation}
and 
\begin{equation}
\label{eq:soc}
\Soc^j_U(M) \ = \ \{m \in M \ | \ s\cdot m = 0 \text{ for all } s \in S_j(u_1, \ldots, u_r) \}.
\end{equation}
The commutativity of $E$ implies that each $\Rad^j_U(M)$ and each $\Soc_U^j(M)$ is a 
$kE$-submodule of $M$.

If $A$ is a Hopf algebra and $f: L \subset M$ is an embedding of $A$-modules, then we 
denote by $f^\#: M^\# \to L^\#$ the induced map of $A$-modules and denote by 
$L^\perp \ \equiv \ \Ker\{ f^\# \}$.  Explicitly, the action of $A$ on $M^\#$ is given by sending
$a\in A, \ \phi: M \to k$ to  
$$a \cdot \phi: M \to k, \quad (a\cdot \phi)(m) = \phi (\iota(m)),$$
where $\iota: A \to A$ is the antipode of $A$; thus, the $A$-module structures on
$M^\#$ and $L^\perp$ depend upon the Hopf algebra structure on $A$, not just the structure
of $A$ as an algebra.

Although we assume throughout this paper that $kE$ is equipped with the Hopf algebra 
structure which is primitively generated (so that $kE$ is viewed as a quotient of the primitively
generated Hopf algebra $S^*(\CV)$), the following proposition is formulated to apply
as well to the usual group-like Hopf algebra structure of $kE$.

For the automorphism $\iota: kE\to kE$  defined by the antipode of $kE$, 
and a $kE$--module $M$, we denote by $\iota(M)$  the $kE$--module $M$ twisted by $\iota$. 
That is, $M$ coincides with $\iota(M)$ as a vector space but 
an element $x \in kE$ acts on $\iota(M)$ as $\iota(x)$ acts on $M$.  We denote an element of 
$\iota(M)$ corresponding to $m \in M$ by $\iota(m)$.

\begin{prop} \label{rad-dual}
Choose any Hopf algebra structure on $kE$, and let $\iota$ be the antipode of this structure.
For any $kE$-module $M$, let $\iota(M)$ denote the $kE$-module which coincides with $\iota(M)$ as 
a $k$-vector space and such that $x \in kE$ acts on $m \in \iota(M)$ as $\iota(x)\cdot m$.

For any $U \in \Grass(r,\CV)$ and any $j \geq 1$, there are natural isomorphisms of $kE$-modules
\begin{equation}
\label{perp}
\Soc^j_{U}(\iota(M)^{\#}) \ \simeq \  (\Rad_{U}^j(M))^{\perp}, \quad
\Rad^j_{U}(\iota(M)^{\#}) \ \simeq \ (\Soc^j_{U}(M))^{\perp},
\end{equation}
\end{prop}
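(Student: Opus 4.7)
My plan is to work with the explicit module structure on $\iota(M)^\#$, to show that it can be identified naturally with the ``naive dual'' $M^\vee$ on which $kE$ acts without the antipode twist, and then to verify the two equalities of subspaces of $M^\#$ directly from the descriptions of $\Rad_U^j$ and $\Soc_U^j$ in \eqref{eq:rad} and \eqref{eq:soc}. The main conceptual input is involutivity of the antipode, $\iota^2 = \mathrm{id}$, which holds because $kE$ is commutative (and cocommutative) for any choice of Hopf algebra structure on it.

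For the identification, I will use the $k$-linear bijection $\iota(M)^\# \to M^\#$ sending $\phi$ to $\tilde\phi$ with $\tilde\phi(m) = \phi(\iota(m))$, where $\iota(m)$ denotes the element of $\iota(M)$ corresponding to $m$. Combining the definition $(a \cdot \phi)(v) = \phi(\iota(a) v)$ of the action on the dual with the twisted action $\iota(a) \cdot_{\iota(M)} \iota(m) = \iota(\iota(\iota(a)) \cdot m) = \iota(a \cdot m)$ on $\iota(M)$ (using $\iota^2 = \mathrm{id}$), a direct calculation shows $(\widetilde{a\cdot\phi})(m) = \tilde\phi(a \cdot m)$. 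Hence the $kE$-module structure on $\iota(M)^\#$ transports to the naive action $(a \cdot \psi)(m) = \psi(a m)$ on $M^\#$; write $M^\vee$ for $M^\#$ equipped with this action. This identification is visibly natural in $M$.

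It then suffices to verify, after fixing a basis $u_1, \ldots, u_r$ of $U$, that $\Soc^j_U(M^\vee) = (\Rad^j_U M)^\perp$ and $\Rad^j_U(M^\vee) = (\Soc^j_U M)^\perp$ as subspaces of $M^\#$. The first equality is essentially tautological: by \eqref{eq:soc} and the naive action, $\psi \in \Soc^j_U(M^\vee)$ iff $\psi(s \cdot m) = 0$ for every $s$ in a spanning set of $S_j(u_1, \ldots, u_r)$ and every $m \in M$, which by \eqref{eq:rad} is exactly the condition $\psi \in (\Rad^j_U M)^\perp$. For the second, I would dualize the $k$-linear injection $M/\Soc^j_U M \hookrightarrow \bigoplus_{s} M$, $m \mapsto (s \cdot m)_s$, indexed by a basis of $S_j(u_1, \ldots, u_r)$; the dual map is a surjection from $\bigoplus_s M^\vee$ onto $(\Soc^j_U M)^\perp$ whose image, under the naive action, equals $\sum_s s \cdot M^\vee = \Rad^j_U(M^\vee)$. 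That all of these subspaces are $kE$-submodules and that the isomorphisms are $kE$-equivariant follows from commutativity of $kE$; the one genuinely delicate point is the transport of structure in the second paragraph, where the involution property of $\iota$ is essential, and this is the step I expect to write out most carefully.
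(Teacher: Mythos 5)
Your proof is correct and follows essentially the same route as the paper's: the paper likewise unwinds the two antipode twists (using $\iota^2=\id$ implicitly) to reduce the socle condition on $\iota(M)^\#$ to the vanishing condition $f(sm)=0$, i.e.\ to vanishing on $\Rad_U^j(M)$. The only difference is that you spell out the second equality (radical of the dual equals perp of the socle) via exactness of $k$-linear duality applied to $M/\Soc_U^j(M)\hookrightarrow\bigoplus_s M$, where the paper simply says the argument is ``similar''; your version is a welcome amplification, not a different method.
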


\begin{proof}  
Choose a basis $\{u_1, \dots, u_r\}$ for $U$. 
An element  $\iota(f)$ in $\iota(M^{\#})$ is in  $\Soc^j_U(\iota(M^{\#}))$
if and only if  for any monomial $s$ of degree $j$ in the elements $u_1, \dots, u_r$,
we have that $s\cdot \iota(f) = 0$. This happens if and only if for any such $s$ and any
$m$ in $M$, $(s\cdot \iota(f))(m) = (\iota(s)f)(m) = f(sm) = 0$. In turn, this can happen 
if and only if $f$ vanishes on $\Rad^j_{U}(M)$. 
This proves the first equality; the proof of the second is similar. 
\end{proof} 

We introduce refinements of the $r$-rank variety $\Grass(r,\CV)_M$, thereby 
extending to $r > 1$ the generalized support varieties of \cite{FP4}.

\begin{defn}
\label{def:Gamma}
Let $M$ be a finite dimensional $kE$-module, and let $j$ be a positive integer.  
We define the {\it nonmaximal $r$--radical support variety} of $M$,
$\Rad^j(r,\CV)_M \ \subset \ \Grass(r,\CV)$, to be 
$$\Rad^j(r,\CV)_M \ \equiv  \ \{ U \in \Grass(r,\CV) \, | \, 
\Dim\Rad^j_U(M) < \max\limits_{U^\prime \in \Grass(r,\CV)} \Dim \Rad^j_{U^\prime}(M) \}.
$$
Similarly, we define the {\it nonminimal $r$-socle support variety} of $M$, $\Soc^j(r,\CV)_M 
 \ \subset \ \Grass(r,\CV)$, to be 
$$\Soc^j(r,\CV)_M \ \equiv \ \{ U \in \Grass(r,\CV) \, | \, 
\Dim\Soc^j_U(M) > \max\limits_{U^\prime \in \Grass(r,\CV)} \Dim \Soc^j_{U^\prime}(M) \}.
$$
\end{defn}

\vskip .1in

For $j=1$, we simplify this notation by writing 
$$\Rad(r,\CV)_M \ = \ \Rad^1(r,\CV)_M, \quad \Soc(r,\CV)_M \ = \Soc^1(r,\CV)_M.$$

The proof of upper/lower semi-continuity in the next theorem is an extension of the 
proof of Proposition \ref{closed}.

\begin{thm}
\label{semicont}
Let $M$ be a finite dimensional $kE$-module.   For any $j$, the function
$$U \in \ \Grass(r,\CV) \ \mapsto \  f_{M,j}(U) \equiv \Dim \Rad^j_U(M)$$
is lower semi-continuous:  in other words, there is a (Zariski) open subset $\cU \subset
\Grass(r,\CV)$ of $U$ such that $f_{M,j}(U) \leq f_{M,j}(U^\prime)$    for all $U^\prime \in \cU$.

Simillarly,   for any $j$ the function
$$U \in \ \Grass(r,\CV) \ \mapsto \  g_{M,j}(U) \equiv \Dim \Soc^j_U(M)$$
is upper semi-continuous.
\end{thm}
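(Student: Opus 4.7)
The plan is to extend the matrix-rank argument used in the proof of Proposition \ref{closed}. Since semi-continuity is a local property, it suffices to verify both claims on each affine open $\cU_\Sigma \subset \Grass(r,\CV)$ as $\Sigma$ ranges over $r$-element subsets of $\{1,\ldots,n\}$. On each $\cU_\Sigma$, I will exhibit $\Rad^j_U(M)$ and $\Soc^j_U(M)$ as the image and the kernel, respectively, of $k[\cU_\Sigma]$-linear maps between free $k[\cU_\Sigma]$-modules whose matrix entries are polynomial in the coordinates $Y_{i,k}^\Sigma$.

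Fix $\Sigma$. Using the factorization $\alpha_\Sigma: C \otimes k[\cU_\Sigma] \to kE \otimes k[\cU_\Sigma]$ of Definition \ref{alpha-S}, enumerate as $s_1,\ldots,s_{N_j}$ the monomials $t_1^{a_1}\cdots t_r^{a_r}$ in $C$ with $0 \leq a_i < p$ and $\sum_i a_i = j$. Each $\alpha_\Sigma(s_l)$ is an element of $kE \otimes k[\cU_\Sigma]$ whose coefficients against a fixed $k$-basis of $kE$ are homogeneous polynomials of degree $j$ in the $Y_{i,k}^\Sigma$. I then form the $k[\cU_\Sigma]$-linear maps of free $k[\cU_\Sigma]$-modules
\[
\Phi^j(M)\colon (M \otimes k[\cU_\Sigma])^{\oplus N_j} \longrightarrow M \otimes k[\cU_\Sigma], \quad (m_l)_l \mapsto \sum_{l=1}^{N_j} \alpha_\Sigma(s_l) \cdot m_l,
\]
\[
\Psi^j(M)\colon M \otimes k[\cU_\Sigma] \longrightarrow (M \otimes k[\cU_\Sigma])^{\oplus N_j}, \quad m \mapsto \bigl(\alpha_\Sigma(s_l) \cdot m\bigr)_l.
\]
Specializing at a point $U \in \cU_\Sigma$ (that is, applying $\otimes_{k[\cU_\Sigma]} k$ along evaluation at $U$) sends each $\alpha_\Sigma(s_l)$ to $\alpha_U(s_l)$, so the spanning descriptions \eqref{eq:rad} and \eqref{eq:soc} give $\Im(\Phi^j(M) \otimes k) = \Rad^j_U(M)$ and $\Ker(\Psi^j(M) \otimes k) = \Soc^j_U(M)$.

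The rest is classical linear algebra: the rank of a matrix with entries in $k[\cU_\Sigma]$ is a lower semi-continuous function of the point of specialization, because the locus $\{U : \rk \geq d\}$ is cut out by the non-vanishing of at least one $d \times d$ minor and is hence open. By rank-nullity, $\dim \Rad^j_U(M) = \rk(\Phi^j(M) \otimes k)$ and $\dim \Soc^j_U(M) = \dim_k M - \rk(\Psi^j(M) \otimes k)$, so $f_{M,j}$ is lower semi-continuous and $g_{M,j}$ is upper semi-continuous on $\cU_\Sigma$. Since the $\cU_\Sigma$ form an open cover of $\Grass(r,\CV)$, both conclusions follow globally. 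I do not expect a genuine obstacle; the only substantive step beyond Proposition \ref{closed} is recognizing the higher radicals and socles as image and kernel of a single matrix of polynomial functions via the monomial description of $\Rad^j(C(U))$, after which semi-continuity of rank does the work.
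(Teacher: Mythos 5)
Your argument is correct. For the radical half it is essentially the paper's proof: restrict to the affine opens $\cU_\Sigma$, assemble the degree-$j$ monomials in the $\alpha_\Sigma(t_i)$ into a single matrix $\Phi^j(M)$ over $k[\cU_\Sigma]$ whose specialization at $U$ has image $\Rad^j_U(M)$, and invoke lower semi-continuity of matrix rank. The socle half is where you diverge: the paper deduces upper semi-continuity of $g_{M,j}$ from the radical case applied to $M^\#$ together with the duality $\Soc^j_U(\iota(M)^\#) \simeq (\Rad^j_U(M))^\perp$ of Proposition \ref{rad-dual}, whereas you build a second matrix $\Psi^j(M)$ (the "transposed" arrangement, stacking the operators rather than summing their sources) whose specialized kernel is $\Soc^j_U(M)$, and use $\dim\Ker = \dim M - \rk$. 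Both are sound; your route is more self-contained since it does not need the duality proposition or any Hopf-algebra structure on $kE$, while the paper's route reuses the already-established radical statement and avoids setting up a second family of matrices.
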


\begin{proof}
As in the proof of Proposition \ref{closed}, we may equip $\CV$ with an ordered basis,
replacing $\Grass(r,\CV)$ by $\Grass_{n,r}$.  It suffices to restrict to affine open
subsets  $\cU_\Sigma \subset \Grass(r,\CV)$.  Recall the notation $S_j(t_1, \ldots, t_r) \subset C$ 
for the linear subspace generated by all monomials on $\{t_1, \ldots, t_r\}$ of degree $j$, 
and  let $d(j) = \dim S_j(t_1, \ldots, t_r)$. 
We replace the  map (\ref{vector}) by 

\begin{equation}
\label{j-vector}
\sum_{\tiny{\begin{tabular}{cc} $d_1+\ldots+d_r=j$\\$0 \leq d_i<p$\end{tabular}}} \alpha_\Sigma(t_1)^{d_1}\ldots\alpha_\Sigma(t_r)^{d_r}: (M\otimes k[\cU_\Sigma])^{\oplus d(j)}
 \ \to \ M\otimes k[\cU_\Sigma],
\end{equation}

Let $\Phi^j(M) \in M_{m,d(j)m}(k[\cU_\Sigma])$ denote the associated matrix, where
$m = \Dim M$.   Then, as for (\ref{spec-rank}) with the same notation, we have the equality  
\begin{equation}
\label{j-spec-rank}
\rk(\Phi^j(M) \otimes_{k[\cU_S]} k)  \ = \  \Dim(\Rad^j(C)\cdot  \alpha_U^*(M)). 
\end{equation} 
The lower semi-continuity of $U \mapsto f_{M,j}(U)$ now follows immediately from the 
lower semi-continuity of $\Phi^j(M)$ as a function on $\cU_\Sigma$.

The upper
semi-cotinuity for $U \mapsto g_{M,j}(U)$ is a consequence of lower semi-continuity for
$U \mapsto f_{M^\#,j}(U)$ and Proposition \ref{rad-dual}.
\end{proof}

As an immediate corollary of Theorem \ref{semicont}, we conclude that the subsets introduced
in Definition \ref{def:Gamma} are Zariski closed subvarieties of $\Grass(r,\CV)$.

\begin{cor} \label{nonmax-closed}
For any finite dimensional $kE$--module $M$, and  any positive integer $j$, 
$\Rad^j(r,\CV)_M$ and $\Soc^j(r,\CV)_M$ 
are Zariski closed subsets of $\Grass(r, \CV)$.
\end{cor}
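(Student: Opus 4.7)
The plan is to deduce the corollary as a formal consequence of the semi-continuity established in Theorem \ref{semicont}, translating integer-valued semi-continuity into closedness of the loci where the extremal values fail to be attained.

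First, I would observe that since $M$ is finite-dimensional, both $f_{M,j}(U) = \Dim \Rad^j_U(M)$ and $g_{M,j}(U) = \Dim \Soc^j_U(M)$ take values in the finite set $\{0,1,\ldots,\Dim M\}$. Consequently the maximum $N = \max_{U'} f_{M,j}(U')$ and the minimum $n = \min_{U'} g_{M,j}(U')$ are actually attained on $\Grass(r,\CV)$. This attainment is what lets us convert the strict inequalities in Definition \ref{def:Gamma} into conditions of the form ``$f_{M,j} < N$'' or ``$g_{M,j} > n$'' on the complements of integer level sets.

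For the radical case, lower semi-continuity of $f_{M,j}$ from Theorem \ref{semicont}, combined with the fact that $f_{M,j}$ is integer-valued, implies that the set
$\{U : f_{M,j}(U) \geq N\} = \{U : f_{M,j}(U) = N\}$
is Zariski open in $\Grass(r,\CV)$. Its complement is precisely $\Rad^j(r,\CV)_M = \{U : f_{M,j}(U) < N\}$, which is therefore closed. For the socle variety, the parallel argument uses upper semi-continuity of $g_{M,j}$: the open set $\{U : g_{M,j}(U) \leq n\} = \{U : g_{M,j}(U) = n\}$ has closed complement, which is $\Soc^j(r,\CV)_M$ (read, consistently with the adjective ``nonminimal'', as the locus where the socle dimension strictly exceeds the generic minimum).

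There is essentially no obstacle here; the substantive input has already been absorbed by Theorem \ref{semicont}, whose proof expresses $\Rad^j_U$-dimensions as ranks of a specialization of the matrix $\Phi^j(M) \in M_{m,d(j)m}(k[\cU_\Sigma])$ and invokes the standard lower semi-continuity of matrix rank. Given that theorem, the corollary is a purely formal bookkeeping step: translating between integer-valued semi-continuity and closed/open level-set conditions.
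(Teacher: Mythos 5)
Your argument is correct and is exactly the ``immediate'' deduction the paper intends: the paper gives no separate proof, asserting the corollary follows at once from Theorem \ref{semicont}, and your spelling-out (integer-valued lower/upper semi-continuity makes the extremal level set open, hence its complement closed) is the right formalization. Your reading of $\Soc^j(r,\CV)_M$ as the locus where the socle dimension exceeds the \emph{minimum} is also the correct interpretation of Definition \ref{def:Gamma}, whose displayed formula contains a $\max$ that should be a $\min$.
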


The reader should observe that the polynomial equations expressing the non-maximality of
$f_{M,j}(U)$  must be expressible
in terms of homogeneous polynomials in the Pl\"ucker coordinates. This fact is exploited in 
the appendix, where some computer calculations of 
nonminimal $r$-socle support varieties are presented.

\begin{ex} \label{nonmax-ex1}
We return to Example~\ref{nonmax-ex}, in which $n=4$ and $[x_1,x_2,x_3,x_4]$ is an
ordered basis of some $\CV \subset \Rad(kE)$ splitting the projection 
$\Rad(kE) \to \Rad(kE)/\Rad^2(kE)$.   As in Example~\ref{nonmax-ex}, we take 
$M = kE/\langle x_1,x_2 \rangle$. 

If  $r=2$, then an argument similar to the one in  Example \ref{nonmax-ex} shows that  
$\Rad(2,\CV)_M = \Grass(2,\CV)_M$.

Now, set $r=3$.  We have $\Grass(3,\CV) \simeq \bP^3$.   
Let $\langle x_1, x_2 \rangle \subset \CV$ be the 2-plane
spanned by $x_1,x_2$.   Observe that the module $M$ has dimension $p^2$. Let $U\subset \CV$ be 
any $3$-plane in $\CV$.  Then $\Rad_U M \subset M$ has codimension p if $\langle x_1, x_2 \rangle \subset U$ and codimension $1$ otherwise.  Hence, 
$\Rad(3,\CV)_M \ \not= \emptyset$; indeed, $\Rad(3,\CV)_M$ consists of all 3-planes 
which contain $\langle x_1, x_2 \rangle$. In Pl\"ucker coordinates $\Rad(3,\CV)_M $ is given as the zero locus of the equations
$\fp_{\{1,3,4\}} =0= \fp_{\{2,3,4\}}$.
\end{ex}

Our next example is more complicated and uses the identification of the rank variety
$\Grass(1,\CV)$ with  $\Proj \HHH^*(E,k)$.

\begin{ex} \label{nonmax-ex2}
Choose some $\CV \subset \Rad(kE)$ splitting the projection $\Rad(kE) \to \Rad(kE)/\Rad^2(kE)$, and
assume that $p=2$, $r = 2$.  Let  $\zeta \in \HHH^m(E,k)$ be a non-trivial 
homogeneous cohomology class of positive degree $m$.
Let $\zeta: \Omega^m(k) \to k$ be the cocycle 
representing $\zeta$  and let
$L_{\zeta}$ denote the kernel of the module map 
$\zeta$ (investigated in detail in Section \ref{se:Lzeta}). 
Recall that the support variety of $L_\zeta$
may be identified with the zero locus of $\zeta$, $Z(\zeta) \subset \Spec \HHH^*(E,k)$ 
(see \cite{C2}). 

There are two possibilities for the restriction of $L_\zeta$ along $\alpha_U: C \to kE$
for $U \in \Grass(2,\CV)$ (see Lemma~\ref{restr-Lzeta}): 
  
$$\left[\begin{array}{ll}\alpha^*(M) \simeq L_{\alpha^*(\zeta)} \oplus C^s & 
\text{ if } \alpha^*(\zeta) \neq 0  \\
\alpha^*(M) \simeq \Omega^m(k_C) \oplus \Omega(k_C) \oplus C^{ s-1} & \text{ if }  \alpha^*(\zeta) = 0
\end{array}
\right.,$$
where $2m+1 +4s = \Dim(\Omega^m(k))$. 
 In particular, 
 $\Grass(2,\CV)_{L_\zeta} \ = \ \Grass(2,\CV).$
 Since $C \simeq k (\Z/2 \times \Z/2)$, we can compute  
$\Dim \Rad(\Omega^m(k_C)) = \Dim(L_{\alpha^*(\zeta)}) = m$
and $\Dim \Rad(C) = 3$ (see \cite{He}). Hence, if $\alpha^*(\zeta) \neq 0$,
$\Dim \Rad_U(L_{\zeta}) = 3s+m$ while for $\alpha^*(\zeta) = 0$, 
$\Dim \Rad_U(L_{\zeta}) = 3(s-1)+m+1$. It follows 
that 
$\Rad(2,\CV)_{L_{\zeta}} \ \not= \  \emptyset, $ 
with $\Rad(2,\CV)_{L_{\zeta}} $ consisting
of exactly those $2$--planes that are contained in $Z(\zeta)$.
We can compute further that $\Dim \Rad^2_U(L_{\zeta}) = s$ 
in the first case 
and $s-1$ in the second. Hence, 
$$\Rad^2(2,\CV)_{L_{\zeta}}  \ = \ \Rad(2,\CV)_{L_{\zeta}}  \ \not= \  \emptyset.$$

Finally, we find a curious thing happens when we consider socles. 
The point is that $\Dim \Soc_U(L_{\zeta}) = s+m$ 
in both cases. Hence, $\Soc(2,\CV)_{L_\zeta} \  = \ \emptyset$, so that $L_\zeta$ has 
constant $2$--$\Soc$--rank in the terminology of Section \ref{se:const}.
However, $\Dim \Soc^2_U(L_{\zeta}) = 3s+2m$ if $\alpha^*(\zeta) \not = 0$ 
and $3s + 2m+1$ otherwise.  Consequently,   $\Soc^2(2,\CV)_{L_\zeta}$ is the 
same as the radical variety $\Rad(2,\CV)_{L_{\zeta}} $.   Thus,
$$\Soc^2(2,\CV)_{L_{\zeta}}  \ \not= \ \Soc(2,\CV)_{L_{\zeta}}  \ = \  \emptyset.$$

By taking duals, we can get a module $M$ with
the property that $\Rad(2,\CV)_M = \emptyset$ and $\Rad^2(2,\CV)_M$ is a 
proper non-trivial subvariety of $\Grass(2,\CV)$. 
\end{ex}

We conclude this section with a consideration of the dependence of the dimension
of radicals on the choice of $\CV$, continuing the investigation of \cite{FPS}.  
Our statements are given
for radicals, but using Proposition \ref{rad-dual} one immediately gets similar statements for
socles.

\begin{defn}
\label{def:abs} Fix a finite dimensional $kE$--module $M$. We say that $M$ has  
{\it absolute maximal radical rank}  at the $r$-plane $V \in \Grass(r,\CV)$ if 
\[\Dim \Rad_V(M) \geq \Dim \Rad_{W}(M)\]
 for any $\CW \subset \Rad(kE)$ splitting the 
 projection $\Rad(kE) \to \Rad(kE)/ \Rad^2(kE)$ and any $W \in \Grass(r,\CW)$.
\end{defn} 

The following theorem is a generalization to $r > 1$ of \cite[1.9]{FPS}.

\begin{thm}
\label{max-rad}  Let $\CV,\CW \subset \Rad(kE)$ be splittings of the projection 
$\Rad(kE) \to \Rad(kE) /\Rad^2(kE)$  and let $\psi: \CV \stackrel{\sim}{\to} \CW$ be the unique
isomorphism commuting with the projection isomorphisms to $ \Rad(kE) /\Rad^2(kE)$.    
Denote by $\Psi: \Grass(r,\CV)  \stackrel{\simeq}{\to}
\Grass(r,\CW)$  the induced isomorphism of Grassmannians.    Then
\begin{enumerate}
\item  
  $\Psi$ restricts to an isomorphism
$$ \Rad(r,\CV)_M \ \stackrel{\sim}{\to} \ \Rad(r,\CW)_M.$$
\item For any $U \not \in \Rad(r, \CV)_M$, $\Dim \Rad_U(M) \ = \ \Dim \Rad_{\Psi(U)}(M).$
\item
$$
\max\limits_{V \in \Grass(r,\CV)} \Dim \Rad_{V}(M) \quad = \quad
\max\limits_{W \in \Grass(r,\CW)} \Dim \Rad_{W}(M).
$$
\end{enumerate} 
\end{thm}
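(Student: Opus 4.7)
My plan is to prove the equality in (2), from which both (1) and (3) follow formally. Given (2), choose $U \notin \Rad(r,\CV)_M$ realizing $M_\CV := \max_V \Dim \Rad_V(M)$; then $M_\CV = \Dim \Rad_U(M) = \Dim \Rad_{\Psi(U)}(M) \le M_\CW$, and the reverse inequality, obtained by repeating the argument with $\CV$ and $\CW$ interchanged, yields (3). The combination of (2) and (3) identifies points of maximal $r$-radical rank across $\Psi$, so $\Psi$ restricts to a bijection between non-maximal loci; this restriction is a scheme isomorphism because $\Psi$ itself is.

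To prove (2), the starting point is the observation that $\psi(v) - v \in \Rad^2(kE)$ for every $v \in \CV$, a direct consequence of the compatibility $\rho_\CW \circ \psi = \rho_\CV$. Fixing a basis $\{u_1, \ldots, u_r\}$ of $U$, I introduce the linear interpolation
\begin{equation*}
U_t \ = \ \bigl\langle u_1 + t(\psi(u_1) - u_1), \, \ldots, \, u_r + t(\psi(u_r) - u_r) \bigr\rangle \ \subset \ \Rad(kE), \qquad t \in \mathbb A^1,
\end{equation*}
so that $U_0 = U$, $U_1 = \Psi(U)$, and every $U_t$ projects injectively onto the image of $U$ in $\Rad(kE)/\Rad^2(kE)$. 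Hence $\alpha_{U_t}: C(U_t) \to kE$ is flat by Proposition \ref{flat}, and a direct $k[t]$-linear extension of the matrix construction in the proof of Proposition \ref{closed} shows that $t \mapsto \Dim \Rad_{U_t}(M)$ is lower semi-continuous on $\mathbb A^1$, attaining its generic maximum $N$ on a Zariski-dense open subset. In particular, $\Dim \Rad_U(M), \Dim \Rad_{\Psi(U)}(M) \le N$.

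The main obstacle is to identify both endpoint values with $N$ when $U$ is a point of maximal $r$-radical rank on $\Grass(r,\CV)$. My plan is to construct a $k$-algebra automorphism $\phi \in \operatorname{Aut}_{k\text{-alg}}(kE)$ satisfying $\phi|_\CV = \psi$: its existence rests on the fact that in commutative characteristic $p$ we have $\phi(v_i)^p = v_i^p + (\psi(v_i) - v_i)^p = 0$ (since elements of $\Rad^2(kE)$ have vanishing $p$-th power by Frobenius), and its bijectivity follows from Nakayama's lemma applied to the induced identity on $\Rad/\Rad^2$. The identity $\Dim \Rad_{U'}(\phi^*(M)) = \Dim \Rad_{\phi(U')}(M)$ then converts the comparison of $M_\CV$ and $M_\CW$ into an invariance statement for $\max_{U'} \Dim \Rad_{U'}(-)$ under the twist by $\phi$. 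Deforming $\phi$ to the identity through $\phi_t(v_i) = v_i + t(\psi(v_i) - v_i)$ gives a family of automorphisms parametrized by $\mathbb A^1$, and I would apply lower semi-continuity to the function $h(t, U') := \Dim \Rad_{U'}(\phi_t^*(M))$ on the irreducible variety $\mathbb A^1 \times \Grass(r, \CV)$. Combined with the properness of the projection $\mathbb A^1 \times \Grass(r, \CV) \to \mathbb A^1$, this should force the fiberwise maximum of $h$ to be constant, whence $M_\CV = M_\CW = N$. Once this is in hand, the interpolation argument above pins down $\Dim \Rad_U(M) = \Dim \Rad_{\Psi(U)}(M) = N$ at each maximal $U$, completing (2); the corresponding socle statements then follow by duality from Proposition \ref{rad-dual}.
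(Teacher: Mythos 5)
There is a genuine gap, and it sits exactly where the real content of the theorem lies. The decisive step of your plan is the claim that lower semicontinuity of $h(t,U')=\Dim\Rad_{U'}(\phi_t^*(M))$ on $\mathbb A^1\times\Grass(r,\CV)$, combined with properness of the projection to $\mathbb A^1$, forces the fiberwise maximum $m(t)=\max_{U'}h(t,U')$ to be constant. This is false. Lower semicontinuity means the loci $\{h\ge c\}$ are \emph{open}; their images under the (open) product projection are open, so $m$ is itself lower semicontinuous --- which precisely allows it to drop on a closed subset of $\mathbb A^1$, e.g.\ at $t=0$ or $t=1$. Properness controls images of \emph{closed} sets and therefore says nothing about the sets $\{h\ge c\}$. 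A sanity check: if this argument worked, the case $r=1$ would give a two-line proof of the maximal Jordan type theorem of \cite{FPS}, which is known to require a genuine argument.

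Even granting part (3), your interpolation only yields the one-sided bounds $\Dim\Rad_U(M)\le N$ and $\Dim\Rad_{\Psi(U)}(M)\le N$, where $N$ is the generic value along the line $t\mapsto U_t$. For $U$ maximal the first becomes an equality, but semicontinuity cannot rule out a drop at the endpoint $t=1$, so you cannot conclude $\Dim\Rad_{\Psi(U)}(M)=N$; that conclusion is exactly statement (2), the heart of the theorem. The paper supplies the missing ingredient by bootstrapping from $r=1$: setting $N=M/(u_2M+\cdots+u_rM)$ it writes $\Dim\Rad_U(M)=\Dim(u_1N)+\Dim\sum_{i\ge2}u_iM$, observes that \emph{absolute} maximality of $U$ (maximality over all splittings, arranged by an auxiliary third splitting $\CV'$) forces $u_1$ to have maximal rank on $N$ among all elements of $\Rad(kE)$, and then invokes \cite[1.9]{FPS} to get $\Dim(u_1N)=\Dim(\psi(u_1)N)$, replacing $u_1,\dots,u_r$ by $\psi(u_1),\dots,\psi(u_r)$ one at a time. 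Your proposal has no substitute for this input. (Two points in your write-up are fine and worth keeping: the formal derivation of (1) and (3) from (2), which matches the paper, and the construction of the algebra automorphism $\phi$ with $\phi|_\CV=\psi$ --- every element of $\Rad(kE)$ does have vanishing $p$-th power --- but neither closes the gap.)
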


\begin{proof} 
We first assume that $\CV$ satisfies the condition that there exists some $r$-plane $V \subset
\CV$ at which $M$ has absolute radical rank.
Since for any $U, U^\prime \not \in \Rad(r, \CV)_M$, we have an equality 
$\Dim \Rad_U(M) = \Dim \Rad_{U^\prime}(M)$, we immediately conclude that any 
$U^\prime \not \in \Rad(r, \CV)_M$ 
satisfies the property that $M$ has absolute maximal radical rank at $U^\prime$. Hence, the 
validity of
statements (1) and (3) will follow from the validity of statement (2) since $\Psi$ is a bijection. 

Let $U \in \Grass(r,\CV)$ satisfy the property that $M$ has absolute maximal radical rank at $U$.
Choose an ordered basis $[u_1, \dots, u_r]$ of $U$.  For each $m, \ 0 \leq m \leq r$, we consider
$\alpha_m: C \equiv k[t_1,\ldots,t_r]/(t_i^p) \to kE$ defined as follows: 
$$\left\{
\begin{array}{ll}\alpha_m(t_i) \ = \ \psi(u_i)& i \leq m\\
\alpha_m(t_i) \ =  \ u_i & m+1\leq i\leq r.\\
\end{array}
\right.
$$ 
Since $\psi$ commutes with the projections to $\Rad(kE) \to \Rad(kE)/\Rad^2(kE)$, we conclude that
$$\psi(u_i) - u_i \ \in \ \Rad^2(kE), \ 1 \leq i \leq r.$$

Observe that
\begin{equation}
\label{obs} 
\Rad_U(M) = \Rad(\alpha_0^*(M)), \quad \Rad_{\psi(U)}(M) = \Rad(\alpha_r^*(M)).
\end{equation}

Consider the $kE$-module $N = M/(u_2M + \ldots + u_rM)$.
We have
\begin{equation}
\label{eq:split}
\Dim \Rad(\alpha_0^*(M)) \ = \ \Dim \sum\limits_{i=1}^r u_iM \ = \ \Dim (u_1N)
+ \Dim \sum\limits_{i=2}^r u_iM 
\end{equation}
and
\begin{equation}
\label{eq:split2}
\Dim \Rad(\alpha_1^*(M)) \ =  \ \Dim \sum\limits_{i=1}^r \alpha_1(t_i)M \ =\ \Dim (\psi(u_1)N)
+ \Dim \sum\limits_{i=2}^r u_iM .
\end{equation}
Our assumption  that $\Rad(\alpha_0^*(M)) \ = \  \Rad_U(M)$ has absolute 
maximal rank and  equation \eqref{eq:split}
imply that 
$$\Dim (u_1 N) \ \geq  \Dim (u N), \quad \forall u \in \Rad(kE).$$
Together with the fact that $u_1 \equiv \psi(u_1) \mod \Rad^2(kE)$, this  implies 
the equality
\begin{equation}
\label{beta}
\Dim (u_1\cdot N) \ = \ \Dim (\psi(u_1) \cdot N)
\end{equation}
by \cite[1.9]{FPS}.
Equalities \eqref{eq:split}, \eqref{eq:split2}, and \eqref{beta}  now imply
$$
\Dim \Rad(\alpha_0^*(M)) \ = \ \Dim \Rad(\alpha_1^*(M)).
$$
We proceed by induction on $m \geq 1$, replacing $u_m$ by $\psi(u_m)$ as we just replaced 
$u_1$ by $\psi(u_1)$.  We conclude that  $\Dim \Rad(\alpha_{m-1}^*(M)) \ =
 \ \Dim \Rad(\alpha_{m}^*(M))$
for $1 \leq m \leq r$.    Thus, by (\ref{obs}), we obtain
$$\Dim \Rad(\alpha_U^*(M)) \ = \ \Dim \Rad(\alpha_{\psi(U)}^*(M)).$$

To prove the theorem without the condition that $\CV$ contains an $r$-plane 
at which $M$ has absolute maximal radical rank, we consider two arbitrary $\CV, \CW \subset
\Rad(A)$ subspaces which split the projection $\Rad(kE) \to \Rad(kE)/\Rad^2(kE)$ 
and choose some third $\CV^\prime \subset \Rad(kE)$ which also splits the 
projection $\Rad(kE) \to \Rad(kE)/\Rad^2(kE)$
and does contain an $r$-plane $V^\prime \subset \CV^\prime$ at which 
$M$ has absolute maximal rank.  Then appealing to the above argument for the 
pairs $(\CV^\prime, \CV)$ and $(\CV^\prime,\CW)$, we conclude the theorem for
the pair $(\CV,\CW)$.  
\end{proof}

\begin{cor}
\label{idp}
Retain the notation of Theorem \ref{max-rad}.  Then $\Psi$ restricts to an isomorphism
$$\Grass(r,\CV)_M \ \stackrel{\sim}{\to} \ \Grass(r,\CW)_M.$$
\end{cor}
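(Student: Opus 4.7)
The plan is to reinterpret $\Grass(r,\CV)_M$ in terms of the nonmaximal $r$-radical support variety $\Rad(r,\CV)_M$ and then invoke Theorem~\ref{max-rad} directly. For any $U \in \Grass(r,\CV)$, the $C(U)$-module $\alpha_U^*(M)$ is free if and only if $\Dim \Rad_U(M) = \tfrac{p^r-1}{p^r}\Dim M$ (the upper bound attained by a free module of rank $\tfrac{1}{p^r}\Dim M$). Equivalently, using the convention that $U$ lies in the rank variety exactly when $\alpha_U^*(M)$ is \emph{not} free, we have
\[
U \in \Grass(r,\CV)_M \quad \Longleftrightarrow \quad \Dim \Rad_U(M) < \tfrac{p^r-1}{p^r}\Dim M .
\]

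With this reformulation in hand, set $\mu_\CV \equiv \max_{V \in \Grass(r,\CV)} \Dim \Rad_V(M)$ and $\mu_\CW$ analogously. Theorem~\ref{max-rad}(3) gives $\mu_\CV = \mu_\CW$, so the two cases below are compatible.

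\textbf{Case 1:} $\mu_\CV = \tfrac{p^r-1}{p^r}\Dim M$, i.e., there exists some $U$ with $\alpha_U^*(M)$ free. Then $U \in \Grass(r,\CV)_M$ if and only if $\Dim \Rad_U(M) < \mu_\CV$, which is precisely the defining condition of $\Rad(r,\CV)_M$. Hence $\Grass(r,\CV)_M = \Rad(r,\CV)_M$ and, by the same reasoning applied to $\CW$, $\Grass(r,\CW)_M = \Rad(r,\CW)_M$. Theorem~\ref{max-rad}(1) now supplies the restriction of $\Psi$ as an isomorphism $\Rad(r,\CV)_M \stackrel{\sim}{\to} \Rad(r,\CW)_M$, which is the desired statement.

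\textbf{Case 2:} $\mu_\CV < \tfrac{p^r-1}{p^r}\Dim M$. Then $\alpha_U^*(M)$ is never free, so $\Grass(r,\CV)_M = \Grass(r,\CV)$; by the equality of maxima, the same holds for $\CW$, and $\Psi$ itself is the desired restriction.

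No single step here is a real obstacle: everything essential has already been packaged into Theorem~\ref{max-rad}. The only subtlety worth flagging is the (entirely elementary) translation between the conditions ``$\alpha_U^*(M)$ is free'' and ``$\Dim \Rad_U(M)$ achieves its global maximum $\tfrac{p^r-1}{p^r}\Dim M$,'' which is what allows the rank variety to be identified with the nonmaximal radical support variety whenever the free locus is nonempty.
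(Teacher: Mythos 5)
Your proof is correct and follows essentially the same route as the paper: both rest on the observation that $\alpha_U^*(M)$ is free exactly when $\Dim\Rad_U(M)$ attains the universal upper bound $\tfrac{p^r-1}{p^r}\Dim M$, and then appeal to Theorem~\ref{max-rad}. The only (cosmetic) difference is that the paper cites part (2) directly, whereas you split into cases and invoke parts (1) and (3).
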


\begin{proof}
For any $U \in \Grass(r,\CV)$, 
$\alpha_U^*(M)$ is free if and only if $\Rad_U(M)$ has dimension equal
to $\frac{p^r-1}{p^r} \cdot \Dim(M)$.   For any $V \in \Grass(r,\CV)$ we have the
inequality 
$$\Dim (\Rad_V(M))\  \leq \ \frac{p^r-1}{p^r} \cdot \Dim(M).$$  The corollary
now follows immediately from Theorem \ref{max-rad} (2).
\end{proof}

\section{Modules of constant  radical and socle rank}
\label{se:const}
We continue our previous notation:  $E$ is an elementary abelian $p$-group of rank $n$ and
$\CV \subset \Rad(kE)$ is a choice of splitting of the projection $\Rad(kE) \to \Rad(kE)/\Rad^2(kE)$
providing the  identification $S^*(\CV)/\langle v^p, v\in \CV \rangle \cong kE$ of (\ref{identify}).
As in Theorem~\ref{semicont}, we can associate to  any finite dimensional $kE$--module 
$M$ and any $j>0$ the integer-valued functions 
$$U \in \ \Grass(r,\CV) \ \mapsto \  f_{M,j}(U) \equiv \Dim \Rad^j_U(M)$$
and
$$U \in \ \Grass(r,\CV) \ \mapsto \  g_{M,j}(U) \equiv \Dim \Soc^j_U(M).$$
We view these functions as defining the {\it local radical ranks} and {\it local socle ranks} of $M$.

In this section we introduce $kE$--modules  of constant $r$-radical 
(resp., $r$-socle) type and more generally of constant $r$-$\Rad^j$-rank 
(resp., $r$-$\Soc^j$-rank). By definition, these are the modules for which 
the functions $f_{M,j}$  (resp., $g_{M,j}$) whose value $f_{M,j}(U)$ in 
independent of $U$ in $\Grass(r, \CV)$.  These are natural analogues 
for $r>1$ of modules of {\it constant Jordan type} (see \cite{CFP}) 
which have many good properties and lead to algebraic vector bundles (see \cite{FP3}).  
In Section~\ref{se:construction}, we see how to associate 
vector bundles on $\Grass(r,\CV)$ to $kE$-modules  of constant $r$-$\Rad^j$-rank 
or constant $r$-$\Soc^j$-rank.

\vspace{0.1in}

\begin{defn} \label{const-rad}
We fix integers $r > 0$, $j, \ 1 \leq j \leq (p-1)r$, and let $M$ be a finite dimensional $kE$-module.
\begin{enumerate}
\item
 The module $M$ has constant $r$-$\Rad^j$ rank (respectively, $r$-$\Soc^j$-rank)
 if the dimension of $\Rad^j_U(M)$ (resp., $\Soc ^j_U(M)$)
is independent of choice of $U \in \Grass(r, \CV)$.
\item
$M$ has constant $r$-radical type (respectively, $r$-socle type) if it has constant 
$r$-$\Rad^j$ rank (resp. $r$-$\Soc^j$ rank) for all $j, \ 1 \leq j \leq (p-1)r$.
\end{enumerate}
\end{defn}

\vskip .1in

To simplify notation, we refer to constant $r$-$\Rad^1$ rank 
(respectively, $r$-$\Soc^1$ rank) 
as constant $r$-$\Rad$ rank (respectively, $r$-$\Soc$ rank).

\begin{remark} It is  immediate from the definitions that $M$ has constant $r$-$\Rad^j$-rank 
(respectively, $r$-$\Soc^j$-rank)  if and  only if $\Rad^j(r, \CV)_M=\emptyset$ (resp, $\Soc^j(r, \CV)_M=\emptyset$).
\end{remark}

The following proposition, stating that the property of constant $r$-$\Rad$ and $r$-$\Soc$-rank is independent  of the choice of $\CV$, 
is an immediate corollary of Theorem \ref{max-rad}.

\begin{prop}
Let $\CW \subset \Rad(kE)$ also provide a splitting of the projection $\Rad(kE) \to
\Rad(kE)/\Rad^2(kE)$.  Then for any $kE$-module $M$ and any $r \geq 1$,
$M$ has constant $r$-radical rank (respectively, constant $r$-socle rank) as above
if and only if $\Dim\Rad_W(M)$ (resp., $\Dim\Soc_W(M)$) is independent
of the choice of $W \in \Grass(r,\CW)$.
\end{prop}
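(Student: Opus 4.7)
The plan is to derive this proposition as an immediate formal consequence of Theorem \ref{max-rad}, using Proposition \ref{rad-dual} to reduce the socle statement to the radical one.

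First I would observe that, by Definition \ref{const-rad} together with the remark preceding the proposition, $M$ has constant $r$-$\Rad$ rank with respect to $\CV$ precisely when $\Rad(r,\CV)_M = \emptyset$; analogously, the condition that $\Dim \Rad_W(M)$ is independent of $W \in \Grass(r,\CW)$ is equivalent to $\Rad(r,\CW)_M = \emptyset$. Theorem \ref{max-rad}(1) asserts that the canonical isomorphism $\Psi : \Grass(r,\CV) \stackrel{\sim}{\to} \Grass(r,\CW)$ restricts to a bijection $\Rad(r,\CV)_M \stackrel{\sim}{\to} \Rad(r,\CW)_M$, so one is empty if and only if the other is. Theorem \ref{max-rad}(3) additionally ensures that, when both vanish, the two constant values coincide.

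For the socle statement, I would pass to the radical case through the antipode-twisted dual. By Proposition \ref{rad-dual}, for each $U \in \Grass(r,\CV)$ the identity
\[
\Dim \Rad_U(\iota(M)^{\#}) \;=\; \Dim M - \Dim \Soc_U(M)
\]
holds, so the function $U \mapsto \Dim \Soc_U(M)$ on $\Grass(r,\CV)$ is constant if and only if $U \mapsto \Dim \Rad_U(\iota(M)^{\#})$ is; the same identity is available on $\Grass(r,\CW)$. Applying the already established radical case to the $kE$-module $\iota(M)^{\#}$ then transports constancy of socles of $M$ between $\Grass(r,\CV)$ and $\Grass(r,\CW)$ in both directions.

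There is essentially no obstacle here: all of the substantive work has been done in Theorem \ref{max-rad} (whose main input in turn is the $r=1$ statement of \cite[1.9]{FPS}), and the present proposition is simply a repackaging of that theorem in the language of constant-rank modules, with Proposition \ref{rad-dual} used only to convert radicals into socles via duality.
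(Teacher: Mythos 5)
Your proposal is correct and matches the paper's intent exactly: the paper gives no separate proof, stating only that the proposition "is an immediate corollary of Theorem \ref{max-rad}," with the socle case handled (as the paper notes just before Definition \ref{def:abs}) by dualizing via Proposition \ref{rad-dual}. Your dimension identity $\Dim\Rad_U(\iota(M)^{\#}) = \Dim M - \Dim\Soc_U(M)$ follows correctly from $\Rad_U(\iota(M)^{\#}) \simeq (\Soc_U(M))^{\perp}$, so the reduction is sound.
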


The reader should observe that in the case that $r =1$, 
either one of the set of $1$-$\Rad^j$
ranks or the set of $1$-$\Soc^j$ ranks, for all $j$, is 
sufficient to determine the Jordan type. Also the Jordan
type determines all of the radical and socle ranks for $r=1$.
Consequently, a $kE$-module has constant $1$-radical type
if and only if it has constant $1$-socle type.  This is no longer 
true for $r \geq 2$ as we show in Examples~\ref{benzol}, \ref{ex-2-rad}.

We begin with particularly easy examples of modules 
of constant radical and socle types.  Since their identification does not
depend upon the choice of $\CV \subset \Rad(kE)$, we conclude that these 
examples are examples of constant radical and socle types for any choice
of $\CV \subset \Rad(kE)$.

\begin{ex} \label{ex-projmod}
For any finite dimensional projective $kE$-module $M$, the $r$-radical type 
and the $r$-socle type of $M$ are constant for every $r > 0$.  Indeed,  a projective 
module is free and its restriction along  $\alpha_U: C(U) \to   kE$ is a free module
for any $U \in \Grass(r,\CV)$ whose rank is determined by $r$ and
the dimension of $M$.

Another evident family of examples of modules of constant radical and socle type
arises from Heller shifts of the trivial module (see (\ref{omegaZ}).  For any $s \in \bZ$,
if $M \simeq \Omega^s(k)$, then $M$ has constant $r$-radical type and constant $r$-socle type 
for each $r > 0$.  Indeed, for any $U \in \Grass(r,\CV)$, we have  $\alpha_U^*(M) \simeq
\Omega^s(k) \oplus Q$ as a $C(U)$-module, where $Q$ is a free $C(U)$-module whose
rank is determined by the dimension of $M$ and the choice of $r$. 
\end{ex}

Recall that we identify $kE$ with $S^*(\CV)/\langle v^p, v\in \CV \rangle$; with this identification, 
any $kE$-module is equipped with the structure of an $S^*(\CV)$-module.  Moreover, we get an action 
of $\GL_n\simeq\GL(\CV)$ on $kE$ by algebra automorphisms induced by the standard representation 
of $\GL_n$ on $\CV$.  We view $S^*(\CV)$ as the 
coordinate algebra of the affine space $\CV^\# = \bA^n$.  Thus, any $kE$-module $M$ determines a 
quasi-coherent sheaf $\wt M$ of $\cO_{\CV^\#}$-modules. The  natural action of $\GL_n = \GL(\CV)$ 
on $S^*(\CV)$ determines an action of $\GL_n = \GL(\CV)$ on the variety 
$\CV^\#$.  As recalled in Definition \ref{de:equiv1}, there is a widely used concept of a $\GL_n$-
equivariant sheaf on a variety $X$ which is provided with a
$\GL_n$-action.   In the special case of $\GL_n = \GL(\CV)$ acting on
$\CV^\#$, this specializes to the following explicit definition of a 
{\it $\GL_n$--equivariant $kE$--module}.

\begin{defn} 
\label{eq}
Let $M$ be a $kE$-module, whose structure map is given by the $k$-linear pairing
\begin{equation}
\label{struc}
S^*(\CV)/\langle v^p, v\in \CV \rangle \otimes M \ \to \ M.
\end{equation}
We say that $M$ is $\GL_n$-equivariant  (or $\GL(\CV)$-equivariant) if it is provided with a second $k$-linear pairing
\begin{equation}
\label{gaction}
\GL(\CV) \times M \ \to \ M, \quad (g,m) \mapsto gm
\end{equation}
such that for any $g \in \GL(\CV), x \in kE$, and $m \in M$, 
we have  
\[g(xm) = (gx)(gm).\]
In other words, the $\GL(\CV)$-action on $M$ of (\ref{gaction}) is such that the pairing
(\ref{struc}) is a map of $\GL(\CV)$-modules with $\GL(\CV)$ acting diagonally on the tensor product. 
\end{defn}

We employ the following notation: 
if $M$ is a $\GL_n$--equivariant $kE$-module and $N\subset M$ is a subset, then 
we denote by $gN$ the image of $N$ under the action of $g\in \GL_n$; if $U \in \Grass(r, \CV)$, 
then we denote by $gU$ the image of $U$ under the action of $g \in \GL(\CV)$ .

As we see in the next proposition, the abundant symmetries of $\GL_n$-equivariant $kE$-modules
imply that they have constant radical and socle types.

\begin{prop}
\label{prop:homog}
 Let $M$ be a $\GL_n$-equivariant $kE$-module. Then the following holds.
\begin{enumerate}
\item $M$ has  constant $r$-radical and $r$-socle type for any $r>0$.
\item For any $U\in \Grass(r,\CV)$, any $g \in \GL_n$,  and any $\ell$, $1\leq \ell \leq r(p-1)$, 
\[\Rad^\ell_{gU}(M)=g\Rad^\ell_U(M), \quad \Soc^\ell_{gU}(M) =g\Soc^\ell_U(M).
\]
\end{enumerate} 
\end{prop}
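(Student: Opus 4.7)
The plan is to prove assertion (2) first, then deduce (1) from it by exploiting the transitivity of the $\GL(\CV)$-action on $\Grass(r,\CV)$ recalled in Section~\ref{se:grass}. Indeed, given any two $r$-planes $U, U' \in \Grass(r,\CV)$ there exists $g \in \GL(\CV)$ with $gU = U'$, and if (2) holds then $\Rad^\ell_{U'}(M) = g\Rad^\ell_U(M)$ is the image under a $k$-linear isomorphism (the action of $g$ on $M$), so the dimensions agree; the same argument handles socles.

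For (2), I would work from the explicit descriptions (\ref{eq:rad}) and (\ref{eq:soc}). Fix a basis $\{u_1,\ldots,u_r\}$ for $U$, so that $\{gu_1,\ldots,gu_r\}$ is a basis for $gU$. The crucial observation is that, through the identification (\ref{identify}), $\GL(\CV)$ acts on $kE$ by $k$-algebra automorphisms, so $g$ sends the space $S_\ell(u_1,\ldots,u_r)$ of degree-$\ell$ monomials in the $u_i$'s bijectively onto $S_\ell(gu_1,\ldots,gu_r)$. Combining this with the defining equivariance relation $g(x\cdot m) = (gx)\cdot(gm)$, one computes
\[
g\,\Rad^\ell_U(M) \;=\; g\Bigl(\sum_{s\in S_\ell(u_1,\ldots,u_r)} s\cdot M\Bigr)
 \;=\; \sum_{s\in S_\ell(u_1,\ldots,u_r)} (gs)\cdot (gM) \;=\; \sum_{s'\in S_\ell(gu_1,\ldots,gu_r)} s'\cdot M \;=\; \Rad^\ell_{gU}(M),
\]
using $gM = M$. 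For the socle, $m \in \Soc^\ell_U(M)$ means $s\cdot m = 0$ for all $s \in S_\ell(u_1,\ldots,u_r)$; applying $g$ and using equivariance, this is equivalent to $(gs)\cdot(gm) = 0$ for all such $s$, i.e., to $gm \in \Soc^\ell_{gU}(M)$. This shows $g\,\Soc^\ell_U(M) \subseteq \Soc^\ell_{gU}(M)$, and the reverse inclusion follows by applying the same argument to $g^{-1}$.

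I do not anticipate a serious obstacle here; this is essentially a bookkeeping argument built on top of the compatibility $g(xm)=(gx)(gm)$ and the fact that $\GL(\CV)$ preserves the grading on $S^*(\CV)$ and hence the monomial filtration used to define $\Rad^\ell_U$ and $\Soc^\ell_U$. The only small point that needs care is that \eqref{eq:rad} and \eqref{eq:soc} are stated in terms of a chosen basis of $U$, so one should note explicitly that the equations are basis-independent (which follows because $S_\ell(u_1,\ldots,u_r)$ depends only on the subspace $U \subset \CV$), so that the identities $g S_\ell(u_1,\ldots,u_r) = S_\ell(gu_1,\ldots,gu_r)$ translate into an identity of subspaces of $kE$ determined solely by $U$ and $gU$.
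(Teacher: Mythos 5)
Your proposal is correct and follows essentially the same route as the paper: the paper likewise reduces (1) to (2) and establishes (2) by the one-line computation $\Rad_{gU}(M)=\sum_i (gu_i)M=\sum_i g\bigl(u_i(g^{-1}M)\bigr)=g\Rad_U(g^{-1}M)=g\Rad_U(M)$, which is your manipulation written from the other end. You simply spell out the details the paper leaves implicit (the general $\ell$, the socle case via applying $g^{-1}$, and the use of transitivity of the $\GL(\CV)$-action to deduce (1)).
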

\begin{proof}
Clearly, (1) follows from (2). 
We prove (2) for $\Rad_U(M)$, the  other statements are similar. Let $\{u_1, \ldots, u_r\}$ be a
basis of  $U$.
We have 
\[
\Rad_{gU}(M) = \sum\limits_{i=1}^r(gu_i)M = \sum\limits_{i=1}^r g(u_i (g^{-1}M)) = \]
\[
g(\sum\limits_{i=1}^r u_i(g^{-1}M) = g\Rad_U (g^{-1}M) = g\Rad_U(M)
\] 
where the second and last equality hold since $M$ is $\GL_n$-equivariant. 
\end{proof}

Examples of $\GL_n$-equivariant $kE$--modules arise as follows.  
The identification $kE \simeq S^*(\CV)/\langle v^p, v \in \CV \rangle$ provides 
the $kE$-module 
$$\Rad^i(kE)/\Rad^{i+j}(kE)$$ 
with a $\GL(\CV)$-structure.  Thus, the subquotients
$S^{*\geq i}(\CV)/S^{*\geq j}(\CV)$ for $i \leq j$, are naturally modules 
over $S^*(\CV)$ with a $\GL(\CV)$ action.  If $j -i \leq p$, then the action of $S^*(\CV)$ on
these subquotient factors through the quotient map $S^*(\CV) \to kE$, so that
\begin{equation}
\label{s*}
S^{*\geq i}(\CV)/S^{*\geq j}(\CV) \quad  \text{ for } j-i \leq p
\end{equation}
inherits a $kE$-module structure. 
 
Let $kG=k[y_1, \ldots, y_n]/(y_1^{p^m}, \ldots, y_n^{p^m}) \simeq k((\Z/p^{m})^\times n) \simeq S^*(\CV)/(v^{p^m}, v \in \CV)$ 
for some $m>0$. Arguing exactly as above, we give 
\begin{equation}
\label{g*}
\Rad^i(kG)/\Rad^{j}(kG)
\end{equation} 
the structure of a  $kE$ module for $j - i \leq p$.

If $\Lambda^*(\CV)$ denotes the exterior algebra
on $\CV$,  then the $\GL(\CV)$-module
\begin{equation}
\label{l*}
\Rad^i(\Lambda^*(\CV))/\Rad^{i+2}(\Lambda^*(\CV))
\end{equation}
also inherits a $kE$-module structure. Note that the anticommutativity of  
$\Lambda^*(\CV)$ causes no problem in the definition of the action 
because it gives a relation in $\Rad^2(\Lambda^*(\CV))$.

It is straightforward to check that the $kE$ and $\GL_n$-actions described above 
are compatible, so that the $kE$-modules of \eqref{s*}, \eqref{g*},  and \eqref{l*} are  $\GL_n$-equivariant.
Proposition~\ref{prop:homog} thus implies the following.

\begin{prop} \label{sym-algs}
Each of the following $kE$-modules $M$ is $\GL_n$-equivariant. Consequently, 
each has  constant 
$r$-radical type and constant $r$-socle type for every $r>0$.
\begin{enumerate}
\item  $M \ = \ \Rad^i(kE)/\Rad^j(kE)$ for any $0 \leq i < j$,
\item  
$M \ = \ \Rad^i(\Lambda^*(\CV))/\Rad^{i+2}(\Lambda^*(\CV))$ for any $0 \leq i$,
\item  $M \ = \ S^{*\geq i}(\CV)/S^{*\geq j}(\CV)$ for any $0 \leq i $, $1\leq j-i \leq p$,
\item $M \ = \ \Rad^i(kG)/\Rad^{j}(kG)$ for any $0 \leq i$, $1\leq j-i \leq p$.
\end{enumerate}
\end{prop}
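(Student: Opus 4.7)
The plan is to invoke Proposition \ref{prop:homog}: once each listed $kE$-module is shown to be $\GL_n$-equivariant in the sense of Definition \ref{eq}, constancy of $r$-radical and $r$-socle type for every $r>0$ is immediate from Proposition \ref{prop:homog}(1). Thus the entire task reduces to verifying $\GL(\CV)$-equivariance in the four cases.

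The first step is to show that $\GL(\CV)$ acts by graded $k$-algebra automorphisms on each of the ambient algebras $S^*(\CV)$, $\Lambda^*(\CV)$, $kE$, and $kG$. This is immediate for $S^*(\CV)$ and $\Lambda^*(\CV)$, while for $kE$ and $kG$ the key observation is that the defining ideals $\langle v^{p^e} : v \in \CV\rangle$ are $\GL(\CV)$-stable because in characteristic $p$ one has $g(v^{p^e}) = (gv)^{p^e}$ for $g\in\GL(\CV)$, $v\in\CV$. Since these automorphisms preserve the augmentation ideal and all of its powers, every subquotient listed in (1)-(4) inherits a $\GL(\CV)$-action.

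The second step is to check that the given $kE$-module structure on each subquotient is well-defined and $\GL(\CV)$-compatible. For (1) the module structure is the left regular action of $kE$ on itself, and the equivariance identity $g(xm)=(gx)(gm)$ is automatic from $g$ being an algebra map. For (3) and (4) the $S^*(\CV)$- or $kG$-action on the named subquotient factors through the surjection $kE = S^*(\CV)/\langle v^p\rangle$ precisely because the constraint $j-i\leq p$ forces $v^p$ to act as zero modulo $\Rad^j$; equivariance then follows by the same reasoning. For (2), the $\Lambda^*(\CV)$-action descends to a $kE$-module structure on $\Rad^i(\Lambda^*)/\Rad^{i+2}(\Lambda^*)$ because $\Rad^2(kE)$ already acts trivially on this subquotient. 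In particular, the anticommutativity of the wedge product is harmless: for $v_i,v_j\in\CV$ and $m\in\Rad^i(\Lambda^*)$, both $v_iv_jm$ and $v_jv_im$ lie in $\Rad^{i+2}(\Lambda^*)$, so the commutativity relations of $kE$ are satisfied in the quotient.

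The main, though mild, obstacle is case (2), where one must confirm that the commutativity imposed by $kE = S^*(\CV)/\langle v^p\rangle$ is compatible with the anticommutative $\Lambda^*(\CV)$-action; the observation above reduces this to the triviality that $\Rad^2(kE)\cdot\Rad^i(\Lambda^*)\subset\Rad^{i+2}(\Lambda^*)$. With equivariance established in all four cases, Proposition \ref{prop:homog}(1) yields the conclusion.
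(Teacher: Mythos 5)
Your proposal is correct and follows essentially the same route as the paper: the paper likewise defines the $\GL(\CV)$-actions on these subquotients, notes that the $S^*(\CV)$- (resp.\ $kG$-, $\Lambda^*(\CV)$-) action factors through $kE$ under the stated constraints on $j-i$ (with the same remark that anticommutativity is harmless because it produces a relation in $\Rad^2$), declares the compatibility of the $kE$- and $\GL_n$-actions "straightforward to check," and then cites Proposition \ref{prop:homog}. Your write-up simply fills in that straightforward check explicitly.
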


We next see how to generate examples of modules of constant type arising from 
the consideration of (negative) Tate cohomology.  Once again, their formulation does
not depend upon a choice of $\CV$ so that we conclude that these examples are 
modules of constant radical and socle type independent of the choice of $\CV \subset
\Rad(kE)$ splitting the projection $\Rad(kE) \to \Rad(kE)/\Rad^2(kE)$.

\begin{prop} \label{negcoho}
Choose a Tate cohomlogy class
$0 \not= \zeta \in \widehat \HHH^{-t}(E,k), \ t > 0$, and denote by
\begin{equation}
\label{tate}
 0 \to k \to M \to \Omega^{-t-1}(k) \to 0
 \end{equation}
the corresponding extension of $kE$-modules.  Then
$M$ has  constant $r$-radical type and constant $r$-socle type for every $0 <r<n$.
 \end{prop}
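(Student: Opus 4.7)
The plan is to realize $M$ via a pushout, reduce via stable equivalence to the kernel of a representative of $\zeta$, and then analyze restrictions along rank-$r$ shifted subgroups for $r < n$.

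First I would represent $\zeta$ by a $kE$-module map $\zeta\colon \Omega^{-t}(k) \to k$ and form the pushout of the defining sequence $0 \to \Omega^{-t}(k) \to I_{-t} \to \Omega^{-t-1}(k) \to 0$ along $\zeta$, where $I_{-t}$ is the injective hull of $\Omega^{-t}(k)$. The lower row of the resulting pushout square is precisely the given extension $0 \to k \to M \to \Omega^{-t-1}(k) \to 0$, and a diagram chase produces a second short exact sequence
\[
0 \to L_\zeta \to I_{-t} \to M \to 0, \qquad L_\zeta := \ker(\zeta).
\]
Since $kE$ is Frobenius, $I_{-t}$ is projective, and hence $M$ is stably isomorphic to $\Omega^{-1}(L_\zeta)$. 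Because having constant $r$-$\Rad^j$ and $r$-$\Soc^j$ ranks is preserved both under adding a projective summand (Example~\ref{ex-projmod}) and under a Heller shift --- since $\alpha_U^*$ intertwines $\Omega^{\pm 1}$ up to free $C(U)$-summands of rank determined by $r$ and the $k$-dimension of the module --- it suffices to prove that $L_\zeta$ has constant $r$-radical and $r$-socle types for every $r$ with $1 \leq r < n$.

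For the central step I would exploit the presentation $M \cong I_{-t}/L_\zeta$ to write, for each $j$,
\[
\dim \Rad^j_U(M) \;=\; \dim \Rad^j_U(I_{-t}) - \dim\bigl(\Rad^j_U(I_{-t}) \cap L_\zeta\bigr).
\]
Since $I_{-t}$ is projective, the first term is $U$-independent (Example~\ref{ex-projmod}). Setting $X_j(U) := \Rad^j_U(I_{-t}) \cap \Omega^{-t}(k)$, the inclusion $L_\zeta \subset \Omega^{-t}(k)$ gives $\Rad^j_U(I_{-t}) \cap L_\zeta = \ker(\zeta|_{X_j(U)})$, whose dimension equals $\dim X_j(U) - \epsilon_j(U)$ with $\epsilon_j(U) \in \{0,1\}$ recording whether $\zeta|_{X_j(U)}$ is nonzero. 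A direct count using the exact sequence defining $\Omega^{-t-1}(k)$ shows $\dim X_j(U) = \dim \Rad^j_U(I_{-t}) - \dim \Rad^j_U(\Omega^{-t-1}(k))$, which is already $U$-independent, so the problem reduces to proving that $\epsilon_j(U)$ is constant over $\Grass(r,\CV)$ for each $j$. An entirely parallel argument, dualizing via Proposition~\ref{rad-dual}, handles the $r$-socle ranks.

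The hard part is the constancy of $\epsilon_j$, which I would establish as the stronger statement $X_j(U) \subset L_\zeta$ for every $U$ with $r < n$, giving $\epsilon_j \equiv 0$. By Tate duality, this reduces to showing that the restriction $\alpha_U^*\colon \widehat{H}^{-t}(E,k) \to \widehat{H}^{-t}(C(U),k)$ is zero whenever $r < n$; dually, that the transfer $\widehat{H}^{t-1}(C(U),k) \to \widehat{H}^{t-1}(E,k)$ along the flat inclusion $C(U) \hookrightarrow kE$ vanishes. The case $t=1$ is immediate: $kE$ is free of rank $p^{n-r}$ over $C(U)$, and $p^{n-r} \equiv 0 \pmod p$ when $r<n$, so the degree-zero transfer is zero. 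For $t \geq 2$ one would use standard vanishing results for transfer in the cohomology of elementary abelian $p$-groups --- for $p=2$ the image of the positive-degree transfer lies in the nilradical of $H^*(E,k) = k[y_1,\ldots,y_n]$, which is trivial, and for odd $p$ a parity argument separates the polynomial and exterior parts. Making this vanishing uniform in $t$ and $p$, compatibly with the primitively generated Hopf structure used on $kE$ throughout the paper, is the main technical obstacle.
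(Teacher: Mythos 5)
Your argument ultimately hinges on exactly the same key point as the paper's proof --- that $\alpha_U^*(\zeta)=0$ in $\widehat\HHH^{-t}(C(U),k)$ for every $U\in\Grass(r,\CV)$ with $r<n$ --- but you prove this only for $t=1$ and explicitly concede the case $t\geq 2$ as ``the main technical obstacle.'' That obstacle \emph{is} the proposition; without it nothing is proved. The paper closes it in a few lines: the Tate duality pairing $\widehat\HHH^{-t}(C,k)\otimes\widehat\HHH^{t-1}(C,k)\to\widehat\HHH^{-1}(C,k)=k$ is nondegenerate, the restriction $\alpha_U^*$ is surjective in the nonnegative degree $t-1$, and $\alpha_U^*$ is the zero map on $\widehat\HHH^{-1}(E,k)$ (this is the [BC90] input, essentially your index-divisibility observation). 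So if $\alpha_U^*(\zeta)\neq 0$, choose $\eta'$ pairing nontrivially with it, write $\eta'=\alpha_U^*(\eta)$, and conclude $\alpha_U^*(\zeta\eta)=\alpha_U^*(\zeta)\eta'\neq 0$ in degree $-1$, a contradiction. This is Frobenius reciprocity in disguise and works uniformly in $t$ and $p$; your proposed route through vanishing of the transfer in all positive degrees is workable but you have not supplied it.

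There are two further problems in the reductions. First, the opening reduction to $L_\zeta$ rests on the claim that constant $r$-radical/socle rank is preserved by Heller shifts ``up to free $C(U)$-summands of rank determined by $r$ and the $k$-dimension of the module.'' This is false: the rank of the free summand of $\alpha_U^*(\Omega M)$ depends on the projective part of $\alpha_U^*(M)$, which varies with $U$, and Example~\ref{ex-char2} of the paper is an explicit module of constant $2$-radical type whose Heller shifts are not. (This detour turns out not to be load-bearing, since your later paragraphs work with $M=I_{-t}/L_\zeta$ directly.) Second, even granting $\alpha_U^*(\zeta)=0$, your deduction that $\epsilon_j(U)=0$ conflates the vanishing of the \emph{stable} class $\alpha_U^*(\zeta)$ with the literal vanishing of a chosen representative map $\zeta$ on the subspace $X_j(U)=\Rad^j_U(I_{-t})\cap\Omega^{-t}(k)$; the latter does not follow, since $X_j(U)$ may properly contain $\Rad^j_U(\Omega^{-t}(k))$. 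The paper avoids all of this bookkeeping: once $\alpha_U^*(\zeta)=0$, the restricted sequence \eqref{tate} splits, so $\alpha_U^*(M)\cong k\oplus\alpha_U^*(\Omega^{-t-1}(k))$, and constancy of all radical and socle ranks is immediate from Example~\ref{ex-projmod}.
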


\begin{proof}
Let $0 \to M_1 \to M_2 \to M_3 \to 0$ is a short exact sequence of $kE$-modules 
with the property that for every $U \in \Grass(r,\CV)$ the restriction of this sequence
along $\alpha_U: C(U)\to kE$ splits.  Then 
if $M_1$ and $M_3$ have constant $r$-radical type (respectively, $r$-socle type),
then so does $M_2$.    Consequently, by Example \ref{ex-projmod}
it suffices to prove that the sequence (\ref{tate})
splits along $\alpha_U$ for every $U \in \Grass(r,\CV)$.  As shown in \cite{BC90},
the splitting of  (\ref{tate}) is implied by 
 \begin{equation}
 \label{tate-restr}
 \alpha_U^*(\zeta) = 0 \in \widehat \HHH^{-t}(C,k), \quad \forall U \in \Grass(r,\CV),
 \end{equation}
 where  $C \equiv  k[t_1,\ldots,t_r]/(t_i^p) \simeq C(U)$.

To show that $\alpha^*(\zeta) = 0$, we employ the  non-degenerate pairing of 
Tate duality (see \cite{BC90}),
\begin{equation}
\label{td}
\widehat \HHH^{-t}(C,k) \otimes \widehat \HHH^{t-1}(C,k) \to \widehat \HHH^{-1}(C,k) = k.
\end{equation}
Suppose that $\alpha^*(\zeta) \neq 0$. Then there exists $\eta^\prime \in 
\widehat \HHH^{t-1}(C,k)$ such that $\alpha^*(\zeta)\eta^\prime \neq 0$
Since $t-1 \geq 0$, $\alpha^*: \widehat \HHH^{t-1}(E,k) \ 
\longrightarrow \ \widehat \HHH^{t-1}(C,k)$ is surjective.
Hence, there exists $\eta \in \widehat \HHH^{t-1}(E,k)$  such that
$\eta^\prime = \alpha^*(\eta)$.  This implies, by the non-degeneracy of (\ref{td}), that
$$\alpha^*(\zeta \eta) \ = \ \alpha^*(\zeta)\eta^\prime\  \neq 0.$$
However, this is a contradiction, because we know that the map
$\alpha^*: \widehat \HHH^{-1}(E,k) \to \widehat \HHH^{-1}(C,k)$  is the zero map \cite{BC90}.
Thus we conclude that
$\alpha^*(\zeta) = 0$.
\end{proof}

It certainly is not always the case that constant $r$-$\Rad$-rank is preserved 
by Heller shifts. For a very easy example, let $M$ be a 2-dimensional indecomposable 
$kE$-module where the rank $n$ of $E$ is at least $2$. Then $M$ does not have constant
$1$-$\Rad^1$-rank, but $\Omega(M)$ does have constant $1$-$\Rad^1$-rank.

A more complicated example is the following. In this case, 
$M$ is a  $kE$-module with $\Rad^2(M) = 0$ such that 
the 2-$\Rad^1$-rank of $M$ is constant (hence, $M$ has constant $2$-radical type) 
but the Heller shifts of $M$ do not have constant $2$-radical type.
Note that this also gives an example of a module with constant $2$-radical type
that does not have constant $1$-radical type, that is, constant Jordan type. 

\begin{ex} \label{ex-char2}
Assume that $k$ is a field of characteristic 2. 
Suppose that $kE = k[w,x,y,z]/(w^2,x^2,y^2,z^2)$ is the group algebra of 
an elementary abelian group of order $2^4 = 16$. We consider the module 
$\Rad^2(kE)$ which is spanned as a subspace of $kE$ by the monomials
\[
wx, \ wy, \ wz, \ xy, \ xz,  \ yz, \ wxy, \ wxz,  \ wyz, \ xyz, \ wxyz. 
\]
Let $L$ be the submodule 
generated by $wx$, which has $k$-basis $wx, wxy, wxz, wxyz$. 
Let $M$ be the quotient $\Rad^2(kE)/L$.  The reader can easily check 
that $M$ has constant 2-radical type. In particular, for any $U \in \Grass(r, \CV)$,
$\Rad_U(M) = \Rad(M)$ which is spanned by $wyz$ and $xyz$. 
Because $\Rad^2(M) = \{0\}$, it also has 
$2$-$\Rad^2$-type.

In terms of diagrams, the restriction of $M$ to  $kF_1 = k[x,w]/[x^2,w^2]$ has the form
\[
\xymatrix@-.8pc{
& yz \ar[dl]_x  \ar[dr]^w &&  \oplus & wy & \oplus & wz & \oplus & xy & \oplus & xz \\
xyz && wyz
}
\]
Thus we see that $M_{\downarrow kF_1} \cong \Omega^{-1}(k) \oplus k^{\oplus 4}$.
On the other hand, the restriction to $kF_2 = k[y,z]/(y^2,z^2)$ has the form
\[
\xymatrix@-.8pc{
wz \ar[dr]_y && wy \ar[dl]^z   & \oplus & xz \ar[dr]_y && xy \ar[dl]^z & \oplus & yz \\
& wyz &&&& xyz
}
\]
Thus we have that $M_{\downarrow kF_2} \cong (\Omega^{1}(k))^{\oplus 2}
\oplus k$.

Now consider the modules $\Omega^{t}(M)$ with $t = 2j$ an 
even non-negative integer.
First note that the dimension of $M$ is 7, and
so the dimension of $\Omega^{2n}(M)$ must be $3+4d$ for some number $d$ which
depends on $n$.  In what follows we use the facts that 
if $kF = k (\Z/2 \times \Z/2)$ then for any  $t > 0$
$$
\Dim\Omega^t(k_F) = 2t +1, \quad \Dim\Rad \Omega^t(k_F) 
= t, \quad \Dim\Rad(kF) = 3
$$
(see \cite{He}). The formula 
$$ 
\Omega^{2j}(M)_{\downarrow kF_1} = \Omega^{2j}(\Omega^{-1}(k) \oplus k^{\oplus 4})_{\downarrow kF_1} =\Omega^{2j-1}(k_{F_1}) \oplus
(\Omega^{2j}(k_{F_1}))^{\oplus 4} \oplus (kF_1)^{\oplus m_1}
$$
for some $m_1$ yields the dimension formula
$$ 
3+4d = (4j-1) + 4(4j+1) +4m_1
$$
which implies that  $m_1 = d - 5j$. When a similar thing is done
for the restriction to $kF_2$, we get that $m_2 = d -3j -1$.
We conclude that
$$
\Dim(\Rad(kF_1)\Omega^{2j}(M)) = 3d-5j-1 \not= 3d-3j-1 =
\Dim(\Rad(kF_2) \Omega^{2j}(M)).
$$
Consequently, the $2$-$\Rad^1$-rank of $\Omega^{2j}(M)$ is constant if and
only if $j = 0$.  A similar analysis can be performed on $\Omega^t(M)$, for 
$t$ odd or negative with the same result. 

In addition, $\Dim(\Rad^2(kF_i) \Omega^{2j}(M)) = m_i$, the rank of the projective
part of the restriction of $\Omega^{2j}(M)$ to $kF_i$. Thus, $\Omega^{2j}(M)$ has constant $2$-$\Rad^2$-rank if and only if $j=0$.
\end{ex}

\section{Modules from quantum complete intersections} 
\label{se:quantum}
In this section, we consider $kE$-modules  constructed as 
subquotients of quantum complete intersection algebras.   
We demonstrate how by varying  parameters, we get families of 
modules with interesting properties, such as   modules of constant 
Jordan type  or constant $r$-radical or $r$-socle type for $r>1$.  
We supplement our constructions with multiple specific examples. 

Let $E$ be an elementary abelian $p$-group of rank $n$, and  let $kE=k[x_1, \ldots, x_n]/(x_1^p, \ldots, x_n^p)$. 
Let $\fq=(q_{ij})_{i,j=1}^n$  be the matrix of quantum parameters: choose non-zero $q_{ij}\in k$ for $1 \leq i<j\leq n$, and set
$q_{ij} = q_{ji}^{-1}$ and $q_{ii}=1$. 
\sloppy
{

}

Let $k\langle z_1, \ldots, z_n\rangle$  be the algebra generated by $n$ (non-commuting) variables $z_1, \ldots, z_n$, 
and let  $s>1$ be an integer.  Let
$$S = \frac{k\langle z_1, \ldots, z_n\rangle}{(z_i^s, z_iz_j - q_{ij}z_jz_i)}$$ 
be a quotient of the quantum complete intersection algebra $k\langle z_1, \ldots, z_n\rangle/(z_iz_j -q_{ij}z_jz_i)$  with respect to the ideal generated by $(z_1^s, \ldots, z_n^s)$. 
Let $I = \Rad(S)$.  When this causes no confusion, we denote the generators of the augmentation ideal $I$ by the same letters $z_i$, $1 \leq i\leq n$. 
For $0 \leq a \leq n(s-1)-1$, we define 
\begin{equation}
\label{wa}
W_a(s, \fq)  \ =  \ I^a/I^{a+2}, \quad \text{frequently denoted by ~} W_a.
\end{equation}
As a vector space, $W_a$ is generated by the monomials $\{z_1^{a_1}\ldots z_n^{a_n}\}$ where $a_1+\ldots + a_n = a$ or $a+1$ and $a_i \leq s-1$ for $1\leq i\leq n$. 
We define the structure of a $kE$-module on $W_a(s, \fq)$ by letting $x_i$ act via multiplication  by $z_i$ :
\[
x_i w \stackrel{def}{=} z_iw \,\quad  (\mod \, I^{a+2})
\]
for any $w \in W_a$.  By construction, $\Rad^2(kE) W_a=0$.  We also note that for $a \leq s-2$, $W_a$ is independent of $s$.

\begin{ex}
Let $n=2$ and choose $s$ and $a$ such that $a< s-1$. Let $q = q_{1,2}$ be the quantization parameter. 
In this case $kE = k[x,y]/(x^p,y^p)$ and $S = k\langle z, t \rangle/(z^s, t^s, zt-qtz)$. 
Then the module $W_a(s, q)$ looks as follows: 
$$
\begin{xy}*!C\xybox{%
\xymatrix{ &z^a
\ar[dr]^{q^{a}y} \ar[dl]_x&&
z^{a-1}t \ar[dl]_{x} \ar[dr]^{q^{a-1}y}
&&\dots &&t^a \ar[dl]_x \ar[dr]^y &\\
z^{a+1}  && z^{a}t  && z^{a-1}t^2  &\dots & zt^{a} &&t^{a+1}}}
\end{xy},
$$ where, for example, an arrow $\xymatrix{z^it^j \ar[r]^{q^iy}& z^it^{j+1}}$ indicates 
that the action of $y$ on $z^it^j$ is defined via $q^iy(z^it^j) = z^it^{j+1}$. 

It is easy to see that this $kE$--module is isomorphic to the ``zig-zag" module denoted 
$W_{a+1,2}$ in \cite{CFS}.   That is, in the case $n=2$ introducing the parameter $q$ does 
not lead to new isomorphism classes of modules.  For $n>2$, though, choice of the $q_{i,j}$ does make a
difference as we demonstrate in this section.
\end{ex}

We now show that if $a$ is sufficiently large, then  the module $W_a$ has a very strong 
property of having equal $r$-images independently of the choice of $q_{ij}$. In particular, 
it has constant $r$-radical type. 

\begin{prop} \label{W-eip}
Let $W_a = W_a(s,\fq)$ for some fixed
choice of $s \geq 1$ and elements $q_{ij} \in k$. 
If  $a \geq (n-r)(s-1)$, then the module $W_{a}$ has the equal 
$r$-images property,  meaning that for any  $U$ in $\Grass(r, \CV)$, 
we have that $\Rad_U(W_a) = \Rad(W_a) = \Rad(kE)W_a$. Hence, $W_a$
has constant $r$-$\Rad$-type. 
\end{prop}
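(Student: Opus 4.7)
The plan is to establish the non-trivial inclusion $\Rad W_a \subseteq \sum_i u_i W_a$ for an arbitrary $U = \langle u_1,\ldots,u_r\rangle \in \Grass(r,\CV)$. Because $\Rad^2 W_a = 0$, the radical $\Rad W_a$ is the $k$-span of the valid length-$(a+1)$ monomials $\{z^\beta : |\beta|=a+1,\ \beta_j\leq s-1\}$, which I will call $V_{a+1}$. My approach is to build, for each such $\beta$, an explicit element of $\sum_i u_i W_a$ whose lex-leading monomial is $z^\beta$; triangularity then forces $\sum_i u_i W_a = V_{a+1}$.

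First I would reduce to a convenient normal form. The desired equality is invariant under simultaneously permuting the variables $z_1,\ldots,z_n$ and the entries of $\fq$, while the proposition is asserted for arbitrary $\fq$. Since the matrix $C=(c_{ij})$ with $u_i=\sum_j c_{ij} x_j$ has rank $r$, I may reorder the variables so that the first $r$ columns of $C$ are linearly independent, and then change the basis of $U$ to arrange
\[
u_i \;=\; x_i + \sum_{j>r} c_{ij}\, x_j, \qquad i=1,\ldots,r.
\]

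Next, for each $\beta \in V_{a+1}$ I set $T_\beta = \{j : \beta_j \geq 1\}$ and $j_0 = \min T_\beta$.  The hypothesis $a+1 \geq (n-r)(s-1)+1$ together with $|\beta|\leq (s-1)|T_\beta|$ forces $|T_\beta|\geq n-r+1$, hence $j_0\leq r$. Working inside $S$, the skew-commutation gives $z_j \cdot z_1^{\alpha_1}\cdots z_n^{\alpha_n} = \lambda_j(\alpha)\, z_1^{\alpha_1}\cdots z_j^{\alpha_j+1}\cdots z_n^{\alpha_n}$ with $\lambda_j(\alpha) := \prod_{i<j} q_{ij}^{-\alpha_i} \in k^\times$. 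Applying $u_{j_0}$ to $z^{\beta-e_{j_0}} \in V_a$ (here $e_j$ denotes the $j$-th standard basis vector of $\mathbb Z^n$) yields
\[
u_{j_0}\cdot z^{\beta-e_{j_0}} \;=\; \lambda_{j_0}(\beta-e_{j_0})\, z^\beta \;+\; \sum_{\substack{j>r\\ \beta_j\leq s-2}} c_{j_0,j}\, \lambda_j(\beta-e_{j_0})\, z^{\beta-e_{j_0}+e_j}.
\]
Each error monomial $z^{\beta-e_{j_0}+e_j}$, with $j > r \geq j_0$, agrees with $z^\beta$ at positions $1,\ldots,j_0-1$ but has strictly smaller value $\beta_{j_0}-1$ at position $j_0$, so it is lex-smaller than $z^\beta$.

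Assembling: the family $\{u_{j_0(\beta)}\cdot z^{\beta-e_{j_0(\beta)}}\}_{\beta\in V_{a+1}} \subset \sum_i u_i W_a$ is upper-triangular in the lex order on $V_{a+1}$ with invertible diagonal $\lambda_{j_0}(\beta-e_{j_0})$. Induction on lex order, with base case the lex-smallest $\beta$ (for which the error sum is empty since then $\beta_j = s-1$ for all $j>r$), shows every $z^\beta$ lies in $\sum_i u_i W_a$, giving $\Rad_U W_a = \Rad W_a$. The main obstacle will be matching Step 1 to Step 2: the arithmetic constraint $a\geq (n-r)(s-1)$ is used essentially to force $j_0\leq r$, which in the normal form is what guarantees that $u_{j_0}$ contains no $x_j$-terms for $j<j_0$ and therefore produces $z^\beta$ as an honest lex-leading monomial.
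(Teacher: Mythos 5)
Your proof is correct and follows essentially the same route as the paper's: reduce to an echelon basis $u_i = x_i + \sum_{j>r} c_{ij}x_j$, use $a \geq (n-r)(s-1)$ to guarantee every degree-$(a+1)$ monomial has a nonzero exponent in some position $j_0 \leq r$, and run a triangular induction on the identity $u_{j_0}\cdot z^{\beta - e_{j_0}} = (\text{unit})\,z^\beta + (\text{error terms shifting weight into positions} > r)$. The only differences are cosmetic — you induct on lex order where the paper inducts on $N = \beta_1+\cdots+\beta_r$, and you track the $q$-scalars $\lambda_j(\alpha)$ explicitly, which the paper leaves implicit.
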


\begin{proof}  Let $\CV\subset \Rad(kE)$ be the subspace generated by $\{x_1, \ldots, x_n\}$. 
Choose $U$ in $\Grass(r, \CV)$. For the purposes of the argument
we desire a basis for the subspace $U
\subseteq \CV$ that is chosen carefully as follows.  
Let $\ul u = [u_1, \dots, u_r]$ be an ordered
basis for $U$ and suppose that $u_i = \sum_{i=1}^n a_{i,j}x_j$ 
for $a_{i,j} \in k$. We may assume that the matrix $(a_{i,j})$
is in echelon form, so that there is some subset $\Sigma = \{i_1, \dots, i_r\}$
in  $\{1, \dots, n\}$ such that the $r \times r$ submatrix having
the columns indexed by $\Sigma$ is the identity matrix. We claim
that, without loss of generality, we may assume that that
$\Sigma = \{1, \dots, r\}$. That is, if $\Sigma$ is not of this form then 
we correct the situation by applying a suitable permutation 
to the basis $x_1, \dots, x_n$ of $\CV$. The same permutation 
must be applied to the generators $z_1, \dots, z_n$ of the algebra $S$.
Note that this changes the values of the $q_{ij}$, but 
because these are assumed to be non zero, 
the augmentation ideal $I \subset S$ is invariant under the permutation.
Hence, $W_a$ is unchanged.  

Let
\[
\alpha: \  kF \ = \ k[t_1, \dots, t_r]/(t_1^p, \dots, t_r^p) 
\quad \longrightarrow \quad kE
\]
be 
given by $\alpha(t_i) = u_i$ for ${\ul u} = [u_1, \dots, u_r]$ 
chosen as above. For $i \in \{1, \dots, r\}$, let $u_i = \sum 
a_{i,j}x_j$, and set $w_i = \sum a_{i,j}z_j \in S$,
so that $\alpha(t_i)$ acts on $W_a$ by multiplication by $w_i$ (mod $I^{a+2}$).
Because of the way that the basis was chosen, we have
that for each $i$, $1 \leq i \leq r$, $w_i = z_i + \sum_{j = r+1}^n a_{i,j}z_j$.

The module $W_a$ has a basis consisting of the monomials 
\[
Z_{s_1, \dots, s_n} \ = \ z_1^{s_1} \ldots z_n^{s_n} 
\]
where $s_1 + \dots +s_n =a$ or $a+1$ and $0 \leq s_i < s$ for all $i$,  
taken modulo $(z_1^s, \ldots, z_n^s)$ and $I^{a+2}$.
Since $\alpha(t_i)$ acts on $W_a$ via $w_i$ which is a linear conbination of $z_i$, we have 
\[
\alpha(t_i)I^a \subset I^{a+1}.
\]
Therefore, 
\[\Rad_U(W_a) \subset I^{a+1}/I^{a+2}=\Rad(kE)W_a
\]
Hence, we need to show that 
\[
 I^{a+1}/I^{a+2} \subset \Rad_U(W_{a}),
\]
or, equivalently, that every monomial $z_1^{s_1} \cdots z_n^{s_n}$ with $s_1 + \dots +s_n = a+1$ is in
$\Sigma_{i=1}^r w_iW_a$.  We accomplish this by an induction on the 
number $N = s_1 + \dots +s_r$. 

Because we assume that $a \geq (n-r)(s-1)$, the minimum value 
that $N$ can have is $a+1 - (n-r)(s-1)>0$ and that occurs when the 
monomial has the form $z_1^{s_1}\ldots 
z_r^{s_r}z_{r+1}^{s-1} \ldots z_n^{s-1}$
for $s_1 + \dots + s_r = a+1 - (n-r)(s-1)$. Let $i$ be the least integer such
that $s_i > 0$. Since $z_j^s=0$ in $S$ and $w_i = z_i + \sum\limits_{j=r+1}^na_{i,j}z_j$, we have 
\begin{equation} \label{induction}
z_i^{s_i}\cdots 
z_r^{s_r}z_{r+1}^{s-1} \ldots z_n^{s-1} \ - \ w_iz_i^{s_i-1}\ldots z_r^{s_r}z_{r+1}^{s-1} \ldots z_n^{s-1} \
= 
\end{equation}
\[ - \sum_{j = r+1}^n a_{i,j}z_j z_i^{s_i-1}\ldots z_r^{s_r}z_{r+1}^{s-1} \ldots z_n^{s-1} = 0
\]
Hence, the class of $z_i^{s_i}\ldots 
z_r^{s_r}z_{r+1}^{s-1} \ldots z_n^{s-1}$ is in $\Rad_U(W_a)$.

For the induction step, let 
$Z  = z_1^{s_1} \ldots z_n^{s_n}$ 
with $N = s_1 + \cdots +s_r>a+1 - (n-r)(s-1)$. If $i$ is the least integer with $s_i >0$,
then we get the exact same formula as in \ref{induction}. By induction,
the classes of the elements on the right hand side are all in $\Rad_U(W_a)$.
Hence, so too is the class of $Z$. 

We conclude that $\Rad_U(W_a) = I^1/I^{a+2}$ is independent of $U$.  On the other hand,
for  $r$-planes $U, V \subset \CV$ 
$$\Rad_U(I^2) \ = \ \Rad_U(\Rad_V(M_a)) \ = \ \Rad_V(\Rad_U(M_a)) \ = \ \Rad_V(I^2).$$
Continuing by induction on $j$, we conclude that $W_a$ has constant $r$-$\Rad^j$-rank for
all $j, 1\leq j < p$.
\end{proof}

The following lemma  (whose proof we leave to the reader) is proved by induction 
using the $q$-binomial formula: suppose $X,Y$ are $q$-commuting variables, 
that is $YX=qXY$. Then
\[(X+Y)^n =\sum_{i=0}^n {n \choose i}_q
  X^iY^{n-i},
\]
where ${n \choose i}_q=
   \frac{(n)_q!}{(i)_q! (n-i)_q!}$, $(i)_q = 1+q+\cdots+q^{i-1}$, 
and $(i)_q! =(i)_q(i-1)_q\ldots(1)_q$. 

\begin{lemma}
\label{le:binom}
Let $s>1$ be an integer prime to $p$, and let $\zeta \in k$ be a primitive $s^{th}$ root of unity. 
Let $z_1, \ldots, z_n$ be $\zeta$-commuting variables, that is  $z_iz_j=\zeta z_jz_i$ for 
$1 \leq i<j\leq n$.  Then for any $a_1, \ldots, a_n \in k$,
\[ (a_1z_1 + \cdots +a_nz_n)^s = a_1^sz_1^s+\cdots + a_n^sz_n^s. 
\]

\end{lemma}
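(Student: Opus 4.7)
The plan is to induct on $n$, with the base case $n = 2$ handled directly by the $q$-binomial formula recalled in the statement.  For $n = 2$, set $X = a_1 z_1$ and $Y = a_2 z_2$; since $z_1 z_2 = \zeta z_2 z_1$ and scalars in $k$ are central, we compute $YX = \zeta^{-1} XY$, so $X$ and $Y$ are $q$-commuting with $q = \zeta^{-1}$, which is still a primitive $s$-th root of unity in $k$ (such a root exists because $s$ is prime to $p$). The $q$-binomial formula then gives
\[
(X + Y)^s \;=\; \sum_{i=0}^{s} \binom{s}{i}_{\zeta^{-1}} X^i Y^{s-i}.
\]
Granting the arithmetic fact that $\binom{s}{i}_{\xi} = 0$ for every primitive $s$-th root of unity $\xi$ and every $0 < i < s$, only the extremal terms survive, yielding $(a_1 z_1 + a_2 z_2)^s = a_1^s z_1^s + a_2^s z_2^s$.

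For the inductive step, I would assume the lemma for any $n-1$ pairwise $\zeta$-commuting variables and set $X = a_1 z_1$ and $Y = a_2 z_2 + \cdots + a_n z_n$.  Using $z_1 z_j = \zeta z_j z_1$ for each $j > 1$, a one-line computation shows $YX = \zeta^{-1} XY$, so the $n = 2$ case applies to the pair $(X,Y)$ and produces $(X + Y)^s = X^s + Y^s = a_1^s z_1^s + Y^s$.  The variables $z_2, \ldots, z_n$ inherit the relations $z_i z_j = \zeta z_j z_i$ for $2 \leq i < j \leq n$, so the inductive hypothesis applies to $Y$ and gives $Y^s = a_2^s z_2^s + \cdots + a_n^s z_n^s$, completing the induction.

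The only nontrivial step is the vanishing $\binom{s}{i}_{\xi} = 0$ for $0 < i < s$, which I expect to be the main obstacle to write cleanly.  One clean approach is to use the polynomial identity
\[
\binom{s}{i}_q \prod_{k=1}^{i}(1 - q^k) \;=\; \prod_{k=s-i+1}^{s}(1 - q^k)
\]
in $\mathbb{Z}[q]$ and then specialize to $q = \xi$: the right-hand side vanishes because one of its factors is $1 - \xi^s = 0$, while $\prod_{k=1}^{i}(1 - \xi^k) \neq 0$ because $\xi$ has exact order $s$ and $k \leq i < s$, which forces the polynomial $\binom{s}{i}_q$ to vanish at $\xi$.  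An alternative is to induct on $s$ using the $q$-Pascal recursion $\binom{s}{i}_q = \binom{s-1}{i-1}_q + q^i \binom{s-1}{i}_q$ together with the factorization $1 - q^s = \prod_{d \mid s} \Phi_d(q)$; either method is standard, so the whole proof reduces to bookkeeping around the $q$-binomial formula.
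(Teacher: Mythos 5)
Your proof is correct, and it follows exactly the route the paper indicates (the paper leaves the proof to the reader, saying only that it "is proved by induction using the $q$-binomial formula"). Your induction on $n$ via the pair $X=a_1z_1$, $Y=a_2z_2+\cdots+a_nz_n$ with $YX=\zeta^{-1}XY$, together with the vanishing of $\binom{s}{i}_{\xi}$ for $0<i<s$ at a primitive $s$-th root of unity $\xi$ (cleanly justified by your polynomial identity in $\mathbb{Z}[q]$ and the hypothesis $(s,p)=1$), is a complete and faithful filling-in of that sketch.
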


This lemma enables us to show that the modules $W_{a,\fq}$ of (\ref{wa}) are of constant Jordan
type provided that our quantum parameters $\fq$ are given by a single $s^{th}$ root of unity.

\begin{prop} \label{quantum-cjt}
Let $s>1$ be an integer. Assume that one of the following holds
\begin{enumerate}\item[I.] $a<s-1$ or
  \item[II.] $(s,p)=1$ and $q_{i,j} = \zeta$ for $1\leq i < j \leq n$ where $\zeta$ be a primitive $s^{th}$ root of unity in k.
  \end{enumerate}   Then 
the module $W_a=W_a(s, \fq)$ has constant Jordan type. 
\end{prop}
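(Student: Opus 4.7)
The first step is a reduction. Since $W_a = I^a/I^{a+2}$, we have $\Rad^2(kE)\cdot W_a = 0$, so every $u\in\CV\setminus\{0\}$ acts on $W_a$ with $u^2=0$. The Jordan type of $\alpha_U^*(W_a)$ at a line $U=\langle u\rangle$ therefore consists only of blocks of sizes $1$ and $2$, and is determined by the single number $\Dim(u\cdot W_a)$. Writing $u=\sum a_ix_i$, the action of $u$ on $W_a$ is multiplication by $w=\sum a_iz_i$; since $w\cdot I^{a+1}\subseteq I^{a+2}$, the quantity $\Dim(u\cdot W_a)$ equals the rank of the graded multiplication map $\mu_w\colon S_a\to S_{a+1}$, where $S_j := I^j/I^{j+1}$. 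So the proposition reduces to showing that $\rk(\mu_w)$ is independent of $w\in S_1\setminus\{0\}$.

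For Case~I, the condition $a+1\leq s-1$ ensures that the truncation relations $z_i^s=0$ do not affect the pieces $S_a$ and $S_{a+1}$; they coincide with the graded pieces $R_a$ and $R_{a+1}$ of the untruncated quantum polynomial ring $R=k\langle z_1,\ldots,z_n\rangle/(z_iz_j-q_{ij}z_jz_i)$. Since $R$ is an iterated Ore extension of $k$, it is a Noetherian domain, so multiplication by any nonzero $w\in R_1$ is injective. This gives $\rk(\mu_w)=\Dim S_a=\binom{n+a-1}{a}$, independent of the choice of $w$.

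For Case~II, Lemma~\ref{le:binom} applied to $w=\sum a_iz_i$ yields $w^s=\sum a_i^sz_i^s=0$ in $S$. The plan is to show that $S$ is graded-free as a left $k[w]/(w^s)$-module with a multiset of generator degrees that does not depend on $w\in S_1\setminus\{0\}$; granting this, the rank of $\mu_w$ on each graded piece is determined by the fixed multiset. For $w=z_1$ the freeness and multiset are transparent from the PBW basis of $S$: the set $\{z_2^{b_2}\cdots z_n^{b_n}:0\leq b_i\leq s-1\}$ is a free graded generating set, with generators in degrees $b_2+\cdots+b_n$. For general $w$, I would invoke Theorem~\ref{semicont}: the function $w\mapsto\Dim(\mu_w(S_a))$ is lower semi-continuous, which combined with a Hilbert-series / dimension-counting argument comparing to $w=z_1$ should force constancy.

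The main obstacle is this uniformity step in Case~II. The $\GL_n$-equivariance approach of Proposition~\ref{sym-algs} is unavailable because the quantum relations $z_iz_j=\zeta z_jz_i$ are not $\GL_n$-invariant for $\zeta\neq 1$. Instead the idea is to exploit the central subring $Z=k[z_1^s,\ldots,z_n^s]\subset R$ (central since $\zeta^s=1$) and the PBW-freeness of $R$ as a $Z$-module of rank $s^n$, combined with the observation that $w^s$ is a nonzero element of $Z$ (using $(s,p)=1$ to ensure $a_i^s\neq 0$ whenever $a_i\neq 0$). From these one hopes to descend through the specialization $z_i^s\mapsto 0$ to obtain a $k[w]/(w^s)$-module structure on $S$ whose graded free generators can be chosen uniformly in $w$.
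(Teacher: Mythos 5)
Your reduction to the constancy of $\rk(\mu_w)$ is correct, and your Case~I argument is complete: for $a+1\leq s-1$ the two-sided ideal $(z_1^s,\ldots,z_n^s)$ lives in degrees $\geq s$, so $S_a$, $S_{a+1}$ and the multiplication map between them coincide with those of the untruncated quantum affine space $R$, which is a domain (an iterated Ore extension); hence $\mu_w$ is injective and has full rank $\dim S_a$ for every $w\neq 0$. This is a genuinely different, and arguably cleaner, route than the paper's, which instead obtains injectivity from an explicit monomial basis adapted to $w$.

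Case~II, however, contains a real gap, and you have located it yourself: the whole argument hinges on $S$ being graded-free over $k[w]/(w^s)$ for \emph{every} nonzero $w\in S_1$, and this is never proved. Neither proposed repair closes it. Lower semicontinuity only shows that the locus where $\rk(\mu_w)$ is maximal is open; it cannot by itself exclude a drop in rank at special $w$, and the Hilbert-series count pins down the generator degrees only \emph{after} freeness is known. (That some input beyond formal semicontinuity is needed is illustrated by Example~\ref{ex:nonconst}, where for general $q$ the rank genuinely jumps on a closed subvariety.) The central-subring idea is left at the level of ``one hopes to descend,'' and descending freeness through the specialization $z_i^s\mapsto 0$ is exactly the nontrivial step. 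The paper supplies the missing freeness directly: choosing $i$ with $a_i\neq 0$, it replaces the generators $z_1,\ldots,z_n$ of $I$ by $\ell,z_1,\ldots,\widehat{z_i},\ldots,z_n$ and shows, by the same induction as in the proof of Proposition~\ref{W-eip}, that the ordered monomials $\ell^{v}z_1^{v_1}\cdots z_{i-1}^{v_{i-1}}z_{i+1}^{v_{i+1}}\cdots z_n^{v_n}$ with all exponents at most $s-1$ form a $k$-basis of $W_a$; since $\ell^s=0$ by Lemma~\ref{le:binom}, the kernel of multiplication by $\ell$ is spanned by exactly the basis monomials with $v=s-1$ together with the top graded piece, and their number is visibly independent of $\ell$. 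To complete your write-up you must prove this adapted-monomial-basis statement (equivalently, the graded freeness over $k[\ell]/(\ell^s)$); as written, Case~II is asserted rather than established.
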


\begin{proof} To prove that $W_a$ has constant Jordan type, we need to
show that for every non-trivial $n$-tuple $(a_1, \ldots, a_n) \in k^n$,
the Jordan type of the element $u = a_1x_1+\cdots + a_nx_n$ as an
operator on $W_a$ is the same.
Since $u^2$ acts trivially by construction of $W_a$, 
we just need to show that the rank of $u$ is constant. 

Let $\ell = \sum\limits_{i=1}^n a_iz_i \in S$.   
Choose some $i$ so that $a_i \neq 0$. Then $I=\Rad(S)$ is generated
by  the elements
$\ell, z_1, \dots, z_{i-1},z_{i+1}, \dots, z_n$.
By an argument as in the proof of Proposition \ref{W-eip}, 
we have that $W_a$ has a basis as a $k$-vector 
space consisting of 
the classes modulo $I^{a+2}$ of the monomials
\[
\ell^vz_1^{v_1} \ldots z_{i-1}^{v_{i-1}}
z_{i+1}^{v_{i+1}} \ldots z_n^{v_n}
\]
for $0 \leq v, v_i \leq s-1$, and $(v + \sum_{j \neq i} v_i) \in \{a, a+1\}$ 
under either one of  our two assumptions.
By the definition of the action, $u$ acts of $W_a$ via multiplication by $\ell$. 
We compute the kernel of the action of $u$ on $W_a$ in our two cases. 

\vspace{0.1in}
I. Assume $a<s-1$. In this case, the kernel of $u$ is precisely $\Rad(W_a)$ since multiplication 
by $\ell$ does not annihilate any linear combination of the monomials  $\ell^vz_1^{v_1} \cdots z_{i-1}^{v_{i-1}}
z_{i+1}^{v_{i+1}} \cdots z_n^{v_n}$ with $v + \sum_{j \neq i} v_i =a$. Hence, $W_a$ has constant Jordan type.  

\vspace{0.1in}
II. Now suppose $q_{ij} = \zeta$ for $1\leq i<j\leq n$.  Since we also assume $(s,p)=1$, 
Lemma~\ref{le:binom} implies that $\ell^s=0$ in this case.

The kernel of multiplication by $\ell$ on $W_a$ is 
precisely the space spanned by those monomials $\ell^vz_1^{v_1} \cdots z_{i-1}^{v_{i-1}}
z_{i+1}^{v_{i+1}} \cdots z_n^{v_n}$
for which either $v + \sum_{j \neq i} v_i = a+1$ or 
$v = s-1$. 
Since the number of such monomials is again independent of the choice of $\ell$
we conclude that $W_a$ has constant Jordan type. 
\end{proof}

The next example illustrates that the condition of Proposition~\ref{quantum-cjt} requiring that $\zeta$ is the $s$-th root of unity is crucial. 

\begin{ex} 
\label{ex:nonconst} 
Let $n=3$, $s=2$, and $a=1$. Let $kE = k[x,y,z]/(x^p, y^p, z^p)$. Pick $q \in k^*$ and let $q_{ij} =q$ for any $i<j$. 
Let $\tilde x, \tilde y, \tilde z$  be the algebraic generators of $S$, that is, $S = k\langle \tilde x, \tilde y, \tilde z\rangle/(\tilde x^2, \tilde y^2, \tilde z^2, \tilde x\tilde y - q\tilde y\tilde x, \tilde x\tilde z - q\tilde z\tilde x, \tilde y\tilde z - q\tilde z\tilde y)$.  Then $W_1(2,q)$ can be depicted as follows:
\[
\xymatrix{
&& \tilde z \ar[dl]_y \ar[dr]^x &&\\
& \tilde y\tilde z && \tilde x \tilde z \\
& \tilde y \ar[u]^{qz} \ar[dr]_x && \tilde x \ar[u]_{qz} \ar[dl]^{qy} \\
&& \tilde x\tilde y \\
}
\]

For $q=-1$, this module is a special case of modules in Proposition \ref{sym-algs}(2). In particular, it has constant Jordan type.
We show that for $q\not = -1$, $W_1(2,q)$ fails to have constant Jordan type. 
To achieve this, we compute the non-maximal support variety of $W_1(2,q)$ for a generic $q$.

Fix the following order of the linear generators of $W_1(2,q)$: $\tilde x, \tilde y, \tilde z, \tilde x \tilde y, \tilde x \tilde z, \tilde y \tilde z$.  Let $[a:b:c] \in \bP^2$ and let $\ell = ax+by+sz \in \CV$ be a generator of the corresponding line in $\Rad kE$.   
 The matrix of $\ell$ as an endomorphism of $W_1(2,q)$ with respect to our fixed basis  has the 
form 
\[
\ell \ \leftrightarrow \ \begin{pmatrix} 0 & 0 \\ A_{\ell} & 0 \end{pmatrix}
\]
where for $x, y$ and $z$ we have 
\[
A_x = \begin{pmatrix} 0& 1& 0 \\ 0 & 0 & 1 \\ 0 & 0 & 0 \end{pmatrix}, \quad
A_y = \begin{pmatrix} q& 0& 0 \\ 0 & 0 & 0 \\ 0 & 0 & 1 \end{pmatrix}, \quad
A_z = \begin{pmatrix} 0& 0& 0 \\ q & 0 & 0 \\ 0 & q & 0 \end{pmatrix}.
\]
For the general element $\ell = ax+by+cz$ we get 
\[ 
A_\ell = \begin{pmatrix} qb& a& 0 \\ qc & 0 & a \\ 0 & qc & b \end{pmatrix}.
\]
The determinant of $A_\ell$ is $q(q+1)abc$. Hence, for $q \neq -1$ 
the  nonmaximal support variety is a union of three lines: $a=0, b=0, c=0$.
In particular, $W_1(2, q)$ has constant  Jordan type if and only if $q=-1$. 

We finish this example recording the properties of radicals and socles of $W_1(2,q)$. 
First, since the condition $a \geq (n-r)(s-1)$ is satisfied for $r=2$ (we get $1 \geq (3-2)(2-1)$), 
Proposition~\ref{W-eip} implies that $W_1(2,q)$ has constant $2$-radical type.  Since the module $W_1(2,q)$ 
is self-dual,   it also has constant $2$-socle type.   So, in particular, we conclude that for $q \not = -1$, 
$W_1(2,q)$   does not have constant Jordan type but has constant $2$-radical and $2$-socle type.  
\end{ex}

In the following example, we construct a module of 
the form $W_a(s, q)$   that has constant Jordan type and constant $2$-socle type 
but fails to have constant $2$-radical type.  It follows that the dual of such 
$W_a(s,q)$ has constant Jordan type, constant 2-radical type, but 
not constant 2-socle type. 

\begin{ex}
\label{benzol}  Let $n=3$, $s\geq 3$, $a=1$. Let $q\not = 0$ be a quantum parameter, and set $q_{ij} = q$ for $1\leq i<j\leq 3$. 
Let $M_q = W_1(s, \fq)$. Let $kE = k[x_1, x_2,x_3]/(x_1^p, x_2^p,x_3^p)$ and $S = k\langle z_1, z_2, z_3\rangle/(z_i^s=0, z_iz_j=q_{ij}z_jz_i)$. 
Here is a depiction of $M_q$:
$$
\xymatrix{
z_1^2  && z_1z_2&& z_2^2  \\
& z_1\ar[ul]_{x_1}\ar[ur]^{x_2}_{(q)}\ar[d]_{x_3}^{(q)} && z_2 \ar[ul]_{x_1}\ar[ur]^{x_2}  \ar[d]^{x_3}_{(q)}\\
& z_1z_3 && z_2z_3 \\
&& z_3 \ar[d]_{x_3} \ar[ul]^{x_1} \ar[ur]_{x_2}\\
&& z_3^2 
} 
$$ 
Here, an arrow marked with $(q)$ means that the action is twisted by $q$. For example, $x_3\circ z_1 = q^{-1}z_1z_3$. 

\vspace{0.1in}
We make several observations about $M_q$.  
\vspace{0.1in}

I. By Proposition~\ref{quantum-cjt}, $M_q$ has constant Jordan type. 
\vspace{0.1in}

II. The module $M_q$ has constant $2$-Socle type.  Indeed, let $a=3s-5$ and consider 
the module $W_a(s, \fq)$ for arbitrary non-zero parameters $\fq=(q_{ij})$: 
\[\xymatrix@=3mm{
z_1^{s-3}z_2^{s-1}z_3^{s-1} \ar[dr]^{x_1} && z_1^{s-2}z_2^{s-2}z_3^{s-1} \ar[dl]^{x_2}_{(?)} \ar[dr]^{x_1} && z_1^{s-1}z_2^{s-3}z_3^{s-1} \ar[dl]^{x_2} \\
& z_1^{s-2}z_2^{s-1}z_3^{s-1} && z_1^{s-1}z_2^{s-2}z_3^{s-1} \\
&&&\\
& z_1^{s-2}z_2^{s-1}z_3^{s-2} \ar[uu]^{x_3}_{(?)} \ar[dr]^{x_1} && z_1^{s-1}z_2^{s-2}z_3^{s-2} \ar[uu]^{x_3}_{(?)} \ar[dl]^{x_2}_{(?)} \\
&& z_1^{s-1}z_2^{s-1}z_3^{s-2} \\
&&&\\
&& z_1^{s-1}z_2^{s-1}z_3^{s-3} \ar[uu]^{x_3}
}
\]

The action along the arrows marked with $(?)$ is twisted by some monomials on $q_{ij}$. By choosing the parameters $q_{12}, q_{23}$ and $q_{13}$ appropriately,  we can arrange the twists so that 
$$W_{3s-5}(s, \fq) \simeq M^\#_q.$$ 
Since $3s-5>(3-2)(s-1)$ for $s \geq 3$, Proposition~\ref{W-eip} implies that $W_{3s-5}(s, \fq)$ has constant $2$-radical type.  By duality, $M_q$ has constant $2$-socle type. 
\vspace{0.1in}

III. Proposition~\ref{W-eip} does not apply to $2$-images of $M_q$  since the parameters $n=3$, $r=2$, $s\geq 3$, and $a=1$ fail to satisfy the condition $a \geq (n-r)(s-1)$.
In fact, we proceed to show that $M_q$ does not have constant $2$-radical type {\it unless} $q=1$.  

Let $U \in \Grass(2, \CV)$ be a $2$--plane in the three-dimensional space $\CV$. Let  

\vspace{0.1in}
\centerline{\begin{tabular}{ll}$u_1 = a_1x_1+a_2x_2+a_3x_3$\\
$u_2 =  b_1x_1+b_2x_2+b_3x_3$ 
\end{tabular}} 
\vspace{0.1in}

\noindent
be a basis of $U$, and  let 

\vspace{0.1in}
\centerline{\begin{tabular}{ll}$\ell_1 = a_1z_1+a_2z_2+a_3z_3$\\
$\ell_2 =  b_1z_1+b_2z_2+b_3z_3$ 
\end{tabular}} 

\vspace{0.1in} \noindent
be the corresponding elements in $S = k\langle z_1, z_2, z_3\rangle/(z_i^s, z_iz_j-q_{ij}z_jz_i)$. 

We fix the following order of the basis of $M_q$: $z_1, z_2, z_3$ for $M_q/\Rad(M_q)$ and 
$z_1z_2,z_1z_3, z_2z_3,z_1^2,z_2^2,z_3^2$
for $\Rad(M_q)$. 
Since $\Rad_U(M_q)\subset \Rad(M_q)$, we work inside $\Rad(M_q)$. 
We have
\vspace{0.1in}

\centerline{\begin{tabular}{lll}
$\ell_1 z_1 = a_1z_1^2+qb_1z_1z_2+qc_1z_1z_3$\\
$\ell_1 z_2 = a_1z_1z_2+b_1z_2^2+qc_1z_2z_3$\\
$\ell_1 z_3 = a_1z_1z_3+b_1z_2z_3+c_1z^2_3$\\
\end{tabular}} 

\vspace{0.1in} \noindent
and similarly for $\ell_2$. Hence, with respect to our fixed basis, 
$\Rad_U(M_q)$ is generated by the following six vectors:
$$R=\begin{pmatrix}
qb_1&a_1&0&qb_2&a_2&0\\
qc_1&0&a_1&qc_2&0&a_2\\
0&qc_1&b_1&0&qc_2&b_2\\
a_1&0&0&a_2&0&0\\
0&b_1&0&0&b_2&0\\
0&0&c_1&0&0&c_2\end{pmatrix}
$$
To compute the nonmaximal $2$-radical support variety of $M_q$, one would need to calculate the rank of this matrix for different parameters $a_i, b_i,c_i$. We leave such calculations to the Appendix and just show here that the rank of this matrix is not constant. 

First, take $u_1=x_1$ and $u_2=x_2$.  In this case we see from the picture that $\alpha_{U}^*(M_q)$ for $U = \langle x_1, x_2 \rangle$ splits as a direct sum of three ``zig-zag" modules: 
\[
\bu \quad \oplus \xymatrix{ &\bu \ar[dr]\ar[dl] & \\ \bu &&\bu} \oplus \xymatrix{ &\bu \ar[dr]\ar[dl] && \bu \ar[dr]\ar[dl] & \\ \bu &&\bu && \bu}.
\]
Hence, $\Dim \Rad_U(M_q)=5$. 

Second, take $u_1=x_1+x_2, u_2=x_2+x_3$. Hence, $a_1=b_1=1$, $b_2=c_2=1$ and $c_1=a_1=0$. In this case,
$$R=\begin{pmatrix}
q&1&0&q&0&0\\
0&0&1&q&0&0\\
0&0&1&0&q&1\\
1&0&0&0&0&0\\
0&1&0&0&1&0\\
0&0&0&0&0&1\end{pmatrix}.
$$
We have 
$\det R = q(1-q)$.
Hence, if $q\not = 1$, the rank of $R$ is $6$, and, therefore, for the $2$-plane $U$ spanned by $u_1, u_2$, we have 
$\Dim \Rad_U(M_q)=6$. We conclude that $M_q$ does  not have constant $2$-radical rank. 
\end{ex}

We give another example of the same phenomenon.  This time, we construct 
a module which has constant Jordan type, constant $2$-radical type but does 
not have constant $2$-socle type. 
\begin{ex} \label{ex-2-rad}
Assume that $p > 3$. 
Let $n = 4$,  $s = 3$ and $q_{i,j} = \zeta_3$
for all $1 \leq i < j \leq r$, where $\zeta = \zeta_3 \in k$ is a primitive
third root of unity.  Consider the module 
$$M= W_{6}(3, \zeta_3) = I^6/I^8.$$
By Proposition~\ref{quantum-cjt}, $M$ has constant
Jordan type. Since $6 > (4-2)(3-1)$, Proposition~\ref{W-eip} implies that $M$ has  
constant 2-radical type.  
We wish to show that 
$M$ fails to have constant 2-socle type. 

The module $M$ has dimension 14, and has a basis consisting of
the classes of the monomials of the form $z_1^az_2^bz_3^cz_4^d$ with
$0 \leq a,b,c,d \leq 2$ and where $a+b+c+d$ is either 6 or 7.
The radical of $M$, which is spanned by the monomials with 
$a+b+c+d = 7$, has dimension 4. Because the module has the 
equal $1$--images property by \ref{W-eip}, the image of multiplication by 
any nonzero $u = a_1x_1 + \dots + a_4x_4$ is the entire radical. 
Consequently the Jordan type of any such $u$ consists of 
4 blocks of size 2, and 6 blocks of size 1. Also, the dimension
of the kernel of multiplication by  $u$ is 10. 

Assume first that $U \subseteq \CV$ is the subspace spanned 
by $x_1$ and $x_2$. Then $\Soc_U(M)$ is the set of all elements
annihilated by multiplication by both $x_1$ and $x_2$. Clearly,
the monomials $z_1^2z_2^2 z_3^2$, $z_1^2z_2^2 z_3z_4$, 
and $z_1^2z_2^2 z_4^2$ are in $\Soc_U(M)$. Moreover,  
$\Rad(M) \in \Soc_U(M)$. From this we see that $\Soc_U(M)$
has dimension at least 7, and further investigation shows that 
the dimension is  exactly 7.

Next suppose that $U$ is the subspace spanned by the 
elements $u_1 = x_1+x_2$ and $u_2 = x_1+x_3$. We claim
that the dimension of $\Soc_U(M)$ is 6. Let $K_i$ denote
the kernel of multiplication by $u_i$ on $M$. Then $\Rad(M)$
is in both $K_1$ and $K_2$. In addition, the elements 
\[
z_1^2z_2^2 z_3^2, \ \ \
z_1^2z_2^2 z_3z_4, \ \ \
z_1^2z_2^2 z_4^2, \ \ \
z_1^2z_3^2 z_4^2,  \ \ \
z_1^1z_2^2 z_3^2z_4 -\zeta z_1^2z_2^1 z_3^2z_4 , \ \ \
z_1^2z_2^1 z_3^2z_4,
\]
\[
z_1^1z_2^2 z_3^1z_4^2 - \zeta z_1^2z_2^1 z_3^1z_4^2, \qquad
z_1^2z_3^2 z_4^2+ z_2^2z_3^2 z_4^2 - \zeta^2z_1^2z_2^1 z_3^1z_4^2, 
\]
\[
z_1^1z_2^1 z_3^2z_4^2 -  z_1^2z_2^1 z_3^1z_4^2, \qquad
z_1^1z_2^2z_3^1 z_4^2 - z_2^2z_3^2 z_4^2 - \zeta^2z_1^2z_1^2 z_3^2z_4^2 
\]
are in $K_1 +K_2$. That is, the reader may check that each of the above elements
is annihilated by either $u_1$ or by $u_2$. Moreover, it is straightforward to check that
these elements are linearly independent and independent of $\Rad(M)$. 
Therefore, $K_1+K_2$ has dimension 14, whereas each $K_i$ has dimension 10.
Hence $\Dim \Soc_U(M) = \Dim(K_1 \cap K_2) = 6$. 
\end{ex}

In the appendix, we calculate some nonminimal 
$r$-socle support varieties for modules of the form $W_a$.  
Whereas calculations in the two examples above were simple enough 
to do by hand, the calculations left in the appendix use 
computational software.


\section{Radicals of $L_\zeta$-modules}
\label{se:Lzeta}

As in previous sections, $E$ is an elementary abelian 
$p$--group of rank $n$ and  $\CV \subset \Rad (kE)$ is chosen as in (\ref{identify}).  
For a homogeneous cohomology class $\zeta \in \HHH^m(E,k)$,
we recall that the module $L_\zeta$ is defined to be
 \begin{equation}
 \label{lz}
L_\zeta \ \equiv \ \Ker\{ \zeta: \Omega^m(k) \to k \} 
\end{equation}
Here, we have abused notation by using
$\zeta: \Omega^m(k) \to k$ also to denote the map representing  $\zeta \in \HHH^m(G,k)$.
As we see in this section, the  $L_\zeta$-modules give good examples of
behavior of radical and socle ranks.  

If $\alpha: C = k[t_1,\ldots,t_r]/(t_i^p) \to kE$ is a flat map, we write $\Omega^m(k_C)$ 
for the $m^{th}$ Heller translate of the trivial $C$-module, thereby distinguishing this 
Heller translate from the restriction $\alpha^*(\Omega^m(k))$ of the $kE$-module
(which is stably equivalent to $\Omega^m(k_C)$).

\vspace{0.05in} 
We employ the following notation: 
\begin{equation}
\HHH^\bu(E,k) \ = \begin{cases} \HHH^*(E,k)  & \text{  if  }  p = 2, \\
\HHH^{\rm even}(E,k) & \text{  otherwise}.
\end{cases} 
\end{equation}
Thus, $\HHH^\bu(E,k)$ is a commutative algebra, and
$\Proj \HHH^\bu(E,k) \simeq \bP^{n-1} \simeq \Grass(1, \CV)$.

\vspace{0.05in}
For our analysis of the behavior of radicals of $L_\zeta$,
we need to exploit a somewhat finer structure 
of the cohomology ring of $kE=k[x_1, \ldots, x_n]/(x_i^p)$ and of the restriction map on cohomology.

Let $\xymatrix{f: k \ar[r]^-{ a \mapsto a^p}& k}$ be  the Frobenius map. For a k-vector space $V$ 
we use the standard notation $V^{(1)}$ for the {\it Frobenius twist} of $V$, a vector space obtained 
via base change $f: k \to k$
\[ V^{(1)} = V \otimes_f k \] 
If $R$ is a (finitely generated commutative) $k$-algebra, then we have a map of $k$-algebras 
\[R^{(1)} \to R\] 
which sends $ x \otimes a$ to $ a^px$. 
Hence, there is an induced map of $k$-varieties
\[F: \Spec A \to (\Spec A)^{(1)} \stackrel{def}{=} \Spec A^{(1)}.\]
The same construction applies globally. If $X$  is any $k$-variety, we obtain a Frobenius twist $X^{(1)}$ 
and a map of $k$-varieties
\[F: X \to X^{(1)}\] 
Moreover,  if $X$ is defined over $\mathbb F_p$, 
then we have a natural identification $X^{(1)} \simeq X$ and Frobenius 
becomes a self-map
\[F_X: X \to X. \]   
We direct the reader to \cite[I.2]{Jan} and \cite[\S1]{FS} for a detailed discussion of the properties of the 
Frobenius twist.  

We apply the above discussion to the algebra $S^*(V^\#)$,
so that the $k$-points of $\Spec S^*(V^\#)$ constitute the vector space $V$. 
Using the natural $k$-algebra isomorphisms 
$S^*((V^{(1)})^\#) \simeq (S^*(V^\#))^{(1)} = S^*(V^\#) \otimes_f k$ 
(see \cite[\S1]{FS}), we get a map of varieties  over $k$
\[ F: V \to V^{(1)}. \]
Suppose  that $V$ is given an $\mathbb F_p$-structure; in other words, $V$ is identified 
with $V_0 \otimes_{\mathbb F_p} k$ where $V_0$ is an 
$\mathbb F_p$-vector space. 
Then  we have a natural identification $V^{(1)} \simeq V$, 
and the Frobenius map becomes a self-map
\[F=F_{V}: V \to V
\] 
If we pick a basis $\{e_1, \ldots, e_n\}$ of $V_0$, then the Frobenius map is given explicitly via the formula 
\[\xymatrix{F_V: V \ar[r] & V\\
a_1e_1+ \cdots +a_ne_n \ar@{|->}[r]& a^p_1e_1+ \cdots + a^p_ne_n.}
\]  
Since $k$ is assumed to be algebraically closed (hence, perfect), the Frobenius map is a bijection on $V$.

The following description of the cohomology of $A=k[x_1,\ldots,x_n]/(x_i^p)$ 
can be found in \cite[I.4.27]{Jan}. Note that $V^{(1)}$ has a natural structure of a 
$\GL_n$--module given by pulling back the standard representation of $\GL_n=\GL(V)$ on $V$ via the 
Frobenius map $F: \GL_n \to \GL_n$.  

\begin{prop}
\label{idd}  Let  $V$ be an $n$-dimensional $k$-vector space
with a basis $\{x_1, \ldots, x_n\}$, and let  $A \ = \ S^*(V)/(v^p, v\in V)$.  
There is an isomorphism of graded $\GL_n$-algebras  
 
 \vspace{0.1in} 
\centerline{\begin{tabular}{cc}
$\HHH^*(A,k) \simeq S^*(V^\#)$ & for $p =2$,\\[5pt]
$\HHH^*(A,k) \simeq S^*((V^{(1)})^\#[2]) \otimes \Lambda^*(V^\#)$ & for $p > 2$,
\end{tabular}}
\vspace{0.1in}

\noindent
where $(V^{(1)})^\#[2]$ is the vector space $(V^{(1)})^\#$ placed in degree $2$.

Identifying $kE$ with $S^*(\CV)/(v^p, v\in \CV)$, we conclude that 
\begin{equation}
\HHH^*(E,k) \ = \begin{cases} k[\zeta_1, \dots, \zeta_n]  & \text{  if  }  p = 2, \\
k[\zeta_1, \dots, \zeta_n] \otimes \Lambda(\eta_1, \dots, \eta_n) & \text{  otherwise},
\end{cases} 
\end{equation}
where $\deg(\zeta_i)=1$ if $p=2$ and $\deg(\zeta_i)=2$ for $p>2$. Hence, $k[\zeta_1, \ldots, \zeta_n]$ 
is the homogeneous coordinate ring of $\Proj \HHH^\bu(E,k) = \Proj S^*(\CV^\#) \simeq \bP^{n-1}$
for $p=2$ and $\Proj (\HHH^\bu(E,k)_{red}) = \Proj S^*((\CV^{(1)})^\#) \simeq \bP^{n-1}$ 
(with $GL_n$ action twisted by Frobenius) for $p > 2$.

\end{prop}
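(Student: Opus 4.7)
The plan is to compute $\HHH^*(A, k)$ via a $\GL(V)$-equivariant minimal resolution of the trivial module $k$ over $A$, and to read off the $\GL(V)$-module structure on each cohomological degree from the weights appearing in the resolution. The underlying algebra isomorphisms are classical. For $p = 2$, $A = \Lambda^*(V)$ is the exterior algebra, so by Koszul duality $\HHH^*(A, k) \simeq S^*(V^\#)$, manifestly $\GL(V)$-equivariantly since the Koszul resolution of $k$ is built from tensor powers of $V$ with canonical differentials. For $p > 2$, the algebra structure $\HHH^*(A, k) = k[\zeta_1, \ldots, \zeta_n] \otimes \Lambda(\eta_1, \ldots, \eta_n)$ with $\deg(\zeta_i) = 2$ and $\deg(\eta_i) = 1$ reduces by K\"unneth to the case $n = 1$.

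The main content of the proof is the identification of the $\GL(V)$-module structure, in particular the Frobenius twist on the degree $2$ generators for $p > 2$. For $n = 1$, the minimal free resolution of $k$ has the form $\cdots \xrightarrow{x^{p-1}} A \xrightarrow{x} A \xrightarrow{x^{p-1}} A \xrightarrow{x} A \to k$, and admits a unique $\GL_1 = \GL(V)$-equivariant structure once $P_0 = A$ is given the standard action. Since $x \in V$ has weight $1$ and each differential must preserve weights, tracking yields that the generator of $P_{2j}$ carries weight $jp$ while the generator of $P_{2j+1}$ carries weight $jp + 1$. Dualizing, $\eta \in \HHH^1(A, k)$ has weight $-1$ (so $\eta$ spans $V^\#$) and $\zeta \in \HHH^2(A, k)$ has weight $-p$ (so $\zeta$ spans $(V^{(1)})^\#$), giving the stated isomorphism for $n = 1$.

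For general $n$, the full $\GL_n$-equivariance does not follow formally from K\"unneth, which respects only the diagonal torus and the $S_n$-action permuting tensor factors. I would instead construct a $\GL(V)$-equivariant Koszul-type resolution of $k$ over $A$ with $P_0 = A$, $P_1 = A \otimes V$, and $P_2 = A \otimes (V^{(1)} \oplus \Lambda^2(V))$; the $V^{(1)}$-summand captures the degree $2$ polynomial generators and arises from the relations $v^p = 0$ via the $\GL(V)$-equivariant $p$-th power map $V^{(1)} \to A$, $v \mapsto v^p$. The main obstacle is the careful construction and verification of this equivariant resolution, which I would ultimately defer to \cite[I.4.27]{Jan}: that reference handles the analogous cohomology computation for the Frobenius kernel $\mathbb{G}_{a(1)}^n \simeq \Spec A^\#$ and produces the Frobenius-twisted $\GL_n$-action from the short exact sequence $1 \to \mathbb{G}_{a(1)}^n \to \mathbb{G}_a^n \xrightarrow{F} \mathbb{G}_a^n \to 1$. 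The final assertion that $\Proj(\HHH^\bullet(E, k)_{\mathrm{red}}) \simeq \bP^{n-1}$ is then immediate: the reduced ring is polynomial on $n$ homogeneous generators (of degree $1$ for $p = 2$ and degree $2$ for $p > 2$), with $\GL(V)$ acting on the generators via $V^\#$ (resp.\ $(V^{(1)})^\#$) as claimed.
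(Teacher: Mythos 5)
Your proposal is correct and ends up in the same place as the paper: the paper gives no proof of this proposition at all, simply citing \cite[I.4.27]{Jan} for the computation together with the Frobenius-twisted $\GL_n$-structure, which is exactly the reference you defer to for the equivariant resolution in general rank. Your additional material (Koszul duality for $p=2$, and the weight bookkeeping on the minimal resolution for $n=1$ that forces $\HHH^2$ to have weight $-p$) is a correct and reasonable elaboration of why the twist appears, so there is nothing to repair.
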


The functoriality of the identifications of Proposition \ref{idd} immediately implies the following
corollary.

\begin{cor} 
\label{cor:twist}  
Let $U \subset V$ be an $r$-dimensional subspace with ordered basis $u_1, \ldots, u_r$, let 
$C = k[t_1, \ldots, t_r]/(t_1^p, \ldots, t_r^p)$, and let $\alpha: C \to A$ be a $k$-algebra 
map such that  $\{ \alpha(t_1)=u_1, \ldots, \alpha(t_r) = u_r \}$ is a basis for $U$. Then
there is a commutative diagram of $k$-algebras
\[\xymatrix{\HHH^\bu(A,k)_{\red} \ar[r]^\sim \ar[d]^{\alpha^*} & S^*((V^{(1)})^\#)\ar[d]\\
\HHH^\bu(C,k)_{\red}\ar[r]^\sim & S^*((U^{(1)})^\#)}
\]
for $p>2$  with the right vertical map induced by the Frobenius twist of 
the embedding $U \subset V$, and 
\[\xymatrix{\HHH^*(A,k)\ar[r]^\sim \ar[d]^{\alpha^*} & S^*(V^\#)\ar[d]\\
\HHH^*(C,k)\ar[r]^\sim & S^*(U^\#).}
\]
for $p=2$. 

Let \[\alpha_*: \Spec (\HHH^\bu(C,k)_{red}) \to \Spec (\HHH^\bu(A,k)_{red})\] 
be the map of $k$-varieties induced by $\alpha$.  
Then we have a commutative diagram of $k$-varieties 
\[\xymatrix{ (\Spec \HHH^\bu(C,k)_{red}) \ar[r]^{\alpha_*}\ar[d]^{\simeq}& 
(\Spec \HHH^\bu(A,k)_{red} )\ar[d]^{\simeq}\\
U^{(1)} \ar@{^(->}[r]& V^{(1)}
}
\]
for $p>2$ and 
\[\xymatrix{ \Spec \HHH^\bu(C,k) \ar[r]^{\alpha_*}\ar[d]^{\simeq}& \Spec \HHH^\bu(A,k) \ar[d]^{\simeq}\\
U \ar@{^(->}[r]& V
}
\]
for $p=2$.
\end{cor}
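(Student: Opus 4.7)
My plan is to deduce the corollary directly from the naturality of the identifications in Proposition~\ref{idd}. The cohomology identifications in Proposition~\ref{idd} are functorial in the underlying vector space $V$, so the task reduces to checking that, under these identifications, the restriction map $\alpha^*$ corresponds to the map induced by the inclusion $U \hookrightarrow V$ (suitably twisted by Frobenius when $p > 2$). Since the corollary has four pieces (two cohomology diagrams and two variety diagrams), I would prove the cohomology diagrams first and then deduce the variety diagrams by applying $\Spec$ together with the standard identifications of $\Spec S^*(W^\#)$ with $W$.

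First I would treat the case $p=2$. The identification $\HHH^*(A,k)\simeq S^*(V^\#)$ of Proposition~\ref{idd} is compatible with morphisms of algebras of the form $S^*(V)/(v^2,v\in V)$ coming from linear inclusions. Writing out the bar complex (or using a Koszul resolution of $k$ over $A$), a degree one cohomology class is represented by a linear functional on $V$ with the generating class $\zeta_i$ dual to $x_i$. Because $\alpha(t_j) = u_j \in V$, the restriction $\alpha^*$ on a linear functional $\varphi:V \to k$ returns $\varphi|_U : U \to k$, which is precisely the map $V^\# \to U^\#$ induced by $U \hookrightarrow V$. Since both $\HHH^*(A,k)$ and $\HHH^*(C,k)$ are polynomial algebras on these generators and $\alpha^*$ is a ring map, this degree-one identification extends to the full commutative square.

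For $p > 2$, the identification of Proposition~\ref{idd} involves the Frobenius twist because the polynomial generators live in cohomological degree $2$ and arise as Bocksteins of the exterior generators (or, equivalently, from the $p$-fold Massey product of the degree one class). Concretely, under the standard construction of the cohomology of a truncated polynomial algebra, the degree $2$ class corresponding to $x_i$ is the dual of $x_i$ viewed in $(V^{(1)})^\#$, i.e., the dual transforms by Frobenius because $v \in V$ passing to $v^p$ is the source of the degree $2$ class. Once this is noted, functoriality along $\alpha$ runs exactly as before: on the exterior part $\Lambda^*(V^\#)$ one gets $V^\# \to U^\#$ as in the $p=2$ case, and on the polynomial part $S^*((V^{(1)})^\#[2])$ one gets $(V^{(1)})^\# \to (U^{(1)})^\#$ induced by the Frobenius twist of $U \hookrightarrow V$. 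Passing to the reduced quotient $\HHH^\bu(-,k)_{\red}$ kills the nilpotent exterior part and leaves precisely the first diagram of the corollary.

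Finally, both variety diagrams follow by applying $\Spec$ to the cohomology diagrams and using the tautological identifications $\Spec S^*(V^\#) \simeq V$ and $\Spec S^*((V^{(1)})^\#) \simeq V^{(1)}$, and similarly for $U$. The main point requiring care is the Frobenius twist for $p > 2$: one must verify that the $2$-dimensional generators in $\HHH^*(A,k)$ really are dual to $V^{(1)}$ rather than $V$, so that restriction along $\alpha$ produces the inclusion $U^{(1)} \hookrightarrow V^{(1)}$ rather than $U \hookrightarrow V$. I expect this bookkeeping of the Frobenius twist to be the only subtle step; once it is in place, the rest is a direct invocation of naturality.
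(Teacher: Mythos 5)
Your proposal is correct and follows essentially the same route as the paper, which simply invokes the functoriality of the identifications in Proposition~\ref{idd}; you have merely spelled out that functoriality explicitly (degree-one generators for $p=2$, the Frobenius-twisted degree-two generators for $p>2$, then applying $\Spec$). No gaps.
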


The following proposition is our key tool in determining whether the modules $L_{\zeta}$ 
have constant $r$-$\Rad^j$-rank.  In contrast to most of the results of this paper, this proposition
is proved for a general finite group scheme. 

\begin{prop} 
\label{dim}
Let $G$ be a  finite group scheme, and let $\zeta$  be a non-zero cohomology class of degree $m$.
Then 
$$
\Dim \Rad(\Omega^m(k)) - \Dim \Rad(L_\zeta) \ = \
\Dim \Ker \{\cdot \zeta: \HHH^1(G,k) \to \HHH^{m+1}(G,k)\}.
$$
In particular, if $\zeta: \HHH^1(G,k) \to \HHH^{m+1}(G,k)$ is injective, then 
\[\Rad^j(L_\zeta)=\Rad^j(\Omega^m(k))
\] for any $j>0$.
\end{prop}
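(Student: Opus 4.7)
The plan is to extract the dimensional information from the defining short exact sequence
\[
0 \to L_\zeta \to \Omega^m(k) \xrightarrow{\zeta} k \to 0
\]
by applying $\Hom_{kG}(-,k)$ and reading off $\dim\Hom_{kG}(L_\zeta,k)$ from the resulting long exact sequence. The key observation is the elementary identity $\dim\Rad(M) = \dim M - \dim\Hom_{kG}(M,k)$ which holds whenever the semisimple quotient $M/\Rad(M)$ is a direct sum of copies of the trivial module. Thus the question of computing $\dim\Rad(L_\zeta) - \dim\Rad(\Omega^m(k))$ reduces to computing $\dim\Hom_{kG}(L_\zeta,k) - \dim\Hom_{kG}(\Omega^m(k),k)$, plus a single dimension $-1$ coming from $\dim L_\zeta = \dim\Omega^m(k)-1$.

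Next, I identify the terms in the resulting long exact sequence. Since $\Omega^m(k)$ has no projective summand, $\Hom_{kG}(\Omega^m(k),k) = \underline{\Hom}_{kG}(\Omega^m(k),k) = \HHH^m(G,k)$, and the dimension shift $\Ext^1_{kG}(\Omega^m(k),k) \simeq \HHH^{m+1}(G,k)$ gives the four-term exact sequence
\[
0 \to k \xrightarrow{\;\zeta\;} \HHH^m(G,k) \to \Hom_{kG}(L_\zeta,k) \to \HHH^1(G,k) \xrightarrow{\,\cdot\zeta\,} \HHH^{m+1}(G,k).
\]
A standard check shows that the rightmost connecting map is the Yoneda product with $\zeta$, using naturality of the identification $\Ext^\bullet(\Omega^m(k),k) \simeq \HHH^{m+\bullet}(G,k)$. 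Taking alternating dimensions in this sequence yields
\[
\dim\Hom_{kG}(L_\zeta,k) = \dim\HHH^m(G,k) - 1 + \dim\Ker\{\cdot\zeta: \HHH^1(G,k) \to \HHH^{m+1}(G,k)\}.
\]
Subtracting this from $\dim\Hom_{kG}(\Omega^m(k),k) = \dim\HHH^m(G,k)$ and combining with $\dim L_\zeta - \dim\Omega^m(k) = -1$ yields the main identity.

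For the final assertion, the tautological inclusion $L_\zeta \hookrightarrow \Omega^m(k)$ gives $\Rad(L_\zeta) \subseteq \Rad(\Omega^m(k))$ as $kG$-submodules. Under the injectivity hypothesis, the displayed formula forces these two submodules to have the same dimension, hence they coincide as subspaces, i.e., $\Rad(L_\zeta) = \Rad(\Omega^m(k))$. Iterating $\Rad$ on both sides then gives $\Rad^j(L_\zeta) = \Rad^j(\Omega^m(k))$ for every $j > 0$ by a trivial induction.

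The principal obstacle is justifying $\dim\Rad(M) = \dim M - \dim\Hom_{kG}(M,k)$ in the stated generality of a finite group scheme $G$; this requires knowing that the top of $M$ contains only trivial simple factors (equivalently, that $M$ lies in a block whose only simple is $k$). For modules like $\Omega^m(k)$ and $L_\zeta$ built from the trivial module this is automatic in the settings of interest (e.g., $p$-groups, connected infinitesimal groups, or more generally whenever we restrict to the principal block with a unique trivial simple); the identification of the connecting map with $\cdot\zeta$ is the only homological point that requires care, but it is a direct consequence of how $\zeta$ represents the class in $\Ext^m_{kG}(k,k)$.
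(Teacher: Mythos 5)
Your argument is correct, and it reaches the dimension formula by a genuinely different (and shorter) route than the paper. The paper passes to the induced sequence $0 \to L_\zeta/\Rad(L_\zeta) \to \Omega^m(k)/\Rad(L_\zeta) \to k \to 0$ and constructs an explicit linear isomorphism $(\Rad(\Omega^m(k))/\Rad(L_\zeta))^\# \to \Ker\{\cdot\zeta\}$ by pushing this sequence out along maps $\phi\circ\rho$ (where $\rho$ splits the inclusion of $\Rad(\Omega^m(k))/\Rad(L_\zeta)$ into the semisimple head of $L_\zeta$), proving $\gamma_f\zeta=0$ because the composite factors through the middle term of the extension, and then verifying injectivity and surjectivity of the assignment by hand. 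Your five-term exact sequence is that same computation repackaged: the classes $\gamma_f$ are exactly the values of the connecting map $\delta:\Hom_{kG}(L_\zeta,k)\to\Ext^1_{kG}(k,k)$ (pushing out along $f$ is insensitive to first factoring out $\Rad(L_\zeta)$); the relation $\gamma_f\zeta=0$ is exactness at $\HHH^1(G,k)$; and the paper's injectivity and surjectivity checks are exactness at $\Hom_{kG}(L_\zeta,k)$ and at $\Ext^1_{kG}(k,k)$ respectively. Your packaging isolates the one homological point needing care (that the last map is the product with $\zeta$) and avoids the explicit diagram chase, at the cost of not exhibiting the extensions realizing $\Ker\{\cdot\zeta\}$ — which the paper never uses anyway. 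One small repair: the equality $\Hom_{kG}(\Omega^m(k),k)=\uHom_{kG}(\Omega^m(k),k)$ holds because $\Omega^m(k)\subset\Rad(P)$ for a projective $P$ when $m\geq 1$, so any map to $k$ factoring through a projective vanishes on it; the absence of projective summands alone is not the reason.

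The caveat you flag at the end is real, but it concerns the proposition itself and not only your proof, and the suggested fix "restrict to the principal block" is not quite right. The identity $\Dim\Rad(M)=\Dim M-\Dim\Hom_{kG}(M,k)$ needs the head of $M$ to be a sum of trivial modules, and a principal block may contain nontrivial simples: for $G=S_3$, $p=3$, the single block of $kG$ has simples $k$ and the sign module $\epsilon$; here $\Omega^3(k)$ is uniserial with head $k$ and socle $\epsilon$, so for the nonzero class $\zeta\in\HHH^3(S_3,k)$ one gets $L_\zeta=\epsilon$ and $\Dim\Rad(\Omega^3(k))-\Dim\Rad(L_\zeta)=1$, while $\HHH^1(S_3,k)=0$. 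Thus the statement really requires $k$ to be the only simple $kG$-module ($G$ unipotent), which holds for every group scheme to which the paper applies the result ($kE$ and $C(U)$). The paper's own proof relies on the same hypothesis tacitly, when it treats an arbitrary linear functional $\phi\circ\rho$ on the semisimple module $L_\zeta/\Rad(L_\zeta)$ as a $kG$-map to $k$ along which one may form a pushout of extensions.
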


\begin{proof} 
To prove the proposition, we construct a linear isomorphism
$$
\Psi: (\Rad (\Omega^m(k))/\Rad(L_\zeta))^\# \to \Ker \{\cdot \zeta: \HHH^1(G,k) \to \HHH^{m+1}(G,k)\}.
$$ 
Let
$$
\xymatrix{0 \ar[r] & L_\zeta \ar[r] & \Omega^{m}(k)\ar[r] &k\ar[r] &0}
$$
be the defining sequence for $L_\zeta$, and let
$$
\gamma: \xymatrix{0 \ar[r] & L_\zeta/\Rad(L_\zeta) \ar[r] & \Omega^{m}(k)/\Rad(L_\zeta)\ar[r] &k\ar[r] &0}
$$
be the induced sequence.
For a non-trivial map $f: L_\zeta/\Rad(L_\zeta) \to k$, we let
$$
\gamma_f: \xymatrix{0 \ar[r] & k \ar[r] & M \ar[r] &k\ar[r] &0}
$$
be  the pushout of the sequence $\gamma$
along the map $f$.  In other words, we have a commutative diagram with exact rows:
\begin{equation}
\label{semi}
\xymatrix{ &\Rad(\Omega^m(k))/\Rad(L_\zeta) \ar@{^(->}[d]_i\ar@{=}[r]&
\Rad(\Omega^m(k))/\Rad(L_\zeta) \ar@{^(->}[d]&\\
0\ar[r] &  L_\zeta/\Rad(L_\zeta) \ar[r]\ar@{->>}[d]_f &
\Omega^{m}(k)/\Rad(L_\zeta) \ar[r]\ar[r]^-\zeta\ar@{->>}[d]_{f^\prime} &k\ar@{=}[d]\ar[r]& 0\\
0\ar[r] & k \ar[r] & M \ar[r] & k \ar[r] & 0}
\end{equation}
The cohomology class $ \gamma_f\zeta \in \HHH^{m+1}(G,k)$ is represented by the composition
\begin{equation}
\label{tri}
\xymatrix{& & & \Omega^{m}(k) \ar[dr]^\zeta \ar@{->>}[d] &\\
& & k \ar[r] & M \ar[r] &k\ar[r]^-{\gamma_f} & \Omega^{-1}(k)}
\end{equation}
Because the composition $\gamma_f \zeta: \Omega^{m}(k) \to 
\Omega^{-1}(k)$ factors through $M$
and the bottom row of (\ref{tri}) is a distinguished triangle in the stable 
category ${\bf stmod}(G)$,
$\gamma_f \zeta$ must be zero.

Since $L_\zeta/\Rad(L_\zeta)$ is semi-simple, we have a splitting 
\[\rho: L_\zeta / \Rad(L_\zeta) \to \Rad(\Omega^m(k))/\Rad(L_\zeta)
\] 
of the map $i$. For any linear map $\phi: \Rad(\Omega^m(k))/\Rad(L_\zeta) \to k$ 
we thus have a map $\phi \circ \rho: L_\zeta/\Rad(L_\zeta) \to k$, and therefore an extension $\gamma_{\phi\circ\rho}$ such that  $\gamma_{\phi\circ\rho} \zeta =0$. 
We define
\[
\Psi : (\Rad(\Omega^m(k))/\Rad(L_\zeta))^\# \to
\Ker \{\cdot \zeta: \HHH^1(G,k) \to \HHH^{m+1}(G,k)\}, \quad \phi \, \mapsto \,\gamma_{\phi\circ\rho}.
\]

To show that  $\Psi$ is injective, let $\phi \in (\Rad(\Omega^m(k))/\Rad(L_\zeta))^\#$, 
and set $f = \phi \circ \rho$.
Observe that the extension $\gamma_f$ (the bottom row of (\ref{semi}))
is not split if and only  if the map $f^\prime: \Omega^m(k)/\Rad(L_\zeta) \to M$
does not factor through $\Omega^m(k)/\Rad(\Omega^m(k))$ which happens if and
only if $f \circ i \not = 0$.   Since $(\phi \circ \rho) \circ i = \phi$, we conclude
that $\Psi$ is injective.

To verify that $\Psi$ is surjective, consider some $\eta \in \HHH^1(G,k)$
such that $\eta \zeta = 0$.
Then $\zeta: \Omega^m(k) \to \Omega^m(k)/ \Rad(\Omega^m(k)) \to k$ must factor through
the extension $ k \to M \to k$ corresponding  to $\eta$. Let $f^\prime: \Omega^m(k) \to M$ be
the factorization map, and denote by $f: L_\zeta/\Rad(L_\zeta) \to k$ the restriction to
$L_\zeta/\Rad(L_\zeta)$. Then by construction $\eta = \gamma_f$.

Finally, if $\Dim \Ker \{\cdot \zeta: \HHH^1(G,k) \to \HHH^{m+1}(G,k)\} \ = \ 0$, we conclude
that $\Rad^j(L_\zeta) = \Rad^j(\Omega^m(k))$ for all $j$.  
\end{proof}

To apply Proposition \ref{dim}, we require the follow facts about restrictions of $L_\zeta$ modules.

\begin{lemma} \label{restr-Lzeta}
Suppose that $U \in \Grass(r, \CV)$ is an $r$-plane in $\CV$. Let $\alpha:
C = k[t_1, \dots, t_r]/(t_1^p, \dots, t_r^p) \to kE$ be a flat map such  that 
$\alpha(t_1), \dots, \alpha(t_r)$ is a basis for $U$. Suppose that 
$\zeta \in \HHH^m(E,k)$ is a non-zero homogeneous cohomology element 
of degree $m > 0$.  There exists a number $\gamma_m$ independent of $\alpha$  
such that
\[
\begin{array}{ll} \alpha^*(L_{\zeta}) \ \simeq \  C^{\oplus \gamma_m-1} \ \oplus \ 
\Omega(k_C) \ \oplus \ \Omega^m(k_C) & 
\text{ if }\alpha^*(\zeta) = 0\\
\alpha^*(L_{\zeta}) \ \simeq \  C^{\oplus \gamma_m} \ \oplus \  \ L_{\alpha^*(\zeta)} &
\text{ if } \alpha^*(\zeta) \neq 0.
\end{array}\]

Consequently,
\[\begin{array}{ll} 
\Dim \Rad(\alpha^*(L_\zeta)) = 
\gamma_m(p^r -1) -r + \Dim \Rad(\Omega^m(k_C)) & 
\text{ if }\alpha^*(\zeta) = 0\\[1pt]
\Dim \Rad(\alpha^*(L_{\zeta})) = \gamma_m(p^r-1) + 
\Dim \Rad(L_{\alpha^*(\zeta)}) &
\text{ if } \alpha^*(\zeta) \neq 0.
\end{array}\]
\end{lemma}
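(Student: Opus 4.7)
The plan is to restrict the defining short exact sequence $0 \to L_\zeta \to \Omega^m(k) \to k \to 0$ along $\alpha$, analyze the resulting triangle in the stable module category $\stmod(C)$, and then lift the stable isomorphism to an honest isomorphism using Krull--Schmidt for the local Frobenius algebra $C$. The starting point is that since $\alpha$ is flat, restriction sends free $kE$-modules to free $C$-modules, so applying $\alpha^*$ to a minimal projective $kE$-resolution of $k$ yields a (usually non-minimal) projective $C$-resolution of $k_C$. Reading off the $m$-th syzygy produces a decomposition
\[
\alpha^*(\Omega^m(k)) \ \cong \ \Omega^m(k_C) \oplus C^{\gamma_m},
\]
where $\gamma_m = (\Dim \Omega^m(k) - \Dim \Omega^m(k_C))/p^r$ is forced by a dimension count and therefore depends only on $m$, $p$, and $r$, not on the particular map $\alpha$.

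Next, I would restrict the defining sequence and pass to the stable category. The short exact sequence $0 \to \alpha^*(L_\zeta) \to \alpha^*(\Omega^m(k)) \to k \to 0$ becomes a distinguished triangle
\[
\alpha^*(L_\zeta) \to \Omega^m(k_C) \to k \to \Omega^{-1}(\alpha^*(L_\zeta))
\]
in $\stmod(C)$, in which the middle map $\Omega^m(k_C) \to k$ is a representative of the cohomology class $\alpha^*(\zeta) \in \widehat{\HHH}^m(C,k)$; here the point is that the restriction of the full map $\alpha^*(\zeta): \alpha^*(\Omega^m(k)) \to k$ to the free summand $C^{\gamma_m}$ is automatically stably trivial, so the stable class is entirely captured by the $\Omega^m(k_C)$-component. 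If $\alpha^*(\zeta) \neq 0$, this triangle is isomorphic to the defining triangle of $L_{\alpha^*(\zeta)}$, hence $\alpha^*(L_\zeta)$ is stably isomorphic to $L_{\alpha^*(\zeta)}$. If $\alpha^*(\zeta) = 0$, the middle map is zero in the stable category, the triangle splits, and the standard identification of the fiber of a zero morphism yields a stable isomorphism $\alpha^*(L_\zeta) \simeq \Omega^m(k_C) \oplus \Omega(k_C)$.

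To upgrade to honest isomorphisms, I would invoke Krull--Schmidt for the commutative local Frobenius algebra $C$: every finitely generated $C$-module decomposes uniquely as a free summand plus a module without free summands, and stably isomorphic modules differ only by a free summand. This gives $\alpha^*(L_\zeta) \cong L_{\alpha^*(\zeta)} \oplus C^a$ or $\alpha^*(L_\zeta) \cong \Omega^m(k_C) \oplus \Omega(k_C) \oplus C^b$, and comparing total dimensions (using $\Dim L_\zeta = \Dim \Omega^m(k)-1$, $\Dim L_{\alpha^*(\zeta)} = \Dim \Omega^m(k_C)-1$, $\Dim \Omega(k_C) = p^r-1$, and $\Dim C = p^r$) forces $a = \gamma_m$ and $b = \gamma_m - 1$. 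The dimension formulas for $\Rad(\alpha^*(L_\zeta))$ then follow immediately from these decompositions together with $\Dim \Rad(C^\ell) = \ell(p^r-1)$ and the elementary computation $\Dim \Rad(\Omega(k_C)) = \Dim \Rad^2(C) = p^r - r - 1$. The main obstacle is the second step: correctly identifying the middle map of the restricted triangle with a representative of $\alpha^*(\zeta)$, and using the uniqueness of the fiber in a distinguished triangle to match this triangle with the defining triangle of $L_{\alpha^*(\zeta)}$; once that identification is in place, the remaining steps are routine bookkeeping.
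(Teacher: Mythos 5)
Your proposal is correct and follows essentially the same route as the paper: restrict the defining sequence of $L_\zeta$ along $\alpha$, use the decomposition $\alpha^*(\Omega^m(k)) \simeq C^{\oplus\gamma_m}\oplus\Omega^m(k_C)$ with $\gamma_m$ determined by a dimension count, and split into cases according to whether $\alpha^*(\zeta)$ vanishes. The only difference is one of exposition — you spell out the stable-category and Krull--Schmidt bookkeeping that the paper dispatches as "clear" and "an easy exercise" (with a reference to Benson), and your dimension counts pinning down the free ranks and the radical formulas are all accurate.
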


\begin{proof}
We have an exact sequence  
\[ 
\xymatrix{
0 \ar[r] & L_\zeta \ar[r] & \Omega^m(k) \ar[r]^{\quad \zeta} & \ar[r] k & 0
}
\]
defining $L_\zeta$. Restricting along $\alpha$, we get 
\[\alpha^*(\Omega^m(k)) \ \simeq \
C^{\oplus \gamma_m} \oplus \Omega^m(k_C),
\] 
where the rank  $\gamma_m$ of the free summand is determined entirely by
the dimensions of the other two modules. Explicitly, 
$\gamma_m = (\Dim \Omega^m(k_E)-\Dim \Omega^m(k_C))/p^r$, 
which depends only on $m$ and $r$. 
The case that 
$\alpha^*(\zeta) \neq 0$ is now clear from the restriction of 
the sequence. In the case that $\alpha^*(\zeta) = 0$,
we have that the map $\zeta$ in the sequence vanishes on the 
$C$-summand $\Omega^m(k_C)$. It is an easy exercise
to show that the restriction of the kernel of $\zeta$ in the 
sequence is as indicated (see also \cite[II, \S5.9]{B}).

For the computations of the dimensions of $\Rad(\alpha^*(L_\zeta))$, we
recall that $\Omega(k_C) \simeq \Rad(C)$ and, hence, 
$\Dim \Rad(\Omega(k_C)) = p^r -1-r$.
\end{proof}

The relevance of Proposition \ref{dim} to radical types of $L_\zeta$ modules is made 
explicit in the following theorem.

\begin{thm}
\label{constant-r}
Suppose that $\zeta \in \HHH^m(E,k)$ is a non-nilpotent  
cohomology class satisfying the condition that  the hypersurface
$$Z(\zeta)  \ \subset \ \Proj \HHH^\bu(E,k)$$  
does not contain a linear hyperplane of dimension $r-1$.  Then $L_\zeta$
has constant $r$-radical type.
\end{thm}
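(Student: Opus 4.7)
The strategy is to combine the structural decomposition of $\alpha_U^*(L_\zeta)$ from Lemma~\ref{restr-Lzeta} with the dimension formula of Proposition~\ref{dim}, after translating the geometric hypothesis on $Z(\zeta)$ into a uniform non-vanishing statement for the pullbacks $\alpha_U^*(\zeta)$.

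The first task is to show that the hypothesis forces $\alpha_U^*(\zeta) \neq 0$ for every $U \in \Grass(r,\CV)$. By Corollary~\ref{cor:twist}, the map $\alpha_U^* \colon \HHH^\bullet(E,k)_{\red} \to \HHH^\bullet(C,k)_{\red}$ is identified with the restriction of polynomial functions along the inclusion $U^{(1)} \hookrightarrow \CV^{(1)}$ (or $U \hookrightarrow \CV$ when $p=2$); hence $\alpha_U^*(\zeta)$ vanishes modulo nilpotents if and only if $U^{(1)} \subseteq Z(\zeta)$. Because Frobenius is a bijection on $k$-points of $\CV$, the assignment $U \mapsto U^{(1)}$ identifies $\Grass(r,\CV)$ with $\Grass(r,\CV^{(1)})$, whose elements correspond precisely to the $(r-1)$-dimensional projective linear subspaces of $\Proj \HHH^\bullet(E,k)$. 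The hypothesis therefore guarantees that $\alpha_U^*(\zeta)$ is non-zero modulo nilpotents; in particular $\alpha_U^*(\zeta) \neq 0$ for every $U$.

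With this in hand, Lemma~\ref{restr-Lzeta} provides the uniform decomposition
\[
\alpha_U^*(L_\zeta) \;\simeq\; C^{\oplus \gamma_m} \;\oplus\; L_{\alpha_U^*(\zeta)}
\]
at every $U$, where $\gamma_m$ depends only on $m$ and $r$. Since radicals commute with direct sums, the problem reduces to showing that $\Dim \Rad^j(L_{\alpha_U^*(\zeta)})$ is independent of $U$ for each $j$. I plan to verify that multiplication by $\alpha_U^*(\zeta)$ on $\HHH^1(C,k)$ is injective, since Proposition~\ref{dim} (applied to the finite group scheme with group algebra $C$) then gives $\Rad^j(L_{\alpha_U^*(\zeta)}) = \Rad^j(\Omega^m(k_C))$ for all $j \geq 1$, whose dimension depends only on $m$ and $r$. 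By Proposition~\ref{idd}, $\HHH^*(C,k)$ is $S^*(U^\#)$ for $p=2$ and $S^*((U^{(1)})^\#[2]) \otimes \Lambda^*(U^\#)$ for $p>2$. In the $p=2$ case the ring is a domain, so multiplication by any non-zero element is injective. In the $p>2$ case, the non-vanishing of $\alpha_U^*(\zeta)$ modulo nilpotents means its component in the polynomial subring $S^*((U^{(1)})^\#[2])$ is non-zero, and since $\Lambda^*(U^\#)$ is a free module over this polynomial subring, multiplication on $\HHH^1(C,k) = \Lambda^1(U^\#)$ is injective.

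Assembling these ingredients, $\Dim \Rad^j_U(L_\zeta)$ is constant on $\Grass(r,\CV)$ for every $j$, so $L_\zeta$ has constant $r$-radical type. The main obstacle is the Frobenius bookkeeping in the first step for odd $p$, together with the observation that the possibly nilpotent component of $\alpha_U^*(\zeta)$ does not interfere with injectivity of multiplication on $\HHH^1(C,k)$; both are handled cleanly by Corollary~\ref{cor:twist} and Proposition~\ref{idd} once the hypothesis is rephrased in terms of $\CV^{(1)}$.
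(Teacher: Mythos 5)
Your proposal is correct and follows essentially the same route as the paper's proof: Corollary~\ref{cor:twist} to translate the hypothesis into non-nilpotence of $\alpha_U^*(\zeta)$ for every $U$, then Lemma~\ref{restr-Lzeta} combined with Proposition~\ref{dim} applied over $C$. The only difference is cosmetic — you spell out why multiplication by $\alpha_U^*(\zeta)$ is injective on $\HHH^1(C,k)$ via the $S^*\otimes\Lambda^*$ decomposition, where the paper simply observes that a non-nilpotent element of such a ring is a non-zero-divisor.
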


\begin{proof} For any $U \in \Grass(r, \CV)$, let $\alpha: C = 
k[t_1, \dots, t_r]/(t_1^p, \dots, t_r^p) \to kE$ be a homomorphism with
$\{ \alpha(t_1),\ldots, \alpha(t_r)] \}$ a basis for $U$.
By Corollary~\ref{cor:twist}, we may identify 
$\alpha_*: \Proj \HHH^\bu(C,k)  \  \to \  \Proj \HHH^\bu(E,k)$ with the linear 
embedding of projective spaces associated to the embedding 
$U^{(1)} \subset \CV^{(1)}$.
Hence, the image of $\alpha_*$ is a linear 
subspace of dimension $r-1$.
Our hypothesis implies that the image of $\alpha_*$  can not be in the zero set
of $\zeta$ and, therefore, the restriction $\alpha^*(\zeta)\in \HHH^*(C,k)$ 
is not nilpotent.   

Since $\HHH^*(C,k)$ is a product of a symmetric algebra 
and an exterior algebra, this implies that $\alpha^*(\zeta)$ is not a zero divisor. 
Hence, $\Ker\{ \cdot \alpha^*(\zeta): \HHH^1(C,k) \to \HHH^{m+1}(C,k) \} =0$. 
By Proposition \ref{dim}, we get that 
$\Rad^i(L_{\alpha^*(\zeta)}) = \Rad^i(\Omega^m k_C)$ for $i \geq  1$. 
Lemma~\ref{restr-Lzeta}  now implies that $\alpha^*(L_\zeta)$ has constant $r$-radical type. 
\end{proof}

We now see how $L_\zeta$-modules give us examples of modules of constant $r$-radical
type but not constant $s$-radical type for any $s$ with $1 \leq s < r$.

\begin{prop}
\label{notlower}
Suppose that $\zeta \in k[\zeta_1, \ldots, \zeta_n]$ is a homogeneous polynomial
of degree  $m$ such that the zero locus of $\zeta$ inside $\Proj k[\zeta_1, \ldots, \zeta_n] \simeq 
\Proj (\HHH^\bu(E,k)_{red})$ 
contains a linear subspace of dimension $r-2$ but not of  dimension $r-1$.
Then the $kE$-module $L_\zeta$ has constant $r$-radical type
but not  constant $s$-radical type for any $s, 1 \leq s< r$.
\end{prop}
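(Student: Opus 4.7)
The plan is to apply Theorem \ref{constant-r} for the positive half and, for each $s<r$, to exhibit two $s$-planes in $\Grass(s,\CV)$ at which $L_\zeta$ has different radical dimensions, using Lemma \ref{restr-Lzeta} and Proposition \ref{dim} to detect the difference.

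For constant $r$-radical type, the hypothesis that $Z(\zeta)$ contains no linear subspace of projective dimension $r-1$ is exactly the hypothesis of Theorem \ref{constant-r}, so that theorem applies directly and gives constant $r$-radical type for $L_\zeta$.

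For failure of constant $s$-radical type with $1\leq s<r$, I will produce $U_1,U_2\in\Grass(s,\CV)$ with $\alpha_{U_1}^*(\zeta)=0$ and $\alpha_{U_2}^*(\zeta)\neq 0$. Since $s-1\leq r-2$, the assumed projective linear subspace of dimension $r-2$ in $Z(\zeta)$ contains a projective subspace of dimension $s-1$, coming from an $s$-dimensional linear subspace $W\subset\CV^{(1)}$ under the identification from Proposition \ref{idd} (or directly inside $\CV$ when $p=2$). Because $k$ is algebraically closed and hence perfect, the Frobenius twist gives a dimension-preserving bijection between linear subspaces of $\CV$ and linear subspaces of $\CV^{(1)}$, so there is $U_1\in\Grass(s,\CV)$ with $U_1^{(1)}=W\subset Z(\zeta)$. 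By Corollary \ref{cor:twist} the restriction $\alpha_{U_1}^*(\zeta)$ is nilpotent in $\HHH^\bu(C,k)$; but $\zeta$ lies in the polynomial subalgebra $k[\zeta_1,\ldots,\zeta_n]$, so $\alpha_{U_1}^*(\zeta)$ lies in the reduced polynomial subring $k[\zeta_1,\ldots,\zeta_s]\subset\HHH^*(C,k)$, forcing $\alpha_{U_1}^*(\zeta)=0$. For $U_2$, the locus of $s$-planes whose Frobenius twist lies in the proper hypersurface $Z(\zeta)$ is a proper closed subset of $\Grass(s,\CV)$, so a generic $U_2$ satisfies $\alpha_{U_2}^*(\zeta)\neq 0$.

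To conclude, Lemma \ref{restr-Lzeta} gives
\begin{align*}
\Dim\Rad_{U_1}(L_\zeta) &= \gamma_m(p^s-1) - s + \Dim\Rad(\Omega^m(k_C)),\\
\Dim\Rad_{U_2}(L_\zeta) &= \gamma_m(p^s-1) + \Dim\Rad(L_{\alpha_{U_2}^*(\zeta)}).
\end{align*}
Since $\alpha_{U_2}^*(\zeta)$ is a nonzero element of the polynomial subring $k[\zeta_1,\ldots,\zeta_s]$, over which $\HHH^*(C,k)$ is free, it is not a zero divisor, so multiplication by $\alpha_{U_2}^*(\zeta)$ is injective on $\HHH^1(C,k)$. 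Proposition \ref{dim} then forces $\Dim\Rad(\Omega^m(k_C))=\Dim\Rad(L_{\alpha_{U_2}^*(\zeta)})$, and subtracting the two displayed formulas yields
\[
\Dim\Rad_{U_2}(L_\zeta)-\Dim\Rad_{U_1}(L_\zeta)=s\geq 1.
\]
Hence $L_\zeta$ fails to have constant $s$-$\Rad$-rank, and in particular fails constant $s$-radical type.

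The main obstacle will be the careful handling of the Frobenius twist in odd characteristic: one must use perfection of $k$ to transfer the projective linear subspace of $Z(\zeta)\subset\bP(\CV^{(1)})$ back to an honest $s$-plane of $\CV$, and observe that because $\zeta$ lies in the reduced polynomial part of the cohomology, its restriction being nilpotent in $\HHH^\bu(C,k)$ is equivalent to its being zero. Everything else reduces to numerical bookkeeping from Lemma \ref{restr-Lzeta} and Proposition \ref{dim}.
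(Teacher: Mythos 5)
Your proposal is correct and follows essentially the same route as the paper's proof: Theorem \ref{constant-r} for the positive direction, Frobenius untwisting to produce an $s$-plane $U_1$ with $\alpha_{U_1}^*(\zeta)=0$ and another $s$-plane $U_2$ with $\alpha_{U_2}^*(\zeta)$ a nonzerodivisor, and then the dimension count from Lemma \ref{restr-Lzeta} and Proposition \ref{dim} giving a difference of $s$. Your version is in fact slightly more careful than the paper's in two small places: you correctly write $\gamma_m(p^s-1)-s$ where the paper's displayed formula retains $p^r$ and $-r$ from Lemma \ref{restr-Lzeta} without substituting the rank $s$, and you make explicit why nilpotence of $\alpha_{U_1}^*(\zeta)$ upgrades to vanishing (it lies in the reduced polynomial part of $\HHH^*(C,k)$).
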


\begin{proof} 
We view $\zeta$ as a homogeneous polynomial function on $\CV^{(1)}$ of degree $m$.
Theorem  ~\ref{constant-r}  implies that $L_\zeta$ has constant $r$-radical type. 

For $s<r$, we proceed to find $s$-planes $U, V \in \Grass(s, \CV)$ such that 
$\Dim \Rad_U(L_\zeta) \neq \Dim \Rad_V(L_\zeta)$.  
By assumption, we can find a linear $s$-subspace $\wt U \subset \CV \simeq \CV^{(1)}$ 
such that $\zeta$ vanishes on $\wt U$.   
Let $F_{\CV}: \CV \to \CV^{(1)}$  be the Frobenius map on $\CV$, and  let 
$U = F^{-1}(\wt U)$.  Note that $U$ is again a linear subspace of $\CV$, 
and by construction we have 
\[
U^{(1)} = F_{\CV}(U)  = \wt U
\]
   
Choose an ordered basis $\ul u=[u_1, \ldots, u_s]$ of $U$, and define
\[
\alpha: C = k[t_1, \dots, t_s]/(t_1^p, \dots, t_s^p) \to kE
\]
to be the flat $k$-algebra homomorphism defined by $\alpha(t_i) =u_i$.  
Corollary~\ref{cor:twist} enables us to identify 
\[
\alpha_*: \Spec (\HHH^\bu(C,k)_{red})  \to \  \Spec (\HHH^\bu(kE,k)_{red})
\]
with the inclusion $U^{(1)} \subset \CV^{(1)} \simeq \CV$ obtained by applying 
the Frobenius twist to $U \subset \CV$. Since $U^{(1)} = \wt U$, we conclude that 
$\alpha^*(\zeta)=0$. 
Applying Lemma~\ref{restr-Lzeta}, we get 
\[
\Dim\Rad_U(L_\zeta)=\Dim \Rad(\alpha^*(L_\zeta)) = 
\gamma_m(p^r -1) -r + \Dim\Rad(\Omega^m(k_C)).
\]

Now let $\wt W$ be a linear $s$-subspace in $\CV$ such that $\zeta$ 
does not vanish on $\wt W$, and let $W = F_{\CV}^{-1}(\wt W)$, so that $\wt W = W^{(1)}$.
Let $\ul w = [w_1, \ldots, w_s]$ be a basis of $W$, and let 
\[
\beta: C = k[t_1, \dots, t_s]/(t_1^p, \dots, t_s^p) \to kE
\]
be the flat $k$-algebra homomorphism defined by $\beta(t_i) =w_i$.   
Then $\beta^*(\zeta)$ is not 
nilpotent, and, in particular, 
\[
\Dim \Rad_W(L_\zeta) = \Dim\Rad(\beta^*(L_\zeta)) = 
\gamma_m(p^r-1) + \Dim\Rad(L_{\beta^*(\zeta)})
\]
by Lemma~\ref{restr-Lzeta}.
Since $\beta^*(\zeta)$ is not nilpotent, we conclude that 
$\Ker\{\beta^*(\zeta): \HHH^1(C,k) \to \HHH^{m+1}(C,k)\}=0$. 
Hence, by Prop.~\ref{dim}, 
\[\Dim\Rad_W(L_{\beta^*(\zeta)}) = \Dim \Rad(\Omega^m(k_C)).
\] Therefore,  
\[\Dim \Rad_W(L_\zeta) = \Dim \Rad_U(L_\zeta) + r
\]
which implies the desired inequality. 
\end{proof}

The following proposition provides examples of homogeneous polynomials 
which satisfy the condition of Proposition \ref{notlower}.
We are grateful to S\'andor Kov\'acs for suggesting the geometric argument in 
the proof that follows.

\begin{prop}
\label{prop:geom}
Let $n>r$ be positive integers. There exists a homogeneous 
polynomial $f \in k[X_0, \ldots, X_n]$ such that the zero locus 
of $f$, $Z(f) \subset \bP^n$, contains a linear subspace of 
dimension $r-1$ ($\bP^{r-1}$)  but not of  
dimension $r$. 
\end{prop}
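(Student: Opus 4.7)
The plan is to reduce the statement to a dimension count on an incidence variety. Fix an $(r-1)$-plane $L \cong \bP^{r-1} \subset \bP^n$; it suffices to exhibit, for some $d$, a form $f \in V_d := H^0(\bP^n, \cO(d))$ vanishing on $L$ but on no $r$-plane in $\bP^n$. Let $W_d = \{f \in V_d : f|_L = 0\}$; from exactness of restriction to $L \simeq \bP^{r-1}$ we have $\Dim W_d = \Dim V_d - \binom{r-1+d}{d}$. Writing $G = \Grass(r+1, n+1)$, consider the incidence variety
\[
I_W = \{(f, L') \in W_d \times G : L' \subset Z(f)\},
\]
with projections $\pi_W \colon I_W \to W_d$ and $\pi_G \colon I_W \to G$. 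Since the image of $\pi_W$ is precisely the set of $f \in W_d$ whose zero locus contains an $r$-plane, it will suffice to prove $\Dim I_W < \Dim W_d$ for some $d$.

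The next step is to stratify $G$ by the dimension of intersection with $L$: for $-1 \leq k \leq r-1$ put
\[
G_k = \{L' \in G : \Dim(L \cap L') = k\},
\]
where $k = -1$ means $L \cap L' = \emptyset$. A direct parameter count (first choose the $(k+1)$-dimensional subspace of $\widetilde L \subset k^{n+1}$ giving the intersection, then complete it to an $(r+1)$-dimensional subspace of $k^{n+1}$ meeting $\widetilde L$ in exactly that subspace) yields
\[
\Dim G_k = (k+1)(r-1-k) + (r-k)(n-r),
\]
and each $G_k$ is irreducible as an open subset of a Schubert variety. For a generic $L' \in G_k$ and for $d$ large enough that Serre vanishing applies to $\cI_{L \cup L'}$, the restriction $V_d \twoheadrightarrow H^0(L \cup L', \cO(d))$ is surjective. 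Mayer--Vietoris, together with surjectivity of restriction maps between linear subspaces, gives
\[
\Dim H^0(L \cup L', \cO(d)) = \binom{r-1+d}{d} + \binom{r+d}{d} - \binom{k+d}{d}
\]
(with the convention $\binom{-1+d}{d} = 0$ when $k = -1$). Hence the generic fiber of $\pi_G$ over $G_k$ has dimension $\Dim V_d - \binom{r-1+d}{d} - \binom{r+d}{d} + \binom{k+d}{d}$, and Chevalley's fiber dimension theorem applied to the irreducible $G_k$ gives
\[
\Dim \pi_G^{-1}(G_k) = \Dim G_k + \Dim V_d - \binom{r-1+d}{d} - \binom{r+d}{d} + \binom{k+d}{d}.
\]

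Taking the maximum over $k = -1, 0, \ldots, r-1$, the inequality $\Dim I_W < \Dim W_d = \Dim V_d - \binom{r-1+d}{d}$ reduces to
\[
\binom{r+d}{d} - \binom{k+d}{d} > (k+1)(r-1-k) + (r-k)(n-r) \qquad \text{for } -1 \leq k \leq r-1.
\]
For $n$ and $r$ fixed, the left side is a polynomial in $d$ of degree $r$ with positive leading coefficient (since $k < r$), while the right side is a constant. Thus each of the finitely many inequalities holds for $d$ sufficiently large, and any such $d$ yields, by construction, an $f$ with the required property. The main obstacle is the dimension computation for $G_k$ and the generic fiber of $\pi_G$, but these follow from standard techniques (an elementary linear-algebra parameter count and Mayer--Vietoris for coherent sheaves on a union of linear subspaces); the remainder of the argument is bookkeeping.
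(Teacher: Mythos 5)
Your argument is correct, and it is built on the same basic idea as the paper's proof: a dimension count showing that the forms of degree $d\gg 0$ vanishing on a fixed $L=\bP^{r-1}$ form a strictly larger family than those whose zero locus contains some $r$-plane. The difference is in how the ``bad'' locus is bounded. The paper takes the full incidence variety $T=\{(L',f): L'\subset Z(f)\}\subset \Grass(r+1,n+1)\times H^0(\bP^n,\cO(d))$, observes it is a vector bundle over the Grassmannian of fiber dimension $\binom{n+d}{d}-\binom{r+d}{d}$, and simply demands $\dim T<\dim W_d$; this reduces everything to the single inequality $\binom{r+d-1}{d-1}>(r+1)(n-r)$ and requires no analysis of how $L'$ meets $L$. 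You instead intersect the incidence variety with $W_d\times G$ and compute its dimension stratum by stratum in $k=\dim(L\cap L')$, using Mayer--Vietoris on $L\cup L'$ to get the fiber dimensions. Your version is sharper (it would produce smaller admissible $d$) but costs the Schubert-cell dimension count and the cohomological computation on $L\cup L'$; the paper's cruder bound buys a much shorter proof. Two points in your write-up should be tightened, though neither is a real gap: (i) to bound $\dim\pi_G^{-1}(G_k)$ from above you need control of \emph{all} fibers over $G_k$, not just the generic one --- here this is harmless because $\mathrm{PGL}_{n+1}$ acts transitively on pairs $(L,L')$ with fixed intersection dimension, so for suitable $d$ the surjectivity of $V_d\to H^0(L\cup L',\cO(d))$ and hence the fiber dimension is constant on each stratum, and a single $d$ serves all finitely many strata; (ii) the appeal to ``Chevalley'' should really be to upper semicontinuity of fiber dimension (or to constancy of the fibers), since $\pi_G^{-1}(G_k)$ is not known in advance to be irreducible.
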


\begin{proof}
Fix $L = \bP^{r-1}$ to be the projective subspace 
which is the zero set of the ideal
$$
\cI_L =(X_r,X_{r+1}, \ldots, X_n).
$$
Fix a positive degree $d$.   Then the set of polynomials 
$f$ of degree $d$  such that $L \subset Z(f)$ is the set 
of global sections of $\cI_L(d)$ on $\bP^n$, that is, 
$\HHH^0(\bP^n, \cI_L(d))$.  Indeed, we have an exact sequence
$$\xymatrix{0 \ar[r] & \cI_L \ar[r] &\cO_{\bP^n} \ar[r] & \cO_L \ar[r] & 0}$$
Twisting by $d$ and applying global sections, we get an exact sequence
$$\xymatrix{0 \ar[r] & \HHH^0(\bP^n, \cI_L(d)) \ar[r] 
&\HHH^0(\bP^n,\cO_{\bP^n}(d)) \ar[r] & \HHH^0(\bP^n,\cO_L(d))}$$
For a homogeneous polynomial of degree $d$ to 
vanish on $L$, it must go to zero under  the last map.  
Hence, it belongs to $\HHH^0(\bP^n, \cI_L(d))$.

We also note that the map $\HHH^0(\bP^n,\cO_{\bP^n}(d)) 
\to \HHH^0(\bP^n,\cO_L(d))$ is surjective since it is simply 
a projection on the first $r$ coordinates.  Hence,
$$
\Dim  \HHH^0(\bP^n, \cI_L(d)) = \Dim \HHH^0(\bP^n,\cO_{\bP^n}(d)) - 
\Dim \HHH^0(\bP^n,\cO_{L}(d)).
$$
We compute the  right hand side:  
$\Dim \HHH^0(\bP^n,\cO_{\bP^n}(d)) = 
\Dim k[X_0, \ldots, X_n]_{(d)} = {n+d \choose d}$,
$\Dim \HHH^0(\bP^n,\cO_{L}(d)) = 
\Dim k[X_0, \ldots, X_{r-1}]_{(d)} = {r+d -1 \choose d}$.  Hence,
$$ \Dim  \HHH^0(\bP^n, \cI_L(d)) = {n+d \choose d} - {r+d -1 \choose d}.$$

Now, let $L^\prime  = \bP^{r}$ be any linear subspace 
of dimension $r$.  Such subspaces are 
parametrized by $\Grass(r+1, n+1)$.
For each one,  the corresponding space of homogeneous 
functions of degree $d$ that vanish on $L^\prime$ has dimension  
$\Dim \HHH^0(\bP^n,\cI_{L^\prime}(d)) = {n+d \choose d} - 
{r+d \choose d}$.  
Let  $T \subset \Grass(r+1, n+1)\times \HHH^0(\bP^n, \cO_{\bP^n}(d))$
be a subspace defined as follows:
$$
T = \{ (L^\prime, f), \bP^r =L^\prime \subset \bP^n, f \in \HHH^0(\bP^n,
\cI_{L^\prime}(d)) \}.
$$
This is a vector bundle with the fiber of dimension $\Dim
 \HHH^0(\bP^n, \cI_{L^\prime}(d))$ and the base $\Grass(r+1, n+1)$, 
and it is precisely the space of functions we need to avoid.
Hence, altogether we need to avoid a 
total space of dimension
$$
\Dim T = \Dim \HHH^0(\bP^n,\cO_{L^\prime}(d)) + \Dim
\Grass(r+1, n+1) = 
$$
$$
{n+d \choose d} - {r+d \choose d} + (r+1)(n-r).
$$
Therefore, to prove the claim, we need to establish that for a large 
enough $d$, we have an inequality
$$
{n+d \choose d} - {r+d-1 \choose d} > 
{n+d \choose d} - {r+d \choose d} + (r+1)(n-r).
$$
This is equivalent to the conditions that 
$$ 
{r+d \choose d} -  {r+d-1 \choose d} > (r+1)(n-r). 
$$
and
$$
{r+d-1 \choose d-1} > (r+1)(n-r).
$$
Since $r$ and $n$ are fixed but $d$ can be chosen 
arbitrarily large, this is now evident.
\end{proof}

The following corollary is immediate from Prop.~\ref{notlower} and \ref{prop:geom}.

\begin{cor}
\label{ex-rad} Let $E$ be an elementary abelian $p$-group of rank $n$.
For any integer $r$, $1 <r < n$, there exists a module 
of constant $r$-radical type but not of constant $s$-radical type
for $s<r$.  
\end{cor}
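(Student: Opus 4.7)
The plan is a direct combination of the two preceding propositions, \ref{notlower} and \ref{prop:geom}, with a careful match of indexing conventions. Proposition \ref{notlower} already reduces the problem to producing a homogeneous polynomial whose projective zero locus contains a linear subspace of dimension $r-2$ but no linear subspace of dimension $r-1$; Proposition \ref{prop:geom} produces such polynomials. So the task is essentially bookkeeping to align the two statements.

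More concretely, I would proceed as follows. First, identify the homogeneous coordinate ring appearing in Proposition \ref{notlower}: it is $k[\zeta_1,\dots,\zeta_n]$ in $n$ variables, whose $\Proj$ is $\mathbb P^{n-1}$. Proposition \ref{prop:geom} is stated for the polynomial ring $k[X_0,\dots,X_N]$ in $N+1$ variables, giving $\mathbb P^N$; so to apply it in our setting I take $N = n-1$, i.e.\ relabel $X_0,\dots,X_{n-1}$ as $\zeta_1,\dots,\zeta_n$. Next, inside Proposition \ref{prop:geom} I set the integer named ``$r$'' there equal to $r-1$ (with $r$ the integer in the statement of the corollary). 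The hypothesis of \ref{prop:geom} then becomes $n-1 > r-1$, i.e.\ $n>r$, which holds by assumption; and its conclusion furnishes a homogeneous polynomial $\zeta \in k[\zeta_1,\dots,\zeta_n]$ of some degree $m$ whose zero locus $Z(\zeta) \subset \mathbb P^{n-1}$ contains a linear subspace of dimension $(r-1)-1 = r-2$ but no linear subspace of dimension $r-1$.

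With $\zeta$ in hand, the hypothesis of Proposition \ref{notlower} is met verbatim, so $L_\zeta$ is a $kE$-module of constant $r$-radical type but not of constant $s$-radical type for any $1 \leq s < r$, which is exactly the assertion of the corollary. The hypothesis $r > 1$ of the corollary is needed to ensure that the ``lower'' linear subspace in \ref{notlower} has nonnegative dimension $r-2 \geq 0$, so that the existence assertion in \ref{prop:geom} is nontrivial and the range $1 \leq s < r$ of the conclusion of \ref{notlower} is nonempty.

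There is no real obstacle here, since all the difficult content is encapsulated in Propositions \ref{notlower} and \ref{prop:geom}. The only point requiring attention is the off-by-one index shift between ``number of variables'' versus ``projective dimension'' and between ``contains $\mathbb P^{r-2}$ but not $\mathbb P^{r-1}$'' (needed for \ref{notlower}) versus ``contains $\mathbb P^{r-1}$ but not $\mathbb P^{r}$'' (produced by \ref{prop:geom}); handling this correctly is the only nontrivial step.
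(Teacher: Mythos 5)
Your proposal is correct and is exactly the paper's argument: the paper states the corollary is immediate from Propositions \ref{notlower} and \ref{prop:geom}, and your careful alignment of the indices (replacing the ``$n$'' and ``$r$'' of \ref{prop:geom} by $n-1$ and $r-1$ so that the zero locus contains a $\bP^{r-2}$ but no $\bP^{r-1}$) is precisely the bookkeeping that makes that immediacy precise.
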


We next construct examples of a $kE$-modules which have 
constant $r$-radical type for small $r$,
but not for large $r$.

\begin{prop}  \label{nothigher}
Assume that $p > 2$.  As before we write
$\HHH^*(E,k) = k[\zeta_1, \ldots, \zeta_n] \otimes \Lambda^*(\eta_1, \ldots, \eta_n)$. 
Let $\zeta = \eta_1\ldots \eta_s$ for some $s$ with $1 < s <n$.
Then $L_\zeta$ satisfies the following properties:
\begin{enumerate}
\item  $L_\zeta$ has constant $r$-radical type for any $r$, $r < s$.
\item  $L_\zeta$ has constant $s$-$\Rad$-rank,  but not constant $s$-radical type.
\item $L_\zeta$ does not have constant $r$-$\Rad$-rank for any $r$ such that $s<r<n$.
\end{enumerate}
\end{prop}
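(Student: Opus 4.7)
The proof relies on Corollary~\ref{cor:twist} and Proposition~\ref{idd} to describe the restriction of $\zeta$ in cohomology, Lemma~\ref{restr-Lzeta} to decompose $\alpha_U^*(L_\zeta)$, and Proposition~\ref{dim} to compare radicals. Using the isomorphism $\HHH^*(C(U),k) \simeq S^*((U^{(1)})^\#[2]) \otimes \Lambda^*(U^\#)$, the restriction map identifies
\[
\alpha_U^*(\zeta) \ = \ \alpha_U^*(\eta_1) \wedge \cdots \wedge \alpha_U^*(\eta_s) \ \in \ \Lambda^s(U^\#) \ \subset \ \HHH^s(C(U),k),
\]
where each $\alpha_U^*(\eta_i) \in U^\#$ is the restriction of $\eta_i$ to $U \subset \CV$. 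Thus $\alpha_U^*(\zeta) = 0$ if and only if the functionals $\alpha_U^*(\eta_1), \ldots, \alpha_U^*(\eta_s)$ are linearly dependent in $U^\#$.

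For Part~(1), when $r < s$ one has $\Lambda^s(U^\#) = 0$ automatically, so $\alpha_U^*(\zeta) = 0$ for every $U \in \Grass(r,\CV)$, and Lemma~\ref{restr-Lzeta} produces a single isomorphism class for $\alpha_U^*(L_\zeta)$ independent of $U$. Consequently every $\Dim\Rad^j_U(L_\zeta)$ is constant, yielding constant $r$-radical type.

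For the $\Rad^1$-rank analysis of Parts~(2) and~(3) ($r \geq s$), both cases of Lemma~\ref{restr-Lzeta} occur: the generic $U$ has $\alpha_U^*(\zeta) \neq 0$, while any $U$ contained in a coordinate hyperplane $\{\eta_i = 0\}$ for some $i \leq s$ (such $U$ exists since $r < n$) gives $\alpha_U^*(\zeta) = 0$. Combining Lemma~\ref{restr-Lzeta} with Proposition~\ref{dim} yields
\[
\Dim\Rad_U(L_\zeta) \ = \ \gamma_s(p^r-1) \ + \ \Dim\Rad(\Omega^s(k_C)) \ - \ e_U,
\]
where $e_U = r$ if $\alpha_U^*(\zeta) = 0$, and $e_U = \Dim\Ker\{\cdot\alpha_U^*(\zeta) : \HHH^1(C,k) \to \HHH^{s+1}(C,k)\}$ otherwise. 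Since this multiplication map lands in $\Lambda^{s+1}(U^\#)$ (there is no Frobenius-twist contribution in degree $s+1$), its kernel equals the span of $\alpha_U^*(\eta_1), \ldots, \alpha_U^*(\eta_s)$ when $r > s$ (so $e_U = s$), and equals all of $U^\#$ when $r = s$ (so $e_U = r = s$, since $\Lambda^{s+1}(U^\#) = 0$). Thus for $r = s$ both cases agree with $e_U = s$, proving constant $s$-$\Rad^1$-rank; for $s < r < n$ the values $e_U \in \{r, s\}$ are both realized and distinct, yielding non-constant $r$-$\Rad^1$-rank with a difference of $r - s$. This completes Part~(3) and the first assertion of Part~(2).

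The remaining step, showing in Part~(2) that $L_\zeta$ is not of constant $s$-radical type, is the main obstacle. One must find some $j \geq 2$ with $\Dim\Rad^j_U(L_\zeta)$ varying on $\Grass(s,\CV)$. The two candidate $C$-modules from Lemma~\ref{restr-Lzeta} are structurally different: $C^{\gamma_s-1} \oplus \Omega(k_C) \oplus \Omega^s(k_C)$ is decomposable, while $C^{\gamma_s} \oplus L_{\alpha_U^*(\zeta)}$ contains the indecomposable $L_{\alpha_U^*(\zeta)}$. My plan is to compute $\Rad^2$ directly in both cases, using $\Omega(k_C) \simeq \Rad(C)$ (giving $\Rad^2(\Omega(k_C)) = \Rad^3(C)$) together with the defining sequence $0 \to L_{\alpha_U^*(\zeta)} \to \Omega^s(k_C) \to k \to 0$ and Proposition~\ref{dim} to locate the $s$ extra top classes of $L_{\alpha_U^*(\zeta)}$ contributed by the large kernel of $\cdot\alpha_U^*(\zeta)$. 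The hard part is then to show that the resulting $\Dim\Rad^2(L_{\alpha_U^*(\zeta)})$ strictly exceeds the value forced by the decomposable alternative; this amounts to a depth analysis of a lift of the quotient $k = \Omega^s(k_C)/L_{\alpha_U^*(\zeta)}$ inside $\Omega^s(k_C)$, showing that multiplication by $\Rad(C)$ on those extra tops produces a genuinely larger contribution to $\Rad^2$ than the sum of $\Rad^3(C)$ and $\Rad^2(\Omega^s(k_C)) - \Rad^2(C)$ delivers in the other case.
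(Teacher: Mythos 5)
Your treatment of part (1), part (3), and the constant $s$-$\Rad$-rank assertion in part (2) is correct and follows essentially the paper's own route: you identify $\alpha_U^*(\zeta)$ with the decomposable form $\alpha_U^*(\eta_1)\wedge\cdots\wedge\alpha_U^*(\eta_s)\in\Lambda^s(U^\#)$, observe that it vanishes identically when $r<s$ (so Lemma \ref{restr-Lzeta} yields a single isomorphism class of restriction and hence constant $r$-radical type), and for $r\ge s$ combine Lemma \ref{restr-Lzeta} with Proposition \ref{dim}, the kernel of $\cdot\,\alpha_U^*(\zeta)$ on $\HHH^1(C,k)$ having dimension exactly $s$ when the restriction is nonzero versus the dimension-$r$ correction in the zero case. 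This reproduces the comparison of \eqref{eq:dim1} and \eqref{eq:dim2}.

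The genuine gap is the second assertion of part (2), that $L_\zeta$ is not of constant $s$-radical type. You correctly flag this as the main obstacle, but you only outline a plan — a direct computation of $\Rad^2(L_{\alpha_U^*(\zeta)})$ via a ``depth analysis'' — and do not carry it out. Nothing you have established controls $\Rad^2(L_{\alpha_U^*(\zeta)})$: Proposition \ref{dim} gives information about higher radicals only when $\cdot\,\alpha_U^*(\zeta)$ is injective on $\HHH^1$, which is precisely what fails here. The paper sidesteps the issue by taking $j=s(p-1)$, the top radical layer, rather than $j=2$: for a $C$-module $N$ with $C=k[t_1,\dots,t_s]/(t_i^p)$, the space $\Rad^{s(p-1)}(N)=(t_1\cdots t_s)^{p-1}N$ has dimension equal to the number of free direct summands of $N$. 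By Lemma \ref{restr-Lzeta} this count is $\gamma_s-1$ when $\alpha_U^*(\zeta)=0$ (neither $\Omega(k_C)$ nor $\Omega^s(k_C)$ has a free summand) and at least $\gamma_s$ when $\alpha_U^*(\zeta)\neq 0$, and both cases occur on $\Grass(s,\CV)$; hence $L_\zeta$ fails to have constant $s$-$\Rad^{s(p-1)}$-rank. You should either adopt this observation or actually complete your $\Rad^2$ computation; as written, the claim is unproved.
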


\begin{proof} Let $U$ be an $r$--plane in $\CV$, and let 
$\alpha: C = k[t_1,\ldots,t_r]/(t_1^p, \dots, t_r^p) \to kE$ 
be a map such that $\alpha(t_1), \dots, \alpha(t_r)$ is a basis for $U$. 
 
For $r < s$, the product of any $s$ elements of degree one  is necessarily 
zero in $\HHH^s(C,k)$. Hence,  $\alpha^*(\zeta) = 0$.  
By Lemma~\ref{restr-Lzeta},
$\alpha^*(L_\zeta) \simeq C^{\oplus \gamma_s-1} \oplus \Omega(k_C) \oplus \Omega^s(k_C)$ 
for some $\gamma_s$ which does not depend on the choice of $U$.
Consequently, $L_\zeta$  has constant $r$-radical type.

Assume that $s\leq r\leq n$. If $U$ is the subspace such as the one 
spanned by $x_1, \dots, x_r$, then $\alpha^*(\zeta) \neq 0$.  
Since $\alpha^*(\zeta)$ is a product of $s$ degree $1$ classes, 
it annihilates a subspace of dimension $s$ of $\HHH^1(C,k)$. Hence, 
Lemma \ref{restr-Lzeta} and Proposition \ref{dim} imply that 
\begin{equation} 
\label{eq:dim1}
\begin{array}{ll}\Dim \Rad_U(L_{\zeta}) & =  \gamma_s(p^r-1)  + \Dim \Rad(L_{\alpha^*(\zeta)}) \\[5pt]
& = \gamma_s(p^r-1) + \Dim \Rad(\Omega^r(k_C)) -s 
\end{array}
\end{equation}
If $U$ is the 
subspace spanned by $x_2, \dots, x_{r+1}$, then 
$\alpha^*(\zeta) = 0$ and
\begin{equation}\label{eq:dim2}
\Dim \Rad_U(L_{\zeta}) = \gamma_s(p^r-1)-r + \Dim \Rad(\Omega^r(k_C)).
\end{equation}
It follows that 
$L_\zeta$ has constant $r$-$\Rad$--rank
if and only if $r = s$. This proves (3) and the first part
of (2).

For the remainder of part (2), notice that
the dimension of  $\Rad^{r(p-1)}(M)$ of a $C$-module $M$ counts 
 the number of direct summands 
of $C$ in a decomposition of the module into indecomposable
submodules. In the case $r\geq s$, we can get two 
different values for $\Dim \Rad^{r(p-1)}(\alpha^*(L_\zeta))$ depending 
on whether $\alpha^*(\zeta)$  is zero or not, by Lemma \ref{restr-Lzeta}.
Therefore $L_\zeta$ does not have 
constant $r$-$\Rad^{r(p-1)}$-rank for any $r \geq s$. In particular, it does not 
have constant $s$-radical type.   
\end{proof}


\begin{cor} \label{r-supp1} Let $p>2$, and let $\zeta \in \HHH^s(E,k)$ be 
a product of $s$ degree one cohomology classes.  For any $r>s$ the 
nonmaximal radical support variety $\Rad(r, \CV)_{L_\zeta}$ consists of exactly 
those $r$-planes $U$ for which $\alpha^*(\zeta) = 0$, where 
$\alpha: k[t_1, \dots, t_r]/(t_1^p, \dots, t_r^p) \to kE$ is 
a map such that $\alpha(t_1), \dots, \alpha(t_r)$ form a 
basis for $U$. 
\end{cor}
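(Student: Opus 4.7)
The plan is to read the corollary directly off the dimension calculations performed in the proof of Proposition~\ref{nothigher}. For any $r$-plane $U \in \Grass(r,\CV)$ with associated flat map $\alpha: C = k[t_1,\ldots,t_r]/(t_i^p) \to kE$, Lemma~\ref{restr-Lzeta} shows that $\alpha^*(L_\zeta)$ has one of exactly two isomorphism types according as $\alpha^*(\zeta) = 0$ or $\alpha^*(\zeta) \neq 0$. Consequently the function $U \mapsto \Dim \Rad_U(L_\zeta)$ on $\Grass(r,\CV)$ takes at most two values, and the nonmaximal radical support variety is precisely the locus on which the smaller value is attained.

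First I would unpack the two values. Lemma~\ref{restr-Lzeta} gives
\[
\Dim \Rad_U(L_\zeta) \ = \ \gamma_s(p^r-1) - r + \Dim \Rad(\Omega^s(k_C)) \quad \text{if } \alpha^*(\zeta) = 0,
\]
and
\[
\Dim \Rad_U(L_\zeta) \ = \ \gamma_s(p^r-1) + \Dim \Rad(L_{\alpha^*(\zeta)}) \quad \text{if } \alpha^*(\zeta) \neq 0.
\]
To evaluate the second expression I would invoke Proposition~\ref{dim}. After choosing coordinates I may assume $\zeta = \eta_1 \cdots \eta_s$ with $\eta_1,\ldots,\eta_s$ part of the standard basis of $\HHH^1(E,k)$ furnished by Proposition~\ref{idd}. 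If $\alpha^*(\zeta) \neq 0$, then $\alpha^*(\eta_1),\ldots,\alpha^*(\eta_s)$ are linearly independent in $\HHH^1(C,k)$, and in the tensor product $\HHH^\bu(C,k) \simeq S^*((U^{(1)})^\#[2]) \otimes \Lambda^*(U^\#)$ the kernel of multiplication by $\alpha^*(\zeta)$ on $\HHH^1(C,k)$ is exactly the $s$-dimensional span of these factors. Proposition~\ref{dim} then yields $\Dim \Rad(L_{\alpha^*(\zeta)}) = \Dim \Rad(\Omega^s(k_C)) - s$.

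Comparing the two values shows that the case $\alpha^*(\zeta) \neq 0$ produces a radical dimension exceeding that of the other case by exactly $r - s > 0$. It remains only to check that the larger of these two values is actually attained on $\Grass(r,\CV)$, so that the smaller value is indeed non-maximal. For this take $U$ to be the $r$-plane spanned by $x_1,\ldots,x_r$; then $\alpha^*(\eta_i)$ is the $i$-th standard generator of $\HHH^1(C,k)$ for $1 \leq i \leq s \leq r$, and $\alpha^*(\zeta) = \alpha^*(\eta_1) \cdots \alpha^*(\eta_s) \neq 0$.

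Combining these observations, the maximum of $\Dim \Rad_U(L_\zeta)$ over $\Grass(r,\CV)$ equals $\gamma_s(p^r-1) + \Dim \Rad(\Omega^s(k_C)) - s$ and is attained exactly on the locus $\{U : \alpha^*(\zeta) \neq 0\}$. The nonmaximal radical support variety $\Rad(r,\CV)_{L_\zeta}$ is therefore its complement $\{U : \alpha^*(\zeta) = 0\}$, which is the asserted description. There is no real obstacle to this argument beyond correctly assembling the previously established results; the proof is essentially bookkeeping on top of Lemma~\ref{restr-Lzeta} and Proposition~\ref{dim}.
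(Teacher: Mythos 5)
Your argument is correct and takes essentially the same route as the paper: the paper's proof simply compares \eqref{eq:dim1} and \eqref{eq:dim2} from the proof of Proposition~\ref{nothigher}, which are precisely the two dimension formulas you derive from Lemma~\ref{restr-Lzeta} and Proposition~\ref{dim} (with the $s$-dimensional annihilator of $\alpha^*(\zeta)$ in $\HHH^1(C,k)$), differing by $r-s>0$. Your explicit check that the maximal value is attained at $U=\langle x_1,\ldots,x_r\rangle$ is the same verification already made inside the proof of Proposition~\ref{nothigher}.
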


\begin{proof}
This follows by comparing equalities \eqref{eq:dim1} and \eqref{eq:dim2} of the proof of Prop.~\ref{nothigher}.
\end{proof}

In a similar way, we get the following statement about nonmaximal radical 
support varieties.
\begin{cor} \label{r-supp2}
Let $\zeta \in \HHH^{2m}(E,k)$.
If $r=1,2,3$ or if $\zeta$ is a product of one-dimensional cohomology classes,
then
\[\Rad(r, \CV)_{L_\zeta} = \{ U \in \Grass(r, \CV) \, | \, \alpha^*(\zeta)=
0\ \text{ in } \HHH^*(C,k)_{\rm red}\}
\]
where  
$\alpha: C = k[t_1, \dots, t_r]/(t_1^p, \dots, t_r^p) \to KE$ is 
a map such that $\alpha(t_1), \dots, \alpha(t_r)$ form a 
basis for $U$.
 
On the other hand, for $r> 3$, there exists a homogeneous cohomology class $\zeta$ for which this equality is not valid.
\end{cor}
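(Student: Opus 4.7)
The plan is to use Proposition~\ref{dim} and Lemma~\ref{restr-Lzeta} to express $\Dim \Rad_U(L_\zeta)$ as
\[
\Dim \Rad_U(L_\zeta) = \gamma_m(p^r-1) + \Dim \Rad(\Omega^m(k_C)) - \delta_U,
\]
where $\delta_U = \Dim \Ker\{\,\cdot\, \alpha^*(\zeta) : \HHH^1(C,k) \to \HHH^{m+1}(C,k)\}$ if $\alpha^*(\zeta) \ne 0$ and $\delta_U = r$ if $\alpha^*(\zeta) = 0$. Thus $U \in \Rad(r,\CV)_{L_\zeta}$ precisely when $\delta_U$ exceeds its minimum over $\Grass(r,\CV)$, and the problem reduces to controlling $\delta_U$ via the structure of $\HHH^*(C,k)$ supplied by Proposition~\ref{idd}.

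For the positive direction, I would use the decomposition $\HHH^*(C,k) \cong S^*((U^{(1)})^\#)[2] \otimes \Lambda^*(U^\#)$ to write $\alpha^*(\zeta) = p + \eta$ with $p \in S^*((U^{(1)})^\#)$ its image in $\HHH^*(C,k)_{\rm red}$ and $\eta$ in the ideal generated by $\Lambda^{\ge 2}(U^\#)$. The first key step is that when $p \neq 0$, multiplication by $\alpha^*(\zeta)$ is injective on $\HHH^1(C,k)$: for $\nu \in U^\# = \HHH^1(C,k)$ the product $\alpha^*(\zeta) \cdot \nu$ has a nonzero component $p \cdot \nu$ in the summand $S^* \otimes \Lambda^1$, which cannot be cancelled by terms of $\eta \cdot \nu$ lying in $S^* \otimes \Lambda^{\ge 3}$. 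Hence the minimum of $\delta_U$ is $0$, attained precisely on the complement of $\{U : \alpha^*(\zeta) = 0 \text{ in } \HHH^*(C,k)_{\rm red}\}$. The second step is to check positivity of $\delta_U$ on the locus $p = 0$. For $r = 1$, the space $\HHH^{2m}(C,k)$ contains no nilpotents, so $p = 0$ forces $\alpha^*(\zeta) = 0$ and $\delta_U = 1$. For $r = 2, 3$ every nonzero element of $\Lambda^2(U^\#)$ is decomposable (as any alternating form in $\le 3$ variables has rank $\le 2$), so a nonzero nilpotent $\alpha^*(\zeta)$ has $2$-form part $(v \wedge w) \cdot q$ whose annihilator in $\Lambda^1(U^\#)$ contains $\Span(v,w)$, and the $\Lambda^{\ge 3}$ contributions automatically kill $\Lambda^1$ when $r \le 3$; this gives $\delta_U \ge 2$ in the nilpotent case. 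For $\zeta$ a product of one-dimensional classes $\eta_{i_1}\cdots\eta_{i_s}$, $\alpha^*(\zeta) = \alpha^*(\eta_{i_1}) \wedge \cdots \wedge \alpha^*(\eta_{i_s})$ is itself a decomposable $s$-form, so when nonzero its annihilator is the $s$-dimensional span of its factors, and the analogous deficit computation gives the equality.

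For the negative direction I would produce an explicit counterexample for $r \ge 4$. Take $p > 2$, $n = r = 4$, $\zeta = \eta_1\eta_2 + \eta_3\eta_4 \in \HHH^2(E,k)$, and $U = \langle x_1, x_2, x_3, x_4\rangle = \CV$. Then $\alpha^*(\zeta) = \nu_1\nu_2 + \nu_3\nu_4$ is nonzero and nilpotent, so it vanishes in $\HHH^*(C,k)_{\rm red}$; but $\alpha^*(\zeta) \cdot \nu_i$ for $i = 1, \ldots, 4$ equals, up to sign, one of $\nu_1 \nu_3 \nu_4$, $\nu_2 \nu_3 \nu_4$, $\nu_1 \nu_2 \nu_3$, $\nu_1 \nu_2 \nu_4$, and these four $3$-forms are linearly independent in $\Lambda^3(U^\#)$. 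Hence $\delta_U = 0$ and $U$ attains the overall maximum of $\Dim \Rad_U(L_\zeta)$, so $U \notin \Rad(4, \CV)_{L_\zeta}$ while $U$ does lie in the right-hand side, disproving the equality for $r = 4$.

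The main obstacle is the structural dichotomy on decomposability of alternating $2$-forms: in at most $3$ variables every $2$-form is decomposable and therefore has a nontrivial linear annihilator (which forces $\delta_U \ge 2$ throughout the nilpotent regime for $r \le 3$), whereas for $r \ge 4$ the indecomposable symplectic form $\nu_1\nu_2 + \nu_3\nu_4$ has trivial linear annihilator, powering the counterexample. The delicate bookkeeping is handling uniformly the sub-cases $\alpha^*(\zeta) = 0$ and $\alpha^*(\zeta)$ nonzero nilpotent in the nilpotent regime, since the formula for $\delta_U$ changes but one must verify strict inequality $\delta_U > 0$ in both.
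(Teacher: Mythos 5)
Your overall route is the paper's: via Lemma~\ref{restr-Lzeta} and Proposition~\ref{dim} you reduce to
$\Dim \Rad_U(L_\zeta) = \gamma_m(p^r-1) + \Dim\Rad(\Omega^m(k_C)) - \delta_U$ with
$\delta_U = \Dim\Ker\{\cdot\,\alpha^*(\zeta):\HHH^1(C,k)\to\HHH^{2m+1}(C,k)\}$, so that the corollary becomes the claim that $\delta_U>0$ exactly when $\alpha^*(\zeta)$ is nilpotent; and your $r>3$ counterexample is literally the paper's class $\eta_1\eta_2+\eta_3\eta_4$. Your verification that a non-nilpotent $\alpha^*(\zeta)$ acts injectively on $\HHH^1(C,k)$ (via the decomposition of Proposition~\ref{idd}) and your treatment of $r=1,2$ are correct and supply details the paper only asserts.

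The genuine gap is at $r=3$ when $m\ge 2$. You assume the nilpotent class $\alpha^*(\zeta)$ has ``$2$-form part $(v\wedge w)\cdot q$.'' Decomposability of alternating $2$-forms in three variables applies to $\Lambda^2(U^\#)$ with scalar coefficients, but the relevant component is a general element of $S^{m-1}((U^{(1)})^\#)\otimes\Lambda^2(U^\#)$, i.e.\ a sum $\sum_{i<j} q_{ij}\otimes \nu_i\nu_j$, which need not be a pure tensor and need not have a nontrivial annihilator in $\Lambda^1(U^\#)$: for $p>2$, $r=3$, the class $\xi=y_1\nu_2\nu_3-y_2\nu_1\nu_3+y_3\nu_1\nu_2\in\HHH^4(C,k)$ is nilpotent, yet $\xi\cdot(a\nu_1+b\nu_2+c\nu_3)=(\pm a y_1\pm b y_2\pm c y_3)\nu_1\nu_2\nu_3\ne 0$ whenever $(a,b,c)\ne 0$, so $\delta_U=0$. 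Such a $\xi$ arises as $\alpha_U^*(\zeta)$ for $\zeta=\zeta_1\eta_2\eta_3-\zeta_2\eta_1\eta_3+\zeta_3\eta_1\eta_2$ (add $\zeta_4^2$ if you want $\zeta$ non-nilpotent) and $U=\langle x_1,x_2,x_3\rangle$, so the implication ``nilpotent $\Rightarrow\delta_U>0$'' fails for $r=3$, $m\ge2$; the paper asserts the equivalence for $r\le 3$ with no argument, and no argument along your lines will close this. Two smaller caveats: identifying $\Rad(r,\CV)_{L_\zeta}$ with $\{U:\delta_U>0\}$ presupposes that $\alpha^*(\zeta)$ is non-nilpotent for at least one $U$ (otherwise $\min_U\delta_U>0$, the left-hand side is a proper subset while the right-hand side is all of $\Grass(r,\CV)$); and for $p>2$ a product of one-dimensional classes is itself nilpotent, so every restriction vanishes in $\HHH^*(C,k)_{\rm red}$ and that clause is coherent only for $p=2$ (compare Corollary~\ref{r-supp1}, where the condition is $\alpha^*(\zeta)=0$ rather than $\alpha^*(\zeta)=0$ in the reduced ring), where your decomposability argument does work.
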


\begin{proof} If $\zeta \in \HHH^{2m}(E,k)$ satisfies the hypothesis of the corollary,
then  the condition that  $\Ker \{\cdot \alpha_{\ul u}^*(\zeta): \HHH^1(C,k) 
\to \HHH^{2m+1}(C,k)\}$ be zero
is equivalent to a simpler condition that $\alpha^*_{\ul u}(\zeta)$ is not  nilpotent. 
Hence, Prop.~\ref{dim}  
implies the desired equality.

On the other hand, suppose that $r>3$ and let $\eta_1,
\dots, \eta_r$ span  $\HHH^1(E,k)$.
Then $\eta = \eta_1\eta_2+\eta_3\eta_4$ is a nilpotent element in 
$\HHH^*(C,k)$ which  does not annihilate any non-zero class of degree 1.
\end{proof}

We finish this section with a simple observation about the socle series of $\alpha^*(L_\zeta)$.

\begin{prop} \label{soc-rank}
Suppose that $\zeta \in \HHH^m(E,k)$  is a non-zero cohomology class.
If $r >1$, then for any $U$ in $\Grass(r, \CV)$ we have that
$\Soc_U(L_\zeta) = \Soc_U(\Omega^m(k)).$

\vspace{0.05in}
Consequently, $L_\zeta$ has constant $r$-$\Soc$-rank for any $r>1$. 
\end{prop}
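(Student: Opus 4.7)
The plan is to examine the defining short exact sequence
$$0 \to L_\zeta \to \Omega^m(k) \to k \to 0$$
after restriction along $\alpha_U: C(U) \to kE$ for an arbitrary $U \in \Grass(r,\CV)$. Writing $C = C(U)$ and noting that the trivial module is preserved by $\alpha_U^*$, we obtain a short exact sequence of $C$-modules
$$0 \to \alpha_U^*(L_\zeta) \to \alpha_U^*(\Omega^m(k)) \to k \to 0.$$
The left-exact functor $\Soc(-) = \Hom_C(k,-)$ yields an inclusion $\Soc_U(L_\zeta) \hookrightarrow \Soc_U(\Omega^m(k))$ whose cokernel embeds into $\Soc(k) = k$. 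Thus equality of socles amounts to showing that the composition $\Soc_U(\Omega^m(k)) \hookrightarrow \alpha_U^*(\Omega^m(k)) \xrightarrow{\alpha_U^*(\zeta)} k$ is zero for every $U$.

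Since $\alpha_U^*(\zeta)$ is a $C$-module map to the trivial module, it factors through the head of $\alpha_U^*(\Omega^m(k))$, so it suffices to prove
$$\Soc(\alpha_U^*(\Omega^m(k))) \ \subseteq \ \Rad(\alpha_U^*(\Omega^m(k))).$$
Using the decomposition $\alpha_U^*(\Omega^m(k)) \cong C^{\oplus\gamma_m} \oplus \Omega^m(k_C)$ from the proof of Lemma~\ref{restr-Lzeta}, this reduces to two statements. First, $\Soc(C) = k\cdot(t_1^{p-1}\cdots t_r^{p-1})$ is contained in $\Rad^{r(p-1)}(C) \subseteq \Rad(C)$ because $r(p-1) \geq 2$ under the hypothesis $r > 1$. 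Second, and more substantially, one must verify that $\Soc(\Omega^m(k_C)) \subseteq \Rad(\Omega^m(k_C))$.

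The socle-in-radical condition for a module over a commutative local algebra with unique simple $k$ is equivalent to the absence of a direct summand isomorphic to $k$: any socle element outside the radical would generate a simple submodule retracted onto by its nonzero image in the head. The crux is therefore the claim that $\Omega^m(k_C)$ has no summand isomorphic to $k$ when $r \geq 2$, which I expect to be the main obstacle. Since $\Omega^m(k_C)$ carries no projective summand (by minimality of the Heller translate) and $k$ is not projective, a putative splitting $\Omega^m(k_C) \cong k \oplus N$ would persist in the stable module category, and applying $\Omega^{-m}$ would give $k \simeq \Omega^{-m}(k) \oplus \Omega^{-m}(N)$ stably. Indecomposability of $k$ forces $\Omega^{-m}(k)$ to be stably either zero or $k$. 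The former is impossible since $C$ is not semisimple, and the latter would make $k$ periodic as a $C$-module, contradicting the fact that the complexity of $k_C$ equals $r \geq 2$ and modules of complexity greater than one cannot be periodic.

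Combining these reductions yields $\Soc_U(L_\zeta) = \Soc_U(\Omega^m(k))$ for all $U \in \Grass(r,\CV)$. The final assertion that $L_\zeta$ has constant $r$-$\Soc$-rank for $r > 1$ then follows immediately from Example~\ref{ex-projmod}, which records that $\Omega^m(k)$ itself has constant $r$-socle type.
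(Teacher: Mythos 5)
Your proof is correct and follows essentially the same route as the paper's: restrict the defining sequence along $\alpha_U$, observe that a socle element of $\alpha_U^*(\Omega^m(k))$ not lying in $\Soc_U(L_\zeta)$ would split off a trivial summand of $\alpha_U^*(\Omega^m(k))$, and rule this out for $r>1$. The only difference is that you supply a justification (via the decomposition $C^{\oplus\gamma_m}\oplus\Omega^m(k_C)$, Krull--Schmidt in the stable category, and the complexity of $k_C$) for the claim that $\alpha_U^*(\Omega^m(k))$ has no summand isomorphic to $k$, which the paper asserts without proof; note this step uses $m>0$, as is implicit in the definition of $L_\zeta$.
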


\begin{proof}
Choose $U$ in $\Grass(r, \CV)$. Let $\alpha: C= k[t_1, \dots, t_r]/(t_1^p, 
\dots, t_r^p) \to kE$ be a $k$-algebra homomorphism such that
 $\alpha(t_1), \dots, \alpha(t_r)$ is a basis for $U$. 
Suppose there is a simple submodule in $\alpha^*(\Omega^m k)$ 
which does not map to $0$ under $\alpha^*(\zeta)$ 
and, hence, is not a submodule in $\Soc\alpha^*(L_\zeta)$. 
Then it maps isomorphically onto $k$. This implies that 
the sequence 
$0 \to \alpha^*(L_\zeta) \to 
\alpha^*(\Omega^m(k)) \to k  \to  0$ 
splits. But if $r>1$, then this 
is not possible because $\alpha^*(\Omega^m(k))$ has no summand that 
is isomorphic to $k$. 
\end{proof}


\section{Construction of Bundles on $\Grass(r, \CV)$}
\label{se:construction}
This section opens the second part of the paper in which we 
discuss algebraic vector bundles on Grassmannians arising from 
finite dimensional $kE$-modules 
having either constant $r$-$\Rad^j$-rank or constant 
$r$-$\Soc^j$-rank for some $j$. We begin by 
developing two approaches of constructing vector bundles 
on $\Grass(r, \CV)$ which we then show determine isomorphic algebraic vector bundles. 
The first approach uses a 
local analysis on standard affine open subsets of the Grassmannian,
while the second is a global process defining the 
bundles by equivariant descent. In the next section we show that for the class 
of $\GL_n$-equivariant $kE$-modules discussed in Section~\ref{se:const}, 
our construction can be recognized as a familiar functor widely used 
for algebraic groups and homogeneous spaces.  Our first series of 
examples appears in the same section.  Finally, in Section~\ref{global} 
we introduce a formula that constructs
a graded module over the homogeneous coordinate ring
of the Grassmannian whose associated coherent sheaf is the kernel bundle
associated to a module of constant $r$-socle rank.  

We use notations and conventions for the Grassmannian discussed in 
detail in Section \ref{se:grass}. 

\subsection{A local construction of bundles}
\label{se:local} 
Let $x_1, \dots, x_n$ be a basis for the space $\CV \subset \Rad(kE)$ splitting the projection 
$\Rad(kE) \to \Rad(kE)/\Rad^2(kE)$. Let $C=k[t_1, \ldots, t_r]/(t_1^p, \ldots, t_r^p)$.  For 
\[
\xymatrix{\alpha_{\Sigma} \,:\, C \otimes k[\cU_{\Sigma}] \ar[r]& kE \otimes k[\cU_{\Sigma}]}
\]
as in Definition~\ref{alpha-S}, we  denote by 
$\theta_j^\Sigma$, $1 \leq j \leq r$, the $k[\cU_{\Sigma}]$--linear 
$p$-nilpotent operator on $M \otimes k[\cU_{\Sigma}]$ given by multiplication 
by $\alpha_{\Sigma}(t_j)$:
\begin{equation}
\label{theta}
\xymatrix@=2mm{M \otimes k[\cU_{\Sigma}] \ar[r]^-{\theta^\Sigma_j}& M \otimes k[\cU_{\Sigma}] \\
 m \otimes f \ar@{|->}[r]&
\sum\limits_{i=1}^n x_im \otimes Y_{i,j}^\Sigma f.
}\end{equation}
For any $r$-subset $\Sigma\subset \{1, \ldots, n\}$, and  any $\ell$, $1 \leq \ell \leq r(p-1)$, we define   $k[\cU_\Sigma]$--modules
\begin{align}
\label{ker_loc}
\Ker^\ell(M)_{U_\Sigma} = \bigcap\limits_{1\leq j_1, \ldots, j_\ell \leq r}
\Ker\{ \theta_{j_1}^\Sigma\cdots\theta_{j_\ell}^\Sigma: M \otimes k[\cU_\Sigma]  \to 
M \otimes k[\cU_\Sigma]\} \\
\Im^\ell(M)_{U_\Sigma} = \sum\limits_{1\leq j_1, \ldots, j_\ell \leq r}
\Im\{\theta_{j_1}^\Sigma\cdots\theta_{j_\ell}^\Sigma: M \otimes k[\cU_\Sigma] \to 
M \otimes k[\cU_\Sigma]\}
\end{align}

We denote by $\cO_{Gr}$ the structure sheaf of $\Grass(r,\CV)$.  For any
finite dimensional $kE$-module $M$, the coherent sheaf $M \otimes \cO_{Gr}$ is a free 
$\cO_{Gr}$-module of rank
equal to the dimension of $M$. In the next proposition, we define the $\ell^{\rm th}$ kernel and image sheaves,
\begin{equation}
\cKer^\ell(M) \quad \text{ and } \quad \cIm^\ell(M),
\end{equation}
associated to a $kE$--module $M$. 

\begin{prop} \label{localdef}  Let $M$ be a finite-dimensional $kE$--module.    There is a 
unique subsheaf $\cKer^\ell(M) \subset M \otimes \cO_{Gr}$ whose restriction to 
$U_\Sigma$ equals $\Ker^\ell(M)_{U_\Sigma} $ for each subset
$\Sigma \subset \{1,\dots, n\}$ of 
cardinality $r$.   We refer to $\cKer^\ell(M)$ as the $\ell^{th}$ kernel sheaf.

Similarly, there is a unique subsheaf $\cIm^\ell(M) \subset M \otimes \cO_{Gr}$ whose restriction to 
$U_\Sigma$ equals $\Im^\ell(M)_{U_\Sigma} $ for each subset $\Sigma \subset \{1,\ldots, n\}$ of 
cardinality $r$.  We refer to $\cIm^\ell(M)$ as the $\ell^{th}$ image sheaf.
\end{prop}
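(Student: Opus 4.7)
The plan is to verify the sheaf-gluing condition on the standard affine open cover $\{\cU_\Sigma\}$ of $\Grass(r,\CV)$: I will show that on each overlap $\cU_\Sigma \cap \cU_{\Sigma'}$, the restrictions of the locally defined submodules $\Ker^\ell(M)_{U_\Sigma}$ and $\Ker^\ell(M)_{U_{\Sigma'}}$ agree, and similarly for $\Im^\ell(M)_{U_\Sigma}$ and $\Im^\ell(M)_{U_{\Sigma'}}$. Uniqueness of the resulting subsheaves is then automatic, since a subsheaf of the free sheaf $M \otimes \cO_{Gr}$ is determined by its sections on any open cover.

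My first step is to recognize the local modules intrinsically. Because $kE$ is commutative, the operators $\theta_1^\Sigma, \ldots, \theta_r^\Sigma$ pairwise commute, so each composition $\theta_{j_1}^\Sigma \cdots \theta_{j_\ell}^\Sigma$ reduces to multiplication on $M \otimes k[\cU_\Sigma]$ by the element $\alpha_\Sigma(t_{j_1} \cdots t_{j_\ell}) \in kE \otimes k[\cU_\Sigma]$. As the indices vary, these degree-$\ell$ monomials span $\alpha_\Sigma(S_\ell(t_1,\ldots,t_r) \otimes k[\cU_\Sigma])$ as a $k[\cU_\Sigma]$-module. Hence $\Ker^\ell(M)_{U_\Sigma}$ is the annihilator of this submodule in $M \otimes k[\cU_\Sigma]$, and $\Im^\ell(M)_{U_\Sigma}$ is the $k[\cU_\Sigma]$-submodule of $M \otimes k[\cU_\Sigma]$ generated by multiplying by this same subset. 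Both descriptions depend only on the subset $\alpha_\Sigma(S_\ell \otimes k[\cU_\Sigma]) \subset kE \otimes k[\cU_\Sigma]$.

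For the second step, I will compare these subsets over an overlap $\cU_\Sigma \cap \cU_{\Sigma'}$. The two sections of the principal $\GL_r$-bundle $M_{n,r}^o \to \Grass(r,\CV)$ associated with $\Sigma$ and $\Sigma'$ differ, over the overlap, by a transition matrix $g_{\Sigma,\Sigma'} \in \GL_r(k[\cU_\Sigma \cap \cU_{\Sigma'}])$. This matrix induces a graded $k[\cU_\Sigma \cap \cU_{\Sigma'}]$-algebra automorphism $\phi$ of $C \otimes k[\cU_\Sigma \cap \cU_{\Sigma'}]$ satisfying $\alpha_\Sigma = \alpha_{\Sigma'} \circ \phi$. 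Because the defining ideal $(t_1^p, \ldots, t_r^p)$ of $C$ is homogeneous and $\GL_r$-stable --- any $g \in \GL_r$ sends $t_i^p$ to $(\sum_j g_{ij} t_j)^p = \sum_j g_{ij}^p t_j^p$, still in the ideal --- the grading on $C$ is $\GL_r$-stable, and $\phi$ preserves each graded piece $S_\ell(t_1,\ldots,t_r) \otimes k[\cU_\Sigma \cap \cU_{\Sigma'}]$ as a subspace. Therefore $\alpha_\Sigma(S_\ell \otimes k[\cU_\Sigma \cap \cU_{\Sigma'}]) = \alpha_{\Sigma'}(S_\ell \otimes k[\cU_\Sigma \cap \cU_{\Sigma'}])$ inside $kE \otimes k[\cU_\Sigma \cap \cU_{\Sigma'}]$, so the two local descriptions of the kernel and image submodules agree on the overlap. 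The gluing axiom then yields the global subsheaves $\cKer^\ell(M), \cIm^\ell(M) \subset M \otimes \cO_{Gr}$. I expect the only subtle point to be verifying cleanly that $\phi$ preserves each graded piece of $C$ as a subspace; this ultimately rests on both the commutativity of $kE$ (needed to recognize the $\theta$-compositions as multiplications by a single element of $kE$) and the homogeneity and $\GL_r$-stability of the $p$-power ideal.
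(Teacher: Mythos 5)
Your proposal is correct and follows essentially the same route as the paper: both arguments rest on the observation that over $\cU_\Sigma\cap\cU_{\Sigma'}$ the two families of operators differ by the $\GL_r$-valued transition matrix of the principal bundle, so the degree-$\ell$ products span the same $k[\cU_\Sigma\cap\cU_{\Sigma'}]$-submodule of operators and hence have the same common kernel and total image. If anything, your version is the more carefully written one — the paper appeals somewhat loosely to "specializing to each point" of the overlap, whereas your graded-automorphism argument (using that $(\sum_j g_{ij}t_j)^p=\sum_j g_{ij}^p t_j^p$ keeps the $p$-power ideal stable) establishes the equality of submodules directly.
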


\begin{proof}
Let $\Sigma, \Sigma^\prime \subset \{1, \ldots, n\}$ be two $r$-subsets and let
$$
\tau_{\Sigma,\Sigma^\prime}: k[Y_{i,j}^\Sigma,\fp_{\Sigma^\prime}^{-1}] \ 
\simeq k[\CU_\Sigma\cap \CU_{\Sigma^\prime}] \
\simeq k[Y_{i,j}^{\Sigma^\prime},\fp_\Sigma^{-1}]
$$
denote the evident transition function.
Observe that on $\CU_\Sigma\cap \CU_{\Sigma^\prime}$, 
each $\theta_j^\Sigma$ can be written 
using the transition functions
$\tau_{\Sigma,\Sigma^\prime}$ as a $k[Y_{i,j}^{\Sigma^\prime},\fp_\Sigma^{-1}]$-linear 
combination of the $\theta_j^{\prime \Sigma}$'s:
\begin{equation}
\label{eq:rel}
\theta_j^{\Sigma^\prime} = \tau_{\Sigma, \Sigma^\prime}(\theta_j^\Sigma):  
M \otimes k[Y_{a,b}^{\Sigma^\prime},\fp_\Sigma^{-1}] \to
M \otimes k[Y_{a,b}^{\Sigma^\prime},\fp_\Sigma^{-1}].
\end{equation}
This enables us to identify $\Ker^\ell(M)_{U_\Sigma}$  and $\Im^\ell(M)_{U_\Sigma}$   
when restricted to $\CU_{\Sigma} \cap \cU_{\Sigma^\prime}$ as submodules 
of $M \otimes k[Y_{a,b}^{\Sigma^\prime},\fp_\Sigma^{-1}]$.
It can be verified that the kernels and images of the products of 
$\theta_j^\Sigma, \ \theta_j^{\Sigma^\prime}$ acting on  
$M \otimes k[Y_{a,b}^{\Sigma^\prime},\fp_\Sigma^{-1}]$
are equal by specializing to each point 
$x \in \CU_\Sigma\cap \CU_{\Sigma^\prime}$  and  using the relationship \eqref{eq:rel}.
\end{proof}

For $\ell = 1$, we write $\cKer(M)$ for $ \cKer^1(M)$, and we write $\cIm(M)$ for $\cIm^1(M)$.

\begin{thm}
\label{th:bundle} Let $M$ be a finite dimensional $kE$--module, and $U \subset \CV$ an $r$-plane.
Let $\ell$ be an integer, $1 \leq \ell \leq (p-1)r$. 
\begin{enumerate}
\item
 If $M$ has constant $r$--$\Soc^\ell$--rank, then 
\begin{itemize}
\item[$\circ$]   $\cKer^\ell(M)$   is an algebraic 
vector bundle on $\Grass(r, \CV)$, 
\item[$\circ$] $\rk \cKer^\ell(M) = \dim \Soc_U^\ell(M)$.
\end{itemize}
\item If  $M$ has constant $r$-$\Rad^\ell$-rank, then 
\begin{itemize}
\item[$\circ$]   $\cIm^\ell(M)$   is an algebraic 
vector bundle on $\Grass(r, \CV)$, 
\item[$\circ$] $\rk \cIm^\ell(M) = \dim \Rad_U^\ell(M)$.
\end{itemize}
\end{enumerate}
 \end{thm}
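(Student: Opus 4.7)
The plan is to work locally on each standard affine open $\cU_\Sigma \subset \Grass(r,\CV)$, exploiting the description of $\cIm^\ell(M)$ and $\cKer^\ell(M)$ as images and kernels of explicit maps between free $k[\cU_\Sigma]$-modules furnished by Proposition \ref{localdef}. The main tool is the classical criterion that on a reduced Noetherian scheme---and $\Grass(r,\CV)$ is smooth, hence reduced---a coherent sheaf is locally free of rank $s$ exactly when all of its fibers have dimension $s$. Combined with the fact that a short exact sequence of coherent sheaves whose quotient term is locally free splits locally, this reduces everything to tracking fiber dimensions.

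For (2) the idea is to assemble the local operators $\theta^\Sigma_{j_1}\cdots\theta^\Sigma_{j_\ell}$ into a single $k[\cU_\Sigma]$-linear map
\[
\Theta^\ell_\Sigma: (M \otimes k[\cU_\Sigma])^{\oplus d(\ell)} \to M \otimes k[\cU_\Sigma],
\]
whose image is $\Im^\ell(M)_{U_\Sigma}$; its matrix is essentially $\Phi^\ell(M)$ from \eqref{j-vector}. By \eqref{j-spec-rank}, the rank of $\Theta^\ell_\Sigma \otimes k(U)$ at a closed point $U$ equals $\Dim \Rad^\ell_U(M)$. Under the constant $r$-$\Rad^\ell$-rank hypothesis this integer is independent of $U$, so $\Coker(\Theta^\ell_\Sigma)$ has locally constant fiber dimension $\Dim M - \Dim \Rad^\ell_U(M)$ and is therefore locally free on $\cU_\Sigma$. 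The short exact sequence
\[
0 \to \cIm^\ell(M)|_{\cU_\Sigma} \to M \otimes \cO_{\cU_\Sigma} \to \Coker(\Theta^\ell_\Sigma) \to 0
\]
then splits locally, forcing $\cIm^\ell(M)|_{\cU_\Sigma}$ to be locally free of the asserted rank.

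Part (1) is dual. I would introduce the companion map
\[
\Psi^\ell_\Sigma: M \otimes k[\cU_\Sigma] \to (M \otimes k[\cU_\Sigma])^{\oplus d(\ell)}, \qquad m \mapsto \bigl(\theta^\Sigma_{j_1}\cdots\theta^\Sigma_{j_\ell}(m)\bigr)_{(j_1,\ldots,j_\ell)},
\]
whose kernel is $\Ker^\ell(M)_{U_\Sigma}$. The fiber $\Ker(\Psi^\ell_\Sigma \otimes k(U))$ is exactly $\Soc^\ell_U(M)$ by \eqref{eq:soc}, so $\Psi^\ell_\Sigma \otimes k(U)$ has rank $\Dim M - \Dim \Soc^\ell_U(M)$. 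Constancy of $r$-$\Soc^\ell$-rank therefore makes $\Coker(\Psi^\ell_\Sigma)$ locally free on $\cU_\Sigma$; peeling off that split summand shows that $\Im(\Psi^\ell_\Sigma)$ is locally free, and one more split extension
\[
0 \to \cKer^\ell(M)|_{\cU_\Sigma} \to M \otimes \cO_{\cU_\Sigma} \to \Im(\Psi^\ell_\Sigma) \to 0
\]
delivers $\cKer^\ell(M)|_{\cU_\Sigma}$ as a vector bundle of rank $\Dim \Soc^\ell_U(M)$.

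The only point I expect to require some care is the identification of fiber rank at $U$ with $\Dim \Rad^\ell_U(M)$ (respectively, $\Dim \Soc^\ell_U(M)$); this follows from the remark just after Definition \ref{alpha-S} that specializing $\alpha_\Sigma$ at $U \in \cU_\Sigma$ recovers $\alpha_U$, combined with formulas \eqref{eq:rad} and \eqref{eq:soc}. Once this is in hand, everything else is the standard constant-rank criterion, and the global conclusion on $\Grass(r,\CV)$ is immediate since the two sheaves have already been glued in Proposition \ref{localdef}.
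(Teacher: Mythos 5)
Your proposal is correct and follows essentially the same route as the paper: assemble the local operators into a single map of free $k[\cU_\Sigma]$-modules, identify the rank of its specialization at $U$ with $\Dim\Rad^\ell_U(M)$ (resp.\ read off $\Soc^\ell_U(M)$ as the kernel of the specialized map), invoke the constant-fiber-dimension criterion to get projectivity of the cokernel, and peel off split summands to conclude that the image (resp.\ kernel) is locally free of the stated rank. The paper phrases the last step via a single four-term exact sequence rather than two short ones, but the argument is the same.
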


\begin{proof}
 First assume that $\ell = 1$. \\
(1). Let $\Sigma$ be an $r$-subset of $\{1, \ldots, n\}$. We proceed to define a map 
\begin{equation} 
\label{eq:theta}
\xymatrix{
\Theta^\Sigma: M \otimes k[\cU_\Sigma] \ar[rr]^-{[\theta_1^\Sigma, \ldots, \theta_r^\Sigma]}&
& (M \otimes k[\cU_\Sigma])^{\oplus r}}
\end{equation} 
such that $\Ker(M)_{\cU_\Sigma} = \Ker \Theta^\Sigma$.
Let $U\in \cU_\Sigma \subset \Grass(r, \CV)$ and let $\{u_1, \ldots, u_r\}$ 
be the unique choice of ordered basis for $U$ such that
the $\Sigma$--submatrix of $A_U=(a_{i,j})$ equals $ [u_1, \ldots, u_r]$ 
(expressed with respect to the fixed basis $\{x_1, \ldots, x_n\}$ of $\CV$) 
is the identity matrix.  
Then $\alpha_U: C \to kE$, defined by $\alpha_U(t_i)=u_i$, equals the result 
of specializing  $\alpha_\Sigma: C \otimes k[\cU_\Sigma] \to kE \otimes 
k[\cU_\Sigma]$ by setting the variables  $Y_{i,j}^\Sigma$ to values $a_{i,j} \in k$. Hence, 
the specialization of the  map $\Theta^\Sigma$ at the point  $U\in \cU_\Sigma$ gives the $k$-linear map 
$[\alpha_U(t_1), \ldots, \alpha_U(t_r)]: M \to M^{\oplus r}$. In other words,
\[\Theta^\Sigma \otimes_{k[\cU_\Sigma]} k = [\alpha_U(t_1), \ldots, \alpha_U(t_r)]\]
where the tensor is taken over the map $k[\cU_\Sigma] \to k$  corresponding to the point $U \in \cU_\Sigma$. 
Since specialization is right exact,  we have an equality
\[\Coker\{ \Theta^{\Sigma}\}  \otimes_{k[\cU_\Sigma]} k =  \Coker \{\Theta^{\Sigma}  \otimes_{k[\cU_\Sigma]} k \} = 
\Coker\{[\alpha_U(t_1), \ldots, \alpha_U(t_r)]: M \to M^{\oplus r}\}.\]

Let $f: W \to W^\prime$  be a linear map of $k$-vector spaces.  Then $\Dim \Coker f  =\Dim \Ker f - 
\Dim W + \Dim W^\prime$.  
Using this observation, we further conclude that
\[ \Dim \Coker\{[\alpha_U(t_1), \ldots, \alpha_U(t_r)]: M \to M^{\oplus r}\} = \]
\[\Dim \Ker\{[\alpha_U(t_1), \ldots, \alpha_U(t_r)]: M \to M^{\oplus r}\} + (r-1)\Dim M= \]
\[\Dim \Soc_U(M) + (r-1)\Dim M.\]
Therefore, all specializations of the $k[\cU_\Sigma]$--module $\Coker \Theta^\Sigma$ have the 
same dimension. By   \cite[4.11]{FP3} (see also \cite[5 ex.5.8]{Har}),  $\Coker \Theta^\Sigma$ 
is a projective module over $k[\cU_\Sigma]$.     Now the exact sequence 
\[\xymatrix{0 \ar[r] &\Ker \Theta^\Sigma\ar[r] & M \otimes k[\cU_\Sigma] 
\ar[r]^{\Theta^\Sigma} & (M \otimes  k[\cU_\Sigma])^{\oplus r} \ar[r] & \Coker \Theta^\Sigma \ar[r] & 0 } 
\]
implies that $\Ker(M)_{\cU_\Sigma} = \Ker \Theta^\Sigma$ is also projective.  
Since this holds for any $r$-subset $\Sigma \subset \{1, \ldots, n\}$, we conclude that $\cKer(M)$ is locally free. 

(2). For an $r$-subset $\Sigma \subset \{1, \ldots, n\}$, define a map 
$\Theta^\Sigma: (M\otimes k[\cU_\Sigma])^{\oplus r} \to M\otimes k[\cU_\Sigma]$ as the composition
\[
\xymatrix{\Theta^\Sigma: (M\otimes k[\cU_\Sigma])^{\oplus r}\ar[rr]^-{\diag [\theta^\Sigma_1, \ldots, \theta^\Sigma_r]}&& 
(M\otimes k[\cU_\Sigma])^{\oplus r}\ar[r]^-{\sum}& M\otimes k[\cU_\Sigma] }
\]
where the second map is the sum  over all $r$ coordinates.  Arguing as in (1) and 
using  that $\dim \Coker f = \dim W^\prime - \dim \Im f$ for a map of $k$-vector spaces 
$f: W \to W^\prime$, we conclude (2) for $\ell = 1$. 

Finally, the proof for $\ell>1$ is very similar with the map $\Theta^\Sigma$ replaced by its $\ell$-th
iterate.
\end{proof}

The two basic examples we give below can be justified directly from the local construction just 
described;  indeed, both are defined in terms of moving frames inside trivial bundles of 
appropriate ranks on the Grassmannian.  Formal verifications are given in Examples~\ref{ex:taut} and \ref{ex:uni}.   

\begin{ex}
\label{ex:first}
 (1) [Tautological/universal subbundle $\gamma_r$]. Let $kE = k[x_1, \ldots, x_r]/(x_1^p, \ldots, x_n^p)$, 
and let $M=kE/\Rad^2(kE)$. 
We can represent $M$ pictorially as follows:
\[
\xymatrix{
&& \ \bu \ar[dll]_{x_1} \ar[dl]^{x_2} \ar[dr]_{x_{n-1}} \ar[drr]^{x_n}&&\\
\qquad \bu & \quad \bu & \ldots & \quad \bu & \bu
}
\]
Then $\Rad_U(M) \subset \Rad(M)$ can be naturally identified with the plane $U \subset \CV$ under  our fixed isomorphism $\Rad(M) =\Rad(kE)/\Rad^2(kE) \simeq \CV$.  Thus,
$$\cIm(M) \quad = \quad  \ \gamma_r, $$
where $\gamma_r \subset \ \cO_{Gr}^{\oplus n} \simeq \Rad(M) \otimes \cOG$
is the {\it tautological} (or universal) rank $r$ subbundle  
of the rank $n$ trivial bundle on $\Grass_{n,r}$.   

(2) [Universal subbundle $\delta_{n-r}$]. Let $\delta_{n-r}$ be the universal rank $n-r$ subbundle  
of the trivial bundle  of rank $n$ on $\Grass_{n,r}$, that is, the subbundle whose dual, $\delta_{n-r}^\vee$, 
fits into a short exact sequence   
\[\xymatrix{ 0 \ar[r]& \gamma_r \ar[r]& \cOG^{\oplus n} \ar[r]& \delta_{n-r}^\vee  \ar[r] & 0}. \] 
Let $M=kE/\Rad^2(kE)$. Note that $M^\#$ can be represented pictorially as follows:
\[\xymatrix{
\bu\ar[drr]_{x_1}& \bu\ar[dr]^{x_2}& \ldots & 
\bu\ar[dl]_{x_{n-1}}& \bu\ar[dll]^{x_n}\\
&&\bu&&\\}
\]
We have 
$$\{ \cKer(M^\#)\  \subset \ M^\#\otimes \cO_{Gr} \} \quad = \quad \{\delta_{n-r} 
\oplus \cO_{Gr}  \ \subset \
\cO_{Gr}^{\oplus n+1}\}.$$

\end{ex}

\subsection{A construction by equivariant descent}
\label{se:descent} 
Our second construction has the 
advantage of producing bundles on $\Grass(r, \CV)$
by a  ``global" process rather 
than as a patching of locally defined kernels
or images.  In this sense, it  
resembles the global operator $\Theta$ in the case $r=1$ 
employed in \cite{FP3} to construct bundles on cohomological 
support varieties of infinitesimal group schemes.  However, 
the reader should be alert 
to the fact that the kernels (or images)
are not produced as kernels (or images) of a map 
of bundles on $\Grass(r, \CV)$ but rather by a descent process.

We begin by recalling the definition of a $G$-equivariant sheaf 
followed by a general lemma. We refer the reader to \cite[5]{CG} or \cite[I.0]{BL} for 
a detailed discussion of equivariant sheaves. 

\begin{defn}
\label{de:equiv1} Let $G$ be  a linear algebraic group and let $Y$ be a $G$-variety;
in other words, $Y$ is a variety equipped with an algebraic $G$--action
$\mu: G\times Y \to Y$. Denote  by $p: G\times Y \to Y$ the projection map, 
and by $m:G \times G \to G$ multiplication in $G$.  
A sheaf $\cF$ of $\cO_Y$--modules is $G$-equivariant if there 
is an isomorphism $f: \mu^*\cF \simeq p^* \cF$ satisfying the natural cocycle condition. 
Explicitly,  for  
\[\begin{array}{l}
p_1=  \id_G \times \mu: G \times G \times Y \to G\times Y \\
p_2=m \times \id_Y: G \times G \times Y \to G\times Y\\
p_3=\proj_{G\times Y}: G \times G \times Y \to G\times Y 
\end{array}\]
\noindent
(where $p_3$ is the projection along the first factor), $\cF$ satisfies the condition
\begin{equation}
\label{Gequiv}
p_1^*(f)\circ p_3^*(f) \ = \ p_2^*(f).
\end{equation}
\end{defn}

The following fact is well known although usually mentioned without proof (e.g., \cite[0.3]{BL} or \cite[5.2.15]{CG}).  
We provide a straight-forward proof for completeness.

\begin{lemma} \label{descent}   Let $G$ be a linear algebraic group and 
let $p: Y \to X$ be a principal homogeneous space for $G$ locally trivial
in the etale topology.  
There is an equivalence of categories
given by the pull-back functor
$$\xymatrix{p^*: \Coh(X) \ar[r]^-\sim & \Coh^G(Y)}$$
between coherent sheaves of $\cO_X$--modules and $G$-equivariant 
coherent sheaves of $\cO_Y$--modules.
\end{lemma}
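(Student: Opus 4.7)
The plan is to invoke Grothendieck's faithfully flat descent for coherent sheaves. Since $p: Y \to X$ is \'etale-locally isomorphic to $G \times X \to X$, it is faithfully flat and quasi-compact; hence the descent theorem asserts that $p^*: \Coh(X) \to \Coh(Y)$ induces an equivalence between $\Coh(X)$ and the category of pairs $(\cG, \phi)$, where $\cG \in \Coh(Y)$ and $\phi: \pi_1^*\cG \stackrel{\sim}{\longrightarrow} \pi_2^*\cG$ on $Y \times_X Y$ satisfies the cocycle condition $\pi_{23}^*(\phi)\circ \pi_{12}^*(\phi) = \pi_{13}^*(\phi)$ on the triple fiber product $Y \times_X Y \times_X Y$.

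The crux is then to identify this descent category with $\Coh^G(Y)$. The torsor structure on $p: Y \to X$ furnishes a canonical $X$-isomorphism
$$\sigma: G \times Y \ \stackrel{\sim}{\longrightarrow} \ Y \times_X Y, \qquad (g, y) \mapsto (y,\, g\cdot y),$$
under which the projections $\pi_1, \pi_2: Y \times_X Y \to Y$ correspond, respectively, to the $Y$-projection $p_Y : G \times Y \to Y$ and to the action map $\mu: G \times Y \to Y$. Consequently, a descent isomorphism $\phi: \pi_1^*\cG \simeq \pi_2^*\cG$ becomes exactly an isomorphism $f: \mu^*\cG \simeq p_Y^*\cG$ on $G \times Y$, which is precisely the data of a $G$-equivariant structure as in Definition \ref{de:equiv1}.

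In the same spirit, the iterated torsor structure identifies $Y \times_X Y \times_X Y$ with $G \times G \times Y$ via $(g_1, g_2, y) \mapsto (y,\, g_1 y,\, g_2 g_1 y)$, and under this identification the three face projections $\pi_{12}, \pi_{23}, \pi_{13}$ correspond (in the appropriate order) to the three maps $p_1, p_2, p_3$ of Definition \ref{de:equiv1}. Hence the descent cocycle condition for $\phi$ translates into the equivariance cocycle \eqref{Gequiv} for $f$. One also checks directly that $p \circ \mu = p \circ p_Y$, so that for any $\cF \in \Coh(X)$ the pullback $p^*\cF$ carries a canonical tautological $G$-equivariant structure, and that the functor $p^*: \Coh(X) \to \Coh^G(Y)$ factors through the descent category exactly via the identifications above.

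The main obstacle is strictly the bookkeeping: the various canonical isomorphisms on $Y \times_X Y$ and $Y \times_X Y \times_X Y$ must be arranged compatibly, and one must verify that the descent cocycle identity corresponds term-by-term to \eqref{Gequiv}. Once this matching is established, the equivalence is an immediate consequence of faithfully flat descent for coherent sheaves.
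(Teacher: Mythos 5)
Your proposal is correct and follows essentially the same route as the paper: invoke faithfully flat descent for the fppf morphism $p$, then use the torsor isomorphism $G\times Y \simeq Y\times_X Y$ (and its triple-product analogue) to translate descent data and the descent cocycle into a $G$-equivariant structure satisfying \eqref{Gequiv}. The only difference is cosmetic — your convention $(g,y)\mapsto(y,gy)$ versus the paper's $(g,y)\mapsto(gy,y)$ — and your treatment of the triple fiber product is, if anything, slightly more explicit.
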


\begin{proof}  Note that our assumption implies that 
$Y \to X$ is faithfully flat and quasi-compact.  Hence, 
we can use faithfully flat descent (\cite{SGAI}, VIII, \S.1). 
Therefore, we have an equivalence between  the category of 
coherent  sheaves of $\cO_X$--modules and the category of coherent 
sheaves of $\cO_Y$--modules with descent data.  Consider the diagram 
$$
\xymatrix{Y \times_{X} Y \ar[r]^-{\pi_1} \ar[d]^-{\pi_2} & Y \ar[d] \\
Y \ar[r] & X }
$$
Recall that the descent data for an $\cO_Y$--module $\cF$  is an isomorphism 
$\phi: \pi_1^*(\cF) \simeq \pi_2^*(\cF)$ such that
\begin{equation}
\label{descdata}
\pi^*_{23}(\phi)\pi^*_{12}(\phi) \  = \ \pi^*_{13}(\phi),
\end{equation}
where $\pi_{ij}: Y \times_X Y\times_X Y \to Y\times_X Y$ is the projection 
on the $(i,j)$ component. Since $p:Y \to X$ is a principal homogeneous space
for $G$ (i.e., a $G$-torsor for $G\times X$ over $X$), $G \times Y \ \to \  Y \times_X Y$ 
defined by sending $(g, y)$ 
to $(gy, y)$ is an isomorphism. With this isomorphism, the Cartesian square above becomes
$$\xymatrix{G \times Y \ar[r]^{\rm \mu} \ar[d]^-{p} & Y \ar[d] \\
Y \ar[r] & X }
$$
and the maps $\pi_{i,j}: Y \times_X Y\times_X Y \to Y\times_X Y$ become  precisely 
the maps in Definition~\ref{de:equiv1} with $\pi_{i,j}$ going to $p_\ell$ for $\ell \not = i,j$
\[\begin{array}{l}
p_1: G \times G \times Y \to G\times Y \\
p_2: G \times G \times Y \to G\times Y\\
p_3: G \times G \times Y \to G\times Y. 
\end{array}\]
\noindent
\noindent 
Consequently, the descent data (\ref{descdata}) is transformed into the condition
(\ref{Gequiv}) for $G$-equivariance.
\end{proof}

\begin{remark}
\label{re:triv} 
Suppose $p: Y \to X$ is a trivial $G$-fiber bundle, that is, there is a section 
$s: X \to Y$ such that $Y = s(X) \times G \simeq X \times G$. 
In this special case, $p^*$  is given simply by tensoring with the structure sheaf of $G$: 
for $\cF \in \Coh(X)$,
$$p^*(\cF) = s_*(\cF) \otimes \cO_G \simeq \cF \otimes \cO_G.$$
\end{remark}

\vspace{0.1in}
We fix an ordered basis of $\CV$ and  an $r$-plane $U_0\subset \CV$. As in \eqref{pinned}, 
we identify $\bM = M_{n,r}\simeq \Hom_k(U_0, \CV)$  with the 
affine variety of $n\times r$ matrices, and we set $\bM^o \subset \bM$  equal to the open 
quasi-affine subvariety of matrices  of maximal rank.  
Then $\Grass_{n,r} \simeq \bM^o/\GL_r$ and, moreover, 
$\bM^o \to \Grass_{n,r}$ is a principal $\GL_r$-equivariant bundle.
Hence, we have an equivalence of categories
\begin{equation}
\label{equiv}
\Coh(\Grass_{n,r}) \simeq \Coh^{\GL_r} (\bM^o).
\end{equation}
Moreover, using the action of $\GL_n$ on $\bM$ via multiplication on the left which 
commutes with the action by $\GL_r$ (via multiplication by the inverse on the right), 
we get an equivalence  between ($\GL_n$, $\GL_r$)-equivariant sheaves on $\bM^o$ 
(with $\GL_n$ acting on the left and $\GL_r$ on the right) 
and $\GL_n$-equivariant sheaves on $\Grass_{n,r}$ (with $\GL_n$ acting on 
$\Grass_{n,r}\simeq\GL_n/\GL_r$ via multiplication on the left).

We denote by
\begin{equation}
\label{eq:restr}
\xymatrix{\cR: \Coh^{\GL_r}(\bM) \ar[r]&  \ \Coh(\Grass_{n,r})}
\end{equation}
the functor defined as a composition 
\[\xymatrix{ \cR: \Coh^{\GL_r}(\bM) \ \ar[r]^-{\rm res}& \ \Coh^{\GL_r}(\bM^o)\ \ar[r]^\sim& \ \Coh(\Grass_{n,r})} \] 
of the restriction functor and the inverse to the pull-back functor which defines 
the equivalence of categories in Lemma~\ref{descent}.
Since $\bM$ is an affine scheme, the category of $\GL_r$-equivariant coherent 
$\cO_\bM$--modules is equivalent to the category of $\GL_r$--equivariant 
$k[\bM]$--modules. Using this equivalence, we apply the 
functor $\cR$ to $\GL_r$--equivariant $k[\bM]$--modules. 
Finally, recall that the choice of basis for $\CV$ determines the choice 
of the dual basis of $(\CV^{\oplus r})^\#$ which we denoted by $\{Y_{i,j}\}_
{\tiny\begin{array}{l}1\leq i\leq n\\ 1\leq j \leq r\end{array}}$ in Section~\ref{se:grass}. 
Since $k[\bM] = S^*(M_{n,r}^\#) = S^*((\CV^{\oplus r})^\#)$, we get that $\{Y_{i,j}\}$ are algebraic 
generators of $k[\bM]$.   We use the identification $k[\bM]\simeq k[Y_{i,j}]$.

Let $M$ be a finite dimensional $kE$-module, and let $\wt M = M \otimes k[Y_{i,j}]$ 
be a free module of rank $\Dim M$ over $k[Y_{i,j}]$.
We define a $k[Y_{i,j}]$-linear map
$$\wt \Theta=[\wt\theta_1, \ldots, \wt\theta_r]: \wt M \to (\wt M)^{\oplus r}$$
by
$$\wt\theta_j(m \otimes f) = \sum\limits_{i=1}^n  x_im \otimes Y_{i,j}f $$
for all $j$, $1 \leq j \leq r$.
We further define
\begin{equation}
\label{def:ker} 
\Ker \{ \wt \Theta, M\} = \Ker\{\wt \Theta: \wt M \to (\wt M)^{\oplus r} \}
\end{equation}
to be the $k[Y_{i,j}]$--submodule of $\wt M$ which is the kernel of the map 
$\wt \Theta$. Letting
$$
\wt \Theta^\ell=[\wt \theta_1^\ell, \ \wt \theta_1^{\ell-1}\wt\theta_2, \ 
\ldots, \ \wt\theta_r^\ell],
$$
(all monomials of degree $\ell$ in $\wt\theta_1, \dots, \wt\theta_r$)
we similarly define 
\begin{equation}
\label{der:kerl}
\Ker \{ \wt \Theta^\ell, M\} = \Ker\{\wt \Theta^\ell: \wt M \to (\wt M)^{\oplus {r+\ell -1  \choose \ell }} \}
\end{equation}
for any $\ell$, $1 \leq \ell \leq (p-1)r$. 
 
An analogous construction  is applied to the image. 
Let
\begin{equation}
\label{def:im}
\Im\{ \wt \Theta, M \} = \Im\{\xymatrix{ (M \otimes k[Y_{i,j}])^r\ar[rr]^-{
\diag [\wt\theta_1, \ldots, \wt\theta_r]}
&&(M \otimes k[Y_{i,j}])^r \ar[r]^-\sum& M \otimes k[Y_{i,j}]} \})
\end{equation}
Replacing $\wt \Theta$ with $\wt \Theta^\ell$, we obtain 
$k[Y_{i,j}]$--modules $\Im\{\wt\Theta^\ell, M\}$ for any $\ell$, $1 \leq \ell \leq (p-1)r$.

\begin{lemma}
\label{le:equiv} Let  $M$ be a $kE$-module. Then $\Ker\{\wt \Theta^\ell, M\}$, $\Im\{\wt \Theta^\ell, M\}$ are $\GL_r$-equivariant 
$k[\bM]$--submodules  of $M \otimes k[\bM]$  for any $\ell$, $1 \leq \ell \leq r(p-1)$, 
where the action of $\GL_r$ is trivial on $M$ and is given by the multiplication by the inverse on the right on $\bM$. 
\end{lemma}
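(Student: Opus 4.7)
The strategy is to reduce the lemma to tracking how the $\GL_r$ action on $k[\bM]$ interacts with the operators $\wt\theta_j$. First I would record the $k[\bM]$-linearity of the construction: each $\wt\theta_j$ is multiplication by the element $\sum_i x_i \otimes Y_{i,j} \in kE \otimes k[\bM]$, hence $k[\bM]$-linear, and the same therefore holds for every composite $\wt\theta_{j_1}\cdots \wt\theta_{j_\ell}$ and for $\wt\Theta^\ell$ itself. Consequently $\Ker\{\wt\Theta^\ell,M\}$ and $\Im\{\wt\Theta^\ell,M\}$ are automatically $k[\bM]$-submodules of $\wt M$, and it remains only to verify $\GL_r$-equivariance.

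Under the stated convention that $g\in\GL_r$ acts on $A\in\bM$ by $g\cdot A = Ag^{-1}$, the induced action on functions is $(g\cdot f)(A)=f(Ag)$, which on coordinates gives $g\cdot Y_{i,j} = \sum_{k=1}^r g_{kj}\,Y_{i,k}$. Because $\wt\theta_j = \sum_i x_i\otimes Y_{i,j}$ depends linearly on the $Y_{i,j}$, a short calculation on elementary tensors $m\otimes f$ yields the conjugation identity
\[
g\cdot \wt\theta_j(v) \;=\; \sum_{k=1}^r g_{kj}\,\wt\theta_k(g\cdot v),\qquad v\in \wt M,
\]
so the tuple $(\wt\theta_1,\ldots,\wt\theta_r)$ transforms under $\GL_r$-conjugation as the standard representation.

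From this identity the equivariance claims are immediate for $\ell=1$: if $\wt\theta_k(v) = 0$ for every $k$, then $\sum_k g_{kj}\wt\theta_k(g\cdot v) = 0$ for every $j$, and invertibility of $(g_{kj})$ forces $\wt\theta_k(g\cdot v)=0$ for every $k$, showing $\Ker\{\wt\Theta, M\}$ is $\GL_r$-stable; an analogous manipulation shows that any element $\sum_j \wt\theta_j(v_j)$ of $\Im\{\wt\Theta,M\}$ is sent by $g$ into $\Im\{\wt\Theta,M\}$. For $\ell>1$ I would simply iterate the conjugation formula to obtain
\[
g\cdot(\wt\theta_{j_1}\cdots\wt\theta_{j_\ell})(v) \;=\; \sum_{k_1,\ldots,k_\ell} g_{k_1 j_1}\cdots g_{k_\ell j_\ell}\,(\wt\theta_{k_1}\cdots\wt\theta_{k_\ell})(g\cdot v),
\]
so that the span of the degree-$\ell$ monomials in $\wt\theta_1,\ldots,\wt\theta_r$ is $\GL_r$-stable under conjugation and the $\ell=1$ argument carries over verbatim.

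There is no substantive obstacle here; the only real care required is in the bookkeeping conventions (left vs.\ right action, evaluation vs.\ precomposition on coordinate functions) that enter into deriving the conjugation formula. Once that formula is written down correctly, both $\GL_r$-stability statements are purely formal consequences of the invertibility of $g$ and the linearity of the operators in the $Y_{i,j}$.
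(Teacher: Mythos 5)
Your proof is correct and follows essentially the same route as the paper: both arguments compute that under the $\GL_r$-action the tuple $(\wt\theta_1,\ldots,\wt\theta_r)$ is conjugated into a linear combination of itself via the invertible matrix of $g$, and then deduce stability of kernels and images from that invertibility. The only difference is presentational (you write the identity in components and spell out the $\ell>1$ and image cases, which the paper dismisses as "similar").
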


\begin{proof} We prove the statement for $\Ker\{\wt \Theta, M\}$, other cases are similar.

Let $g \in \GL(U_0) \simeq \GL_r$ and denote the action of $g$ on $f \in k[\bM]\simeq k[Y_{i.j}]$ by $f \mapsto f^g$.  
Let $[A_g] \in \GL_r$ be the matrix that gives the action of $g$ on $M_{n,r}^\#$ with respect to the basis $\{Y_{i,j}\}$.
Consider the diagram (which is not commutative!)
\[\xymatrix{
M \otimes k[\bM] \ar[r]^-{\wt \Theta} \ar[d]^g& (M \otimes k[\bM])^{\oplus r}\ar[d]^g\\
M \otimes k[\bM] \ar[r]^-{\wt \Theta}& (M \otimes k[\bM])^{\oplus r}
}\]
Going to the right and then down, we get
\[
(\wt \Theta(m \otimes f))^g = \left(\begin{pmatrix} x_1& \ldots & x_n\end{pmatrix} \otimes
\begin{pmatrix}Y_{1,1}  & \ldots   & Y_{1, r}\\
\vdots & \ddots  & \vdots\\
&&\\
\vdots & \ddots  & \vdots\\
Y_{n,1}  &\ldots   &Y_{n, r} \\
\end{pmatrix}^g \right)(m \otimes f^g) = 
\] 
\[\left(\begin{pmatrix} x_1& \ldots & x_n\end{pmatrix} \otimes
\begin{pmatrix}Y_{1,1}  & \ldots   & Y_{1, r}\\
\vdots & \ddots  & \vdots\\
&&\\
\vdots & \ddots  & \vdots\\
Y_{n,1}  &\ldots   &Y_{n, r} \\
\end{pmatrix}\right)[A_g] (m \otimes f^g) = [\wt \theta_1(m\otimes f^g), \ldots, \wt \theta_r(m\otimes f^g)][A_g]. 
\]
Going down and to the left, we  get 
\[\wt \Theta(m \otimes f^g) = \left(\begin{pmatrix} x_1& \ldots & x_n\end{pmatrix} \otimes
\begin{pmatrix}Y_{1,1}  & \ldots   & Y_{1, r}\\
\vdots & \ddots  & \vdots\\
&&\\
\vdots & \ddots  & \vdots\\
Y_{n,1}  &\ldots   &Y_{n, r} \\
\end{pmatrix}\right)(m \otimes f^g) = 
\] 
\[ [\wt \theta_1(m\otimes f^g), \ldots, \wt \theta_r(m\otimes f^g)]. 
\]
Since the results differ by multiplication by an invertible matrix, we conclude that $\Ker\{\wt\Theta, M\}$ is a $\GL_r$-invariant submodule of $ M \otimes k[\bM]$.
\end{proof}

Lemmas~\ref{le:equiv} and \ref{descent}  imply that the $\GL_r$-equivariant sheaf $\Ker\{\wt\Theta^\ell, M\}$ (resp., $\Im\{\wt\Theta^\ell, M\}$) 
 descends to a 
coherent sheaf on $\Grass_{n,r}$ via the functor $\cR$. 
We denote the resulting sheaf by $\cR (\Ker \{ \wt \Theta^\ell, M\})$ 
(resp,  $\cR(\Ker \{\wt \Theta^\ell, M\})$).

Note that $\cKer^\ell(M)$ (resp., $\cIm^\ell(M)$) is a subsheaf of 
 $M \otimes \cOG$ by construction.    The equality 
 $\cR(\wt M) = M \otimes \cOG$ and the naturality of $\cR$ imply that 
 $\cR(\Ker \{\wt \Theta^\ell, M\})$ (resp., $\cR(\Im\{\wt \Theta^\ell, M \})$) 
 is also a subsheaf of $M \otimes \cOG$.     We now show that the 
 subsheaves   $\cKer^\ell(M)$ and $\cR(\Ker \{\wt \Theta^\ell, M\})$ (resp., 
 $\cIm^\ell(M)$ and $\cR(\Im\{\wt \Theta^\ell, M \})$) of $M \otimes \cOG$ are equal.

\begin{thm}
\label{pr:coincide}
 For any finite dimensional $kE$-module  $M$ and any interger $\ell$, $1 \leq \ell \leq (p-1)r$,
we have equalities of coherent $\cOG$--modules 
$$\cKer^\ell(M) = \cR(\Ker \{\wt \Theta^\ell, M\}),$$
$$\cIm^\ell(M) = \cR(\Im \{\wt \Theta^\ell, M\}).$$
\end{thm}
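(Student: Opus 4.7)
The plan is to reduce the assertion to a local verification on each of the standard affine opens $\cU_\Sigma \subset \Grass_{n,r}$ indexed by $r$-subsets $\Sigma \subset \{1, \ldots, n\}$. Since both sides of each stated equality are by construction coherent subsheaves of the trivial bundle $M \otimes \cOG$, it suffices to compare their sections over each $\cU_\Sigma$. To this end I exploit the section $s_\Sigma : \cU_\Sigma \to \bM^o$ sending an $r$-plane $U$ to its unique representative whose $\Sigma$-submatrix is the identity: this trivialises the principal $\GL_r$-bundle $p:\bM^o \to \Grass_{n,r}$ over $\cU_\Sigma$, giving an isomorphism $p^{-1}(\cU_\Sigma) \simeq \cU_\Sigma \times \GL_r$ via $(U,h) \mapsto s_\Sigma(U)\cdot h$. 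Under this isomorphism the coordinate function $Y_{i,j}$ on $\bM$ pulls back to $\sum_{k=1}^{r} Y_{i,k}^\Sigma \cdot h_{k,j}$, where the $h_{k,j}$ are the universal matrix coordinate functions on $\GL_r$ and the $Y_{i,k}^\Sigma$ are as in \eqref{eq:YSigma}.

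The key step is the resulting operator identity
$$[\wt\theta_1,\ldots,\wt\theta_r]\big|_{p^{-1}(\cU_\Sigma)} \ = \ [\theta_1^\Sigma,\ldots,\theta_r^\Sigma] \cdot h$$
of $r$-tuples of $k[\cU_\Sigma]\otimes k[\GL_r]$-linear endomorphisms of $M\otimes k[\cU_\Sigma]\otimes k[\GL_r]$, where $h=(h_{k,j})$ is the universal invertible matrix in $\GL_r(k[\GL_r])$.    Iterating this identity (and using commutativity of the $\theta$'s) shows that the tuple of degree-$\ell$ monomials in the $\wt\theta_j$'s is related to that in the $\theta_k^\Sigma$'s by an invertible transformation, essentially $\Sym^\ell h$. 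Because right multiplication by an invertible matrix preserves both the intersection of kernels and the sum of images of a family of operators, this yields the equality of $\GL_r$-equivariant $k[\cU_\Sigma]\otimes k[\GL_r]$-submodules of $M\otimes k[\cU_\Sigma]\otimes k[\GL_r]$:
$$\Ker\{\wt\Theta^\ell, M\}\big|_{p^{-1}(\cU_\Sigma)} \ = \ \Ker^\ell(M)_{\cU_\Sigma} \otimes_{k} k[\GL_r],$$
together with its analogue for images. Applying the descent functor $\cR$, which over the trivialised chart coincides by Remark~\ref{re:triv} with pullback along $s_\Sigma$ (equivalently, with specialising $h$ to the identity of $\GL_r$), then produces precisely $\Ker^\ell(M)_{\cU_\Sigma} = \cKer^\ell(M)(\cU_\Sigma)$, and similarly for images. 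The asserted global equalities follow by the uniqueness of the local descriptions of $\cKer^\ell(M)$ and $\cIm^\ell(M)$ established in Proposition~\ref{localdef}.

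The main obstacle is the bookkeeping for the change-of-coordinates identity $Y_{i,j} = \sum_k Y_{i,k}^\Sigma \cdot h_{k,j}$ under the trivialisation, and its correct iteration for $\ell>1$ so that the resulting invertible matrix matches the indexing of the operator tuples $\wt\Theta^\ell$ and $(\Theta^\Sigma)^\ell$. Once that matrix identity is in hand, the rest of the argument is essentially formal: invertibility of the change-of-basis makes the identification of kernels (and images) on the trivialised chart immediate, and the equivalence of categories in Lemma~\ref{descent} promotes the compatible local identifications into the required global equalities of subsheaves of $M\otimes \cOG$.
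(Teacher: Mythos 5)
Your proposal is correct and follows essentially the same route as the paper's proof: trivialise $\bM^o \to \Grass_{n,r}$ over each $\cU_\Sigma$ via the section $s_\Sigma$, factor $\wt\Theta\downarrow_{\wt\cU_\Sigma}$ as $\Theta^\Sigma$ composed with the invertible universal $\GL_r$-matrix, observe that invertibility preserves kernels and images, and then use Remark~\ref{re:triv} and the compatibility of the descent equivalence with restriction to the charts. The only place you go beyond the paper is in spelling out the $\Sym^\ell h$ bookkeeping for $\ell>1$, which the paper dispatches with ``other cases are similar.''
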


\begin{proof}   We establish the equality $\cKer(M) 
\simeq \cR(\Ker \{\wt \Theta, M\})$, other cases are similar.  

Let $\{x_1, \ldots, x_n\}$   be the fixed basis of $\CV$ so that $kE \simeq k[x_1, \ldots ,x_n]/(x_1^p, \ldots, x_n^p)$. 
Globally on $\bM$ the operator 
\[\wt\Theta = [\wt\theta_1, \ldots, \wt\theta_r]^T:\wt M \to (\wt M)^{\oplus r}
\] 
is given as a product
\begin{equation} 
\wt \Theta =  \begin{pmatrix} x_1& \ldots & x_n\end{pmatrix} \otimes
\begin{pmatrix}Y_{1,1}  & \ldots   & Y_{1, r}\\
\vdots & \ddots  & \vdots\\
&&\\
\vdots & \ddots  & \vdots\\
Y_{n,1}  &\ldots   &Y_{n, r} \\
\end{pmatrix} .
\end{equation}
Let $\Sigma=\{i_1, \ldots, i_r\}$,   $i_1 < \cdots < i_r$, be a subset of $\{ 1, \ldots, n\}$, 
and  let $\CU_\Sigma \subset \Grass_{n,r}$ be the corresponding principal open.
Let $\wt \CU_\Sigma \subset \bM^o\subset \bM$  be the 
principal open subset defined by the non-vanishing of the 
minor corresponsing to the columns numbered by $\Sigma$. Hence, $k[\wt \cU_\Sigma]$ 
is the localization of $k[\bM]$  at the determinant  of the matrix $[Y_{i_t,j}]_{1\leq t,j\leq r}$. 
Note that $\wt \cU_\Sigma$ is $\GL_r$-invariant subset of $\bM$ and 
that $\wt \cU_\Sigma \to \cU_\Sigma$ is a trivial $\GL_r$-bundle.
Denote by 
\[
\eta_{\cU_\Sigma}: \Coh^{\GL_r}(\wt \cU_\Sigma) \simeq \Coh(\cU_\Sigma)
\]
the corresponding equivalence of categories as in Lemma~\ref{descent}. 
As in Section~\ref{se:grass} (prior to Defn.~\ref{alpha-S}), we choose 
a section of $\bM^0 \to \Grass_{n,r}$ over $\cU_\Sigma$ defined 
by sending a $\GL_r$-orbit to its unique representative such that the 
$\Sigma$-matrix is the identiy matrix. This section splits the trivial 
bundle $\wt \cU_\Sigma \to \cU_\Sigma$ giving an isomorphism 
$\wt \cU_\Sigma \simeq \cU_\Sigma \times \GL_r$ and, hence, 
\[
k[\wt \cU_\Sigma] \simeq k[\cU_\Sigma] \otimes k[\GL_r] = k[Y_{i,j}^\Sigma]\otimes k[Y_{i_t, j}]\left[\frac{1}{\det(Y_{i_t,j})}\right]
\]
where $Y_{i,j}^\Sigma$ are as defined in \eqref{eq:YSigma}.    
Using the identification of $k[\wt \cU_\Sigma]$ as $k[\cU_\Sigma] \otimes k[\GL_r]$, we can write 
\[\begin{pmatrix}Y_{1,1}  & \ldots   & Y_{1, r}\\
\vdots & \ddots  & \vdots\\
&&\\
\vdots & \ddots  & \vdots\\
Y_{n,1}  &\ldots   &Y_{n, r} \\
\end{pmatrix}  = \begin{pmatrix}Y^\Sigma_{1,1}  & \ldots & Y^\Sigma_{1, r}\\
\vdots & \ddots  & \vdots\\
&&\\
\vdots & \ddots  & \vdots\\
Y^\Sigma_{n,1}  &\ldots   &Y^\Sigma_{n, r} \\ 
\end{pmatrix} \otimes \begin{pmatrix}Y_{i_1,1}  & \ldots   & Y_{i_1, r}\\
\vdots & \ddots  & \vdots\\
Y_{i_r,1}  &\ldots   &Y_{i_r, r} \\ 
\end{pmatrix}^{-1}
\]
Hence, we can decompose the operator $\wt \Theta\downarrow_{\wt \cU_\Sigma}$ on $M \otimes k[\wt \cU_\Sigma] \simeq M \otimes k[\cU_\Sigma]\otimes k[\GL_r]$ 
 as follows: 
\[\wt \Theta\downarrow_{\wt \cU_\Sigma} = \Theta^\Sigma \otimes [Y_{i_t,j}]^{-1}, \]
where $\Theta^\Sigma$ is as defined in \eqref{eq:theta}. 
Since the last factor is invertible, we conclude that
\[\Ker \{\wt \Theta\downarrow_{\wt \cU_\Sigma} \}= \Ker \Theta^\Sigma \otimes k[\GL_r] = \eta_{\cU_\Sigma}^{-1}
(\Ker \Theta^\Sigma),\]
where the last equality holds by the triviality of the bundle $\wt \cU_\Sigma \to \cU_\Sigma$ and Remark~\ref{re:triv}. 
Since localization is exact, we have 
$\Ker\{\wt \Theta, M\}\downarrow_{\wt \CU_\Sigma} = 
\Ker\{\wt \Theta\downarrow_{\wt \CU_\Sigma}\}$. 
Hence, 
\begin{equation}
\label{eq:eta}
 \eta_{\cU_\Sigma}(\Ker\{\wt \Theta, M\}\downarrow_{\wt \CU_\Sigma}) = \Ker \Theta^\Sigma. 
 \end{equation}

The Cartesian square
$$
\xymatrix{ 
\wt \CU_\Sigma \ar@{^(->}[r]\ar[d]& \bM^o\ar[d]\\ 
\CU_\Sigma\ar@{^(->}[r]&\Grass_{n,r}\\
}
$$  gives rise to a commutative diagram where  
the vertical arrows are equivalences of categories as 
in Lemma~\ref{descent}
$$
\xymatrix{
\Coh^{\GL_r}(\bM^o) \ar[d]^{\simeq}\ar[r]^-{\Res} & 
\Coh^{\GL_r}(\wt \CU_\Sigma)\ar[d]^{\simeq}  \\
\Coh(\Grass_{n,r})\ar[r]^-{\Res} & \Coh(\CU_\Sigma)
}
$$
Therefore, $ \cR(\Ker\{\wt \Theta, M\})\downarrow_{\CU_\Sigma} = 
\eta_{U_\Sigma}(\Ker\{\wt \Theta, M\}\downarrow_{\wt \CU_\Sigma})$. 
Combining this observation with the equality \eqref{eq:eta},  we conclude 
\begin{equation}
\label{equality}
\cR(\Ker \{\wt \Theta, M\}) \downarrow_{\CU_\Sigma}
=\eta_{\cU_\Sigma}(\Ker\{\wt \Theta, M\}\downarrow_{\wt \CU_\Sigma})
= \Ker \Theta^\Sigma = 
\cKer(M)\downarrow_{\CU_\Sigma},
\end{equation} 
where the last equality holds by the definition of $\cKer(M)$. 
Since this holds for any $r$-subset $\Sigma \subset \{1, \ldots, n\}$, we conclude that 
$\cKer(M) = \cR(\Ker \{\wt \Theta, M\})$.
\end{proof}


\section{Bundles for $\GL_n$-equivariant modules.} 
\label{se:equiv}
For the special class of $\GL_n$-equivariant $kE$--modules (see Definition \ref{eq}), the  
constructions from the previous section 
can be shown to coincide with a well known  construction of algebraic vector bundles arising in 
representation theory of algebraic groups.  This enables us to identify various algebraic
vector bundles on Grassmannians associated to such $\GL_n$-equivariant $kE$--modules.
We give many examples of the applicability of this approach:   Examples \ref{ex:taut1}, \ref{ex:uni},
\ref{ex:twist}, \ref{exsymm}, \ref{dual_symm}, \ref{ex:twist2}, \ref{delta-dual}, and \ref{ex:coh}.

We start by recalling some generalities.
Let $G$ be an algebraic group and $H\subset G$ 
be a closed subgroup.   For any rational 
$H$-module $V$, we consider the flat map of varieties
$$\pi:  G\times^H V \ \to G/H$$
with fiber $V$. We recall the functor (\cite[I.5]{Jan})
\[\xymatrix{\cL:  H\text{-}\mod \ar[r]& \cO_{G/H}\text{-}\mod}
\]
which sends a rational $H$-module $V$ to a quasi--coherent sheaf of
$\cO_{G/H}$--modules  
which is the sheaf of sections of $G\times^H V$. That is, 
for $U \subset G/H$  we have 
\[\cL(V)(U) = \Gamma(U, G\times^H V).\] 

We summarize properties of the functor 
$\cL$ in the following proposition.

 \begin{prop}  \label{prop:el}
Let $G$ be an algebraic group and $H \subset G$ be a closed subgroup.\\
(1) \cite[II.4.1]{Jan}. 
The functor $\cL$ is exact and commutes with tensor products, duals,
symmetric and exterior powers, and  Frobenius twists.\\
(2) \cite[I.5.14]{Jan}. Let $V$ be a rational $G$--module. Then
$\cL(V\downarrow_H)\simeq \cO_{G/H} \otimes V$ is a trivial bundle.  
\end{prop}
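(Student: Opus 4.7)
The plan is to exploit the descent-theoretic description of $\cL$ combined with the fact that $G \to G/H$ is a principal $H$-bundle locally trivial in the fppf (indeed étale) topology. Under this description, which dovetails with Lemma~\ref{descent}, the sheaf $\cL(V)$ corresponds to the $H$-equivariant quasi-coherent $\cO_G$-module $V \otimes_k \cO_G$, where $H$ acts diagonally (on $V$ by its given structure and on $\cO_G$ via right translation). All the functorial properties in~(1) then reduce to the corresponding statements about the assignment $V \rightsquigarrow V \otimes_k \cO_G$ on the $H$-equivariant category.

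For part~(1), I would verify each item in turn. Exactness follows because $V \otimes_k \cO_G$ is obtained by tensoring with a flat $k$-module, and descent along $G \to G/H$ is an equivalence (hence exact). For tensor products, there is a canonical isomorphism
\[(V \otimes_k W) \otimes_k \cO_G \ \simeq \ (V \otimes_k \cO_G) \otimes_{\cO_G} (W \otimes_k \cO_G)\]
of $H$-equivariant $\cO_G$-modules, which descends to the desired isomorphism $\cL(V \otimes W) \simeq \cL(V) \otimes_{\cO_{G/H}} \cL(W)$. Compatibility with duals, symmetric and exterior powers, and Frobenius twists is handled analogously: each of these operations on an $H$-module commutes with $(-) \otimes_k \cO_G$, and descent then transports the isomorphism to $G/H$.

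For part~(2), the crucial point is that when $V$ underlies a rational $G$-module, the $H$-action on $V$ extends to a $G$-action, and one can construct an explicit trivialization. Concretely, define
\[\phi: G \times V \ \longrightarrow \ G \times V, \qquad (g,v) \ \longmapsto \ (g,\, g^{-1}v).\]
This is an isomorphism of varieties over $G$, and it is $H$-equivariant if the source is given the diagonal $H$-action $(h \cdot (g,v) = (gh^{-1}, hv))$ and the target is given the action $h \cdot (g,v) = (gh^{-1}, v)$ (i.e., the trivial action on the $V$-factor). Passing to the quotient by $H$ yields an isomorphism $G \times^H V \simeq (G/H) \times V$, which translates into the sheaf-theoretic isomorphism $\cL(V\downarrow_H) \simeq \cO_{G/H} \otimes_k V$.

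The main obstacle, really the only substantive one, is ensuring that the descent and the $H$-equivariance are compatible with each of the operations in~(1); this is essentially bookkeeping once one has set up the framework carefully, and it is precisely the content of the referenced sections \cite[I.5, II.4]{Jan}. Since both statements appear there with full proofs, I would simply state the proposition and cite the respective results rather than reproducing the arguments in detail.
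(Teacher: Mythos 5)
Your proposal is correct and takes essentially the same approach as the paper, which offers no proof of this proposition at all and simply cites \cite[I.5, II.4]{Jan}; your sketch (descent of the $H$-equivariant module $V\otimes_k\cO_G$ along $G\to G/H$ for (1), and untwisting $G\times^H V$ for (2)) is precisely the standard argument given in that reference. One harmless convention slip: with the $H$-action $h\cdot(g,v)=(gh^{-1},hv)$ that you specify, the equivariant untwisting map is $(g,v)\mapsto(g,gv)$ rather than $(g,v)\mapsto(g,g^{-1}v)$.
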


We say that an algebraic vector bundle $\cE$ on $G/H$ (i.e., a locally free,
coherent sheaf on $G/H$) is  $G$-equivariant 
if $G$ acts on $\cE$ compatibly with the action of $G$ on the  base $G/H$
(via multiplication on the left).  
That is, for all Zariski open subsets $U \subset G/H$ and each $g \in G$, there is 
an isomorphism 
\begin{equation}
\xymatrix{g^*: \cE(U) \ar[r]& \cE( g^{-1}\cdot U)}
\end{equation}
such that
\[g^* (f s) = g^*(f)g^*(s), \ 
s \in \cE(U), \ f \in \cO_{G/H}(U).
\]
In other words, the algebraic vector bundle $\cE$ on $G/H$ is $G$-equivariant
in the sense of  Definition \ref{de:equiv1}.

\begin{prop}
\label{prop:el2}
(1)  \cite[5.1.8]{CG}. Let $G$ be a linear algebraic group, $H$ be a closed
subgroup of $G$, and $V$ a rational $H$-module.
Then the sheaf of sections of $\pi: G \times^H V \to V$ (a quasi-coherent sheaf of
$\cO_{G/H}$-modules) is  $G$-equivariant.\\
(2) \cite[II.4.1]{Jan}.  Let $G$ be a reductive linear algebraic group, $P
\subset G$ be a parabolic subgroup, 
and $V$ be a rational $P$-module. Then $G\times^P V \to G/P$
is locally trivial for the Zariski topology of $G/P$.  Hence,
 $\cL(V)$ is an algebraic vector bundle on $G/P$.
\end{prop}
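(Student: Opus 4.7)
The plan is to treat the two parts separately, with part (1) resting on the general formalism of descent for sheaves and part (2) on the Bruhat-style structure theory of parabolic subgroups.

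For part (1), I would construct the $G$-action on the total space $G \times^H V$ by left multiplication on the first factor, $g \cdot [g',v] = [gg',v]$, and observe that under the projection $\pi: G \times^H V \to G/H$ this intertwines with the standard left action of $G$ on $G/H$. Functoriality of sheaves of sections then produces, for every $g \in G$ and every Zariski open $U \subset G/H$, a $k$-linear isomorphism $g^*: \cL(V)(U) \to \cL(V)(g^{-1}U)$ satisfying the Leibniz-style compatibility with multiplication by functions in $\cO_{G/H}(U)$. To match Definition~\ref{de:equiv1} precisely, I would assemble these pointwise isomorphisms into a sheaf isomorphism $f: \mu^*\cL(V) \simeq p^*\cL(V)$ on $G \times G/H$, and then verify the cocycle condition~\eqref{Gequiv} on $G \times G \times G/H$ by unwinding both $p_1^*(f) \circ p_3^*(f)$ and $p_2^*(f)$ down to the associativity $(gh)\cdot[g',v] = g \cdot (h \cdot [g',v])$ of the $G$-action on the total space. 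This is formal once the action is written down.

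For part (2), the essential input is the existence of Zariski-local sections of the principal $P$-bundle $G \to G/P$. This is where reductivity of $G$ and the parabolic hypothesis on $P$ are used. By the Bruhat decomposition of $G$, the big cell $U^- P \subset G$ (where $U^-$ is the unipotent radical of the parabolic opposite to $P$) is Zariski open, and the inclusion $U^- \hookrightarrow G$ provides a section of $G \to G/P$ over its image $U^- P / P$, which is a Zariski open cell of $G/P$ isomorphic to $U^-$. Translating this cell by Weyl group representatives lifted to $G$ produces a finite Zariski open cover $\{V_w\}$ of $G/P$ over each of which $G \to G/P$ admits a section. Over each $V_w$, the associated bundle $G \times^P V \to G/P$ restricts to $V_w \times V$, so its sheaf of sections is $\cO_{V_w} \otimes V$. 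Patching these trivializations shows that $\cL(V)$ is locally free of rank $\dim V$, hence an algebraic vector bundle.

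The main obstacle is part (2): the Zariski-local (as opposed to merely \'etale-local) triviality of $G \to G/P$ is special to reductive $G$ and parabolic $P$, and really does require citing the structural results about the big cell and its translates. For a general closed subgroup $H \subset G$ one only gets an \'etale-locally trivial principal bundle, and consequently $\cL(V)$ in part (1) is only guaranteed to be quasi-coherent; this is exactly why part (1) is stated without the local-freeness conclusion and why Proposition~\ref{prop:el2}(2) needs its hypotheses. The verification in part (1), by contrast, is essentially bookkeeping: the only content is the compatibility of descent data with the left translation action, which is automatic from the definition of the quotient $G \times^H V$.
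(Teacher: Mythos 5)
Your argument is correct, but note that the paper does not actually prove this proposition: both parts are stated as recollections with citations to \cite[5.1.8]{CG} and \cite[II.4.1]{Jan}, so there is no in-paper proof to match. What you have written is essentially the standard proof underlying those citations. For (1), your construction of the left $G$-action on $G\times^H V$ by $g\cdot[g',v]=[gg',v]$ and the verification of the cocycle condition via associativity is exactly the content of the cited fact, and your remark that only quasi-coherence (not local freeness) is available for general closed $H$ correctly explains the division of labor between the two parts. For (2), the big-cell argument is the right one, but be aware that the step ``translating the big cell by Weyl group representatives produces a finite Zariski open cover of $G/P$'' is itself the nontrivial structural input — it is precisely \cite[II.1.10]{Jan} — so your proof still rests on a citation at that point rather than eliminating it; if you want a self-contained argument you would need to prove that the Bruhat cells $B\dot wP/P$ are contained in the corresponding translates of the big cell. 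One last cosmetic point: the statement as printed contains a typo ($\pi: G\times^H V\to V$ should read $\pi: G\times^H V\to G/H$), and you have correctly worked with the intended map.
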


The following result complements the preceding recollections.

\begin{prop}
\label{prop:known}
Let $G$ be an algebraic group and $H \subset G$ be a closed subgroup such that
$p: G \to G/H$ is locally trivial with respect to the Zariski topology on $G/H$.  
Consider a $G$-equivariant algebraic vector bundle $\cE$ on $G/H$.  
Then there is an isomorphism $\cL(V) \stackrel{\sim}{\to} \cE$ of $G$-equivariant vector
bundles on $G/H$, 
$$\cL(V)(U) =  \Gamma(U,G\times^H V) \ \stackrel{\sim}{\to}  \ \E(U), \quad U \subset G/H,$$
where $V$ is the fiber of $\cE$ over $eH \in G/H$ provided with
the structure of a rational $H$-module by the restriction of the $G$-action on $\cE$.  
\end{prop}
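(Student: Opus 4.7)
The plan is to construct an explicit $G$--equivariant morphism of vector bundles $\Phi \colon G \times^H V \to \cE$ over $G/H$ and then verify that it is an isomorphism by checking it fiberwise under the local triviality assumption. First I would observe that $V = \cE_{eH}$ inherits a canonical structure of a rational $H$--module: since $H$ stabilizes $eH$, the $G$--equivariant structure isomorphism $f \colon \mu^*\cE \simeq p^*\cE$ of Definition~\ref{de:equiv1} restricts, upon pulling back along $H \hookrightarrow G \to G \times \{eH\} \subset G \times G/H$, to an isomorphism $\mu|_H^*\cE \simeq V \otimes \cO_H$, which is the data of an algebraic $H$--action on $V$; the cocycle condition \eqref{Gequiv} guarantees that this is a genuine group action.

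Next, I would define the morphism $\Phi$. The isomorphism $f$ says exactly that for each $g \in G$ and each $x \in G/H$ there is a linear isomorphism $\cE_x \to \cE_{gx}$ varying algebraically in $(g,x)$. In particular, taking $x = eH$ gives an algebraic map
\[
\wt\Phi \colon G \times V \to \cE, \qquad (g,v) \mapsto g \cdot v \in \cE_{gH}.
\]
For $h \in H$, the group action property gives $\wt\Phi(gh^{-1}, hv) = (gh^{-1})\cdot(h\cdot v) = g \cdot v = \wt\Phi(g,v)$. Hence $\wt\Phi$ descends to a morphism $\Phi \colon G \times^H V \to \cE$ of schemes over $G/H$, which is $G$--equivariant for the left $G$--actions on both sides.

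Finally, to verify that $\Phi$ is an isomorphism of vector bundles, I would argue locally. By hypothesis, $G \to G/H$ is Zariski locally trivial, hence so is $G \times^H V \to G/H$, with fiber $V$; and $\cE$ is a vector bundle by assumption, hence locally trivial with fiber $V$. On each fiber over $gH$, $\Phi$ restricts to the linear isomorphism $V \simeq \cE_{eH} \to \cE_{gH}$ induced by $g$ (well-defined modulo $H$ by the construction), which is bijective by $G$--equivariance. A morphism of vector bundles of the same rank that is a linear isomorphism on every fiber is an isomorphism, so $\Phi \colon \cL(V) \stackrel{\sim}{\to} \cE$.

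The main obstacle is the careful translation between the sheaf-theoretic definition of $G$--equivariance (the isomorphism $f$ together with the cocycle condition) and the naive total-space description ``$g$ acts by an iso $\cE_x \to \cE_{gx}$'' needed to make $\wt\Phi$ a morphism; once this dictionary is in place, the construction of $\Phi$ is formal and the fiberwise argument concludes the proof. Alternatively, one can phrase the entire argument via Lemma~\ref{descent} applied to the principal $H$--bundle $G \to G/H$, identifying both $\cL(V)$ and $\cE$ with the coherent sheaf on $G/H$ descending the $H$--equivariant sheaf $V \otimes \cO_G$ on $G$.
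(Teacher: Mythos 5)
Your proof is correct, and it reaches the conclusion by a genuinely different (more global) route than the paper. The paper's proof never leaves the base: it picks a single Zariski-open $U \ni eH$ trivializing both $p$ and $\cE$, fixes one isomorphism $\phi$ over $U$, translates it to isomorphisms $\phi_i$ over a finite cover by translates $g_i\cdot U$, and then checks well-definedness and agreement on overlaps. You instead build one globally defined morphism $\wt\Phi\colon G\times V\to \cE$, $(g,v)\mapsto g\cdot v$, on the total space, observe that it descends through the $H$-quotient to $\Phi\colon G\times^H V\to\cE$, and conclude by the fiberwise criterion for morphisms of vector bundles of equal rank. Both arguments rest on the same mechanism --- the $G$-action transports the fiber identification at $eH$ to every other fiber --- but your packaging avoids the patching step entirely (the overlap compatibility is absorbed into the single formula for $\wt\Phi$ and the cocycle condition \eqref{Gequiv}), at the cost of having to make precise the dictionary between the sheaf-theoretic equivariance datum $f\colon \mu^*\cE\simeq p^*\cE$ and the naive statement ``$g$ acts by an isomorphism $\cE_x\to\cE_{gx}$''; you correctly flag this as the only real technical point, and it is exactly the translation the paper implicitly uses when it asserts that every section over $g_i\cdot U$ is uniquely of the form $g_i s$. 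Your alternative via Lemma~\ref{descent}, identifying both $\cL(V)$ and $\cE$ with the descent of the $H$-equivariant sheaf $V\otimes\cO_G$ on $G$, is also valid and is arguably the cleanest formulation, though it uses faithfully flat descent where the paper's hypothesis of Zariski local triviality permits the more elementary covering argument.
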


\begin{proof}
Let $U \subset G/H$ be a Zariski open neighborhood of $eH \in G/H$ such that 
$p_{|U}: p^{-1}(U) \to U$ is isomorphic to the product projection
$U\times V \to U$ and $\cE_{|U} \simeq V\otimes_k \cO_U$ is trivial.  
Choices of trivialization of $p_{U}$ and
$\cE_{|U}$ determine an isomorphism $\phi: \cL(V)(U) \stackrel{\sim}{\to} \cE(U)$.   Some finite
collection of subsets $g_i \cdot U \subset G/H$ is a finite open covering of $G/H$.
For each $g_i$, we define $\phi_i: \cL(g_i\cdot U) \to \cE(g_i\cdot U)$ by
sending $g_i s \in \cL(V)(g_i\cdot U)$ for any $s \in \cL(U)$ to $g_i\phi(s)$; this is well defined, for
each $s^\prime \in  \cL(g_i\cdot U)$ is uniquely of the form $g_i s$ for some $s \in \cL(U)$. 
We readily check that each $\phi_i$ induces an isomorphism on fibers, and that 
$(\phi_i)_{|U_i\cap U_j} = (\phi_j)_{|U_i\cap U_j}.$
\end{proof}

Let $U_0\subset \CV$ be a fixed $r$-dimensional subspace, and let
$\P_0=\Stab(U_0)$. 
With $U_0$ chosen, we may identify $G$ as $\GL_n$ and $\P_0$ as the standard
parabolic
subgroup of type $(r,n-r)$ of $\GL_n$.
We consider the above construction of the functor $\cL$ with $G=\GL(\CV)$ and
$H=\P_0$.
Since $\GL(\CV)/\P_0 \simeq \Grass(r, \CV)$, we get a functor
\[
\xymatrix{\cL: \P_0\text{-}\mod \ \ar[r]&  \  {\rm locally \, free \,
}\cOG\text{-}\mod }
\]
where we denote by $\cOG$ the structure sheaf on $\Grass(r, \CV)$.

\begin{ex}
\label{ex:taut}
We revisit and supplement the examples of Example \ref{ex:first}.

(1)
Let $\gamma_r$ be  the universal subbundle (of $\cOG^{\oplus n}$) of rank $r$ on $\Grass_{n,r}$.
Then 
\begin{equation}
\label{eq:gamma}
\cL(U_0)= \gamma_r.
\end{equation}

(2)
Let $\delta_{n-r}$ be the universal subbundle (of $\cOG^{\oplus n}$) of rank $n-r$ on $\Grass_{n,r}$.
Then 
\begin{equation}
\label{eq:delta}
\cL(W_0) \ = \ \delta_{n-r}, \quad \text{where } \quad W_0 = \Ker\{\CV^\#\to U_0^\#\},
\end{equation}
as can be verified using Proposition~\ref{prop:el} and the short exact sequence 
$$
0 \to \gamma_r \to \cOG^{\oplus n} \to \delta_{n-r}^\vee \to 0.
$$

(3)
By Proposition~\ref{prop:el}, 
$$\cL(\Lambda^r(U_0)) \ = \ \Lambda^r(\gamma_r).$$   
Let $\fp: \Grass(r, \CV)  \to \bP(\Lambda^r (\CV))$ be the Pl\"ucker embedding, 
and let $\cO_{\bP(\Lambda^r (\CV))}(-1)$ 
be the tautalogical line bundle on $\bP(\Lambda^r (\CV))$.  Then by definition  
\[\cOG(-1) = \fp^*(\cO_{\bP(\Lambda^r (\CV))}(-1)).\] 
The fiber of $\cO_{\bP(\Lambda^r (\CV))}(-1)$  over a point 
$v_1\wedge \ldots \wedge v_r \in \bP^r(\Lambda^r(\CV))$ equals $k(v_1\wedge \ldots \wedge
v_r)$. Pulling back via $\fp$, we get that the fiber of $\cOG(-1)$  over the $r$-plane 
$U = kv_1+\cdots+kv_r = \fp^{-1}(v_1\wedge \ldots \wedge v_r)$ equals $\Lambda^r(U)$. 
Thus,
\begin{equation}
\label{eq:lambda}
\cL(\Lambda^r(U_0)) = \Lambda^r(\gamma_r) \simeq \cOG(-1).
\end{equation}
\end{ex}

\vskip .1in

\begin{prop} 
\label{prop:equiv}
Let $M$ be a $\GL_n$--equivariant $kE$--module. Then $\cIm^\ell(M)$, $\cKer^\ell(M)$ 
are $\GL_n$-equivariant $\cOG$-modules for any $\ell$, $1\leq  \ell\leq (p-1)r$. 
\end{prop}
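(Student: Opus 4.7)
The plan is to deduce $\GL_n$-equivariance from the global construction of \S\ref{se:descent} by showing that the ``big'' operator $\wt\Theta^\ell$ is itself $\GL_n$-equivariant with respect to a natural diagonal action, and then promoting the equivalence $\cR$ of \eqref{eq:restr} to an equivalence between $(\GL_n,\GL_r)$-biequivariant sheaves on $\bM^o$ and $\GL_n$-equivariant sheaves on $\Grass(r,\CV)$. By Theorem~\ref{pr:coincide} it suffices to prove that $\Ker\{\wt\Theta^\ell,M\}$ and $\Im\{\wt\Theta^\ell,M\}$ are $\GL_n$-stable submodules of $\wt M=M\otimes k[\bM]$, where $\GL_n=\GL(\CV)$ acts on $\bM=M_{n,r}$ by left multiplication, on $k[\bM]$ contragrediently, on $M$ via the given equivariant structure of Definition~\ref{eq}, and on $\wt M$ diagonally.

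First, I would equip $(\wt M)^{\oplus\binom{r+\ell-1}{\ell}}$ with the diagonal $\GL_n$-action on each summand and verify the key equivariance identity
\[
g\cdot\wt\theta_j(m\otimes f)\;=\;\wt\theta_j\bigl(g\cdot(m\otimes f)\bigr), \qquad g\in\GL_n,
\]
for each of the operators $\wt\theta_j$ defined in Section~\ref{se:descent}. The calculation is short: write $g\cdot x_i=\sum_{i'}g_{i'i}x_{i'}$ and $g\cdot Y_{ij}=\sum_{i''}(g^{-1})_{ii''}Y_{i''j}$ (the latter because $\GL_n$ acts on $k[\bM]$ contragrediently to left multiplication on $\bM$). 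Using the defining relation $g(xm)=(gx)(gm)$ of Definition~\ref{eq}, the sum $\sum_i g_{i'i}(g^{-1})_{ii''}=\delta_{i'i''}$ collapses, yielding the desired equivariance. It then follows formally that every monomial $\wt\theta_{j_1}\cdots\wt\theta_{j_\ell}$ is $\GL_n$-equivariant, hence so is the combined map $\wt\Theta^\ell$, and therefore its kernel and image are $\GL_n$-stable submodules of $\wt M$. Combined with the $\GL_r$-equivariance already established in Lemma~\ref{le:equiv}, this shows that $\Ker\{\wt\Theta^\ell,M\}$ and $\Im\{\wt\Theta^\ell,M\}$ are $(\GL_n\times\GL_r)$-equivariant coherent sheaves on $\bM$.

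Second, I would observe that the $\GL_n$- and $\GL_r$-actions on $\bM^o$ commute and that $\bM^o\to\Grass(r,\CV)$ is a $\GL_n$-equivariant principal $\GL_r$-bundle. A standard variant of Lemma~\ref{descent} therefore gives an equivalence of categories
\[
\cR\colon\Coh^{\GL_n\times\GL_r}(\bM^o)\;\xrightarrow{\;\sim\;}\;\Coh^{\GL_n}(\Grass(r,\CV)),
\]
compatible with the equivalence of \eqref{equiv} after forgetting the $\GL_n$-structure. Applying $\cR$ to the biequivariant sheaves of the previous paragraph and invoking Theorem~\ref{pr:coincide} to identify the results with $\cKer^\ell(M)$ and $\cIm^\ell(M)$, the proposition follows.

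The main obstacle is essentially bookkeeping: fixing a consistent convention for how $\GL_n$ acts on $\CV$, on $\bM$, and on the coordinate ring $k[Y_{i,j}]$ so that the cancellation of the matrices $g$ and $g^{-1}$ in the verification of equivariance of $\wt\theta_j$ is transparent. Once this is set up properly the calculation is essentially the same one as in Lemma~\ref{le:equiv}, with $\GL_n$ acting nontrivially on $M$ in place of trivially and acting contragrediently on $Y_{i,j}$ via left (rather than right) multiplication on $\bM$.
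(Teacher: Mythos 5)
Your proposal is correct and follows essentially the same route as the paper: both verify that the diagonal $\GL_n$-action on $M\otimes k[\bM]$ commutes with the operators $\wt\theta_j$ (the point being that $\sum_i x_i\otimes Y_{i,j}$ is invariant because the matrix of $g$ on the $x_i$'s cancels against the contragredient action on the $Y_{i,j}$'s), and then descend the resulting $(\GL_n,\GL_r)$-equivariant sheaf on $\bM^o$ through the equivariant refinement of Lemma~\ref{descent}, identifying the result via Theorem~\ref{pr:coincide}. Your explicit remark that equivariance of each $\wt\theta_j$ formally yields equivariance of all monomials $\wt\theta_{j_1}\cdots\wt\theta_{j_\ell}$, hence of $\wt\Theta^\ell$ for all $\ell$, is a small completeness improvement over the paper's ``the other cases are similar.''
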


\begin{proof} 
We first observe that Proposition~\ref{prop:homog} implies that
  $\Soc^{\ell}_{U_0}(M)$ and  $\Rad_{U_0}^\ell(M)$ are stable 
 under the action of the standard parabolic subgroup $\P_0 \subset \GL(\CV)$ on $M$
 for any $\ell$, $1\leq  \ell\leq (p-1)r$. 
  
We consider only $\cIm$; verification of the proposition for $\cIm^\ell(M)$, $\cKer^\ell(M)$  with 
$\ell$, $1\leq  \ell\leq (p-1)r$ is similar.  

Let $\bM = M_{n,r}$ be the affine variety of $n \times r$-matrices.  We identify
\[ 
\bM = \CV^{\oplus r} 
\]
as $k$-linear space and note that both $\GL_n$ and $\GL_r$ act on $\bM$: $\GL_n$ via 
multiplcation on the left and $\GL_r$ via multiplication on the right.
Moreover, these actions obviously commute.  Hence, the coordinate ring $k[\bM]$ is a 
$(\GL_n, \GL_r)$-bimodule. 

Recall the $\GL_r$-invariant submodule $\Im\{\wt \Theta, M\}$ of $M\otimes k[\bM]$  
defined in \eqref{def:im}. The $\GL_r$-action on $M\otimes k[\bM]$  is given via the 
trivial action on $M$ and the action on $k[\bM]$ induced by multiplication on $\bM$ 
on the right. There is also a $\GL_n$-action on $M\otimes k[\bM]$ which is diagonal: as 
given on the $\GL_n$-equivariant module $M$ and via the left multiplication on $\bM$. 
We first show that $\Im\{\wt \Theta, M\}$ is a $\GL_n$-invariant submodule of 
$M \otimes k[\bM]$ (and, hence, a $(\GL_n, \GL_r)$-submodule). 

Recall  $\wt \Theta = [\wt \theta_1, \ldots, \wt \theta_r]: \wt M \to (\wt M)^{\oplus r}$  
where for each $j$, $ 1 \leq j \leq r$, 
\[
\wt \theta_j (m \otimes f)  = \sum_i x_im \otimes Y_{i,j}f.
\]
Fix an element $g \in \GL_n$.     We proceed to compute the effect of the action of $g$ on 
$\wt \theta_j (m \otimes f)$.

Let $(y_{ij})_{1\leq i\leq n, 1 \leq j \leq r}$ be linear generators of $\bM = \CV^{\oplus r}$ 
chosen in such a way  that $y_{ij}$ is simply the generator $x_i$  of $\CV$ put in the $j^{\rm th}$ 
column.  Suppose the action of $g$ on $\CV$ with respect to the fixed basis $\{x_1, \ldots, x_n\}$ 
is given by a matrix $A = (a_{st})$. The action of $g$ on $y_{ij}$ is  then given by $gy_{ij} = 
\sum_\ell a_{\ell i}y_{\ell j}$, the same action on each factor $\CV$ in $\bM$.   
 
We identify the coordinate algebra $k[\bM]$ as $S^*(\bM^\#) \simeq k[Y_{i,j}]$ with the coordinate 
functions $Y_{i,j}$  defined as the linear duals of $y_{i,j}$.   For $f \in k[\bM]$, 
we have $g \circ f(-) = f(g^{-1} -)$.  Consequently,   the action of $g$ on $\bM^\#$ with respect to 
the basis $\{Y_{i,j} \}_{1\leq i\leq n, 1 \leq j \leq r}$  is given by multiplication 
on the right by $A^{-1}$. 
We compute
\[g(\sum_i x_i \otimes Y_{i,j}) = g([x_1, \ldots, x_n] \otimes [Y_{1,j}, \ldots, Y_{n,j}]^T) = \]
\[g([x_1, \ldots, x_n]) \otimes g([Y_{1,j}, \ldots, Y_{n,j}]^T) = [x_1, \ldots, x_n]
\cdot A^T \otimes ([Y_{1,j}, \ldots, Y_{n,j}] \cdot A^{-1})^T = \]
\vspace{0.08in}
\[  [x_1, \ldots, x_n]\cdot A^T \otimes (A^T)^{-1}\cdot [Y_{1,j}, \ldots, Y_{n,j}]^T = 
\sum_i x_i \otimes Y_{i,j}.
\]
Hence,
\[ g(\wt \theta_j(m \otimes f)) = g(\sum_i x_i \otimes Y_{i,j})g(m \otimes f) = 
(\sum_i x_i \otimes Y_{i,j})(gm\otimes gf) = \wt \theta_j(gm\otimes gf).\] 
With the given $GL_n$-actions on $k[\bM]$ and $M$,  we have $gm \otimes gf \in M \otimes k[\bM]$. 
Hence,  
\[
g(\wt \theta_j(m\otimes f)) = \wt \theta_j(gm\otimes gf) \in \Im \wt \theta_j. \] 
Since this holds for all $j$, we conclude that $\Im\{\wt \Theta, M\} = \sum\limits_{j=1}^r \Im \theta_j \subset M \otimes k[\bM] $ 
is  invariant under the $\GL_n$-action.
Hence, $\Im(\wt \Theta, M)$ determines a $(\GL_n, \GL_r)$-equivariant sheaf on $\bM$. Moreover, 
since $\bM^0 \subset \bM$ is a $(\GL_n, \GL_r)$-stable subvariety,  the restriction of 
$\Im\{\wt \Theta, M\}$  to $\bM^0$ is a $(\GL_n, \GL_r)$-equivariant sheaf on $\bM^0$.  
Since the actions of $\GL_n$ and $\GL_r$ commute,  the equivalence
\[
\Coh^{\GL_r}(\bM^0) \simeq \Coh(\Grass_{n,r}) 
\]
of Lemma~\ref{descent}
restricts to an equivalence of $\GL_n$-equivariant sheaves. Consequently, 
$\cIm(M) \ = \ \cR(\Im(\wt \Theta, M))$ (by Theorem \ref{pr:coincide}) is a 
 $\GL_n$-equivariant $\cOG$--module. 
 \end{proof}

The following theorem enables us to indentify kernel and image bundles as in
\eqref{se:local} with bundles obtained  via the functor $\cL$. 

\begin{thm}
\label{thm:constr}
Let $M$ be a $\GL_n$-equivariant $kE$-module, and let $U_0=kx_0+\cdots + kx_r \subset \CV$. 
Then for any $\ell$,  $1\leq \ell \leq r(p-1)$,  we have an isomorphism of $\GL_n$-equivariant algebraic vector bundles on $\Grass_{n,r} = \Grass(r,\CV)$
\[
\cKer^\ell(M) \simeq \cL(\Soc^\ell_{U_0}(M)), \quad \cIm^\ell (M) \simeq \cL(\Rad^\ell_{U_0}(M)).
\]
\end{thm}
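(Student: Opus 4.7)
The plan is to apply Proposition~\ref{prop:known}: the quotient $\pi: \GL_n \to \GL_n/\P_0 \simeq \Grass_{n,r}$ is Zariski-locally trivial (since $\P_0$ is parabolic), so any $\GL_n$--equivariant algebraic vector bundle $\cE$ on $\Grass_{n,r}$ is canonically $\cL(V)$, where $V$ is the fiber of $\cE$ over the base point $U_0$ endowed with its induced $\P_0$--module structure. Applying this to $\cE = \cKer^\ell(M)$ and $\cE = \cIm^\ell(M)$ will yield the theorem, once one checks three points: (a) these are $\GL_n$--equivariant vector bundles; (b) their fibers at $U_0$ are $\Soc^\ell_{U_0}(M)$ and $\Rad^\ell_{U_0}(M)$ respectively; (c) the $\P_0$--action on these fibers coincides with the one induced from the ambient $\GL_n$-action on $M$.

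For (a), observe that by Proposition~\ref{prop:homog} the $\GL_n$--equivariant module $M$ has constant $r$--$\Rad^\ell$--rank and constant $r$--$\Soc^\ell$--rank for every $\ell$, $1\leq \ell \leq r(p-1)$. Hence Theorem~\ref{th:bundle} shows that $\cKer^\ell(M)$ and $\cIm^\ell(M)$ are algebraic vector bundles on $\Grass_{n,r}$, and Proposition~\ref{prop:equiv} gives their $\GL_n$--equivariance.

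For (b), I would work in the standard affine chart $\cU_\Sigma$ with $\Sigma = \{1,\dots,r\}$, which contains $U_0$. The distinguished matrix representative of $U_0$ in this chart has $\Sigma$-submatrix equal to the identity and zero entries elsewhere, so the coordinate functions of \eqref{eq:YSigma} satisfy $Y^\Sigma_{i,j}(U_0) = \delta_{i,j}$. Consequently the operator $\theta^\Sigma_j$ of \eqref{theta} specializes at $U_0$ to multiplication by $x_j$ on $M$. Because $\cKer^\ell(M)$ and $\cIm^\ell(M)$ are locally free on $\cU_\Sigma$ (with locally free cokernels, as in the proof of Theorem~\ref{th:bundle} via the projectivity argument of \cite[4.11]{FP3}), fiber formation commutes with taking kernels and images of the iterated operators built from the $\theta^\Sigma_j$. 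Hence the fibers at $U_0$ are
\[
\bigcap_{j_1,\dots,j_\ell} \Ker(x_{j_1}\cdots x_{j_\ell} : M\to M) = \Soc^\ell_{U_0}(M),
\quad
\sum_{j_1,\dots,j_\ell}\Im(x_{j_1}\cdots x_{j_\ell} : M\to M) = \Rad^\ell_{U_0}(M).
\]

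For (c), recall from the proof of Proposition~\ref{prop:equiv} that the $\GL_n$--equivariant structure on $\cKer^\ell(M)$ and $\cIm^\ell(M)$ is inherited from the diagonal $\GL_n$--action on the ambient bundle $M\otimes\cOG$. For $g\in \P_0 = \Stab(U_0)$, the induced map on fibers at the fixed point $U_0$ is therefore just the restriction to $\Soc^\ell_{U_0}(M)$ (respectively $\Rad^\ell_{U_0}(M)$) of the $\GL_n$--action on $M$; these subspaces are $\P_0$--stable by Proposition~\ref{prop:homog}(2), which is precisely the natural $\P_0$--module structure appearing in the statement. Invoking Proposition~\ref{prop:known} then finishes the proof. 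The main technical point is (b), where one needs local freeness (i.e., the constant-rank hypothesis ensured by Proposition~\ref{prop:homog}) in order to commute specialization with the kernel and image constructions; once this is granted, the identification of the $\P_0$--action is automatic from the way equivariance was built in Proposition~\ref{prop:equiv}.
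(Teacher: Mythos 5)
Your proposal is correct and follows essentially the same route as the paper: establish $\GL_n$-equivariance via Proposition~\ref{prop:equiv}, identify the fiber over the base point $U_0$ with $\Soc^\ell_{U_0}(M)$ (resp.\ $\Rad^\ell_{U_0}(M)$), and conclude by Proposition~\ref{prop:known}. You supply more detail than the paper at the fiber-identification step — in particular the observation that local freeness of the cokernel (coming from the constant-rank property guaranteed by Proposition~\ref{prop:homog}) is what allows specialization to commute with forming kernels and images — which the paper leaves implicit; this is a welcome clarification rather than a divergence.
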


\begin{proof}
By Proposition 
~\ref{prop:equiv}, $\cKer^\ell(M)$ is a $\GL_n$-equivariant vector bundle on $\Grass(r,\CV)$.   
The fiber of $\cKer^\ell(M)$ above the base point of 
$\Grass(r,\CV)$ equals 
$\Soc^\ell_{U_0}(M)$.   We now apply Proposition ~\ref{prop:known} to
conclude that $\cKer^\ell(M) \simeq \cL(\Soc^\ell_{U_0}(M))$.  

The proof that $\cIm^\ell (M) \simeq \cL(\Rad^\ell_{U_0}(M))$ is strictly analogous.
\end{proof}

In the following examples, we show how to realize various  ``standard" bundles on 
$\Grass(r, \CV)$ as kernel and image bundles associated to $\GL_n$-equivariant $kE$-modules.
For convenience, we fix a basis $\{x_1, \ldots, x_n\}$ 
of $\CV$ and choose $U_0$ to be the subspace generated by $\{x_1, \ldots, x_r\}$.  
As before, the action 
of $\GL_n\simeq \GL(\CV)$ on $kE$ is given via the identification $kE \simeq S^*(\CV)/\langle v^p, 
v \in \CV \rangle \simeq k[x_1, \ldots, x_n]/(x^p_1, \ldots, x_n^p)$.
 
\begin{ex}
[{\it Universal subbundle of rank $r$}]
\label{ex:taut1} Let $M = kE/\Rad^2 (kE)$.    
As a $\P_0$-module, $\Rad_{U_0}(M) \simeq U_0$.  Hence, $\cIm(M) \simeq \gamma_r$
by Example \ref{ex:taut}(1).
\end{ex}

\vskip .1in

\begin{ex}[{\it Universal subbundle of rank $n-r$}]
\label{ex:uni}
Let $M=\Rad^{n-1}(\Lambda^*(\CV))$.  Then 
\[\Soc_{U_0}(M) \simeq (\sum\limits_{j=r+1}^n kx_1\wedge\ldots\wedge x_{j-1}\wedge x_{j+1}\wedge \ldots \wedge x_n) \oplus \Lambda^n(\CV)
\]
 as a $\P_0$--module.  Moreover, the second direct summand is a $\GL_n$--module. The first direct summand 
 can be naturally identified with the $\P_0$-module 
\[W_0 = \Ker\{\CV^\#\to U_0^\#\}
\] 
as in Example~\ref{ex:taut}(2).
We get
\[\cKer(M) =  \cL(W_0) \oplus \cL(\Lambda^n(\CV)) = \delta_{n-r} \oplus \cOG.
\]
It is straightforward to see that $\cIm(M)$ is a trivial bundle of rank one. 
Hence, 
\[
\cKer(M)/\cIm(M) \simeq \delta_{n-r}.
\]
We also note that we have an isomorphism of $kE$--modules: 
$\Rad^{n-1}(\Lambda^*(\CV)) \simeq \left(kE/\Rad^2(kE)\right)^\#$. Hence, we have also justified 	
the second part of Example~\ref{ex:first}.
\end{ex}

\vskip .1in

The previous two examples are connected by a certain ``duality"  which we now state formally. 
As before, we fix the basis $\{x_1, \ldots, x_n\}$ of $\CV$. 
We give $kE \simeq k[x_1, \ldots, x_n]/(x_1^p, \ldots, x_n^p)$ 
the Hopf algebra structure of the truncated polynomial algebra. 
That is, the elements $x_i$ are primitive 
with respect to the coproduct, and the antipode sends $x_i$ to $-x_i$.  
In particular, $\CV \subset \Rad(kE)$ 
is stable under the antipode.  We emphasize that the $kE$-module structure of the 
dual $M^\#$ of  a $kE$--module $M$ utilizes this Hopf algebra structure.

\begin{prop}
\label{duality} 
Let $M$ be a $\GL_n$-equivariant $kE \simeq k[x_1, \ldots, x_n]/(x_1^p, \ldots, x_n^p)$--module. 
Then $M^\#$ is also a $\GL_n$-equivariant $kE$-module (with the standard $\GL_n$-action 
on the dual) and for any $j$, $1 \leq j \leq p-1$, we have 
a short exact sequence of algebraic vector bundles on $\Grass(r,\CV)$:
\[ 
\xymatrix{0\ar[r]&\cKer^j(M^\#) \ar[r]& M^\#\otimes \cOG \ar[r]& \cIm^j(M)^\vee\ar[r]&0.}
\]
\end{prop}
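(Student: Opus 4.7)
The plan is to assemble the asserted short exact sequence from the exactness of $\cL$ applied to a natural short exact sequence of $\P_0$-modules, after identifying both the kernel and image sheaves via Theorem~\ref{thm:constr}. First, I would justify the unstated preliminary that $M^\#$ inherits a $\GL_n$-equivariant $kE$-module structure. The $\GL_n$-action on $kE = S^*(\CV)/\langle v^p,\, v \in \CV\rangle$ arises from the standard representation on $\CV$, and since $\GL_n$ preserves the set of primitive generators $\CV$, it acts by Hopf algebra automorphisms for the primitive Hopf structure. Consequently the usual dual action $(g\phi)(m):=\phi(g^{-1}m)$ satisfies $g(x\phi)=(gx)(g\phi)$, verifying Definition~\ref{eq} for $M^\#$.

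The key step is to identify $\Soc^j_{U_0}(M^\#)$ with $(\Rad^j_{U_0}(M))^\perp$ as $\P_0$-submodules of $M^\#$. Proposition~\ref{rad-dual} provides this identification not for $M^\#$ but for $\iota(M)^\#$, so I must reconcile the two. For the primitive Hopf structure on $kE$, the antipode $\iota$ acts on $\CV$ by $-1$, hence acts on any monomial of degree $j$ built from elements of $U_0$ by $(-1)^j$. In the $kE$-structure on $\iota(M^\#)=\iota(M)^\#$, such a monomial acts as $(-1)^j$ times its action on $M^\#$, so both structures have exactly the same annihilators of all monomials of degree $j$ in $U_0$. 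Therefore
\[
\Soc^j_{U_0}(M^\#) \;=\; \Soc^j_{U_0}(\iota(M)^\#) \;\simeq\; (\Rad^j_{U_0}(M))^\perp.
\]

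With this identification in hand, the tautological short exact sequence of $\P_0$-modules
\[
0 \ \to \ (\Rad^j_{U_0}(M))^{\perp} \ \to \ M^\# \ \to \ (\Rad^j_{U_0}(M))^{\#} \ \to \ 0
\]
becomes a short exact sequence with left term $\Soc^j_{U_0}(M^\#)$ and right term $(\Rad^j_{U_0}(M))^\#$. Apply the exact functor $\cL$ (Proposition~\ref{prop:el}(1)), noting that $\cL$ commutes with duals, and use Theorem~\ref{thm:constr} to identify $\cL(\Soc^j_{U_0}(M^\#))\simeq\cKer^j(M^\#)$ and $\cL(\Rad^j_{U_0}(M))\simeq\cIm^j(M)$. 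Since $M^\#$ is a $\GL_n$-module, $\cL(M^\#\!\downarrow_{\P_0})\simeq M^\#\otimes\cOG$ is trivial by Proposition~\ref{prop:el}(2). These substitutions yield the claimed short exact sequence of $\GL_n$-equivariant algebraic vector bundles.

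The only genuinely delicate step is the bookkeeping in the middle paragraph: keeping straight the interplay among the Hopf algebra structure on $kE$, the antipode, and the dual-module conventions used in Definition~\ref{eq} and Proposition~\ref{rad-dual}. Once the sign cancellations are recorded and $\iota(M)^\#$ is seen to have the same $\Soc^j_{U_0}$ as $M^\#$, the remainder of the argument is a formal application of the exact, dual-commuting, and trivializing properties of $\cL$.
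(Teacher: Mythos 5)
Your proof is correct and follows essentially the same route as the paper's: obtain the short exact sequence of $\P_0$-modules $0 \to \Soc^j_{U_0}(M^\#) \to M^\# \to \Rad^j_{U_0}(M)^\# \to 0$ from Proposition~\ref{rad-dual}, then apply the exact, dual-commuting functor $\cL$ together with Theorem~\ref{thm:constr}. Your middle paragraph identifying $\Soc^j_{U_0}(M^\#)$ with $\Soc^j_{U_0}(\iota(M)^\#)$ via the sign $(-1)^j$ of the antipode on degree-$j$ monomials is a correct filling-in of a step the paper leaves implicit.
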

\begin{proof}
Let $U_0\subset \CV$ be the $r$-plane spanned by $\{x_1, \ldots, x_r\}$. 
Proposition~\ref{rad-dual} implies that the following sequence of  
$P_0=\Stab(U_0)$--modules
\begin{equation}
\label{eq:dual}
\xymatrix{0\ar[r]&\Soc_{U_0}^j(M^\#) \ar[r]& M^\# \ar[r]& \Rad_{U_0}^j(M)^\#\ar[r]&0}
\end{equation}
 is exact.
Applying the functor $\cL$ to the short exact sequence \eqref{eq:dual}, using the properties of $\cL$ 
given in Proposition~\ref{prop:el}, and appealing to Theorem~\ref{thm:constr}, 
we conclude the desired short  exact sequence of bundles. 
\end{proof}

\begin{remark}  Let $M=kE/\Rad^2(kE)$ as in Examples~\ref{ex:first}(1) and \ref{ex:taut1}. 
Then the short exact sequence of Proposition~\ref{duality} (with $j = 1$) takes the form
\[\xymatrix{ 0 \ar[r]& \delta_{n-r}  \oplus \cOG \ar[r]& \cOG^{\oplus n+1} \ar[r]& \gamma_r^\vee \ar[r]& 0 }.
\]
\end{remark}

\begin{ex}[{\it The Serre twist bundle $\cOG(-1)$}]
\label{ex:twist}
Let 
\[
M=\Rad^r (\Lambda^*(\CV))/\Rad^{r+2}(\Lambda^*(\CV)).
\]
 Then $\Soc_{U_0}(M) = \Lambda^r(U_0) 
\oplus \Rad^{r+1}(\Lambda^*(\CV))$ as a $\P_0$--module. 
Hence, 
\[ \cKer(M) \simeq \cL(\Lambda^r(U_0)) \oplus \cL(\Rad^{r+1}(\Lambda^*(\CV))). 
\] 
Since the structure of $\P_0$ on $\Rad^{r+1}(\Lambda^*(\CV))$ is the restriction of 
$\GL_n$-structure, Prop.~\ref{prop:el}.2 implies that $\cL(\Rad^{r+1}(\Lambda^*(\CV)))$ 
is a trivial bundle. Hence, Prop.~\ref{prop:el} and Example~\ref{ex:taut} imply 
that
\[
\cKer(M) \simeq \Lambda^r(\gamma_r) \oplus (\cOG\otimes \Lambda^{r+1}(\CV)) \
\simeq \ \cOG(-1) \oplus \cOG^{n \choose {r+1}}.
\]
\end{ex} 

\vskip .1in

\begin{ex}
\label{exsymm}
[{\it Symmetric powers}]  Let $j$ be a positive integer, $j \leq p-1$, and let 
\[M=S^*(\CV)/S^{*\geq j+1}(\CV).\]  
Then $\Rad^j_{U_0}(M)$ is isomorphic to  $S^j(U_0)$ as a $\P_0$--module. 
Hence, by Prop.~\ref{prop:el} and Example~\ref{ex:taut},
\[\cIm^j(M) =   S^j(\gamma_r).\]

\vspace{0.1in} 
More generally, let $M= S^{*\geq i}(\CV)/S^{*\geq i+j+1}(\CV)$. Consider 
the multiplication map
\[
\mu: S^j(U_0) \otimes S^i(\CV) \to S^{i+j}(\CV). 
\]
and the corresponding exact sequence of $\P_0$--modules
\[\xymatrix{
0 \ar[r] & \Ker \mu \ar[r] & S^j(U_0) \otimes S^i(\CV) \ar[r]^-\mu & S^{i+j}(\CV) \ar[r] & \Coker \mu \ar[r] &0.
}
\]
The image of the multiplication map $\mu$ is spanned by all monomials divisible by a monomial 
in $x_1, \ldots, x_r$ of degree $j$. 
Hence, $\Rad^j_{U_0}(M) \simeq \Im \mu$.    Applying the functor $\cL$ to the exact sequence 
above, we conclude that
\[\cIm^j(M) \simeq \Im \{ \cL(\mu)\}\]
where  $\cL(\mu): S^j(\gamma_r) \otimes  S^i(\CV) \subset S^j(\CV) 
\otimes S^i(\CV) \otimes \cOG \to S^{i+j}(\CV) \otimes \cOG$
is the multiplication map.

\vspace{0.1in} 
We now specialize to the case $j=1$. Then, 
\[M = S^{*\geq i}(\CV)/S^{*\geq i+2}(\CV).\]
In this case, the image of the multiplication map $\mu: U_0 \otimes S^i(\CV) \to S^{i+1}(\CV)$ 
is spanned by 
all monomials divisible by one of the variables $x_1, \ldots, x_r$.  Therefore, we have a 
short exact sequence of $P_0$-modules
\begin{equation}
\label{cokermu}
\xymatrix{ 0 \ar[r] & \Rad(M)=\Im \mu \ar[r]& S^{i+1}(\CV)  \ar[r] & S^{i+1}(\CV/U_0) \ar[r] & 0 }.
\end{equation}
In the notation of Example~\ref{ex:taut}(2), $\CV/U_0 \simeq W_0^\#$. Hence, Proposition
~\ref{prop:el}  and Example~\ref{ex:taut}(2) imply that 
$$\cL(S^{i+1}(\CV/U_0)) \ \simeq \ \cL(S^{i+1}(W_0^\#)) \ = \ S^{i+1}(\delta_{n-r}^\vee).$$ 
Applying the exact functor $\cL$ to (\ref{cokermu}), we conclude that 
$\cIm(M)$ fits into the following  short exact sequence of vector bundles
\begin{equation}
\label{eq:symm} 
\xymatrix{ 0 \ar[r] & \cIm(M) \ar[r]& S^{i+1}(\CV)\otimes \cOG  \ar[r] & S^{i+1}(\delta_{n-r}^\vee) \ar[r] & 0 }.
\end{equation}
\end{ex}

\vskip .1in

\begin{ex}\label{dual_symm} 
Let $i$ be a positive integer such that $i\leq p-1$, and let 
\[M=\frac{\Rad^{n(p-1)- i - 1 }(kE)}{\Rad^{n(p-1) - i + 1}(kE)}.\] 
Note that as a $kE$-module, 
\[M^\# \simeq \Rad^{i}(kE)/\Rad^{i+2}(kE).\]
Moreover, the restriction on $i$ implies that 
\[
\Rad^i(kE)/\Rad^{i+2}(kE) \simeq S^{*\geq i}(\CV)/S^{*\geq i+2}(\CV).
\]
Applying Proposition~\ref{duality}, we get a short exact sequence of bundles
\[
\xymatrix{0\ar[r]&\cKer(M) \ar[r]& M\otimes \cOG \ar[r]& \cIm(M^\#)^\vee\ar[r]&0.}
\]
Since the bottom radical layer of $M$ is in the socle for any $U\subset \CV$, 
the kernel bundle $\cKer(M)$ has a trivial subbundle $\Rad(M) \otimes \cOG \simeq 
S^{i}(\CV^\#) \otimes \cOG$ 
as a direct summand. Hence, we can rewrite the exact sequence above as 
\[
\xymatrix{0\ar[r]&\frac{\cKer(M)}{\Rad(M) \otimes \cOG} \oplus 
( \Rad(M) \otimes \cOG) \ar[r]& (S^{i}(\CV^\#) \oplus S^{i+1}(\CV^\#)) \otimes \cOG  }
\]
\[\xymatrix{&\ar[r]&\cIm(M^\#)^\vee\ar[r]&0.}
\]
Discarding the direct summand $\Rad(M) \otimes \cOG$ which splits off, we get 
\[
\xymatrix{0\ar[r]&\frac{\cKer(M)}{\Rad(M) \otimes \cOG} 
\ar[r]& S^{i+1}(\CV^\#) \otimes \cOG \ar[r]& \cIm(M^\#)^\vee\ar[r]&0.}
\]
Dualizing, we further get 
\[
\xymatrix{ 0 \ar[r] & \cIm(M^\#) \ar[r]& S^{i+1}(\CV)\otimes \cOG  \ar[r] & 
\left(\frac{\cKer(M)}{\Rad(M) \otimes \cOG}\right)^\vee  \ar[r] & 0 }.
\]
It follows from the construction that the embedding $\cIm(M^\#) 
\hookrightarrow S^{i+1}(\CV)\otimes \cOG$ in this short exact sequence 
coincides with the corresponding map in 
\eqref{eq:symm} which was induced by the multiplication map 
$\mu: \gamma_r \otimes S^i(\CV) \to S^{i+1}(\CV) \otimes \cOG$.  
Hence, 
\[\frac{\cKer(M)}{\cIm(M)} = \frac{\cKer(M)}{\Rad(M) \otimes \cOG} \simeq S^i(\delta_{n-r}).\]
\end{ex}

\vskip .1in

\begin{ex}[{\it The Serre twist bundle $\cOG(1-p)$}]
\label{ex:twist2}  Let  
\[M = \Rad^{r(p-1)}(kE)/\Rad^{r(p-1)+2} (kE).
\]
 Then 
\[
\Soc_{U_0}(M) = kx_1^{p-1}\ldots x_r^{p-1} \oplus \Rad(M)
\]
We have an obvious isomorphism of one-dimensional $\P_0$-modules
\[\underbrace{\Lambda^r(U_0)\otimes \ldots \otimes \Lambda^r(U_0)}_{p-1} \simeq kx_1^{p-1}\ldots x_r^{p-1} 
\]
given by sending $x_1\wedge\ldots\wedge x_r \otimes \ldots \otimes x_1\wedge\ldots\wedge x_r$ to $x_1^{p-1}\ldots x_r^{p-1}$. 
Hence, 
\[\cKer(M) \simeq \cL((\Lambda^r(U_0)^{\otimes p-1}) \oplus \cL(\Rad (M)) \simeq   \cOG(1-p) \oplus  \Rad (M) \otimes \cOG 
\] 
where the last equality follows from Example~\ref{ex:twist} and Proposition ~\ref{prop:el}.
\end{ex}
\vskip .1in

\begin{ex}[{\it $\delta_{n-r}^\vee$ via cokernel}]
\label{delta-dual}  Let $\cCoker(M) \stackrel{def}{=} (M \otimes \cOG)/\cIm(M)$.   
Let $M=kE/\Rad^2(kE)$. 
The exactness of $\cL$ together with Example~\ref{ex:taut} imply that
\[
\cCoker(M) \simeq \delta_{n-r}^\vee.
\]
 \end{ex}
 
 \vskip .1in
 
In the following example we study a bundle that comes not from a 
$\GL_n$-equivariant $kE$-module but from  
the cohomology of $E$ considered as a $\GL_n$-module. 
For the coherence of notation, assume that $p>2$. 
Recall that $\HHH^*(E,k)$ has a $\GL_n$-structure and, moreover, we have an 
isomorphism of $\GL_n$--modules
\[
\HHH^*(kE,k) \simeq     \Lambda^*(V^\#)\otimes S^*((V^{(1)}[2])^\#)
\]
as stated in Proposition \ref{idd}.

\begin{ex}   
\label{ex:coh} 
Let $\alpha_0: C=k[t_1, \ldots, t_r]/(t_i^p) \to kE$ be the map defined 
by $\alpha_0(t_i)=x_i $ for $1\leq i\leq r$, and let
\[
\alpha_0^*:\HHH^{2m}(kE,k) \to \HHH^{2m}(C,k)
\] be the induced  map on cohomology for some positive integer $m$.  Reducing modulo nilpotents, we
get a map
\[ 
\alpha_0^*:\HHH^{2m}(kE,k)_{\rm red} \simeq S^{m}((V^{(1)})^\#)  \to \HHH^{2m}(C,k)_{\rm red} \simeq S^m((U_0^{(1)})^\#)
\]
which is induced by $(\alpha_0^{(1)})^\#: (V^{(1)})^\# \to (U_0^{(1)})^\#$ by Proposition~\ref{idd}. 
This implies that 
the kernel of $\alpha_0^*$ is stable under the action of the standard parabolic $\P_0$. Hence, we can apply the functor $\cL$ to 
$\Ker \alpha_0^*$. Since $\cL$ is exact and commutes with Frobenius twist we obtain a short exact sequence of  bundles
\[
\xymatrix{0 \ar[r]& \cL(\Ker \alpha_0^*) \ar[r] & \cOG\otimes S^m((V^{(1)})^\#) \ar[r] & S^m(F^*(\gamma_r^\vee))\ar[r] & 0}
\]
where $F:\Grass(r,\CV) \to \Grass(r, \CV)$ is the Frobenius map.
\end{ex}


\section{A construction using the Pl\"ucker embedding}\label{global}  

We present another construction of bundles from modules of constant
$r$-socle rank, one that  applies only to kernel bundles.  This construction provides
``generators" for  graded modules for the coordinate algebra of the 
Grassmannian whose associated coherent sheaf is the kernel bundle of Theorem \ref{pr:coincide}.

We denote the homogeneous coordinate ring of 
$\Grass_{n,r}$ by $\cA$ and identify it with a quotient 
of $k[p_\Sigma]$ via the Pl\"ucker embedding  
$\p:\Grass_{r,n} \to \bP^{{n \choose r}-1}$.  As before, $\cOG$ denotes
the structure sheaf of $\Grass_{n,r}$. Since $\cA$ is 
generated in degree one, we have an equivalence of categories 
(the Serre correspondence) 
\begin{equation}
\label{Serre}
\Coh(\Grass_{n,r}) \simeq \frac{\text{Fin. gen. graded } \cA-\mod}{\text{fin. dim. graded }   \cA-\mod}  
\end{equation}
between the category of coherent $\cOG$--modules and the quotient 
category of finitely generated graded $\cA$--modules modulo the finite dimensional 
graded $\cA$--modules. The equivalence is given explicitly by sending an $\cOG$--
module $\cF$  to 
$\bigoplus\limits_{i \in \Z} \Gamma(\Grass_{n,r}, \cF(i))$ 
(see \cite[II.5]{Har}). 

Starting with a module of constant r-socle  rank, we construct a graded 
$\cA$--module $\Ker\{\Theta_\cA, M\}$
which is in the equivalence class of the kernel bundle  $\cKer(M)$ 
via the Serre correspondence (Theorem~\ref{pr:coincide2}).  
We then develop an algorithm that can be used to construct a collection 
of generators $w_1, \dots, w_t$, in degrees  $d_1, \dots, d_t$, of the 
graded module $\Ker\{\Theta_\cA, M\}$,  up to a finite dimensional quotient.  
Applying the Serre  correspondence again, we obtain
a surjective map of vector bundles 
\[
\xymatrix{\bigoplus\limits_{i=1}^{t} \cOG(-d_i) \ar[r]&  \cKer(M)}.
\]

\begin{defn} \label{def-theta}
Let $M$ be a $kE \simeq k[x_1, \ldots, x_r]/(x_1^p, \ldots, x^p_n)$--module. We define the  map
\[\Theta_\cA: \quad M \otimes \cA   \quad \longrightarrow  \quad
(M \otimes\cA )^{\binom{n}{r-1}}
\]
by components $\Theta_\cA = \{ \vartheta_W\}$ where the index is over the
subsets $W \subset \{1 \dots n\}$ having $r-1$ elements. For any such $W$,
and any $m \in M$, let
\[
\vartheta_W(m \otimes 1) \quad = \quad\sum_{i \notin W} (-1)^{u(W,i)} \ x_im \otimes \fp_{W \cup \{i\}}
\]where $u(W,i) = \#\{j \in W \ \vert \ j < i\}$. 
\end{defn}

Since the operator $\Theta_\cA$  is 
graded of degree one (with respect to the standard grading  of the 
homogeneous coordinate algebra $k[p_\Sigma]$ of
$\bP^{{n \choose r}-1}$ where the Pl\"ucker coordinates $\fp_\Sigma$ have 
degree $1$), the kernel of $\Theta_\cA$, denoted $\Ker\{\Theta_\cA, M\}$, 
is a graded $\cA$--module. 

\begin{thm}
\label{pr:coincide2} For any finite--dimensional $kE$--module $M$, 
the graded $\cA$-module $\Ker\{\Theta_\cA, M\}$ corresponds to 
the coherent sheaf $\cKer(M)$ as defined in \eqref{localdef} via 
the equivalence of categories \eqref{Serre}.
\end{thm}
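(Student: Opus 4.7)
My plan is to verify the correspondence affine-locally on $\Grass_{n,r}$, using the standard Serre dictionary on the cover $\{\cU_\Sigma\}$. Under this dictionary, the sheaf associated to $\Ker\{\Theta_\cA, M\}$ restricts on $\cU_\Sigma = D_+(\fp_\Sigma) \cap \Grass_{n,r}$ to the degree-zero piece of the localization $\Ker\{\Theta_\cA, M\}[\fp_\Sigma^{-1}]$. Since $(M \otimes \cA)[\fp_\Sigma^{-1}]_0 = M \otimes k[\cU_\Sigma]$ and localization is exact, it suffices to exhibit, for every $r$-subset $\Sigma \subset \{1,\ldots,n\}$, an equality
\[
\bigl(\Ker\{\Theta_\cA, M\}[\fp_\Sigma^{-1}]\bigr)_0 \ = \ \Ker \Theta^\Sigma \ = \ \Ker^1(M)_{\cU_\Sigma}
\]
of $k[\cU_\Sigma]$-submodules of $M \otimes k[\cU_\Sigma]$; the identifications for different $\Sigma$ will automatically be compatible with the transition functions $\tau_{\Sigma, \Sigma'}$ used in the proof of Proposition~\ref{localdef}, and Serre's equivalence will then yield the theorem.

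The heart of the argument will be a reinterpretation of Definition~\ref{def-theta} in the exterior algebra. Using $e_{W \cup \{i\}} = (-1)^{u(W,i)}\, x_i \wedge e_W$, where $e_W = x_{k_1} \wedge \cdots \wedge x_{k_{r-1}}$, one rewrites $\vartheta_W(m \otimes 1) = \sum_{i=1}^n x_i m \otimes (x_i \wedge e_W)$. Evaluating at a point $U \in \cU_\Sigma$ via the distinguished section (whose $\Sigma$-submatrix is the identity and whose columns $u_1, \ldots, u_r$ span $U$), and writing $\omega_U = u_1 \wedge \cdots \wedge u_r$, a direct computation identifies
\[
\sum_{i \notin W} (-1)^{u(W,i)} \fp_{W \cup \{i\}}(U)\, x_i \ = \ \iota_{e_W^\#}(\omega_U) \ \in \ U,
\]
where $\iota_{e_W^\#}$ denotes contraction by the element of $\Lambda^{r-1}(\CV)^\#$ dual to $e_W$. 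The key geometric observation is that contraction of a decomposable $r$-vector $u_1 \wedge \cdots \wedge u_r$ by any $(r-1)$-covector automatically lies in the span $U$ of the $u_j$'s.

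Consequently the contraction expands as $\sum_k c_{W,k}(U)\, u_k$ with coefficients $c_{W,k}$ that are regular on $\cU_\Sigma$ (one inverts the matrix of columns $u_1, \ldots, u_r$, whose $\Sigma$-rows are the identity), yielding the ``factorization'' identity $\vartheta_W|_{\cU_\Sigma} = \sum_k c_{W,k}\, \theta_k^\Sigma$ after dehomogenizing. This at once gives the inclusion $\Ker \Theta^\Sigma \subset \bigcap_W \Ker \vartheta_W|_{\cU_\Sigma}$. For the reverse inclusion I would specialize to $W = \Sigma \setminus \{i_k\}$: expanding the relevant minor by its unique non-standard row collapses the contraction to $(-1)^{k-1} u_k$, so $\vartheta_{\Sigma \setminus \{i_k\}}|_{\cU_\Sigma} = (-1)^{k-1} \theta_k^\Sigma$. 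Since these particular components of $\Theta_\cA$ already recover all of $\theta_1^\Sigma, \ldots, \theta_r^\Sigma$ up to sign, the dehomogenized kernel must lie in $\bigcap_k \Ker \theta_k^\Sigma = \Ker \Theta^\Sigma$, and combining both inclusions gives the required equality. The main obstacle will be the combinatorial bookkeeping---keeping the signs straight in the exterior-algebra computation and confirming regularity of the $c_{W,k}$ on $\cU_\Sigma$; the remaining steps (the gluing between different $\Sigma$ and the passage through Serre's correspondence) are then formal.
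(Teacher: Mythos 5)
Your proposal is correct and follows essentially the same route as the paper: both reduce to the affine opens $\cU_\Sigma$ and establish the factorization $[\vartheta_W]_W = [\theta_1^\Sigma,\ldots,\theta_r^\Sigma]\cdot B$, where your contraction coefficients $c_{W,k}$ are exactly the signed $(r-1)\times(r-1)$ minors forming the paper's matrix $B$, and your specialization to $W=\Sigma\setminus\{i_k\}$ is precisely the paper's observation that the columns of $B$ indexed by these $W$ form an identity block, giving both inclusions. The exterior-algebra/contraction packaging is a clean way to see why the coefficients land in $U$, but the underlying computation is the same.
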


\begin{proof} Let $\CU_\Sigma \subset \Grass_{n,r}$ be a principal open subset
indexed by some subset $\Sigma \subset \{1,\ldots,n\}$ of cardinality $r$.   Then
$$k[\CU_\Sigma] \ = \ (\cA[1/\fp_\Sigma])_0, \quad 
\Ker\{\Theta_\cA, M\}_{\CU_\Sigma} = (\Ker\{\Theta_\cA, M\} \otimes \cA[1/\fp_\Sigma])_0,$$
where  $\Ker(M)_{\CU_\Sigma} = \cKer(M){\downarrow_{\CU_\Sigma}}$.

We show that for any $r$-subset $\Sigma \subset \{1, \ldots, n\}$, 
\[ \Ker\{\Theta_\cA, M\}_{\CU_\Sigma} =  \Ker(M)_{\CU_\Sigma} \]
as submodules of $M \otimes k[\CU_\Sigma]$  which is sufficient to prove the theorem. 

Let $\CI_{r-1}$ be the set of all subsets $W$ of $\{1, \ldots, n\}$ of cardinality $r-1$. 
 Recall  that  $\Ker(M)_{\CU_\Sigma}$ is given 
as the kernel of the operator
$$
[\theta^\Sigma_1, \ldots, \theta^\Sigma_r] = [x_1, \ldots, x_n] \otimes \begin{pmatrix}Y^\Sigma_{1,1}  & \ldots & Y^\Sigma_{1, r}\\
\vdots & \ddots  & \vdots\\
&&\\
\vdots & \ddots  & \vdots\\
Y^\Sigma_{n,1}  &\ldots   &Y^\Sigma_{n, r} \\ 
\end{pmatrix}: M \otimes k[\CU_\Sigma] \to (M \otimes k[\CU_\Sigma])^{\oplus r}.
$$
On the other hand, the operator 
$\Theta_\cA: M \otimes k[\CU_\Sigma]  \to (M \otimes k[\CU_\Sigma])^{\binom{n}{r-1}}$ 
is given by localizing $[\vartheta_W]_{W \in \CI_{r-1}}$ 
as defined in (\ref{def-theta}) to $k[\CU_\Sigma]$. We show 
that the operators $[\theta^\Sigma_1, \ldots, \theta^\Sigma_r]$ and $[\vartheta_W]_{W \in \CI_{r-1}}$ 
are related by multiplication by a matrix $B$ 
(of size  ${n \choose {r-1}} \times r$) which 
does not change the kernel.

To simplify notation, assume  that 
$\Sigma = \{1, \ldots, r \}$. 
We define the matrix $B$ with columns indexed  by 
subsets $W=\{i_1, \ldots, i_{r-1} \}$  of $\{ 1, \ldots, n \}$ 
and rows  indexed by $j$, $ 1 \leq j \leq r$. Let
$B_{W,j}$ be the
$(-1)^j$ times the determinant
of the $(r-1) \times (r-1)$ submatrix obtained 
from $[Y^\Sigma_{i,j}]$ by taking the rows indexed by
$W$ and deleting the $j^{th}$ column. That is,
\[
B_{j,W} = (-1)^j \ \Det \left[\begin{array}{cccccc} Y^\Sigma_{i_1,1} 
& \ldots & Y^\Sigma_{i_1,j-1}& Y^\Sigma_{i_1,j+1}& \ldots &Y^\Sigma_{i_1, r} \\[0.5em]
Y^\Sigma_{i_2,1} & \ldots & Y^\Sigma_{i_2,j-1}& Y^\Sigma_{i_2,j+1}& \ldots &Y^\Sigma_{i_2, r}\\[0.5em]
\vdots &\ddots& \vdots &\vdots &\ddots&\vdots\\[0.5em]
\vdots &\ddots& \vdots &\vdots &\ddots&\vdots\\[0.5em]
Y^\Sigma_{i_{r-1},1} & \ldots & Y^\Sigma_{\tiny i_{r-1},j-1}& Y^\Sigma_{i_{r-1},j+1}& \ldots &Y^\Sigma_{i_{r-1}, r}
\end{array}\right]\]

We pick a special order on the subsets $W \in\CI_{r-1}$,  so 
that the first $r$ columns of $B$ are indexed by 
$\{1, \ldots, r-1\}, \{1, \ldots, r-2, r\}, \ldots, \{1, 3, \ldots, r\}, \{2, \ldots, r\}$. 
With this assumption, the first $r$ columns of $B$ form 
an identity matrix.  Indeed, since the  first $r$ rows 
of $[Y^\Sigma_{i,j}]$ form an identity matrix, we have
$$
B_{j, \{1, \ldots, j-1, j+1, \ldots r \}} = 1 \quad \text{ and }  
B_{j^\prime, \{1, \ldots, j-1, j+1, \ldots r \}} = 0 \text{ for } j^\prime \not = j.
$$
We rewrite
\begin{equation}B = \begin{pmatrix}
 I_r & | &  B^\prime
\end{pmatrix}
\end{equation}

Next we compute the $n \times {n \choose {r-1}}$ -- matrix 
$[Y_{i,j}^\Sigma] \cdot B$.  We have that
$$
([Y_{i,j}^\Sigma] \cdot B)_{i,W} = Y^\Sigma_{i,1} B_{1,W} +  Y^\Sigma_{i,2} B_{2,W}
+  Y^\Sigma_{i,r}B_{r,W} 
$$
which is the determinant of the matrix
$$
\begin{pmatrix} Y^\Sigma_{i,1} & Y^\Sigma_{i,2} & \ldots & Y^\Sigma_{i,r}\\[0.5em]  
Y^\Sigma_{i_1,1} & Y^\Sigma_{i_1,2} & \ldots & Y^\Sigma_{i_1,r}\\[0.5em]
\vdots& & \ddots &\vdots \\[0.5em]
Y^\Sigma_{i_{r-1},1} & Y^\Sigma_{i_{r-1},2} & \ldots & Y^\Sigma_{i_{r-1},r}\\[0.5em]
\end{pmatrix},
$$
where $W = \{i_1, \ldots, i_{r-1}\}$.
If $i$ is in $W$ then the matrix has two 
identical columns and its determinant
is zero. If $i$ is not in $W$ then the determinant is precisely
$(-1)^{u(W,i)} \p_{W \cup \{i\}}$. That is, 
the only difference between the
above matrix and the matrix whose 
determinant is $\p_{W \cup \{i\}}$ is
that the first row must be moved to the 
proper position so that the elements
$i, i_1, \dots, i_{r-1}$ are rearranged to 
be consecutive. This requires $u(W,i)$ moves.
We conclude that the matrix $B$ has an 
entry $(-1)^{u(W,i)} \p_{W \cup \{i\}}$ at the place $\{W, i\}$
(where we assume for convenience that 
$\p_{W \cup \{i\}} =0$  if $ i \in W$). Hence, 
\[
([x_i]\cdot[Y_{i,j}^\Sigma]\cdot B)_W  = \vartheta _W.
\]
The formula  $[\theta^\Sigma_1, \ldots, \theta^\Sigma_r] = [x_i]\cdot[Y_{i,j}^\Sigma]$
 now implies the equality
\begin{equation}
\label{transition}
[\theta^\Sigma_1, \ldots, \theta^\Sigma_r] \cdot B = 
[\vartheta_W]_{W \in \CI_{r-1}}.
\end{equation}
Since $B = [I \, \, | \,\,B^\prime]$ has maximal rank, we conclude that
$$
\Ker\{[\theta^\Sigma_1, \ldots, \theta^\Sigma_r]: 
M \otimes k[\CU_\Sigma]\to (M \otimes k[\CU_\Sigma])^{\oplus r}\} =
$$
$$ \Ker\{[\vartheta_W]: M \otimes k[\CU_\Sigma]\to (M \otimes k[\CU_\Sigma])^{n \choose r-1} \}.
$$ 
Hence, $\Ker\{\Theta_\cA, M\}_{\CU_\Sigma} =  \Ker(M)_{\CU_\Sigma}$. 
\end{proof}

Combining Theorems ~\ref{pr:coincide2} and 
\ref{th:bundle}, we get the following Corollary.

\begin{cor} \label{glob=bundle}
Assume that $M$ is a $kE$-module of constant $r$-$\Soc^1$ rank.
Then the Serre correspondent (via the equivalence \eqref{Serre}) of the graded $\cA$-module 
$\Ker\{\Theta_\cA, M\}$ is an algebraic vector bundle on $\Grass_{n,r}$.
\end{cor}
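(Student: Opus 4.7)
The plan is to simply combine the two main theorems of the section, namely Theorem \ref{pr:coincide2} and Theorem \ref{th:bundle}, both of which are already established at this point in the paper.

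First I would invoke Theorem \ref{pr:coincide2}, which identifies the Serre correspondent (under the equivalence \eqref{Serre}) of the graded $\cA$-module $\Ker\{\Theta_\cA, M\}$ with the coherent sheaf $\cKer(M) = \cKer^1(M)$ on $\Grass_{n,r}$ defined locally in Proposition \ref{localdef}. Note that this identification holds for any finite-dimensional $kE$-module $M$, with no hypothesis on constancy of ranks.

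Next I would apply Theorem \ref{th:bundle}(1) in the special case $\ell = 1$. Since by hypothesis $M$ has constant $r$-$\Soc^1$-rank, that theorem guarantees that $\cKer(M)$ is an algebraic vector bundle on $\Grass(r,\CV) = \Grass_{n,r}$, of rank equal to $\dim \Soc_U(M)$ for any (equivalently, every) $U \in \Grass(r,\CV)$. Combining with the previous step gives exactly the conclusion: the Serre correspondent of $\Ker\{\Theta_\cA,M\}$ is a locally free coherent sheaf, i.e., an algebraic vector bundle.

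There is essentially no obstacle to this argument, since all the work has already been done in Theorems \ref{pr:coincide2} and \ref{th:bundle}; the corollary is a formal consequence obtained by stringing these two results together.
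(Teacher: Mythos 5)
Your proposal is correct and is exactly the paper's argument: the corollary is stated as an immediate consequence of combining Theorem \ref{pr:coincide2} (identifying the Serre correspondent of $\Ker\{\Theta_\cA, M\}$ with $\cKer(M)$) and Theorem \ref{th:bundle}(1) with $\ell = 1$. Nothing further is needed.
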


In some of our calculations,  we use the following 
variation of the operator $\Theta_\cA$ given in Definition \ref{def-theta}. 
Let $\cI_{r-1}$ be the set of all subsets of $\{1, \dots, n\}$ having $r-1$ elements.
Let $M$ be a $kE$-module of constant socle type
with the property that $\Rad^2(M) = \{0\}$.  Note that thr assumption $\Rad^2(M)=0$ 
implies that constant $r$-socle type is equivalent to constant  $r$-$\Soc^1$ rank. 

We define the map
\[
\ol\Theta_\cA: \quad  M/\Rad(M)  \otimes \cA \quad \longrightarrow  \quad
( \Rad(M)\otimes \cA)^{\binom{n}{r-1}}
\]
by its components $\ol \Theta_\cA = \{ \ol \vartheta_W\}$ where the index is over 
$W \in \cI_{r-1}$. For any such $W$,
and any $m \in M$, let
\begin{equation}
\label{formula}
\ol \vartheta_W((m+ \Rad(M)) \otimes 1) \quad = \quad
\sum_{i \notin W} (-1)^{u(W,i)} \ x_im \otimes \fp_{W \cup \{i\}}
\end{equation}
where $u(W,i) = \#\{j \in W \ \vert \ j < i\}$.

\begin{cor} \label{bund-2step} Let $M$ be a $kE$-module of constant socle type
with the property that $\Rad^2(M) = \{0\}$.
Then the graded $\cA$--module $\Ker \{ \ol\Theta_\cA, M \}$ corresponds to an 
algebraic vector bundle on
$\Grass_{n,r}$ via the equivalence \eqref{Serre}.
\end{cor}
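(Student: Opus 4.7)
The plan is to relate $\Ker\{\ol\Theta_\cA, M\}$ to $\Ker\{\Theta_\cA, M\}$ via a short exact sequence of graded $\cA$-modules, and then transport the bundle property of the latter (from Corollary \ref{glob=bundle}) through the Serre correspondence \eqref{Serre}.

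The first observation is that, because $\Rad^2(M) = 0$, the operator $\Theta_\cA$ from Definition \ref{def-theta} vanishes identically on the submodule $\Rad(M) \otimes \cA \subset M \otimes \cA$ (for any $m \in \Rad(M)$, each $x_i m$ lies in $\Rad^2(M) = 0$, so every component $\vartheta_W(m \otimes 1)$ is zero). Consequently $\Rad(M) \otimes \cA \subset \Ker\{\Theta_\cA, M\}$, and $\Theta_\cA$ factors through the projection $M \otimes \cA \twoheadrightarrow (M/\Rad(M)) \otimes \cA$. Since the image of $\Theta_\cA$ already lies in $\Rad(M) \otimes \cA$ inside each coordinate (because $x_i m \in \Rad(M)$), the induced factorization is precisely the operator $\ol\Theta_\cA$ appearing in \eqref{formula}.

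From this factorization I will extract a short exact sequence of graded $\cA$-modules
\[
0 \longrightarrow \Rad(M) \otimes \cA \longrightarrow \Ker\{\Theta_\cA, M\} \longrightarrow \Ker\{\ol\Theta_\cA, M\} \longrightarrow 0,
\]
where surjectivity on the right is immediate: given any $\bar v \in \Ker\{\ol\Theta_\cA, M\}$, any lift $v \in M \otimes \cA$ satisfies $\Theta_\cA(v) = \ol\Theta_\cA(\bar v) = 0$, so $v \in \Ker\{\Theta_\cA, M\}$.

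Now I pass through the Serre correspondence \eqref{Serre}. The constant module $\Rad(M) \otimes \cA$ corresponds to the trivial bundle $\Rad(M) \otimes \cOG$, and by Corollary \ref{glob=bundle} the middle term corresponds to the algebraic vector bundle $\cKer(M)$ on $\Grass_{n,r}$. The embedding $\Rad(M) \otimes \cOG \hookrightarrow \cKer(M)$ is a subbundle: its fiber at $U \in \Grass_{n,r}$ is $\Rad(M) \subseteq \Soc_U(M)$ (again using $\Rad^2(M) = 0$), so the inclusion has constant rank $\dim \Rad(M)$. Since $M$ has constant socle type, the fiber $\Soc_U(M)/\Rad(M)$ of the quotient sheaf is of dimension independent of $U$, so $\cKer(M)/(\Rad(M) \otimes \cOG)$ is itself an algebraic vector bundle on $\Grass_{n,r}$. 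By exactness of the Serre correspondence this is the sheaf associated to $\Ker\{\ol\Theta_\cA, M\}$, completing the proof.

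The only nontrivial verification is the factorization statement (that the map induced by $\Theta_\cA$ on the radical quotient is exactly $\ol\Theta_\cA$), which is a direct comparison of the defining formulae in Definition \ref{def-theta} and \eqref{formula}; everything else reduces to the exactness of the Serre correspondence and the constant-rank observation afforded by the hypothesis $\Rad^2(M) = 0$.
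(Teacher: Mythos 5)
Your proposal is correct and follows essentially the same route as the paper: the paper's (one-sentence) proof likewise observes that $\Rad^2(M)=\{0\}$ forces $\Rad(M)\otimes\cA$ to sit inside $\Ker\{\Theta_\cA,M\}$ as a free submodule with quotient $\Ker\{\ol\Theta_\cA,M\}$, and then invokes Corollary \ref{glob=bundle} together with the Serre correspondence. You have simply filled in the details (the factorization of $\Theta_\cA$ through the radical quotient, surjectivity onto $\Ker\{\ol\Theta_\cA,M\}$, and the constant-rank check for the quotient sheaf) that the paper leaves implicit.
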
 

\begin{proof}
Because $\Rad^2(M) = \{0\}$, the free $\cA$--module $\Rad(M) \otimes \cA$
is a submodule of $\Ker \{\Theta_\cA, M \}$ with quotient 
$\Ker \{ \ol\Theta_\cA, M \}$. 
\end{proof}

\begin{remark}
\label{how_to_use} For certain $kE$-modules $M$ of constant $r$-socle rank, 
Corollary~\ref{bund-2step} can be used
to determine a  graded $\cA$-submodule of $M\otimes \cA$ with Serre correspondent 
$\cKer(M) \subset M\otimes \cO_{Gr}$. The process goes in two steps. First,  
a set of elements of the  kernel is calculated. 
This can be done using a computer seaching through the degrees. 
That is, we use (\ref{formula}) to calculate
a matrix of the map $\ol\Theta_\cA$ on the degree one
grading of $ M/\Rad(M) \otimes \cA$ to the degree two grading 
of $\Rad(M) \otimes \cA$.  A spanning set of elements of the null space of 
this matrix constitutes part of our set of ``generators".  We continue,
next looking for a spanning set of the null space of our matrix for  
$\ol\Theta_\cA$ on the degree two
grading of $M/\Rad(M) \otimes \cA$.  We proceed to higher and higher 
gradings.

The next step is to verify that we have found sufficiently many elements
in the kernel to generate a graded module with Serre correspondent $\cKer(M)$.
In certain examples, it is possible to show that the elements obtained by
considering gradings less than or equal to a given degree
generate a graded submodule $N \subseteq \Ker\{\ol\Theta_\cA, M\}$ 
with Serre correspondent $\cKer(M)$.   We start with the information that
the Serre correspondent of $N$
should have rank equal to  $d = \dim \Soc_U(M) -\dim \Rad(M)$ (which is 
independent of $r$-plane $U$ since $M$ has constant $r$-socle rank).

Because the module $M$ has constant $r$-socle rank,  for any extension 
$K$ of $k$ and any specialization
$\cA \to K$ at a homogeneous prime ideal of $\cA$, 
the induced inclusion map $N \otimes_{\cA} K \to  
M/\Rad(M) \otimes_\cA  K$ can not have rank more than $d$ by Corollary 
\ref{bund-2step}. If it can be shown that the rank of any such 
specialization is exactly $d$, then we have that $N$ is a graded 
module corresponding to a vector bundle of rank $d$ that is contained in  
$\Ker\{\ol \Theta_\cA, M\}$ which also has rank $d$. 
Consequently, the Serre correspondents of  $N$ and $\Ker\{\ol \Theta_\cA, M\}$ 
are equal.
\end{remark}

We revisit some of the examples of Section ~\ref{se:equiv} to illustrate how this method works. 

\begin{ex}[Universal subbundle $\delta_{n-r}$] \label{dual-image}
Let $M \simeq \Rad^{(p-1)n-1}(kE)$. Then $\Rad^2(M) = \Rad^{(p-1)n+1}(kE) = \{0\}$ and, hence, 
$M$ satisfies the hypothesis of Corollary~\ref{bund-2step}.  Pictorially, we can represent 
$M$ as follows: 
\[\xymatrix{
\stackrel{f_1}{\bu}\ar[drr]_{x_1}& \stackrel{f_2}{\bu}\ar[dr]^{x_2}& \ldots & 
\stackrel{f_{n-1}}{\bu}\ar[dl]_{x_{n-1}}& \stackrel{f_n}{\bu}\ar[dll]^{x_n}\\
&&\stackrel{\bu}{f}&&\\}
\]
It is then evident that $M \simeq \Rad^{n-1}(\Lambda^*(\CV))$. By \eqref{ex:uni}, 
$\cKer(M) / (\Rad(M) \otimes\cOG) \simeq \delta_{n-r}$, the universal subbundle 
of $\cOG^{\oplus n}$ or rank $n-r$.

We proceed to write down explicit generators for the kernel $\Ker\{\ol\Theta_\cA, M \}$ as 
a submodule of $M/\Rad(M) \otimes \cA$. 
Let $\{f, f_1, \ldots, f_n\}$ be linear generators of $M$ as indicated on the diagram 
above. 
  Let $\CI_{r+1}$ be the set of subsets of
$\{1, \dots, n\}$ having exactly $r+1$ elements.
For each $S \in \CI_{r+1}$ let $w_S$ be the element of $M/\Rad(M) \otimes \cA$ given as
\[
w_S \ = \ \sum_{j \in S} \  (-1)^{u(S,j)} f_j \otimes
\fp_{_{S\setminus\{j\}}} 
\]
where $u(S,j) = \#\{i \in S \vert i \leq j\}$.  These $w_S$, all of grading one, generate $\delta_{n-r}$
as we verify in the next proposition.

\begin{prop} \label{ex-delta}
The elements $w_S$ generate a graded $\cA$-module corresponding to the
algebraic vector bundle $\delta_{n-r}$ via \eqref{Serre}.
\end{prop}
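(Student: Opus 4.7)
The plan is to carry out the two-step strategy outlined in Remark \ref{how_to_use}: first verify that each $w_S$ actually lies in $\Ker\{\ol{\Theta}_\cA, M\}$, and then check that the specialization of the submodule $\langle w_S : S \in \cI_{r+1}\rangle$ at each point of $\Grass_{n,r}$ has image of rank $n-r$, which matches the rank of $\delta_{n-r}$ by Example \ref{ex:uni}.

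For the first step, I would begin by fixing the basis $f_j = (-1)^{j-1} x_1 \wedge \cdots \wedge \hat{x}_j \wedge \cdots \wedge x_n$ of $M/\Rad(M)$ and $f = x_1 \wedge \cdots \wedge x_n$ for $\Rad(M)$, so that $x_j f_j = f$ and $x_i f_j = 0$ for $i \neq j$. A direct computation then gives
\[
\ol{\vartheta}_W(w_S) \;=\; \sum_{j \in S \setminus W} (-1)^{u(S,j)+u(W,j)}\, f \otimes \fp_{W \cup \{j\}} \fp_{S \setminus \{j\}},
\]
where the terms with $j \in S \cap W$ drop out because $x_i f_j = 0$ when $i \neq j$. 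The right-hand side vanishes precisely by the classical Pl\"ucker relation for $|W| = r-1$ and $|S| = r+1$; the main combinatorial point is to match the sign $(-1)^{u(S,j)+u(W,j)}$ appearing above with the sign obtained by sorting the tuple $(W,j)$ into increasing order (an elementary transposition count).

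For the second step, denote by $N \subseteq \Ker\{\ol{\Theta}_\cA, M\}$ the graded $\cA$-submodule generated by $\{w_S\}_{S \in \cI_{r+1}}$. By Corollary \ref{bund-2step} and Example \ref{ex:uni}, it suffices to show that, for every $U \in \Grass_{n,r}$, the specialization $N \otimes_\cA k(U) \to M/\Rad(M)$ has image of dimension exactly $n-r$. The upper bound is automatic since the image sits inside $\Soc_U(M)/\Rad(M)$, which has dimension $n-r$ by the constant socle type of $M$. For the lower bound, I will first verify the claim at the base point $U_0 = \langle x_1, \dots, x_r\rangle$: among the $w_S(U_0)$, only those with $S \supseteq \{1,\dots,r\}$ have a nonzero Pl\"ucker coordinate $\fp_{S\setminus\{j\}}(U_0) = \fp_{\{1,\dots,r\}}(U_0) \neq 0$, and these yield the $n-r$ elements $\pm f_i \cdot \fp_{\{1,\dots,r\}}(U_0)$ for $i = r+1, \dots, n$, which are clearly linearly independent. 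The extension to arbitrary $U$ then follows from $\GL_n$-equivariance: the family $\{w_S\}$ is permuted (up to sign) by $\GL(\CV)$ acting on the basis and on the Pl\"ucker coordinates, so $N$ is $\GL_n$-stable, and the transitive action of $\GL_n$ on $\Grass_{n,r}$ moves $U_0$ to any $U$.

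The main obstacle I anticipate is bookkeeping of signs in the first step, since the Pl\"ucker relations as usually written involve an unsorted tuple $(w_1, \dots, w_{r-1}, j)$ whose rearrangement into increasing order introduces a sign depending on the position $u(W,j)$ of $j$ among the elements of $W$. Once this sign computation is carefully carried out, the vanishing of $\ol{\vartheta}_W(w_S)$ is immediate from the standard Pl\"ucker identity, and the rank computation and $\GL_n$-equivariance argument are straightforward.
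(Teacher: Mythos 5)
Your overall strategy is the same as the paper's: both follow the two-step recipe of Remark \ref{how_to_use}, first checking $w_S\in\Ker\{\ol\Theta_\cA,M\}$ and then showing every specialization of the generated submodule has rank exactly $n-r$. Your step 1 is fine (the paper dismisses it as an exercise; your sign bookkeeping $(-1)^{u(S,j)+u(W,j)}$ does match the Pl\"ucker relation once one accounts for the $(r-1)-u(W,j)$ transpositions needed to sort $(W,j)$, so the vanishing is indeed the standard exchange relation). Your upper bound in step 2 is also the paper's.

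The one genuine flaw is the justification of the lower bound at a general $U$. The family $\{w_S\}$ is \emph{not} permuted up to sign by a general $g\in\GL(\CV)$ — that only happens for monomial matrices, since a general $g$ mixes the basis vectors $x_i$ and hence mixes the Pl\"ucker coordinates nontrivially. What is true, and what your argument actually needs, is that the $k$-span of the $w_S$ is a $\GL_n$-submodule of $(M/\Rad(M))\otimes\cA_1$: identifying $M/\Rad(M)\cong\CV^\#\otimes\Lambda^n(\CV)$ and $\cA_1\cong\Lambda^r(\CV)^\#$, the assignment $S\mapsto w_S$ is (up to sign) the $\GL_n$-equivariant comultiplication $\Lambda^{r+1}(\CV)^\#\otimes\Lambda^n(\CV)\to\CV^\#\otimes\Lambda^r(\CV)^\#\otimes\Lambda^n(\CV)$, so its image is stable and the transitivity argument then goes through. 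Alternatively — and this is what the paper does — you can skip equivariance entirely: your base-point computation works verbatim at any $U$ once you replace $\{1,\dots,r\}$ by any $\Sigma$ with $\fp_\Sigma(U)\neq 0$; the $(n-r)\times(n-r)$ submatrix of coefficients with columns $S=\Sigma\cup\{j\}$, $j\notin\Sigma$, and rows $j\notin\Sigma$ is diagonal with entries $\pm\fp_\Sigma$, hence has determinant $\pm\fp_\Sigma^{\,n-r}\neq 0$. Either repair is short, but as written the "permuted up to sign" claim is false and must be replaced.
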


\begin{proof} A proof proceeds as follows. We should note that the elements were
generated by computer in special cases, but it is a straightforward
exercise to check that these elements are in the kernel of $\ol\Theta_\cA$.
We leave this exercise to the reader.

The defining equations for the elements $w_S$
can be written as a matrix equation
\[
{\bf w} = {\bf f} \otimes {\bf P} 
\]
where ${\bf w} = [w_{_S}]_{_{S \in \CI_{r+1}}}$, ${\bf P} = (p_{j,{}_S})$ is the
$n \times \binom{n}{r+1}$ matrix with entries $p_{j,{}_S} =
(-1)^{u(S,j)} \fp_{_{S\setminus\{j\}}}$ if $j \in S$ and $p_{j,{}_S} = 0$
otherwise, and  ${\bf f} =[f_1, \ldots, f_n]$.  Because the elements $f_1, \dots,
f_n$ are linearly independent, the dimension of the image depends
entirely on the rank of the matrix ${\bf P}$. As was noted Remark~\ref{how_to_use}, 
at any specialization $\phi: \cA \to
K$, $K$ an extension of $k$, the rank of $\phi({\bf P})$
can not be greater than $n-r$ which is $\dim(\Soc_U(M))-1$ for
any $U$. So the task is to show that the rank of the matrix
${\bf P}$ at any specialization is at least $n-r$.

In any specialization, one of the Pl\"ucker coordinates, call it $\fp_\Sigma$,
must be nonzero. 
Consider the $(n-r)\times (n-r)$ submatrix of $\bf P$ determined by the columns indexed by subsets
$T \in \cI_{r+1}$ that contain a fixed $\Sigma \in \cI_{r}$ and the rows indexed
by all $j$ such that $j \notin \Sigma$. The $(i,T)$ entry in this matrix is
$(-1)^{u(T,j)} \fp_\Sigma$ if $i = j$, and is 0 if $i \neq j$. Consequently,
the determinant of this submatrix is $\pm \fp_\Sigma^{n-r}$ which
is not zero. So we have proved that the elements $w_S$ generate
a locally free graded module whose corresponding bundle is the
kernel bundle $\cKer(M)/ (\Rad(M)  \otimes \cOG)$.
\end{proof} 
\end{ex}
\vspace{0.1in}

\begin{remark}  In the notation of the Example~\ref{ex:uni}, it is not difficult 
to see that the kernel $\Ker\{\ol \Theta_\cA, \Rad^{n-1}(\Lambda^*(\CV))\}$ is generated 
by the element
\[
v \ = \ \sum_{i \notin \Sigma}  \ (-1)^{u(\Sigma,i)} (y_\Sigma \otimes \fp_\Sigma) ,
\]
where $u(\Sigma,i)$ is the number of elements in $\Sigma \in \CI_r$ that are less than $i$,
and the sum is over all subsets of $ \{1, \dots, n\}$ having exactly 
$r$ elements. Here $y_\Sigma = x_{i_1} \wedge \dots \wedge x_{i_r}$
where $\Sigma = \{i_1, \dots, i_r\}$.  Hence, in this case we have  a graded $\cA$-module  
corresponding to the universal bundle $\delta_{n-r}$ generated by only one element. 
\end{remark}

\begin{ex}\label{bundex7}
Set $p = 3, n=4, r=2$ and consider $M = \Rad^4kE/\Rad^6kE$.  By Example~\ref{ex:twist2}, 
$\cKer(M)/(\Rad(M) \otimes \cOG) \simeq \cOG(-2)$.  The following generator of 
the graded module $\Ker\{\ol\Theta_\cA,M\}$ whose associated bundle is $\cKer(M)/(\Rad(M) 
\otimes \cOG)$, was constructed with the aid of the computational algebra package
Magma \cite{BC}.
\begin{align*}
v = & \ x_1^2x_2^2 \otimes \fp_{12}^2         -
x_1^2x_2x_3 \otimes\fp_{12}\fp_{13}  -
x_1^2x_2x_4 \otimes \fp_{12}\fp_{14}  +
x_1x_2^2x_3 \otimes\fp_{12}\fp_{23}   + \\
&x_1x_2^2x_4  \otimes\fp_{12}\fp_{24}  +
x_1^2x_3^2 \otimes\fp_{13}^2     -
x_1^2x_3x_4\otimes\fp_{13}\fp_{14}      -
x_1x_2x_3^2\otimes\fp_{13}\fp_{23}   - \\
&x_1x_2x_3x_4  \otimes\fp_{13}\fp_{24}    +
x_1x_3^2x_4\otimes\fp_{13}\fp_{34}                       +
 x_1^2x_4^2\otimes\fp_{14}^2                     -
x_1x_2x_3x_4\otimes\fp_{14}\fp_{23}     - \\
&x_1x_2x_4^2 \otimes\fp_{14}\fp_{24}                    -
x_1x_3x_4^2\otimes \fp_{14}\fp_{34}                    +
x_2^2x_3^2 \otimes\fp_{23}^2              -
x_2^2x_3x_4\otimes\fp_{23}\fp_{24}                   +  \\
& x_2x_3^2x_4 \otimes\fp_{23}\fp_{34}     +
x_2^2x_4^2 \otimes\fp_{24}^2           -
x_2x_3x_4^2 \otimes\fp_{24}\fp_{34}    +
x_3^2x_4^2 \otimes\fp_{34}^2                            .
\end{align*}
Note that the degree of this generator is 2, which is consistent 
with the fact that the associated bundle is 
$\cOG(-2)$.
\end{ex}

We end this section with nontrivial computation of the graded module of a 
vector bundle of rank 3 over $\Grass(2,\CV)$. It confirms the intuition that
modules become more complicated as the rank and degree increase. The 
generators in this example were calculated using Magma \cite{BC} for 
specific fields, but were checked for general fields by hand. 

\begin{ex}\label{bundex5}
Assume that $r =2$ and $n = 4$.
We consider the module 
\[
M = \Rad^{n(p-1)-2}(kE)/\Rad^{n(p-1)}(kE).
\]
and look at the kernel of the operator 
\[
\xymatrix{\ol \Theta_\cA: M/\Rad(M) \otimes \cA \ar[r]& 
\Rad(M)^4 \otimes \cA.}
\]
as in \ref{bund-2step}.   Taking $i=1$ in Example~\ref{dual_symm}, 
we see that 
\[\frac{\cKer(M)}{\Rad(M) \otimes\cOG}  \simeq S^2(\delta_{n-r}) = S^2(\delta_2).\]
This gives us a rank 3 vector bundle on $\Grass(2,\CV)$. For a 
plane spanned by vectors $u_i = \sum_{j=1}^4 a_{i,j}x_j \in \CV = k^4$, $i = 1,2$,
we have that $\Soc_U(M)$
is spanned by a basis for $\Rad(M)$ together with the classes 
of the elements 
\[
u_1^{p-1}u_2^{p-1}u_3^{p-2}u_4^{p-1}, \ \   
u_1^{p-1}u_2^{p-1}u_3^{p-2}u_4^{p-2}, \ \ 
u_1^{p-1}u_2^{p-1}u_3^{p-1}u_4^{p-3}, 
\]where 
$u_3$ and $u_4$ are two elements of $\CV$ which
together with $u_1$ and $u_2$ span $\CV \simeq \Rad(kE)/\Rad^2(kE)$.

We proceed to write down generators of the graded $\cA$--module
$\Ker\{\ol\Theta_{\cA}, M\}$. They
come in two types. We note that 
neither of the collections of all generators of a single type generates
a subbundle. That is to say, if we specialize the Pl\"ucker coordinates
to a random point, then (in general) the subspace of $k^{10}$ spanned by the 
specialized generators of each type has dimension 3 and hence is equal
to the subspace spanned by all of the specialized generators. The 
generators are described as follows. 
\vskip.1in
\noindent{\it Generators of Type 1.} These are indexed by the set 
$\{1,2,3,4\}$. For each $\ell \in \{1,2,3,4\}$, let $i, j$ and $k$
denote the other three elements. In what follows, we are not
assuming that $i,j,k$ are in any particular order. 
The generator corresponding to the choice of $\ell$ has the form 
\[
\gamma_\ell = \sum \mu_{a,b,c} \otimes 
x_i^{p-1-a}x_j^{p-1-b}x_k^{p-1-c}x_{\ell}^{p-1}
\]
where the index is over all tuples $(a,b,c)$ such that $a,b,c$ 
are in $\{0,1,2\}$ and $a+b+c = 2$. The coefficient $\mu_{a,b,c}$
is determined by the following rule. First, $\mu_{2,0,0} = \p_{j,k}^2$.
In the other cases, $\mu_{1,1,0} = \beta\p_{i,k}\p_{j,k}$, where $\beta$ is 
$1$ if $k$ is between $i$ and $j$ and $-1$ otherwise. The other
coefficients are obtained by permuting $i,j$ and $k$. The notational
convention is that $\p_{i,j} = \p_{j,i}$ in the event that $i>j$. 
So in the case that $\ell = 2$, the generator has the form
\begin{align*}
\gamma_2 = & \ \p_{1,3}^2 \otimes x_1^{p-1}x_2^{p-1}x_3^{p-1}x_4^{p-3} \ \
+ \p_{1,4}^2 \otimes x_1^{p-1}x_2^{p-1}x_3^{p-3}x_4^{p-1} \\
& + \p_{3,4}^2 \otimes x_1^{p-3}x_2^{p-1}x_3^{p-1}x_4^{p-1} \ \
- \p_{1,3}\p_{1,4} \otimes x_1^{p-1}x_2^{p-1}x_3^{p-2}x_4^{p-2} \\
& + \p_{1,3}\p_{3,4} \otimes x_1^{p-2}x_2^{p-1}x_3^{p-1}x_4^{p-2} \ \ 
 - \p_{1,4}\p_{3,4} \otimes x_1^{p-2}x_2^{p-1}x_3^{p-2}x_4^{p-1}
\end{align*}

\noindent{\it Generators of Type 2.} The generators of the second 
type are indexed by subsets $S = \{i,j\}$ with two elements in 
$I = \{1,2,3,4\}$. Let $k,\ell$ denote the other two elements in $I$. 
Again, we are not assuming any ordering on $i,j,k$ and $\ell$.
The generator corresponding to $S$ has the form
\[
\gamma_S = \sum \mu_{a,b,c,d} \otimes 
x_i^{p-1-a}x_j^{p-1-b}x_k^{p-1-c} x_{\ell}^{p-1-d}
\]
where the sum is over the set of all tuples $(a,b,c,d)$ such that
$\{a,b\} \subset \{0,1,2\}$, $\{c,d\} \subset \{0,1\}$, 
and $a+b+c+d=2$. The coefficients $\mu_{a,b,c,d}$ are determined
by the following rules.  
\begin{enumerate}
\item Let $\mu_{0,0,1,1} = \p_{i,j}^2$.
\item Let $\mu_{0,1,1,0} = \beta\p_{i,j}\fp_{i,k}$, 
where $\beta = 1$ if $i$ is between $j$ and 
$k$ ({\it i. e.} $j<i<k$ or $k<i<j$) and $\beta = -1$ otherwise.
\item Assume that $i<j$ then $\mu_{1,1,0,0} = 
\beta_1\p_{i,k}\p_{j,\ell}+\beta_2\p_{i,\ell}\p_{j,k}$
where $\beta_1 = \gamma_1\delta_1$ for 
\[ 
\gamma_1 = \begin{cases} 1 & \text{ if } j<\ell \\ -1 & \text{otherwise} 
\end{cases}, \qquad 
\delta_1 = \begin{cases} 1 & \text{ if } i<k  \\ -1 & \text{otherwise},
\end{cases}
\]
and $\beta_2$ is given by the same formula with $k$ and $\ell$ interchanged. 
\item Let $\mu_{0,2,0,0} = \beta\p_{j,k}\p_{j,\ell}$ where $\beta$ is $2$
if $j$ is between $k$ and $\ell$ and is $-2$ otherwise.
\end{enumerate}
So, for example, if $S = \{2,4\}$, then 
\begin{align*}
\gamma_S =  & \ \p_{2,4}^2 \otimes x_1^{p-2}x_2^{p-1}x_3^{p-2}x_4^{p-1} \ \
  -2 \p_{1,2}\p_{2,3} \otimes x_1^{p-1}x_2^{p-1}x_3^{p-1}x_4^{p-3} \\
& \p_{1,2}\p_{2,4} \otimes x_1^{p-1}x_2^{p-1}x_3^{p-2}x_4^{p-2} \ \
  - \p_{1,4}\p_{2,4} \otimes x_1^{p-1}x_2^{p-2}x_3^{p-2}x_4^{p-1} \\  
&  2 \p_{1,4}\p_{3,4} \otimes x_1^{p-1}x_2^{p-3}x_3^{p-1}x_4^{p-1} \
  - \p_{2,3}\p_{2,4} \otimes x_1^{p-2}x_2^{p-1}x_3^{p-1}x_4^{p-2} \\
& - \p_{2,4}\p_{3,4} \otimes x_1^{p-2}x_2^{p-2}x_3^{p-1}x_4^{p-1}
 + (- \p_{1,2}\p_{3,4}+\p_{1.4}\p_{2,3}) \otimes x_1^{p-1}x_2^{p-1}x_3^{p-1}x_4^{p-4}.
\end{align*}
\end{ex}


\section{APPENDIX (by J. Carlson).\\ Computing nonminimal $2$-socle support varieties using MAGMA}
\label{appendix}

We reveal the results of computer calculations of the
nonminimal 2-socle support variety of some modules. Our aim is 
to illustrate the computational method and to show some examples 
using modules that have been discussed in this paper.
All of the calculations were made using the computer algebra 
system Magma \cite{BC}.

Our first interest is the module $M = W_6 = I^6/I^8$ of 
Example \ref{ex-2-rad}.  In that example, we showed that 
the module has constant 2-radical
type, but not constant 2-socle type. The collection of all $U\in \Grass(2,\CV)$ 
for which the dimension of $\Soc_U(M)$  is more than minimal form a closed
subvariety of $\Grass(2,\CV)$, $\Soc(2,\CV)_M$.

\begin{ex}
Assume that $p > 3$.  We recall the situation in Example \ref{ex-2-rad}.
Let $\zeta$ be a primitive third root of unity in $k$. 
Let  $q_{i,i} = 1$, $q_{i,j} = \zeta$, and $q_{j,i} = \zeta^{-1}$ 
for $1 \leq i < j \leq 4$. Then 
$$S =  k\langle z_1, \dots, z_4\rangle /J$$
where $J$ is the ideal generated by  $z_i^3$ and by all $z_jz_i -q_{i,j}z_iz_j$
for $i, j \in {1,2,3,4}$. Let $I$ be the ideal generated by the 
classes of $z_1, \dots, z_4$. Let
the generator $x_i$ of $kE$ act on $M = I^6/I^8$ 
by multiplication by $z_i$. This is a module with 
constant 2-radical rank but not constant 2-socle rank. 
Recall from the proof of \ref{ex-2-rad} that $M$ has dimension 14, and 
$\Rad(M)$ has dimension 4, so $M/\Rad(M)$ has dimension 10.
The matrix of multiplication by any $x_i$ has rank 4. 

If $U \in \Grass(2,\CV)$, then $U$ is spanned by two elements which we 
can denote $u_1 = ax_1 + bx_2 + cx_3 + dx_4$ and that
$u_2 = Ax_1 + Bx_2 + Cx_3 + Dx_4$ where $a,b,c,d$ and $A,B,C,D$
are elements of $k$. In the generic case we consider them to be
indeterminants. We are interested in the maps 
\[
u_i : \ M/\Rad(M) \to \Rad(M)
\] 
of multiplication by $u_i$ for $i = 1,2$. 
If $Y_1$ is the $4 \times 10$ matrix of $u_1$ for this map
 (which is computed
by taking the indicated linear combination of the 
matrices for $x_1, \dots, x_4$) and
$Y_2$ is the matrix for $u_2$, then the intersection of the kernels
of multiplication by $u_1$ and $u_2$ is the null space of the 
$8 \times 10$-matrix $Y$ obtained by
stacking $Y_1$ on top of $Y_2$. (Note here that we are taking 
a vertical join of the matrices rather than a horizontal join 
as we would everywhere else in the paper
because the computer algebra system takes
right modules rather than left modules.) Generically, this matrix 
has rank 8. That is, when $U$ has minimal socle type on $M$, 
then $\Rad_U(M)$ has dimension
6, which counts 4 for the dimension of $\Rad(M)$ 
and another 2 for the dimension of the
intersection of the kernels of $u_1$ and $u_2$ on
$M/\Rad(M)$.  The dimension of $\Soc_U(M)$ is more than minimal
precisely when the rank of $Y$ is less than 8.

Hence, the exercise of finding the nonminimal 
2-socle support variety of $M$ is
reduced to that of finding all $8 \times 8$ 
minors of the matrix $U$. These are
polynomials in $a,b,c,d,A,B,C,D$ and to 
make sense of them in terms of the
Grassmanian, they should be converted to 
Pl\"ucker coordinates. The variety is the
zero locus of the converted polynomials. 
The Pl\"ucker coordinates are
$\fp_{12}, \fp_{13},\fp_{14}, \fp_{23}, \fp_{24}, \fp_{34}$ 
which are the determinants of the
$2\times2$ minors of the basis matrix of the plane. So, for example,
$\fp_{14} = aD-dA$. One example is the following.

\begin{prop} \label{computer1}
Suppose that $p=7$ and that $M$ is the module given above. Then the
nonminimal 2-socle support variety of $M$ is the zero locus of the ideal
generated by the elements
\[
   \fp_{12}\fp_{14}\fp_{24},  \
    \fp_{12}\fp_{13}\fp_{23}, \
    \fp_{12}\fp_{14}\fp_{34}, \
    \fp_{23}\fp_{24}\fp_{34}, \
    \fp_{13}\fp_{14}\fp_{34},
\]
\[
    \fp_{12}\fp_{14}\fp_{23}, \
    \fp_{13}\fp_{14}\fp_{34}, \
    \fp_{14}\fp_{23}\fp_{34}, \
    \fp_{12}\fp_{23}\fp_{34}.
\]
\end{prop}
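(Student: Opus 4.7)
The plan is to reduce the claim to a finite, mechanical computation with polynomial matrices and then invoke the computer algebra system to perform and verify it. The key observation is that, since $\Rad^2(M)=0$, for any $U=\langle u_1,u_2\rangle \in \Grass(2,\CV)$ the dimension of $\Soc_U(M)$ exceeds the generic (minimal) value of $10$ exactly when the joint kernel
\[
\Ker\{u_1\colon M/\Rad M \to \Rad M\}\cap \Ker\{u_2\colon M/\Rad M \to \Rad M\}
\]
has dimension greater than $2$, equivalently when the stacked $8\times 10$ matrix $Y=\begin{pmatrix}Y_1\\Y_2\end{pmatrix}$ has rank strictly less than $8$.

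First, I would fix a monomial basis for $M/\Rad M$ and for $\Rad M$ coming from the monomials $z_1^{a_1}\cdots z_4^{a_4}$ with $0\le a_i<3$, $\sum a_i=6$ (of which there are $10$) respectively $\sum a_i=7$ (of which there are $4$, namely $z_1^2z_2^2z_3^2z_4$ and its permutations with one exponent lowered). With respect to these bases, multiplication by each $x_j$ is a $4\times 10$ matrix over $k$ whose entries are $0,1,\zeta,\zeta^{2}$, computable from the quantum relations $z_jz_i=\zeta^{\pm 1}z_iz_j$. Write $u_1=ax_1+bx_2+cx_3+dx_4$ and $u_2=Ax_1+Bx_2+Cx_3+Dx_4$, and form $Y_1,Y_2$ and $Y$ as matrices with entries linear in $a,b,c,d,A,B,C,D$.

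Next, I would have Magma compute the ideal $J\subset k[a,b,c,d,A,B,C,D]$ generated by all $\binom{10}{8}=45$ maximal minors of $Y$. By Proposition~\ref{closed} and Corollary~\ref{nonmax-closed}, the variety $V(J)\cap\bM^o$ in the variety of $2$-frames descends through the quotient $\bM^o\to \Grass(2,\CV)$ to the closed subvariety $\Soc(2,\CV)_M$. Since these minors must be $\GL_2$-invariant in the natural left action on the frame (the rank of $Y$ is invariant under change of basis of $U$), each generator can be written as a polynomial in the $2\times 2$ minors $\fp_{ij}=\det\begin{pmatrix}*_i & *_j\\ *_i' & *_j'\end{pmatrix}$ of the frame matrix, i.e.\ in the Pl\"ucker coordinates. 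Performing this rewriting (for instance by expressing each minor in terms of the basic syzygies $a D-Ad=\fp_{14}$, etc.) produces a finite list of homogeneous polynomials in $\fp_{12},\fp_{13},\fp_{14},\fp_{23},\fp_{24},\fp_{34}$ whose radical is the defining ideal of $\Soc(2,\CV)_M$.

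The last step is to simplify this list using a Gr\"obner basis computation modulo the Pl\"ucker relation $\fp_{12}\fp_{34}-\fp_{13}\fp_{24}+\fp_{14}\fp_{23}$, reducing the ideal to the nine cubic generators listed in the statement. The expected main obstacle is not a mathematical subtlety but rather a bookkeeping issue: there are many minors, the conversion to Pl\"ucker coordinates is delicate (one must systematically use the Pl\"ucker relation to eliminate redundancies), and the specific characteristic $p=7$ must be used to confirm that no additional cancellation occurs (for other characteristics the computation would need to be redone, and one would need to check by hand that the generators we list do not collapse further or pick up extra terms). Since each of these steps is a finite linear algebra task, the verification reduces to running and inspecting the output of the Magma session and confirming the stated ideal.
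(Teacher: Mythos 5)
Your proposal follows essentially the same route as the appendix: form the $4\times 10$ matrices of $u_1,u_2$ acting from $M/\Rad(M)$ (dimension $10$) to $\Rad(M)$ (dimension $4$), stack them into the $8\times 10$ matrix $Y$, characterize nonminimality by $\rk Y<8$, compute all $8\times 8$ minors in Magma, and rewrite them in Pl\"ucker coordinates to get the stated generators. One small slip: the generic value of $\dim\Soc_U(M)$ is $6$ (namely $4+2$, as computed in Example \ref{ex-2-rad}), not $10$, though your operative criterion $\rk Y<8$ is the correct one and the computation is unaffected.
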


With a little work we can interpret the zero 
locus in terms of the geometric
model for the Grassmannian. Thinking of a 
point in the zero locus as a plane in
four space we get that it consists of planes 
satisfying any one of the conditions below.
For notation, let $V_{ij}$ be the two 
dimensional subspace of $k^4$ spanned by
the $i^{th}$ and $j^{th}$ coordinate vectors. 
So $V_{23}$ consists of all vectors
of the form $(0,a,b,0)$ for $a,b \in k$. Then 
a closed point (plane defined over $k$) is
in the variety of the proposition if and only if it satisfies
one of the following:
\begin{itemize}
\item it contains one of the coordinate vectors, or
\item it has a basis $u_1,u_2$ where 
$u_1 \in V_{12}$ and $u_2 \in V_{34}$, or
\item it has a basis $u_1,u_2$ where 
$u_1 \in V_{14}$ and $u_2 \in V_{23}$.
\end{itemize}

At first it may seem surprising that the 
description is not symmetric. That is, it
does not include the case that 
$u_1 \in V_{13}$ and $u_2 \in V_{24}$. However,
we should recall that the algebra $S$ is 
not symmetric. There is no automorphism
that interchanges the variables.

Some similar calculations have been made 
in other cases. The identical result
was obtained when $p = 13$. We conjecture that Proposition~\ref{computer1}
is true for all primes $p>3$. 

We also got a very similar outcome in the case
that $p=3$, $s = 4$ (That is where 
relations satisfied by the variables of
$S$ consist of $z_i^4 =0$ and $z_iz_j = qz_jz_i$ 
for $i>j$ and $q$ a primitive
$4^{th}$ root of 1) and we consider the 
module $M = \Rad^{10}(S)/\Rad^{12}(S)$.
For the case that $E$ is an elementary abelian 
group of rank 5, $p = 7$ and
$M = \Rad^8(S)/\Rad^{10}(S)$, the variety 
again appears to be generated by
monomials which are the products of three 
distinct Pl\"ucker coordinates. This case
was not fully completed in that not all of the 
relations were converted to Pl\"ucker
coordinates. However, the experimental evidence suggests
that the closed points in the variety consist of
planes which contain a coordinate vector 
or have a basis $u_1, u_2$ where
$u_1$ is in the subspace $V_{ij}$ for 
$\{i,j\}$ one of the sets $\{1,2\}, \{1,5\},
\{2,3\}, \{3,4\}$ or $\{4,5\}$ and in each 
case $u_2$ is the subspace spanned
by the other three coordinate vectors.

Finally, we can also experiment with 
changing the commutativity relations in
the ring $S$ defined as above. These are the relations 
with the form $z_jz_i  = q_{ij}z_iz_j$ for
$j > i$. If $q_{ij} = 1$ for all $i$ and $j$, 
so that $S$ is commutative, then the
module $M$ has constant 2-socle type. 
In another experiment, we made random
choices of the elements $q_{ij}$ in the 
field $ k = \bF_7$. For one such choice
the module $M$ has a nonminimal 
2-socle support variety which is the zero
locus of the ideal generated by the 
polynomials $\fp_{12}\fp_{13}\fp_{23},$
$\fp_{12}\fp_{13}\fp_{24},$  $\fp_{12}\fp_{14}\fp_{24},$  
$\fp_{12}\fp_{23}\fp_{24},$
$\fp_{12}\fp_{23}\fp_{34},$  $\fp_{12}\fp_{24}\fp_{34},$   
$\fp_{13}\fp_{23}\fp_{24},$
$\fp_{14}\fp_{23},$    $\fp_{23}\fp_{24}\fp_{34}$   
and $\fp_{13}\fp_{24}.$   This variety includes
all planes that contain a coordinate vector 
(except that if it is the second coordinate
vector, then the other spanning vector must 
have zero in one of its other
coordinates). It also included all planes 
contained in the subspace $V_{134}$.
\end{ex}

We end with the remark that several other examples similar
to Example \ref{ex-2-rad} were checked for constant 2-socle
rank. In every experiment 100 random planes $U \in \Grass(2,\CV)$ 
were chosen and the value of  $d = \Dim \Soc_U(M) - \Dim \Rad(M)$ 
was calculated for each. Here $M = W_a(s, \{q_{i,j}\})$, with
$q_{i,j} = \zeta_s$, a primitive $s^{th}$ root of unity. 
For example, for $k = \bF_7$, the value of $d$ was calculated 
in the cases for which $n = 4$, $s=3$, $a = 4,5,6$
and $n = 5$, $s = 3$ and $a = 6,7,8$.
For $k = \bF_5$, $d$ was calculated for $n= 4$, 
$s = 4$, $a = 6,7,8,9$.
In all of these and in other cases, 
the module $M = W_a = I^a/I^{a+2}$, failed to have 
constant 2-socle type, even though it has constant Jordan type
and constant 2-radical type. With this evidence in hand, we 
conjecture that $M$ never has constant $2$-socle type for $(n-r)(s-1) \leq a \leq n(s-1) -2$.

\end{document}